\newtheorem{assumption}{Assumption}
\def\qed{ \ \vrule width.2cm height.2cm depth0cm\smallskip}
\newenvironment{proof}{\noindent {\bf Proof.\/}}{$\qed$\vskip 0.1in}
\newcommand{\ol}{\overline}
\newcommand{\ul}{\underline}
\newcommand{\ba}{\begin{array}}
\newcommand{\ea}{\end{array}}
\newcommand{\be}{\begin{equation}}
\newcommand{\ee}{\end{equation}}
\newcommand{\bea}{\begin{eqnarray}}
\newcommand{\eea}{\end{eqnarray}}
\newcommand{\beaa}{\begin{eqnarray*}}
\newcommand{\eeaa}{\end{eqnarray*}}
\def\dbA{\mathbb{A}}
\def\dbE{\mathbb{E}}
\def\dbF{\mathbb{F}}
\def\dbL{\mathbb{L}}
\def\dbP{\mathbb{P}}
\def\dbR{\mathbb{R}}
\def\dbS{\mathbb{S}}
\def\dbT{\mathbb{T}}
\def\dbV{\mathbb{V}}
\def\dbX{\mathbb{X}}
\def\a{\alpha}
\def\b{\beta}
\def\g{\gamma}
\def\d{\delta}
\def\e{\varepsilon}
\def\k{\kappa}
\def\l{\lambda}
\def\f{\varphi}
\def\th{\theta}
\def\o{\omega}
\def\G{\Gamma}
\def\D{\Delta}
\def\L{\Lambda}
\def\O{\Omega}
\def\cA{{\cal A}}
\def\cF{{\cal F}}
\def\cH{{\cal H}}
\def\cL{{\cal L}}
\def\cM{{\cal M}}
\def\cP{{\cal P}}
\def\cV{{\cal V}}
\def\cW{{\cal W}}
\def\no{\noindent}
\def\ms{\medskip}
\def\q{\quad}
\def\qq{\qquad}
\def\pa{\partial}
\def\cd{\cdot}
\def\cds{\cdots}
\def\tr{\hbox{\rm tr}}
\def\qed{ \hfill \vrule width.25cm height.25cm depth0cm\smallskip}
\newcommand{\basa}{\begin{assumption}}
\newcommand{\easa}{\end{assumption}}
\newcommand{\bas}{\begin{assum}}
\newcommand{\eas}{\end{assum}}
\def\limsup{\mathop{\overline{\rm lim}}}
\def\liminf{\mathop{\underline{\rm lim}}}
\def\pa{\partial}
 \def\cd{\cdot}
\def\cds{\cdots}
\def\supp{\hbox{\rm supp$\,$}}
\def\tr{\hbox{\rm tr$\,$}}
\def\dis{\displaystyle}
\def\bx{{\bf x}}
\def\1{{\bf 1}}
\def\by{{\bf y}}
\def\:{\!:\!}
\def\reff#1{{\rm(\ref{#1})}}
\def \proof{{\noindent \bf Proof\quad}}
\newtheorem{thm}{Theorem}[section]
\newtheorem{lem}[thm]{Lemma}
\newtheorem{prop}[thm]{Proposition}
\newtheorem{rem}[thm]{Remark}
\newtheorem{eg}[thm]{Example}
\newtheorem{defn}[thm]{Definition}
\newtheorem{assum}[thm]{Assumption}
\numberwithin{equation}{section}
\begin{document}

\title{\bf Set Values for Mean Field Games }
\author{Melih \.{I}\c{s}eri\footnote{Department of Mathematics, University of Southern California, United States, melihise@usc.edu.}  \quad Jianfeng Zhang\footnote{Department of Mathematics, University of Southern California, United States, jianfenz@usc.edu. This author is supported in part by NSF grants DMS-1908665 and DMS-2205972. } \footnote{ The authors would like to thank Daniel Lacker for insightful discussions.}}

\maketitle

\begin{abstract}
In this paper we study mean field games with possibly multiple mean field equilibria. Instead of focusing on the individual equilibria, we propose to study the set of values over all possible equilibria, which we call the set value of the mean field game. When the mean field equilibrium is unique, typically under certain monotonicity conditions, our set value reduces to the singleton of the standard value function which solves the master equation. The set value is by nature unique, and we shall establish two crucial properties: (i) the dynamic programming principle, also called time consistency; and  (ii) the convergence of the set values of the corresponding $N$-player games, which can be viewed as a type of stability result. To our best knowledge, this is the first work in  the literature which studies the dynamic value of mean field games without requiring the uniqueness of mean field equilibria. We emphasize that the set value is very sensitive to the type of the admissible controls. In particular, for the convergence one has to restrict to corresponding types of equilibria for the N-player game and for the mean field game. We shall illustrate this point by investigating three cases, two in finite state space models and the other in a  continuous time model with controlled diffusions.
\end{abstract}

\no{\bf MSC2020.} 91A16, 60H30, 91A25, 91A06, 93E20

\vspace{3mm}
\no{\bf Keywords.}  Mean field games, mean field equilibria, set values, dynamic programming principle, closed loop controls, relaxed controls

\vfill\eject

\section{Introduction}
\label{sect-Introduction} \setcounter{equation}{0}

In this paper we study Mean Field Games (MFG, for short) without
monotonicity conditions. There are typically multiple Mean Field
Equilibria (MFE, for short) with possibly different values. Instead of
focusing on the individual equilibria, we propose to study the set of
values over all equilibria, which we call the set value of the
MFG. Note that the set value always exists (with empty set as a
possible value) and is by definition unique. When the MFE is unique,
typically under certain monotonicity conditions, our set value is
reduced to the singleton of the standard value function of the game,
which solves the so called master equation. So the set value can be
viewed as the counterpart of the standard value function for MFGs without monotonicity conditions, 
and it indeed shares many nice properties. In this paper, we 
focus particularly on two crucial properties of the set value:
\begin{itemize}
\item the Dynamic Programming Principle (DPP, for short), or say the
  time consistency;
\item the convergence of the set values of the corresponding
  $N$-player games,  which can be viewed as a type of stability result in terms of model perturbation.
\end{itemize}

\no For general theory of MFGs, we refer to Caines-Huang-Malhame \cite{CHM}, Lasry-Lions \cite{LL}, Lions \cite{Lions}, Cardaliaguet \cite{Cardaliaguet}, Bensoussan-Frehse-Yam \cite{BFY}, and Camona-Delarue \cite{CD1, CD2}.

In standard stochastic control theory, it is well known that the
dynamic value function satisfies the DPP. In fact, this is the
underlying reason for the PDE approach to work. For MFGs under
appropriate monotonicity conditions, the value function (at the
unique MFE) also satisfies the DPP, which, together with the It\^o
formula, leads to the master equation. However, with the presence of
multiple equilibria (see, e.g., Bardi-Fischer \cite{BardiFischer} for
some examples), to our best knowledge this is the first work in the
literature to study the MFG dynamically and to address the time
consistency issue. We show that, when formulated properly, the dynamic
set value function satisfies the DPP.  This also opens the door to a
possible PDE approach for these general games by introducing the so
called set valued PDE. We refer to our work \cite{IZ} for set valued PDEs induced by multivariate stochastic control problems, and Ma-Zhang-Zhang \cite{MZZ} for numerical methods for set valued PDEs,  and we leave their extension to mean field games for future research.  Our
set value approach follows from Feinstein-Rudloff-Zhang \cite{FRZ},
which studies nonzero sum games with finitely many players. See also
the related works Abreu-Pearce-Stacchetti \cite{APS} and Sannikov
\cite{Sannikov} in economics literature, and Feinstein
\cite{Feinstein} which studies the set of equilibria instead of
values.

We note that the set value of games relies heavily on the types of
admissible controls we use.  In this paper we shall consider closed
loop controls.  The open loop equilibria of games are typically time
inconsistent, see e.g. Buckdahn's counterexample in Pham-Zhang
\cite[Appendix E]{PZ} for a two person zero sum game, and
consequently, the set value of games with open loop controls would
violate the DPP. For the MFG, noting that the required symmetry
decomposes the game problem into a standard control problem and a
fixed point problem of measures, and that open loop and closed loop
controls yield the same value function for a standard control problem,
it is possible that the set value with open loop controls still
satisfies the DPP. Nevertheless, bearing in mind the DPP of the set
value for more general (non-symmetric) games, as well as the practical
consideration in terms of the information available to the players, we
shall focus on closed loop controls. There is also a very subtle path
dependence issue. While the game parameters are state dependent, we
may consider both state dependent and path dependent controls. For
general non-zero sum games (not mean field type), \cite{FRZ} shows
that DPP holds for the set value for path dependent controls, but in
general fails for the set value for state dependent controls. For MFGs
with closed loop controls, again due to the required symmetric
properties, the set values for both state dependent controls and path
dependent controls will satisfy the DPP, but they are in general not
equal. For MFGs with closed loop relaxed controls, or say closed loop
mixed strategies, however, it turns out that the state dependent
controls and the path dependent controls induce the same set value
which still satisfies the DPP.

We next turn to the convergence issue. Let  $\dbV$ and $\dbV^N$ denote the set values of the MFG and the corresponding $N$-player games, respectively, under appropriate closed-loop controls. 
Our convergence result reads roughly as follows (the precise form is slightly different):
\bea\label{convergence}
\lim_{N\to\infty} \dbV^N(0, \vec x) = \dbV(0, \mu),\q\mbox{when}\q \mu^N_{\vec x}:= {1\over N} \sum_{i=1}^N \d_{x_i} \to \mu.
\eea
In the realm of master equations, again under certain
monotonicity conditions and hence with unique MFE, one can show that the values of the $N$-player games converge to the
value of the MFG. See Cardaliaguet-Delarue-Lasry-Lions \cite{CDLL}, followed by
Bayraktar-Cohen \cite{BC}, Cardaliaguet \cite{Cardaliaguet2},
Cecchin-Pelino \cite{CP}, Delarue-Lacker-Ramanan \cite{DLR1, DLR2},
Gangbo-Meszaros \cite{GM}, and Mou-Zhang \cite{MZ}, to mention a few. So \reff{convergence} can be viewed as their natural extension to MFGs without monotonicities.

We emphasize again that the set value is very sensitive to the types
of admissible controls. To ensure the convergence, one simple but
crucial observation is that the $N$-player game and the MFG should use
the "same" type of controls (more precisely, corresponding types of
controls in appropriate sense). We illustrate this point by
considering two cases. Note that in the standard literature each
player is required to use the same closed loop control along an
MFE. For the first case, we will obtain the desired convergence by
restricting the $N$-player game to homogeneous equilibria, namely each
player also uses the same closed loop control. In the second case, we
remove such restriction and consider heterogenous equilibria for the
$N$-player games. Note that a closed loop control means the control
depends only on the state.  In this heterogenous case players with the
same state may choose different controls, then one can not expect in
the limit they will have to use the same control\footnote{When the MFE is unique, under appropriate monotonicity conditions, the set value becomes a singleton and it is not sensitive to the type of admissible controls anymore. Consequently, the convergence becomes possible even if the $N$-player games and the MFG use different types of controls, see e.g. \cite{CDLL}}. Indeed, in this case
the limit is characterized by the MFG with closed loop relaxed
controls, or say closed loop mixed strategies, which exactly means
players with the same state may still have a distribution of controls
to choose from. However, since our relax control for MFG is still homogeneous, namely
each player uses the same relax control, the controls for N-player game and for MFG appear to be
in different forms. Our approach is to introduce a new formulation for the MFG, which embeds
the structure of heterogenous controls and shares the same set value as the relax control
formulation of the MFG. For the homogeneous case, we will investigate both a
discrete time model with finite state space and a continuous time
diffusion model with drift controls. But for the heterogeneous case we
will investigate the discrete model only. The continuous model in such
case involves some technical challenges for the convergence and we
shall leave it for future research. We shall point out that, however,
the DPP would hold in much more general models without significant
difficulties.

To ensure the convergence, another main feature is that we define the
set value as the limit of the approximate set values over approximate
equilibria, rather than the true equilibria. We
call the latter the raw set value, and both the set value and the raw
set value satisfy the DPP. However, the raw set value is extremely
sensitive to small perturbations of the game parameters, in fact, in
general even its measurability is not clear, so one can hardly expect
the convergence for the raw set values. In the standard control
theory, the value function is defined as the infimum of controlled
values, which is exactly the limit of values over approximate optimal
controls, rather than the value over true optimal controls which may
not even exist. So our set value, not the raw set value, is the
natural extension of the standard value function in control theory.
Moreover, since we are considering infinitely many players, an
approximate equilibrium means it is approximately optimal for most
players, but possibly with a small portion of exceptions, as
introduced in Carmona \cite{Carmona}.

 We would like to mention that, although it is not the focus of the present paper, the set value is also numerically a lot easier to compute than the raw set value. For example, the duality result for set values in \cite[Section 3.4]{FRZ} (for finite player games) is very useful for constructing efficient numerical algorithms, see \cite{MZZ}. However, this is not feasible for the raw set value which lacks regularity  and thus is hard to approximate in general.

At this point we should mention that, for MFGs without monotonicity conditions,  there have been many publications on the convergence of $N$-player games, in terms of equilibria instead of values. For open loop controls, we refer to Camona-Delarue \cite{CD0}, Feleqi
\cite{Feleqi}, Fischer \cite{Fischer}, Fischer-Silva
\cite{FischerSilva}, Lacker \cite{Lacker1}, Lasry-Lions \cite{LL},
Lauriere-Tangpi \cite{LT}, and Nutz-San Martin-Tan \cite{NST}, to mention a few.  In
particular, \cite{Lacker1} provides the full characterization for the
convergence: any limit of approximate Nash equilibria of $N$-player
games is a weak MFE, and conversely any weak MFE can be obtained as
such a limit. The work \cite{Fischer} is also in this direction. For closed loop
controls, which we are mainly interested in, the situation becomes much more subtle.  The seminal paper Lacker \cite{Lacker2} established the following result:
\bea
\label{Lacker} \mbox{\{Strong MFEs\}} ~ \subset ~ \mbox{\{Limits of
$N$-player approx. equilibria\}}~ \subset ~ \mbox{\{Weak MFEs\}}.
\eea 
Here an MFE is strong if it depends only on the state processes,
and weak if it allows for additional randomness. The left inclusion in \reff{Lacker} was known  to be strict in general. This work has very interesting further developments recently\footnote{These two works \cite{Djete, DanielLuc} were circulated slightly after our present paper.} by  Lacker-Flem
\cite{DanielLuc}  and Djete \cite{Djete}. In particular,  \cite{Djete} shows that the right inclusion in \reff{Lacker} is actually an equality. 

We emphasize again that we are considering the convergence of sets of values, rather than sets of equilibria as in \reff{Lacker}. For standard control problems, the focus is typically to characterize the (unique) value and to find {\it one} (approximate) optimal control, and the player is less interested in finding {\it all} optimal controls since they have the same value. The situation is quite different for games, because different equilibria can lead to different values. Then it is not satisfactory to find just one equilibrium (especially if it is not Pareto optimal). However, for different equilibria which lead to the same value, the players are indifferent on them. So for practical purpose the players would be more interested in finding all possible values\footnote{Another very interesting question is how to choose an optimal (in appropriate sense) value after characterizing the set value. We shall leave this for future research.} and then to find one (approximate) equilibrium for  each value. This is one major motivation that we focus on the set value, rather than the set of all equilibria. We also note that in general the set value could be much simpler than the set of equilibria. For example, in the trivial case that both the terminal and the running cost functions are constants,  the set value is a singleton, while the set of  equilibria consists of all admissible controls.

We should point out that our admissible controls differ from those in \cite{Djete, Lacker2,  DanielLuc}. Roughly speaking, we put two constraints, due to both practical and technical considerations, on the $N$-player approximate equilibria so that the left inclusion in \reff{Lacker} (in terms of values instead of equilibria) becomes an equality. First, for the $N$-player games, \cite{Djete, Lacker2,  DanielLuc} use full information controls $\a_i(t, X^1_t, \cds, X^N_t)$, while we consider symmetric controls $\a_i(t, X^i_t, \mu^N_t)$, where $X^i_t$ is the state of Player $i$, and $\mu^N_t := {1\over N}\sum_{j=1}^N \d_{X^j_t}$ is the empirical measure of all the players' states. Note that, as a principle the controls should depend only on the information the players observe. While both settings are very interesting, since $N$ is large, the full information may not be available in many practical situations. 

The second difference is that we assume each control is Lipschitz continuous in $\mu$, while \cite{Djete, Lacker2,  DanielLuc}  allow for measurable controls. We shall emphasize though we allow the Lipschitz constant to depend on the control, and thus our set value does not depend on any fixed Lipschitz constant. Roughly speaking, we are considering game values which can be approximated by Lipschitz continuous approximate equilibria. This is typically the case in the standard control theory: even if the optimal control is discontinuous, in  most reasonable framework we should be able to find Lipschitz continuous approximate optimal controls.   The situation is more subtle for games. There may exist (closed loop) equilibria whose values cannot be approximated by any Lipschitz continuous approximate equilibria. In fact, when considering all measurable equilibria, the convergence of set values in \reff{convergence} fails in general, see Example \ref{eg-Lipschitz} and Remark \ref{rem-Lipschitz} below. While clearly more general and very interesting mathematically, such measurable equilibria are hard to implement in practice, since inevitably we have all sorts of errors in terms of the information, or say, data.  Their numerical computation is another serious challenge. For example, in the popular machine learning algorithm,  the key idea is to approximate the controls via composition of linear functions and the activation function, then by definition the optimal controls/equilibria provided by these algorithms are (locally) Lipschitz continuous. That is, the game values falling out of our set value are essentially out of reach of these algorithms, see e.g. \cite{MZZ}.  Moreover, as a consequence of our constraints,  our proof of \reff{convergence} is technically a lot easier than the compactness arguments for \reff{Lacker} used in  \cite{Djete, Lacker2,  DanielLuc}.

Finally we would like to mention some other approaches for MFGs with
multiple equilibria. One is to add sufficient (possibly infinite
dimensional) noise so that the new game will become non-degenerate and
hence have unique MFE, see e.g. Bayraktar-Cecchin-Cohen-Delarue
\cite{BCCD1, BCCD2}, Delarue \cite{Delarue}, Delarue-Foguen Tchuendom
\cite{DF}, Foguen Tchuendom \cite{F}. Another approach is to study a
special type of MFEs, see e.g.  Cecchin-Dai Pra-Fisher-Pelino
\cite{CDFP}, Cecchin-Delarue \cite{CecchinDelarue}, and
\cite{DF}. Another interesting work is Possamai-Tangpi \cite{PT} which
introduces an additional parameter function $\L$ such that the MFE
corresponding to any fixed $\L$ is unique and then the desired
convergence is obtained.

The rest of the paper is organized as follows. In Section
\ref{sect-state} we introduce the set value for an MFG in a discrete
time model on finite state space and establish the DPP, and in Section
\ref{sect-N1} we prove the convergence for the corresponding
$N$-player games with homogeneous equilibria. Sections
\ref{sect-relax} and \ref{sect-N2} are devoted to MFGs with relaxed
controls and the corresponding $N$-player games with heterogenous
equilibria. In Section \ref{sect-Diffusion} we study a continuous time
model with controlled diffusions. Finally in Appendix we provide some examples, discuss the subtle
path dependence issue, and complete some technical proofs.

\section{Mean field games on finite space with closed loop controls}
\label{sect-state} \setcounter{equation}{0}

In this section we consider an MFG on finite space (both time and
state are finite) with closed loop controls, and for simplicity we
restrict to state dependent setting. Since the game typically has
multiple MFEs which may induce different values, see Example
\ref{eg-statepath} below for an example, we shall introduce the set
value of the game over all MFEs. Our goal is to establish the DPP for
the MFG set value, and we shall show in the next section that the set
values of the corresponding $N$-player games converge to the MFG set
value.

\subsection{The basic setting}
\label{sect-FiniteSymmetricMFG}
Let $\dbT := \{0,\cdots, T\}$ be the set of discrete times;
$\dbT_t:= \{t,\cds,T\}$ for $t\in \dbT$; $\dbS$ the finite state
space\footnote{We may allow the state space $\dbS_t$ to depend on time $t$ and all the results in this paper will remain
  true.} with size $|\dbS|=d$; $\cP(\dbS)$ the set of probability
measures on $\dbS$, equipped with the $1$-Wasserstein distance
$W_1$. Since $\dbS$ is finite, $W_1$ is equivalent to the total
variation distance\footnote{More precisely, the total variation
  distance is ${1\over 2} W_1$ for the $W_1$ in \reff{W1}.} which is
convenient for our purpose: by abusing the notation $W_1$, \bea
\label{W1} W_1(\mu, \nu) := \sum_{x\in \dbS} |\mu(x) - \nu(x)|,\q \mu,
\nu \in \cP(\dbS).  \eea Let $\cP_0(\dbS)$ denote the subset of
$\mu\in \cP(\dbS)$ which has full support, namely $\mu(x)>0$ for all
$x\in\dbS$.  Moreover, let $\dbA\subset \dbR^{d_0}$ be a measurable
set from which the controls take values; and $q: \dbT \times \dbS
\times \cP(\dbS)\times \dbA \times \dbS \to (0, 1)$ be a transition
probability function: \beaa \sum_{\tilde x\in \dbS} q(t, x, \mu, a;
\tilde x) = 1,\q\forall (t, x, \mu, a)\in \dbT \times \dbS \times
\cP(\dbS)\times \dbA.  \eeaa

We shall use the weak formulation which is more convenient for closed
loop controls. That is, we fix the canonical space and consider
controlled probability measures on it. To be precise, let $\O:= \dbX
:= \dbS^{T+1}$ be the canonical space; $X: \dbT\times \O\to \dbS$ the
canonical process: $X_t(\o) = \o_t$; $\dbF:=\{\cF_t\}_{t\in \dbT}:=
\dbF^X$ the filtration generated by $X$; and $\cA_{state}$ the set of
state dependent admissible controls $\a: \dbT \times \dbS \to
\dbA$. Introduce the concatenation for controls: \bea
\label{Oplus} (\a\oplus_{T_0} \tilde \a)(s,x):= \a(s,x) \1_{\{s<T_0\}} +
\tilde \a(s,x) \1_{\{s\ge T_0\}},\q \a, \tilde \a\in \cA_{state}.  \eea
It is clear that $\a\oplus_{T_0} \tilde \a\in \cA_{state}$. Given $(t,
\mu, \a)\in \dbT\times \cP(\dbS) \times \cA_{state}$, let
$\dbP^{t,\mu, \a}$ denote the probability measure on $\cF_T$
determined recursively by: for $s=t,\cds, T$, 
\bea
\label{Ptmua} 
\left.\ba{c} \dis\dbP^{t,\mu,\a} \circ X_t^{-1} = \mu,\q \dbP^{t,\mu,\a}(X_{s+1}=\tilde x| X_s = x) = q(s, x, \mu^\a_s, \a(s,
x); \tilde x);\vspace{0.5em}\\ 
\dis \mbox{where}\q \mu^\a_s:= \dbP^{t,\mu,\a} \circ X_s^{-1}.  
\ea\right.  
\eea 
We note that $\mu^\a:=\{\mu^\a_s\}_{s\in \dbT_t}$
are uniquely determined and $X$ is a Markov chain on $\dbT_t$ under
$\dbP^{t,\mu,\a}$. We also note that $\mu^\a$ depends on $(t,\mu)$ as well, but we omit it for notational simplicity.  However, the distribution of $\{X_s\}_{s=0,\cds,
t-1}$ is not specified and is irrelevant, and $\{\a_s\}_{0\le s<t}$ is
also irrelevant.  Moreover, given $\{\mu_\cd\}:=\{\mu_s\}_{s\in
\dbT_t}$, $x\in \dbS$, and $\tilde \a\in \cA_{state}$, let
$\dbP^{\{\mu_\cd\}; t, x, \tilde \a}$ denote the probability measure
on $\cF_T$ determined recursively by: for $s=t,\cds, T-1$,
\begin{equation}
  \label{Ptmuax}
  \dis \dbP^{\{\mu_\cd\}; t, x, \tilde \a}(X_t=x) = 1,\q
  \dbP^{\{\mu_\cd\}; t, x, \tilde \a}(X_{s+1}= \bar x| X_s = \tilde x) =
  q(s, \tilde x, \mu_s, \tilde\a(s, \tilde x); \bar x).
\end{equation}
As in the standard MFG literature, here we are assuming that the
population uses the common control $\a$ while the individual player is
allowed to use a different control $\tilde \a$.

We remark that, since we assume $q>0$, then for any $(t, \mu)$ and
$\a$, $\mu^{\a}_s\in \cP_0(\dbS)$ for all $s>t$. For the convenience
of presentation, in this section we shall restrict our discussion to
the case $\mu\in \cP_0(\dbS)$. The general case that the initial
measure $\mu$ is not fully supported can be treated fairly easily, as
we will do in Section \ref{sect-Diffusion} below. The situation with
degenerate $q$, however, is more subtle and we shall leave  it for future
research.

We finally introduce the cost functional for the MFG: for the $\mu^\a = \{\mu^\a_\cd\}$
in \reff{Ptmua},
\begin{equation}
  \begin{aligned}
    &J(t, \mu, \a; x, \tilde \a) :=
    J(\mu^\a; t, x, \tilde \a) ,\qq v(\{\mu_\cd\}; s, x) :=
    \inf_{\tilde\a\in \cA_{state}} J(\{\mu_\cd\}; s, x, \tilde \a);
    \\ & \mbox{where}\q
    J(\{\mu_\cd\}; s, x, \tilde \a) := \dbE^{\dbP^{\{\mu_\cd\}; s, x,
        \tilde\a}}\Big[G(X_T, \mu_T) + \sum_{r=s}^{T-1} F(r, X_r, \mu_r,
    \tilde\a(r, X_r))\Big].
    \label{J}    
  \end{aligned}
\end{equation}
Here, since $\dbT$ and $\dbS$ are finite, $F$ and $G$ are arbitrary
measurable functions satisfying \beaa \inf_{a\in \dbA}F(t, x,
\mu,a)>-\infty\q\mbox{ for all}~ (t,x,\mu).  \eeaa

We remark that here $v(\{\mu_\cd\};\cd,\cd)$
is the value function of a standard stochastic control problem with
parameter $\{\mu_\cd\}$. In particular, in continuous time models,
$\mu^\a$ and $v(\mu^\a; \cd, \cd)$ will satisfy the Fokker-Planck
equation and the HJB equation, respectively.

\begin{defn}
\label{defn-MFEstate} Given $(t, \mu)\in \dbT \times \cP_0(\dbS)$, we
say $\a^*\in \cA_{state}$ is a state dependent MFE at $(t, \mu)$,
denoted as $\a^*\in \cM_{state}(t,\mu)$, if \bea
\label{MFEstate} J(t, \mu, \a^*; x, \a^*) = v(\mu^{\a^*}; t, x),
\q\mbox{for all}~ x\in \dbS.  \eea
\end{defn} In this and the next section, we will use the following
conditions.
\begin{assum}
  \label{assum-reg}
  
  (i) $q\ge c_q$ for some constant $c_q>0$;

  (ii) $q$ is Lipschitz continuous in $(\mu, a)$, with a Lipschitz
  constant $L_q$;

  (iii) $F, G$ are bounded by a constant $C_0$ and uniformly continuous
  in $(\mu, a)$, with a modulus of continuity function $\rho$.
\end{assum}

\subsection{The raw set value $\dbV_0$ } We introduce the raw set
value for the MFG over all state dependent MFEs: \bea
\label{V0} \dbV_0(t,\mu) := \Big\{J(t, \mu, \a^*; \cd, \a^*): \a^* \in
\cM_{state}(t,\mu)\Big\} \subset \dbL^0(\dbS; \dbR).  \eea Here the
elements of $\dbV_0(t,\mu)$ are functions from $\dbS$ to $\dbR$, which
coincide with $\dbR^d$ by identifying $\f\in \dbL^0(\dbS; \dbR)$ with
$(\f(x): x\in \dbS)\in \dbR^d$. We call $\dbV_0(t,\mu)$ the raw set
value and we will introduce the set value $\dbV(t,\mu)$ of the MFG in
the next subsection.

Next, for any $T_0\in \dbT_t$, $\psi \in \dbL^0(\dbS\times
\cP_0(\dbS); \dbR)$, we introduce the MFG on $\{t,\cds, T_0\}$:
\begin{equation}
  J(T_0,\psi; t, \mu, \a; x, \tilde\a) :=
  \dbE^{\dbP^{\mu^\a; t, x, \tilde\a}}\Big[\psi(X_{T_0}, \mu^\a_{T_0}) +
  \sum_{s=t}^{T_0-1} F(s, X_s, \mu^\a_s, \tilde\a(s, X_s))\Big].
  \label{JT0} 
\end{equation}
In the obvious sense we define $\a^*\in \cM_{state}(T_0, \psi; t,\mu)$
by: for any $x\in \dbS$, 
\begin{equation}
  J(T_0,\psi; t, \mu, \a^*; x, \a^*) = v(T, \psi;
  \mu^{\a^*}; t,x):= \inf_{\tilde\a\in \cA_{state}} J(T,\psi; t, \mu,
  \a^*; x, \tilde\a).
  \label{MFEstateT0} 
\end{equation}
At below we will repeatedly use the following simple fact due to the
tower property of conditional expectations:
\begin{equation}
  \dis J(t, \mu, \a; x, \tilde\a) = J(T_0, \psi; t, \mu,
  \a; x, \tilde\a),\hspace{0.5em}
  \mbox{where}\q \psi(y, \nu) := J(T_0, \nu, \a; y, \tilde\a).
  \label{tower}  
\end{equation}
The following time consistency of MFE is the essence of the DPP for
the raw set value.

\begin{prop}
\label{prop-DPP} Fix $0\le t< T_0\le T$ and $\mu\in \cP_0(\dbS)$. For
any $\a^*, \tilde \a^*\in \cA_{state}$, denote $\hat \a^*:=
\a^*\oplus_{T_0} \tilde \a^*$ and $\psi(y, \nu):= J(T_0, \nu,
\tilde\a^*; y, \tilde\a^*)$. Then $\hat\a^* \in \cM_{state}(t,\mu)$ if
and only if $\a^* \in \cM_{state}(T_0, \psi; t,\mu)$ and $\tilde \a^*
\in \cM_{state}(T_0, \mu^{\a^*}_{T_0})$.
\end{prop}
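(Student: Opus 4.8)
The plan is to reduce the MFE condition to the optimality of an individual control in a \emph{standard} finite-horizon Markov control problem with frozen flow, and then to invoke the ordinary dynamic programming principle for that inner problem. Concretely, for a fixed deterministic flow $\nu_\cd := \{\nu_s\}_{s\in\dbT_t}$, the map $\tilde\a \mapsto J(\nu_\cd; s, x, \tilde\a)$ together with its infimum $v(\nu_\cd; s, x)$ is exactly a standard control problem on the finite space $\dbS$, and by Definition \ref{defn-MFEstate} a control is an MFE at $(t,\mu)$ iff, against the flow $\nu_\cd = \mu^{\hat\a^*}$ that its population copy generates, it is individually optimal at every initial state $x$. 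So I would first record the decomposition of the generated flow: since the recursion \reff{Ptmua} determines $\mu_s^{\hat\a^*}$ using only the population control at times $<s$, and $\hat\a^* = \a^*\oplus_{T_0}\tilde\a^*$, one has $\mu^{\hat\a^*}_s = \mu^{\a^*}_s$ for $t\le s\le T_0$, while for $s\ge T_0$ the flow $\mu^{\hat\a^*}_s$ is exactly the flow generated from $(T_0,\mu^{\a^*}_{T_0})$ by $\tilde\a^*$. In particular $\nu_\cd$ restricted to $\{T_0,\cds,T\}$ coincides with the self-generated flow appearing in the MFE condition at $(T_0,\mu^{\a^*}_{T_0})$.

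With this in hand, the engine of the proof is the DPP for the inner value, $v(\nu_\cd; t, x) = \inf_{\tilde\a}\dbE^{\dbP^{\nu_\cd; t, x, \tilde\a}}\big[v(\nu_\cd; T_0, X_{T_0}) + \sum_{s=t}^{T_0-1} F(s, X_s, \nu_s, \tilde\a(s,X_s))\big]$, combined with the tower property \reff{tower}. The bridge between the two target conditions is the identity $\psi(\cd, \mu^{\a^*}_{T_0}) = v(\nu_\cd; T_0, \cd)$: by the flow decomposition, $\psi(y,\mu^{\a^*}_{T_0}) = J(\nu_\cd; T_0, y, \tilde\a^*)$, so this identity holds \emph{exactly} when $\tilde\a^*\in\cM_{state}(T_0,\mu^{\a^*}_{T_0})$. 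For the ``if'' direction I would assume both conditions: the second gives the identity above, and then using the tower property the cost of $\hat\a^*$ against $\nu_\cd$ equals $J(T_0,\psi; t,\mu,\a^*; x,\a^*)$, which by the first condition equals $v(T_0,\psi;\mu^{\a^*}; t,x)$; the DPP then identifies the latter with $v(\nu_\cd; t, x)$, giving $\hat\a^*\in\cM_{state}(t,\mu)$. For the ``only if'' direction I would start from $J(\nu_\cd; t, x, \hat\a^*) = v(\nu_\cd; t, x)$ for all $x$, expand the left side by the tower property, and compare with the DPP formula; since $J(\nu_\cd; T_0, \cd, \tilde\a^*)\ge v(\nu_\cd; T_0, \cd)$ pointwise, the sandwich forces both that $\hat\a^*$ attains the DPP infimum on $\{t,\cds,T_0\}$ and that $\dbE^{\dbP^{\nu_\cd; t,x,\a^*}}\big[J(\nu_\cd;T_0,X_{T_0},\tilde\a^*) - v(\nu_\cd;T_0,X_{T_0})\big] = 0$.

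The step I expect to be the main obstacle, and the one place the structural hypotheses are really used, is extracting the \emph{pointwise} second condition from this vanishing expectation. Here I would invoke $q>0$ (Assumption \ref{assum-reg}(i)): since $t<T_0$, under $\dbP^{\nu_\cd; t, x, \a^*}$ the variable $X_{T_0}$ charges every state $y\in\dbS$ with positive probability, so a nonnegative integrand with zero expectation must vanish at every $y$, yielding $J(\nu_\cd; T_0, y, \tilde\a^*) = v(\nu_\cd; T_0, y)$ for all $y$, i.e.\ $\tilde\a^*\in\cM_{state}(T_0,\mu^{\a^*}_{T_0})$. Once this is established the identity $\psi(\cd,\mu^{\a^*}_{T_0}) = v(\nu_\cd;T_0,\cd)$ becomes available and the first condition follows from the already-obtained optimality of $\a^*$ on $\{t,\cds,T_0\}$. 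The only subtlety to keep straight is the order of deduction: in the forward direction the second (terminal) condition must be obtained first, since it is what makes $\psi$ agree with the dynamic value and thereby legitimizes reading off the first condition.
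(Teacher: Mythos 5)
Your proof is correct, and its skeleton coincides with the paper's: both rest on the tower property \reff{tower}, the flow decomposition $\mu^{\hat\a^*}_s=\mu^{\a^*}_s$ for $t\le s\le T_0$ (with $\mu^{\hat\a^*}$ on $\dbT_{T_0}$ being the flow generated by $\tilde\a^*$ from $(T_0,\mu^{\a^*}_{T_0})$), standard stochastic control theory for the frozen-flow inner problem, and — exactly as in the paper — the $q>0$ full-support argument that upgrades the vanishing expectation $\dbE^{\dbP^{\mu^{\a^*};t,x,\a^*}}\big[J(T_0,\mu^{\a^*}_{T_0},\tilde\a^*;X_{T_0},\tilde\a^*)-v(\mu^{\hat\a^*};T_0,X_{T_0})\big]=0$ to the pointwise identity defining $\tilde\a^*\in\cM_{state}(T_0,\mu^{\a^*}_{T_0})$. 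The genuine difference is organizational: you funnel both directions through the identity $\psi(\cd,\mu^{\a^*}_{T_0})=v(\mu^{\hat\a^*};T_0,\cd)$ combined with the inner DPP, whereas the paper never identifies $\psi$ with a value function. In its ``if'' part it runs a single chain of inequalities against an arbitrary competitor $\a$, invoking the two hypotheses in inequality form; in its ``only if'' part it obtains $\a^*\in\cM_{state}(T_0,\psi;t,\mu)$ \emph{first and independently of the second condition}, by testing $\hat\a^*$ against competitors of the form $\a\oplus_{T_0}\tilde\a^*$: since $\psi$ is by definition $\tilde\a^*$'s own continuation cost rather than the value function, the tower property alone yields $J(T_0,\psi;t,\mu,\a^*;x,\a^*)=J(t,\mu,\hat\a^*;x,\hat\a^*)\le J(t,\mu,\hat\a^*;x,\a\oplus_{T_0}\tilde\a^*)=J(T_0,\psi;t,\mu,\a^*;x,\a)$, with no identification of $\psi$ needed. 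Consequently your closing remark that the terminal condition ``must be obtained first'' is an artifact of your routing, not a structural necessity: within your argument the order is indeed forced, but the paper shows the first condition is available by pure competitor substitution. Your packaging has the merit of making the value identification $v(T_0,\psi;\mu^{\a^*};t,\cd)=v(\mu^{\hat\a^*};t,\cd)$ explicit, which foreshadows how the DPP \reff{DPP0} is used; the paper's inequality-chain mechanism has the advantage that it transfers essentially unchanged to the approximate ($\e$-MFE) versions in Theorems \ref{thm-DPP1} and \ref{thm-DPP2}, where exact identities like $\psi(\cd,\mu^{\a^*}_{T_0})=v(\mu^{\hat\a^*};T_0,\cd)$ degrade to errors of size $c_q^{t-T_0}\e$ and your sandwich would need to be redone with explicit constants.
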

\begin{proof}
  (i) We first prove the if part.  Let
  $\a^* \in \cM_{state}(T_0, \psi; t,\mu)$ and
  $\tilde \a^* \in \cM_{state}(T_0, \mu^{\a^*}_{T_0})$.  For arbitrary
  $\a\in \cA_{state}$ and $x\in \dbS$, by \reff{tower} we have
   \beaa
  &&J(t, \mu, \hat\a^*; x, \a) = \dbE^{\dbP^{\mu^{\a^*}; t, x,
      \a}}\Big[ J(T_0, \mu^{\a^*}_{T_0}, \tilde \a^*; X_{T_0}, \a) +
  \sum_{s=t}^{T_0-1} F(s, X_s, \mu^{\a^*}_s, \a(s, X_s))\Big]\\ 
  &&\ge
  \dbE^{\dbP^{\mu^{\a^*}; t, x, \a}}\Big[ J(T_0, \mu^{\a^*}_{T_0},
  \tilde \a^*; X_{T_0}, \tilde \a^*) + \sum_{s=t}^{T_0-1} F(s, X_s,
  \mu^{\a^*}_s, \a(s, X_s))\Big]\\ 
  &&= \dbE^{\dbP^{\mu^{\a^*};t, x,
      \a}}\Big[ \psi(X_{T_0}, \mu^{\a^*}_{T_0}) + \sum_{s=t}^{T_0-1}
  F(s, X_s, \mu^{\a^*}_s, \a(s, X_s))\Big]\\ 
  && = J(T_0, \psi; t, \mu,
  \a^*; x, \a) \ge J(T_0, \psi; t, \mu, \a^*; x, \a^*) = J(t, \mu,
  \hat\a^*; x, \hat \a^*), 
  \eeaa 
  where the first inequality is due to
  $\tilde \a^*\in \cM_{state}(T_0, \mu^{\a^*}_{T_0})$ and the second
  inequality is due to $\a^*\in \cM_{state}(T_0, \psi; t,\mu)$. Then
  $\hat\a^* \in \cM_{state}(t,\mu)$.

  (ii) We now prove the only if part. Let
  $\hat\a^* \in \cM_{state}(t,\mu)$. For any $\a\in \cA_{state}$, we
  have $\a\oplus_{T_0}\tilde \a^* \in \cA_{state}$.  Then, since
  $\hat\a^*\in \cM_{state}(t,\mu)$, for any $x\in \dbS$, by
  \reff{tower} we have \beaa J(T_0,\psi; t, \mu, \a^*; x, \a^*) = J(t,
  \mu, \hat\a^*; x, \hat\a^*) \le J(t, \mu, \hat\a^*; x,
  \a\oplus_{T_0} \tilde\a^*) = J(T,\psi; t, \mu, \a^*; x, \a).  \eeaa
  This implies that $\a^*\in \cM_{state}(T_0, \psi; t,\mu)$.

  Moreover, note that $\a^*\oplus_{T_0} \a \in \cA_{state}$ and again
  since $\hat\a^*\in \cM_{state}(t,\mu)$, we have 
  \beaa 
  &&\dbE^{\dbP^{\mu^{\a^*};t, x, \a^*}}\Big[J(T_0, \mu^{\a^*}_{T_0},
  \tilde\a^*; X_{T_0}, \tilde \a^*) + \sum_{s=t}^{T_0-1} F(s, X_s,
  \mu^{\a^*}_s, \a^*(s, X_s))\Big] \\ 
  &&= J(t, \mu, \hat\a^*; x,
  \hat\a^*) \le J(t, \mu, \hat\a^*; x, \a^*\oplus_{T_0} \a) \\ 
  &&=
  \dbE^{\dbP^{\mu^{\a^*}; t, x, \a^*}}\Big[J(T_0, \mu^{\a^*}_{T_0},
  \tilde\a^*; X_{T_0}, \a) + \sum_{s=t}^{T_0-1} F(s, X_s, \mu^{\a^*}_s,
  \a^*(s, X_s))\Big].  
  \eeaa 
  This implies that, recalling the $v$ in
  \reff{J} and by the standard stochastic control theory, 
  \bea
  \label{vDPP} \dbE^{\dbP^{\mu^{\a^*}; t, x, \a^*}}\Big[J(T_0,
  \mu^{\a^*}_{T_0}, \tilde\a^*; X_{T_0}, \tilde \a^*) \Big] &\le&
  \inf_{\a\in\cA_{state}} \dbE^{\dbP^{\mu^{\a^*};t, x,
      \a^*}}\Big[J(T_0,
  \mu^{\a^*}_{T_0},\tilde \a^*; X_{T_0}, \a) \Big]\nonumber\\
  &=&\dbE^{\dbP^{\mu^{\a^*};t, x, \a^*}}\Big[v(\mu^{\hat\a^*}; T_0,X_{T_0}) \Big].  
  \eea 
  On the other hand, by definition
  $v(\mu^{\hat\a^*}; T_0, \tilde x) \le J(T_0, \mu^{\a^*}_{T_0},
  \tilde \a^*; \tilde x, \tilde\a^*)$ for all $\tilde x\in\dbS$. Then
  \beaa J(T_0, \mu^{\a^*}_{T_0}, \tilde\a^*; X_{T_0}, \tilde \a^*) =
  v(\mu^{\hat\a^*}; T_0, X_{T_0}),\q \dbP^{\mu^{\a^*}; t, x,
    \a^*}\mbox{-a.s.}  
    \eeaa Since $q>0$, then clearly
  $\dbP^{\mu^{\a^*}; t, x, \a^*}(X_{T_0}=\tilde x)>0$ for all
  $\tilde x\in \dbS$. Thus $J(T_0, \mu^{\a^*}_{T_0}, \tilde \a^*;
  \tilde x,\tilde \a^*) = v(\mu^{\hat\a^*}; T_0, \tilde x)$, for all
  $\tilde x\in \dbS$.  This implies that
  $\tilde \a^* \in \cM_{state}(T_0, \mu^{\a^*}_{T_0})$.
\end{proof}

We then have the following DPP.

\begin{thm}
  \label{thm-DPP0} For any $0\le t< T_0\le T$, and
  $\mu\in \cP_0(\dbS)$, we have
  \begin{equation}
    \begin{aligned}
      \dbV_0(t,\mu) := \Big\{J(T_0, \psi; t,
      \mu, \a^*; \cd, \a^*): \mbox{for all}~\psi \in \dbL^0(\dbS\times
      \cP_0(\dbS); \dbR)~\mbox{and}~\a^* \in \cA_{state}
      \\ \dis \mbox{such that}~\psi(\cd, \mu^{\a^*}_{T_0}) \in
      \dbV_0(T_0,\mu^{\a^*}_{T_0})~\mbox{and}~\a^*\in
      \cM_{state}(T_0, \psi; t,\mu)\Big\}.
      \label{DPP0}  
    \end{aligned}
  \end{equation}
\end{thm}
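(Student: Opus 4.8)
The plan is to derive this set-valued DPP directly from the time consistency of individual equilibria in Proposition \ref{prop-DPP}, proving the two inclusions in \reff{DPP0} separately. The one structural observation that makes everything fit together is that the map $\tilde\a \mapsto J(T_0,\psi;t,\mu,\a^*;x,\tilde\a)$ in \reff{JT0}, and hence the membership $\a^*\in\cM_{state}(T_0,\psi;t,\mu)$, depends on $\psi$ only through its restriction $\psi(\cd,\mu^{\a^*}_{T_0})$: the common flow $\mu^{\a^*}$ is generated by $\a^*$ alone and is unaffected by the terminal data, so the only value of $\psi$ that ever enters the cost is at the single measure $\mu^{\a^*}_{T_0}$. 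I will use this localization repeatedly to pass between an arbitrary $\psi$ as in \reff{DPP0} and the specific terminal cost produced by Proposition \ref{prop-DPP}.

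For the inclusion ``$\subseteq$'', I start from an arbitrary element $J(t,\mu,\hat\a^*;\cd,\hat\a^*)$ of $\dbV_0(t,\mu)$ with $\hat\a^*\in\cM_{state}(t,\mu)$, and write it trivially as the concatenation $\hat\a^*=\hat\a^*\oplus_{T_0}\hat\a^*$. Setting $\psi(y,\nu):=J(T_0,\nu,\hat\a^*;y,\hat\a^*)$ and applying the ``only if'' direction of Proposition \ref{prop-DPP} with $\a^*=\tilde\a^*=\hat\a^*$ yields simultaneously $\hat\a^*\in\cM_{state}(T_0,\psi;t,\mu)$ and $\hat\a^*\in\cM_{state}(T_0,\mu^{\hat\a^*}_{T_0})$. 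The latter says exactly that $\psi(\cd,\mu^{\hat\a^*}_{T_0})=J(T_0,\mu^{\hat\a^*}_{T_0},\hat\a^*;\cd,\hat\a^*)\in\dbV_0(T_0,\mu^{\hat\a^*}_{T_0})$ by the definition \reff{V0}. Finally the tower identity \reff{tower} gives $J(t,\mu,\hat\a^*;\cd,\hat\a^*)=J(T_0,\psi;t,\mu,\hat\a^*;\cd,\hat\a^*)$, so this element lies in the right-hand side of \reff{DPP0} with the choice $\a^*=\hat\a^*$.

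For the reverse inclusion ``$\supseteq$'', I take $\psi$ and $\a^*$ as in \reff{DPP0}. Since $\psi(\cd,\mu^{\a^*}_{T_0})\in\dbV_0(T_0,\mu^{\a^*}_{T_0})$, the definition \reff{V0} provides some $\tilde\a^*\in\cM_{state}(T_0,\mu^{\a^*}_{T_0})$ with $\psi(\cd,\mu^{\a^*}_{T_0})=J(T_0,\mu^{\a^*}_{T_0},\tilde\a^*;\cd,\tilde\a^*)$. I then replace $\psi$ by $\psi'(y,\nu):=J(T_0,\nu,\tilde\a^*;y,\tilde\a^*)$, which agrees with $\psi$ at $\nu=\mu^{\a^*}_{T_0}$; by the localization observation above, $\a^*\in\cM_{state}(T_0,\psi;t,\mu)$ is equivalent to $\a^*\in\cM_{state}(T_0,\psi';t,\mu)$. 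Now $\psi'$ and $\tilde\a^*$ are exactly in the form required by Proposition \ref{prop-DPP}, so its ``if'' direction gives $\hat\a^*:=\a^*\oplus_{T_0}\tilde\a^*\in\cM_{state}(t,\mu)$, whence $J(t,\mu,\hat\a^*;\cd,\hat\a^*)\in\dbV_0(t,\mu)$. To close, I verify that this equals the candidate value $J(T_0,\psi;t,\mu,\a^*;\cd,\a^*)$: applying \reff{tower} and using that $\hat\a^*$ coincides with $\tilde\a^*$ on $[T_0,T]$ and with $\a^*$ on $[t,T_0)$, one obtains $J(t,\mu,\hat\a^*;\cd,\hat\a^*)=J(T_0,\psi';t,\mu,\a^*;\cd,\a^*)=J(T_0,\psi;t,\mu,\a^*;\cd,\a^*)$, the last equality again by agreement of $\psi,\psi'$ at $\mu^{\a^*}_{T_0}$.

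I expect the main obstacle to be bookkeeping the dependence on $\psi$ rather than any deep difficulty: Proposition \ref{prop-DPP} is stated for the \emph{specific} terminal cost $J(T_0,\cd,\tilde\a^*;\cd,\tilde\a^*)$, whereas \reff{DPP0} quantifies over an arbitrary $\psi\in\dbL^0(\dbS\times\cP_0(\dbS);\dbR)$. The crux is therefore to justify carefully that both the controlled value $J(T_0,\psi;t,\mu,\a^*;\cd,\tilde\a)$ and the equilibrium property defining $\cM_{state}(T_0,\psi;t,\mu)$ see $\psi$ only through $\psi(\cd,\mu^{\a^*}_{T_0})$, so that any $\psi$ satisfying $\psi(\cd,\mu^{\a^*}_{T_0})\in\dbV_0(T_0,\mu^{\a^*}_{T_0})$ may be freely exchanged for the canonical $\psi'=J(T_0,\cd,\tilde\a^*;\cd,\tilde\a^*)$ arising from a realizing equilibrium $\tilde\a^*$. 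Once this reduction is in place, the two inclusions follow immediately from the ``if'' and ``only if'' halves of Proposition \ref{prop-DPP} together with the tower property \reff{tower}.
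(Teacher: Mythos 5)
Your proof is correct and follows essentially the same route as the paper's: both inclusions are reduced to the ``if'' and ``only if'' halves of Proposition \ref{prop-DPP} together with the tower identity \reff{tower}, with the forward inclusion obtained by writing $\hat\a^*=\hat\a^*\oplus_{T_0}\hat\a^*$ and the reverse by realizing $\psi(\cd,\mu^{\a^*}_{T_0})$ through an equilibrium $\tilde\a^*\in\cM_{state}(T_0,\mu^{\a^*}_{T_0})$ and concatenating. Your explicit localization observation --- that $J(T_0,\psi;t,\mu,\a^*;\cd,\tilde\a)$ and hence $\cM_{state}(T_0,\psi;t,\mu)$ see $\psi$ only through $\psi(\cd,\mu^{\a^*}_{T_0})$, since the flow $\mu^{\a^*}$ is generated by the population control alone --- is exactly the point the paper's proof uses implicitly when it invokes Proposition \ref{prop-DPP} for a general $\psi$, so making it explicit is a small improvement in rigor rather than a deviation.
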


\begin{proof}
  Let $\tilde \dbV_0(t,\mu)$ denote the right side of
  \reff{DPP0}. First, for any
  $J(T_0, \psi; t, \mu, \a^*; \cd, \a^*)\in \tilde \dbV_0(t,\mu)$ with
  desired $\psi, \a^*$ as in \reff{DPP0}. Since
  $\psi(\cd, \mu^{\a^*}_{T_0}) \in \dbV_0(T_0, \mu^{\a^*}_{T_0})$,
  there exists $\tilde \a^*\in \cM_{state}(T_0, \mu^{\a^*}_{T_0})$
  such that
  $\psi(\cd, \mu^{\a^*}_{T_0}) = J(T_0, \mu^{\a^*}_{T_0}, \tilde \a^*;
  \cd, \tilde \a^*)$. By Proposition \ref{prop-DPP} we have
  $\hat \a^*:= \a^*\oplus_{T_0} \tilde \a^* \in
  \cM_{state}(t,\mu)$. Then, by \reff{tower},
  $J(T_0, \psi; t, \mu, \a^*; \cd, \a^*) = J(t, \mu, \hat\a^*; \cd,
  \hat\a^*) \in \dbV_0(t,\mu)$, and thus
  $\tilde \dbV_0(t,\mu)\subset \dbV_0(t,\mu)$.

  On the other hand, let $J(t, \mu, \a^*; \cd, \a^*)\in \dbV_0(t,\mu)$
  with $\a^* \in \cM_{state}(t,\mu)$. Introduce
  $\psi(x, \nu) := J(T_0, \nu, \a^*; x, \a^*)$. By Proposition
  \ref{prop-DPP} again we see that
  $\a^* \in \cM_{state}(T_0, \psi; t,\mu)$ and
  $\a^* \in \cM_{state}(T_0, \mu^{\a^*}_{T_0})$, and the latter
  implies further that
  $\psi(\cd, \mu^{\a^*}_{T_0}) \in \dbV_0(T_0,
  \mu^{\a^*}_{T_0})$. Then by the definition of $\tilde \dbV_0(t,\mu)$
  that
  $J(t, \mu, \a^*; \cd, \a^*) = J(T_0, \psi; t, \mu, \a^*; \cd,
  \a^*)\in \tilde \dbV_0(t,\mu)$. That is,
  $\dbV_0(t,\mu)\subset \tilde \dbV_0(t,\mu)$.
\end{proof}

\subsection{The set value $\dbV_{state}$ } While Theorem
\ref{thm-DPP0} is elegant, the raw set value $\dbV_0(t, \mu)$ is very
sensitive to small perturbations of the coefficients $F, G$ and the
variable $\mu$. Indeed, even the measurability of the subset
$\dbV_0(t,\mu) \subset \dbR^d$ and the measurability of the mapping
$\mu\mapsto \dbV_0(t, \mu)$ are not clear to us. Moreover, in general
it does not look possible to have the convergence of the raw set value
of the corresponding $N$-player games to $\dbV_0(t,\mu)$. Therefore,
in this subsection we shall modify $\dbV_0(t,\mu)$ and introduce the
set value $\dbV_{state}(t,\mu)$ of the MFG as follows.

\begin{defn}
  \label{defn-MFEe}
  (i) For any $(t,\mu)\in \dbT \times \cP_0(\dbS)$ and $\e>0$, let
  $\cM^\e_{state}(t,\mu)$ denote the set of $\a^*\in \cA_{state}$ such
  that \bea
  \label{MFEe} J(t, \mu, \a^*; x, \a^*) \le v(\mu^{\a^*};t,
  x)+\e,\q\mbox{for all} ~x\in \dbS.  \eea

  (ii) The set value of the MFG at $(t,\mu)$ is defined as: \bea
  \label{Vtmu} &&\dis \qq\qq\qq\qq \dbV_{state}(t,\mu) :=
  \bigcap_{\e>0} \dbV_{state}^{\e}(t,\mu),\q\mbox{where}\\ &&\dis
  \!\!\!\!\!\!\!\!\!\!  \dbV_{state}^{\e}(t,\mu) := \Big\{\f\in
  \dbL^0(\dbS; \dbR): \|\f - J(t,\mu, \a^*; \cd, \a^*)\|_\infty \le
  \e~\mbox{for some}~ \a^*\in \cM^\e_{state}(t,\mu)\Big\}.\nonumber
  \eea
\end{defn}

Recall \reff{J}, then \reff{MFEe} and \reff{Vtmu} imply that \bea
\label{MFEev} 0\le J(t, \mu, \a^*; x, \a^*) - v( \mu^{\a^*}; t, x)\le
\e,\q \|\f - v(\mu^{\a^*}; t, \cd)\|_\infty \le 2\e.  \eea So we may
alternatively define $\dbV^\e_{state}(t, \mu)$ by using $\|\f -
v(\mu^{\a^*}; t, \cd)\|_\infty \le \e$.

\begin{rem}
  \label{rem-Ve}
  (i) In the case that there is only one player, namely $q, F, G$ do
  not depend on $\mu$, $\dbP^{\mu^{\a^*};t, x, \a} = \dbP^{t, x, \a}$
  does not depend on $\mu$ and $\a^*$. Let \beaa V(t,x) := \inf_{\a\in
    \cA_{state}} \dbE^{\dbP^{t,x,\a}} \Big[G(X_T) + \sum_{s=t}^{T-1}
  F(s, X_s, \a(s, X_s))\Big] \eeaa denote the value function of the
  standard stochastic control problem.  One can easily see that, when
  there exists an optimal control $\a^*$,
  $\dbV_0(t, \mu)=\dbV_{state}(t,\mu) = \{V(t, \cd)\}$. However, when
  there is no optimal control, we still have
  $\dbV_{state}(t,\mu) = \{V(t, \cd)\}$ but
  $\dbV_0(t, \mu) = \emptyset$. So 
  the natural extension of the value function $V$ is the set value $\dbV_{state}$, not $\dbV_0$.

  (ii) We remark that
  $\bigcap_{\e>0} \cM^\e_{state}(t,\mu) = \cM_{state}(t, \mu)$,
  however, in general it is possible that $\dbV_{state}(t, \mu)$ is
  strictly larger than $\dbV_0(t,\mu)$. Indeed, $\dbV_{state}(t,\mu)$
  can be even larger than the closure of $\dbV_0(t,\mu)$, where the
  latter is still empty when there is no optimal control.
\end{rem}

Similarly, given $T_0$ and $\psi$, $\cM^\e_{state}(T_0,\psi; t,\mu)$
denotes the set of $\a^*\in \cA_{state}$ such that \bea
\label{MFEe2} J(T_0,\psi; t, \mu, \a^*; x, \a^*) \le \inf_{\a\in
\cA_{state}}J(T_0,\psi; t, \mu, \a^*; x, \a)+\e,\q\forall ~x\in \dbS.
\eea The DPP remains true for $\dbV_{state}$ after appropriate
modifications as follows.

\begin{thm}
\label{thm-DPP1} Under Assumption \ref{assum-reg} (i), for any $0\le
t< T_0\le T$ and $\mu\in \cP_0(\dbS)$, \bea
\label{DPP1} \left.\ba{c} \dis \dbV_{state}(t,\mu) := \bigcap_{\e>0}
\Big\{\f\in \dbL^0(\dbS; \dbR): \|\f - J(T_0,\psi; t,\mu, \a^*; \cd,
\a^*)\|_\infty \le \e\\ \dis \mbox{for some}~\psi \in
\dbL^0(\dbS\times \cP_0(\dbS); \dbR)~\mbox{and}~\a^* \in \cA_{state}
~\mbox{such that}\\ \dis ~\psi(\cd, \mu^{\a^*}_{T_0}) \in
\dbV_{state}^{\e}(T_0, \mu^{\a^*}_{T_0}), ~\a^*\in \cM^\e_{state}(T_0,
\psi; t,\mu)\Big\}.  \ea\right.  \eea
\end{thm}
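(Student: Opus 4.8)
The plan is to mirror the proof of Theorem~\ref{thm-DPP0}, but now carrying the parameter $\e$ through every estimate, so that the role played by the exact time consistency of Proposition~\ref{prop-DPP} is played by an approximate version of it. Write $\tilde\dbV_{state}(t,\mu)$ for the right side of \reff{DPP1} and abbreviate $m:=\mu^{\a^*}_{T_0}$. Since all the $\e$-level sets involved (namely $\dbV^{\e}_{state}$, $\cM^\e_{state}(\cd,\cd)$ and $\cM^\e_{state}(T_0,\psi;\cd,\cd)$) are nondecreasing in $\e$, it suffices to prove the two inclusions at the cost of a fixed multiplicative constant in $\e$: if for every $\e>0$ a point lies in the $C\e$-version of the target set, then it lies in the intersection. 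Throughout, the only structural input is Assumption~\ref{assum-reg}(i), i.e. $q\ge c_q>0$.

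For the inclusion $\tilde\dbV_{state}(t,\mu)\subset\dbV_{state}(t,\mu)$, fix $\f$ in the right side and $\e>0$, and pick the corresponding $\psi$ and $\a^*$. From $\psi(\cd,m)\in\dbV^{\e}_{state}(T_0,m)$ extract $\tilde\a^*\in\cM^\e_{state}(T_0,m)$ with $\|\psi(\cd,m)-J(T_0,m,\tilde\a^*;\cd,\tilde\a^*)\|_\infty\le\e$, and set $\hat\a^*:=\a^*\oplus_{T_0}\tilde\a^*$. Repeating the chain of inequalities in part (i) of Proposition~\ref{prop-DPP}, but now picking up one $\e$ when $\tilde\a^*$ replaces the optimal individual response (by $\tilde\a^*\in\cM^\e_{state}(T_0,m)$), one $\e$ when $J(T_0,m,\tilde\a^*;\cd,\tilde\a^*)$ is replaced by $\psi(\cd,m)$, and one $\e$ when $\a^*$ is used (by $\a^*\in\cM^\e_{state}(T_0,\psi;t,\mu)$), together with the tower identity \reff{tower} which gives $|J(T_0,\psi;t,\mu,\a^*;x,\a^*)-J(t,\mu,\hat\a^*;x,\hat\a^*)|\le\e$, one obtains $J(t,\mu,\hat\a^*;x,\a)\ge J(t,\mu,\hat\a^*;x,\hat\a^*)-4\e$ for all $\a$ and $x$. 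Taking the infimum over $\a$ shows $\hat\a^*\in\cM^{4\e}_{state}(t,\mu)$, while the two $\e$-estimates give $\|\f-J(t,\mu,\hat\a^*;\cd,\hat\a^*)\|_\infty\le2\e$; hence $\f\in\dbV^{4\e}_{state}(t,\mu)$, and letting $\e\downarrow0$ yields $\f\in\dbV_{state}(t,\mu)$.

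For the reverse inclusion, fix $\f\in\dbV_{state}(t,\mu)$ and $\e>0$, and choose $\hat\a^*\in\cM^\e_{state}(t,\mu)$ with $\|\f-J(t,\mu,\hat\a^*;\cd,\hat\a^*)\|_\infty\le\e$. Take $\a^*:=\hat\a^*$, $\psi(x,\nu):=J(T_0,\nu,\hat\a^*;x,\hat\a^*)$ and $\tilde\a^*:=\hat\a^*$. By \reff{tower}, $J(T_0,\psi;t,\mu,\a^*;\cd,\a^*)=J(t,\mu,\hat\a^*;\cd,\hat\a^*)$, so the value-matching requirement holds with error $\e$, and $\psi(\cd,m)=J(T_0,m,\tilde\a^*;\cd,\tilde\a^*)$ exactly. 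Testing the $\e$-optimality of $\hat\a^*$ against the individual control $\a\oplus_{T_0}\hat\a^*$ and using \reff{tower} gives $J(T_0,\psi;t,\mu,\a^*;x,\a^*)\le J(T_0,\psi;t,\mu,\a^*;x,\a)+\e$ for every $\a$, i.e. $\a^*\in\cM^\e_{state}(T_0,\psi;t,\mu)$. It remains to verify $\tilde\a^*=\hat\a^*\in\cM^{C\e}_{state}(T_0,m)$, which will then give $\psi(\cd,m)\in\dbV^{C\e}_{state}(T_0,m)$ and, after intersecting over $\e$, complete the proof.

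This last point is the main obstacle and the only place where $q\ge c_q$ enters. Testing the $\e$-optimality of $\hat\a^*$ at $(t,\mu)$ against the individual control $\a^*\oplus_{T_0}\a$ and cancelling the common running cost on $\{t,\cds,T_0-1\}$ yields, exactly as in the derivation of \reff{vDPP} but with an extra $\e$, the averaged estimate $\dbE^{\dbP^{\mu^{\a^*};t,x,\a^*}}\big[\,g(X_{T_0})\,\big]\le\e$, where $g(y):=J(T_0,m,\hat\a^*;y,\hat\a^*)-v(\mu^{\hat\a^*};T_0,y)\ge0$ and the interchange of infimum and expectation is justified by standard dynamic programming on the finite horizon. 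In the exact case ($\e=0$) this forced $g\equiv0$ pointwise because every state has positive probability; here I instead bound $\dbP^{\mu^{\a^*};t,x,\a^*}(X_{T_0}=y)\ge c_q^{\,T_0-t}$ using $q\ge c_q$, so that $0\le g(y)\le c_q^{-(T_0-t)}\e\le c_q^{-T}\e$ for every $y\in\dbS$. Thus $\hat\a^*\in\cM^{C\e}_{state}(T_0,m)$ with $C:=c_q^{-T}$, which is the desired pointwise $C\e$-optimality; the blow-up factor $c_q^{-T}$ is a fixed constant independent of $\e$, hence harmless upon sending $\e\downarrow0$.
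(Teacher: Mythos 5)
Your proposal is correct and follows essentially the same route as the paper's own proof (given in the Appendix): the same concatenation $\hat\a^*=\a^*\oplus_{T_0}\tilde\a^*$ with the four-$\e$ accounting for one inclusion, and for the other the same choice $\psi(x,\nu)=J(T_0,\nu,\hat\a^*;x,\hat\a^*)$ together with the upgrade from averaged to pointwise $\e$-optimality at $T_0$ via $\dbP^{\mu^{\a^*};t,x,\a^*}(X_{T_0}=y)\ge c_q^{T_0-t}$, yielding the constant $C=c_q^{t-T_0}$ (your cruder uniform $c_q^{-T}$ works equally well). The only cosmetic difference is that you carry multiplicative constants as $C\e$ and invoke monotonicity of the $\e$-level sets, whereas the paper normalizes up front with $\e_1=\e/4$ and $\e_1\le\e/C$; the two arguments are identical in substance.
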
 
This theorem can be proved by modifying the arguments in
Theorem \ref{thm-DPP0} and Proposition \ref{prop-DPP}. However, since
the proof is very similar to that of Theorem \ref{thm-DPP2} below,
except that the latter is in the more complicated path dependent
setting, we thus postpone it to Appendix.

 \section{The $N$-player game with homogeneous equilibria}
 \label{sect-N1}
  \setcounter{equation}{0} 
 In this section we study the
$N$-player game whose set value will converge to $\dbV_{state}$.
  
\subsection{The $N$-player game} 
\label{sect-stateN}
Set $\O^N:= \dbX^N$ with canonical
processes ${\vec X} = (X^1,\cds, X^N)$, where $X^i$ stands for the
state process of Player $i$.  The empirical measure of ${\vec X}$ is
denoted as: with the Dirac measure $\d_\cd$, \bea
\label{muN} \mu^N_t := \mu^N_{\vec X_t}\q\mbox{where}\q \mu^N_{\vec x}
:= {1\over N}\sum_{i=1}^N \d_{x_i} \in \cP(\dbS), ~\mbox{for}~\vec x
=(x_1,\cds, x_N)\in \dbS^N.  \eea The player $i$ will have control
$\a^i$. In the literature, a closed loop control $\a^i$ may depend on
the full information $\vec X$. However, since we are talking about
large $N$, in practice it may not be feasible for each player to
observe all other players' states individually. Moreover, in the MFG
setting the population state is characterized by its distribution, not
by each player's individual state. So in this section we consider only
symmetric controls, namely $\a^i$ depends on his/her own state $X^i$
and on the others through the empirical measure $\mu^N$.

In order to have the desired convergence, we introduce another
parameter $L\ge 0$.  Denote
\begin{equation}
  \label{cAstate} \cA^L_{state}:=\Big\{ \a: \dbT \times \dbS \times
  \cP(\dbS)\to \dbA: \big|\a(t,x, \mu) - \a(t, x, \nu)\big|\le L
  W_1(\mu,\nu), \forall t, x, \mu, \nu\Big\},  
\end{equation}
and $\cA^\infty_{state}:= \bigcup_{L\ge 0} \cA^L_{state}$. Given
$t\in \dbT$, ${\vec x}\in \dbS^N$, and
${\vec \a} = (\a^1,\cds, \a^N)\in (\cA^\infty_{state})^N$, let
$\dbP^{t,{\vec x}, {\vec \a}}$ denote the probability measure on
$\cF^{\vec X}_T$ determined recursively by: for $s=t,\cds, T-1$,
\begin{equation}
  \label{Ptxa} \dbP^{t,{\vec x}, {\vec \a}}(\vec X_t = \vec x) = 1,~\!
  \dbP^{t,{\vec x}, {\vec \a}}(\vec X_{s+1}=\vec x'' | \vec X_s = \vec
  x') = \prod_{i=1}^N q(s, x_i', \mu^N_s, \a^i(s, x_i', \mu^N_s);
  x''_i),  
\end{equation}
and the cost function of Player $i$ is: \bea
\label{Ji} J_i(t, \vec x, \vec \a) := \dbE^{\dbP^{t,{\vec x}, {\vec
\a}}}\Big[G(X^i_T, \mu^N_T) + \sum_{s=t}^{T-1} F(s, X^i_s, \mu^N_s,
\a^i(s, X^i_s, \mu^N_s))\Big].  \eea

\begin{rem}
  \label{rem-cAL}
  (i) It is obvious that $\cA^0_{state} = \cA_{state}$ for the
  $\cA_{state}$ in the previous subsection. For the MFG, there is no
  need to consider $\cA^\infty_{state}$. Indeed, given
  $(t, \mu)\in \dbT\times \cP_0(\dbS)$, for any
  $\a\in \cA^\infty_{state}$, let $\dbP^{t, \mu, \a}$ be defined as in
  \reff{Ptmua}: again denoting
  $\mu^\a_s:= \dbP^{t,\mu,\a} \circ X_s^{-1}$, \beaa \dbP^{t,\mu,\a}
  \circ X_t^{-1} = \mu,\q \dbP^{t,\mu,\a}(X_{s+1}=\tilde x| X_s = x) =
  q(s, x, \mu^\a_s, \a(s, x, \mu^\a_s); \tilde x).  \eeaa Introduce
  $\tilde \a(s, x) := \a(s, x, \mu^\a_s)$. Then
  $\tilde \a\in \cA_{state}$ and one can easily verify that
  $\mu^{\tilde \a} = \mu^\a$. In particular, the set value
  $\dbV_{state}(t, \mu)$ will remain the same by allowing
  $\a\in \cA^\infty_{state}$. For the $N$-player game, however, since
  $\mu^N$ is random, the dependence on $\mu^N$ makes the difference.

  (ii) In the literature one typically uses
  $\mu^{N, -i}_t := {1\over N-1} \sum_{j\neq i}\d_{X^j_t}$, rather
  than $\mu^N_t$, in \reff{Ptxa} and \reff{Ji}. The convergence
  results in this section will remain true if we use $\mu^{N, -i}$
  instead. However, we find it more convenient to use $\mu^N_t$.
\end{rem}

There is another crucial issue concerning the equilibria. Note that an
MFE requires by definition that each player takes the same control
$\a^*$. To achieve the desired convergence, for the $N$-player game it
is natural to consider only the homogeneous equilibria:
$\a_1=\cds=\a_N$, which we will do in the rest of this section.  We
note that, for a homogeneous control $\a$, the $\dbP^{t, \vec x,
\a}:=\dbP^{t, \vec x, (\a,\cds, \a)}$ in \reff{Ptxa} and $J_i(t, \vec
x, \a):=J_i(t, \vec x, (\a,\cds,\a))$ in \reff{Ji} are also symmetric
in $\vec x$, or say invariant in terms of its empirical measure: \bea
\label{JN} \left.\ba{c} \dis \dbP^{t, \vec x, \a} = \dbP^{t,
\mu^N_{\vec x}, \a},\q J_i(t, \vec x, \a) = J^N(t, x_i, \mu^N_{\vec
x}, \a).  \ea\right.  \eea

\begin{defn}
\label{defn-NE} 
For any $\e>0, L\ge 0$, we say $\a^*\in \cA^L_{state}$
is a homogeneous state dependent $(\e, L)$-equilibrium of the
$N$-player game at $(t, \vec x)$, denoted as $\a^*\in
\cM^{N,\e,L}_{state}(t, \vec x)$, if: \bea
\label{NE} \left.\ba{c} \dis J_i(t, \vec x, \a^*) \le v^{N, L}_i(t,
\vec x, \a^*) := \inf_{\tilde \a\in \cA^L_{state}} J_i(t, \vec x,
(\a^*, \tilde \a)_i) +\e,\q i=1,\cds, N,\\ \dis \mbox{where $(\a,
\tilde \a)_i$ denote the vector $\vec\a$ such that $\a^i = \tilde \a$
and $\a^j = \a$ for all $j\neq i$}.  \ea\right.  \eea
\end{defn}

In light of \reff{JN}, clearly $\cM^{N,\e,L}_{state}(t, \vec x)$ is
law invariant:
$\cM^{N,\e,L}_{state}(t, \vec x) =\cM^{N,\e,L}_{state}(t, \vec x')$
whenever $\mu^N_{\vec x} = \mu^N_{\vec x'}$. Thus, by abusing the
notation, we may denote
$\cM^{N,\e,L}_{state}(t, \vec x)=\cM^{N,\e,L}_{state}(t, \mu^N_{\vec
  x})$ and call $\a^*$ a homogeneous state dependent
$(\e, L)$-equilibrium at $(t, \mu^N_{\vec x})$.
 
 Note again that $q>0$, then similar to Subsection
\ref{sect-FiniteSymmetricMFG}, for convenience in this section we
restrict to only those $\vec x$ such that $\mu^N_{\vec x}$ has full
support, and we denote \bea
\label{SN0} \dbS^N_0:= \big\{\vec x\in \dbS^N: \mu^N_{\vec x} \in
\cP_0(\dbS)\big\},\q \cP_N(\dbS) := \big\{\mu^N_{\vec x}: \vec x\in
\dbS^N_0\big\}\subset \cP_0(\dbS).  \eea We now define the set value
of the homogeneous $N$-player game: recalling \reff{JN},
\begin{equation}
  \begin{aligned}
    \label{VtmuN} \dbV^N_{state}(t,\mu) &:=\bigcap_{\e>0}
    \dbV^{N,\e}_{state}(t,\mu):= \bigcap_{\e>0} \bigcup_{L\ge 0}
    \dbV^{N,\e,L}_{state}(t,\mu),~ \forall (t,\mu)\in \dbT\times
    \cP_N(\dbS),~\mbox{where}\\ \dis \dbV^{N,\e,L}_{state}(t,\mu) &:=
    \Big\{\f\in \dbL^0(\dbS; \dbR): \exists \a^*\in
    \cM^{N,\e,L}_{state}(t,\mu) ~\mbox{s.t.}~ \|\f - J^N(t, \cd, \mu,
    \a^*)\|_\infty \le \e\Big\}.
  \end{aligned}
\end{equation}

\begin{rem}
\label{rem-L}
Note that we require $\tilde \a\in  \cA^L_{state}$ in \reff{NE} for the same $L$, so $\bigcup_{L\ge 0}
    \dbV^{N,\e,L}_{state}(t,\mu)$ at above is in general different from $ \dbV^{N,\e,\infty}_{state}(t,\mu)$, which is defined in an obvious way by requiring  $\a^*, \tilde \a\in  \cA^\infty_{state}$ in \reff{NE}. See also Remark \ref{rem-regularity} (ii) below.
\end{rem}

\subsection{Convergence of the empirical measures}
\label{sect-FiniteSymConv}
\begin{thm}
\label{thm-FinSymMeasureConv} Let Assumption \ref{assum-reg} (ii)
hold. Then, for any $L\ge 0$, there exists a constant $C_L$, which
depends only on $T, d, L_q$, and $L$ such that, for any $t\in \dbT$,
$\vec x\in \dbS_0^N$, $\mu\in \cP_0(\dbS)$, $\a, \tilde \a\in
\cA^L_{state}$, and $s\ge t$, $i=1,\cds, N$, \bea
\label{FinSymMeasureEst1} &\dis \dbE^{\dbP^{t, \vec x, (\a, \tilde
\a)_i}}\big[\cW_1(\mu^N_s, \mu^\a_s)\big] \le C_L
\th_N,\q\mbox{where}\q\th_N:= W_1(\mu^N_{\vec x}, \mu) + {1\over
\sqrt{N}};\\
\label{FinSymMeasureEst2} &\dis \cW_1\Big(\dbP^{t, \vec x, (\a, \tilde
\a)_i}\circ (X^i_s)^{-1}, ~ \dbP^{\mu^\a; t, x_i, \tilde \a}\circ
X_s^{-1}\Big) \le C_L\th_N.  \eea
\end{thm}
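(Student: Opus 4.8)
The plan is to prove both estimates by induction on the time index $s$, running a discrete Gr\"onwall recursion driven by the one-step dynamics; since $\dbS$ is finite, $\cW_1$ and $W_1$ are equivalent, so I work with the $\ell^1$-distance $W_1$ and absorb the equivalence constant (depending on $d$) into $C_L$ at the end.

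For \reff{FinSymMeasureEst1}, fix $\a,\tilde\a\in\cA^L_{state}$, abbreviate $\dbP:=\dbP^{t,\vec x,(\a,\tilde\a)_i}$, and set $e_s:=\dbE^{\dbP}[W_1(\mu^N_s,\mu^\a_s)]$, so that the base case is $e_t=W_1(\mu^N_{\vec x},\mu)\le\th_N$ because $\mu^N_t=\mu^N_{\vec x}$ and $\mu^\a_t=\mu$ are deterministic. Introduce the one-step mean-field operator $\Phi_s(\nu)(\tilde x):=\sum_{x\in\dbS}\nu(x)\,q(s,x,\nu,\a(s,x,\nu);\tilde x)$, for which $\mu^\a_{s+1}=\Phi_s(\mu^\a_s)$. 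Splitting $\nu(x)q(\nu)-\nu'(x)q(\nu')=(\nu(x)-\nu'(x))q(\nu)+\nu'(x)(q(\nu)-q(\nu'))$ and using $\sum_{\tilde x}q(\cdots;\tilde x)=1$, Assumption \ref{assum-reg}(ii), and the $L$-Lipschitz continuity of $\a$ in $\mu$, one checks that $\Phi_s$ is $\k$-Lipschitz in $W_1$ with $\k:=1+dL_q(1+L)$ depending only on $d,L_q,L$.

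The crux is the one-step decomposition of $\mu^N_{s+1}$. By the product structure of \reff{Ptxa}, conditionally on $\cF_s$ the players' transitions are independent, so $\bar\mu^N_{s+1}:=\dbE^{\dbP}[\mu^N_{s+1}\mid\cF_s]$ equals $\frac1N\sum_{j=1}^N q(s,X^j_s,\mu^N_s,\a^j(s,X^j_s,\mu^N_s);\cdot)$; grouping players by their common state shows this coincides with $\Phi_s(\mu^N_s)$ except for the single player $i$, whose control $\tilde\a$ perturbs $W_1$ by at most $2/N$. The fluctuation $\mu^N_{s+1}-\bar\mu^N_{s+1}$ is, in each coordinate $\tilde x$, the $\frac1N$-average of $N$ conditionally independent, mean-zero, $[-1,1]$-valued terms, so a conditional second-moment bound gives $\dbE^{\dbP}[|\mu^N_{s+1}(\tilde x)-\bar\mu^N_{s+1}(\tilde x)|]\le\frac{1}{2\sqrt N}$ and hence $\dbE^{\dbP}[W_1(\mu^N_{s+1},\bar\mu^N_{s+1})]\le\frac{d}{2\sqrt N}$. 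Combining the three contributions with the Lipschitz bound for $\Phi_s$ yields the recursion $e_{s+1}\le\k\,e_s+C'/\sqrt N$ with $C'$ depending only on $d$; iterating it $T$ times gives $e_s\le\k^{T}W_1(\mu^N_{\vec x},\mu)+C'\frac{\k^{T}-1}{\k-1}\frac1{\sqrt N}\le C_L\th_N$, which is \reff{FinSymMeasureEst1}.

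For \reff{FinSymMeasureEst2} I run a parallel induction on the marginals $\nu^N_s:=\dbP\circ(X^i_s)^{-1}$ and $\nu^\a_s:=\dbP^{\mu^\a;t,x_i,\tilde\a}\circ X_s^{-1}$, which both start from $\d_{x_i}$, so $W_1(\nu^N_t,\nu^\a_t)=0$. Writing $\nu^N_{s+1}(\bar x)=\dbE^{\dbP}[q(s,X^i_s,\mu^N_s,\tilde\a(s,X^i_s,\mu^N_s);\bar x)]$ and replacing the random $\mu^N_s$ by the deterministic $\mu^\a_s$ costs at most $dL_q(1+L)\,e_s\le dL_q(1+L)C_L\th_N$ by \reff{FinSymMeasureEst1} and the Lipschitz continuity of $q$ and $\tilde\a$; the remaining term is exactly the image of $\nu^N_s$ under the mean-field single-player stochastic matrix, which is non-expansive in $W_1$. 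This gives $W_1(\nu^N_{s+1},\nu^\a_{s+1})\le W_1(\nu^N_s,\nu^\a_s)+dL_q(1+L)C_L\th_N$, and summing over $s\le T$ yields \reff{FinSymMeasureEst2} after enlarging $C_L$. The main obstacle is the fluctuation estimate above: every other step is a deterministic Lipschitz propagation, but the $1/\sqrt N$ term in $\th_N$ comes precisely from controlling $\mu^N_{s+1}-\dbE^{\dbP}[\mu^N_{s+1}\mid\cF_s]$, which relies on the conditional independence of the players' transitions and a careful conditional variance computation, together with the bookkeeping needed to certify that the final constant depends only on $T,d,L_q$ and $L$.
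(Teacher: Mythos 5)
Your proof is correct and takes essentially the same route as the paper's: a one-step recursion for $e_s=\dbE[W_1(\mu^N_s,\mu^\a_s)]$ that combines the conditional independence of the players' transitions (producing the $1/\sqrt N$ fluctuation via a conditional variance bound), Lipschitz propagation of the one-step mean-field map, and an $O(1/N)$ correction for the single deviating player, followed by a parallel induction for the marginals of $X^i$ that invokes \reff{FinSymMeasureEst1} together with the $W_1$-nonexpansiveness of the transition kernel with the measure frozen at $\mu^\a_s$. The only deviations---centering at the conditional mean $\bar\mu^N_{s+1}$ instead of the paper's Cauchy--Schwarz variance-plus-bias split, and making the Lipschitz constant $\k=1+dL_q(1+L)$ of $\Phi_s$ explicit---are cosmetic.
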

\begin{proof} We first recall Remark \ref{rem-cAL} and extend all the
notations in Subsection \ref{sect-FiniteSymmetricMFG} to those $\a\in
\cA^L_{state}$ in the obvious sense.  Fix $t, i$ and denote
$\dbP^{N}:= \dbP^{t,\vec x ,(\a,\tilde \a)_i}$.

{\it Step 1.} We first prove \reff{FinSymMeasureEst1} for
$s=t+1$. Note that $X^1_{t+1},\cds, X^N_{t+1}$ are independent under
$\dbP^{N}$.  By \reff{W1}, we have \bea
\label{muNaest1} &&\dis \dbE^{\dbP^{N}}\big[W_1(\mu^N_{t+1},
\mu^\a_{t+1})\big] = \sum_{\tilde x\in \dbS}
\dbE^{\dbP^{N}}\big[|\mu^N_{t+1}(\tilde x)- \mu^\a_{t+1}(\tilde
x)|\big] \nonumber\\ &&\dis \le \sum_{\tilde x\in \dbS}
\Big(\dbE^{\dbP^{N}}\big[|\mu^N_{t+1}(\tilde x)- \mu^\a_{t+1}(\tilde
x)|^2\big] \Big)^{1\over 2}\nonumber\\ &&\dis = \sum_{\tilde x\in
\dbS} \Big[Var^{\dbP^{N}}\big[\mu^N_{t+1}(\tilde x)\big] +
\big(\dbE^{\dbP^{N}}\big[\mu^N_{t+1}(\tilde x)- \mu^\a_{t+1}(\tilde
x)\big]\big)^2\Big]^{1\over 2}\\ &&\dis = \sum_{\tilde x\in \dbS}
\Big[{1\over N^2}\sum_{j=1}^N
Var^{\dbP^{N}}\big[\1_{\{X^j_{t+1}=\tilde x\}}\big] + \big({1\over
N}\sum_{j=1}^N \dbP^{N}(X^j_{t+1}=\tilde x) -\mu^\a_{t+1}(\tilde
x)\big)^2\Big]^{1\over 2}\nonumber\\ &&\dis \le {C\over \sqrt{N}} +
\sum_{\tilde x\in \dbS}\big|{1\over N}\sum_{j=1}^N
\dbP^{N}(X^j_{t+1}=\tilde x) -\mu^\a_{t+1}(\tilde x)\big|.\nonumber
\eea Note that, by the desired Lipschitz continuity of $q$ in $\mu$
and that $|\dbS|=d$ is finite, \beaa &&\dis \big|{1\over
N}\sum_{j=1}^N \dbP^{N}(X^j_{t+1}=\tilde x) -\mu^\a_{t+1}(\tilde
x)\big| \\ &&\dis = \Big|{1\over N} \sum_{x\in \dbS} \Big[\sum_{j\neq
i}q(t, x, \mu^N_{\vec x}, \a(t, x, \mu^N_{\vec x}); \tilde x)
\1_{\{x_j=x\}} + q(t, x, \mu^N_{\vec x}, \tilde\a(t, x, \mu^N_{\vec
x}); \tilde x) \1_{\{x_i=x\}}\Big]\\ &&\dis\qq - \sum_{x\in \dbS} q(t,
x, \mu, \a(t, x, \mu); \tilde x)\mu(x)\Big|\\ &&\dis \le \Big|{1\over
N} \sum_{x\in \dbS} \sum_{j =1}^N q(t, x, \mu^N_{\vec x}, \a(t, x,
\mu^N_{\vec x}); \tilde x) \1_{\{x_j=x\}} - \sum_{x\in \dbS} q(t, x,
\mu, \a(t, x, \mu); \tilde x)\mu(x)\Big|\\ &&\dis \qq +{1\over N}
\sum_{x\in \dbS} \big|q(t, x, \mu^N_{\vec x}, \a(t, x, \mu^N_{\vec
x}); \tilde x) - q(t, x, \mu^N_{\vec x}, \tilde\a(t, x, \mu^N_{\vec
x}); \tilde x)\big|\1_{\{x_i=x\}} \\ &&\dis \le \big|\sum_{x\in \dbS}
q(t, x, \mu^N_{\vec x}, \a(t, x, \mu^N_{\vec x}); \tilde x)
\mu^N_{\vec x}(x)- \sum_{x\in \dbS} q(t, x, \mu, \a(t, x, \mu); \tilde
x)\mu(x)\big| +{1\over N}\\ &&\dis \le \sum_{x\in
\dbS}\Big[|\mu^N_{\vec x}(x) - \mu(x)| + C_LW_1(\mu^N_{\vec x}, \mu)
\mu(x)\Big] +{1\over N} \le C_L\th_N.  \eeaa Then, $\dis
\dbE^{\dbP^{N}}\big[\cW_1(\mu^N_{t+1}, \mu^\a_{t+1})\big] \le {C\over
\sqrt{N}} + C_L\th_N \le C_L \th_N.$

{\it Step 2.} We next prove \reff{FinSymMeasureEst1} by induction. For
any $s=t,\cds, T-1$, by Step 1 we have \beaa
\dbE^{\dbP^{N}}\big[\cW_1(\mu^N_{s+1}, \mu^\a_{s+1})\big| \cF^{\vec
X}_s\big] \le C_L \Big[W_1(\mu^N_s, \mu^\a_s) + {1\over
\sqrt{N}}\Big],\q \dbP^{N}\mbox{-a.s.}  \eeaa Then \beaa
\dbE^{\dbP^{N}}\big[\cW_1(\mu^N_{s+1}, \mu^\a_{s+1})\big] =
\dbE^{\dbP^{N}}\Big[ \dbE^{\dbP^{N}}\big[\cW_1(\mu^N_{s+1},
\mu^\a_{s+1})\big| \vec X^N_s\big] \Big] \le C_L
\dbE^{\dbP^{N}}\big[\cW_1(\mu^N_s, \mu^\a_s)\big] + {C_L\over
\sqrt{N}}.  \eeaa Since $T$ is finite, by induction we obtain
\reff{FinSymMeasureEst1} immediately.

{\it Step 3.} We now prove \reff{FinSymMeasureEst2}. Denote \beaa
\k_s:= W_1\Big(\dbP^N\circ (X^i_s)^{-1}, ~ \dbP^i\circ
X_s^{-1}\Big)\q\mbox{where}\q \dbP^i:= \dbP^{\mu^\a; t, x_i, \tilde
\a}.  \eeaa Then $\k_t =0$, and for $s=t,\cds, T-1$, \beaa &&\dis
\k_{s+1} = \sum_{\tilde x\in \dbS} \big|\dbP^N(X^i_{s+1} = \tilde x)
-\dbP^i(X_{s+1}=\tilde x)\big|\\ &&\dis =\sum_{\tilde x\in \dbS} \Big|
\dbE^{\dbP^N}\big[q(s, X^i_{s}, \mu^N_s, \tilde \a(s, X^i_s, \mu^N_s);
\tilde x)\big] - \dbE^{\dbP^i}\big[q(s, X_{s}, \mu^\a_s, \tilde \a(s,
X_s, \mu^\a_s); \tilde x)\big]\Big|\\ &&\dis \le \sum_{\tilde x\in
\dbS} \Big| \dbE^{\dbP^N}\big[q(s, X^i_{s}, \mu^N_s, \tilde \a(s,
X^i_s, \mu^N_s); \tilde x)\big] - \dbE^{\dbP^N}\big[q(s, X^i_{s},
\mu^\a_s, \tilde \a(s, X^i_s, \mu^\a_s); \tilde x)\big]\Big|\\
&&\q+\sum_{\tilde x\in \dbS} \Big| \dbE^{\dbP^N}\big[q(s, X^i_{s},
\mu^\a_s, \tilde \a(s, X^i_s, \mu^\a_s); \tilde x)\big] -
\dbE^{\dbP^i}\big[q(s, X_{s}, \mu^\a_s, \tilde \a(s, X_s, \mu^\a_s);
\tilde x)\big]\Big|\\ &&\le C_L \dbE^{\dbP^N}\big[W_1(\mu^N_s,
\mu^\a_s)\big]+\sum_{x, \tilde x\in \dbS} q(s, x, \mu^\a_s, \tilde
\a(s, x, \mu^\a_s); \tilde x)\big|\dbP^N(X^i_s = x) - \dbP^i(X_s =
x)\big|\\ &&\le C_L \th_N+\k_s, \eeaa where the last inequality thanks
to \reff{FinSymMeasureEst1}. Now by induction one can easily prove
\reff{FinSymMeasureEst2}.
\end{proof}

\subsection{Convergence of the set values}

We first study the convergence of the cost functions.  Recall the
$\th_N$ in \reff{FinSymMeasureEst1} and the functions $v$ in \reff{J}
and $v^{N, L}_i$ in \reff{NE}.
\begin{thm}
  \label{thm-FinSymCostConv} Let Assumption \ref{assum-reg} (ii) and
  (iii) hold. For any $L\ge 0$, there exists a modulus of continuity
  function $\rho_L$, which depends only on $T, d, L_q$, $C_0$, $\rho$,
  and $L$ such that, for any $t\in \dbT$,
  $\mu^N_{\vec x}\in \cP_N(\dbS)$, $\mu\in \cP_0(\dbS)$, and any
  $\a, \tilde \a\in \cA^L_{state}$, $i=1,\cds, N$,
  \begin{equation}
    \label{FinSymCostEst} \big|J_i(t, \vec x, (\a,\tilde \a)_i) - J(t,
    \mu, \a; x_i, \tilde \a)\big| + \big|v^{N, L}_i(t, \vec x, \a) -
    v(\mu^\a; t, x_i)\big|\le \rho_L(\th_N).  
  \end{equation}
\end{thm}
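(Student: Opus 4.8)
The plan is to prove the two estimates in \reff{FinSymCostEst} by directly comparing the $N$-player cost/value to the mean-field cost/value, using the measure-convergence estimates from Theorem \ref{thm-FinSymMeasureConv} as the main quantitative input. The key observation is that, by the tower/expectation structure, the difference $\big|J_i(t, \vec x, (\a,\tilde \a)_i) - J(t, \mu, \a; x_i, \tilde \a)\big|$ is controlled by (a) the closeness of the law of $X^i$ under $\dbP^{t,\vec x,(\a,\tilde\a)_i}$ to the law of $X$ under $\dbP^{\mu^\a; t, x_i, \tilde\a}$, which is exactly \reff{FinSymMeasureEst2}, and (b) the closeness of the running/terminal costs, which depend on the states through the empirical measure $\mu^N_s$ versus the deterministic flow $\mu^\a_s$, controlled by \reff{FinSymMeasureEst1}.

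Concretely, for the cost estimate I would write
\beaa
J_i(t, \vec x, (\a,\tilde \a)_i) = \dbE^{\dbP^{t,\vec x,(\a,\tilde\a)_i}}\Big[G(X^i_T, \mu^N_T) + \sum_{s=t}^{T-1} F(s, X^i_s, \mu^N_s, \tilde\a(s, X^i_s, \mu^N_s))\Big],
\eeaa
and compare it term by term with the corresponding expression for $J(t,\mu,\a;x_i,\tilde\a) = \dbE^{\dbP^{\mu^\a; t, x_i, \tilde\a}}[\,\cdots\,]$ in which $\mu^N_s$ is replaced by $\mu^\a_s$ and $X^i$ by the canonical $X$. Splitting each summand into two pieces, I would first replace $\mu^N_s$ by $\mu^\a_s$ inside $F$ and $G$ (and inside $\tilde\a$), paying a cost bounded by the modulus of continuity $\rho$ of $F,G$ together with the Lipschitz continuity of $\tilde\a$ in $\mu$, both evaluated at the random quantity $W_1(\mu^N_s,\mu^\a_s)$; taking expectations and invoking \reff{FinSymMeasureEst1} bounds this by $\rho_L(C_L\th_N)$ for a suitable modulus. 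Once the measure argument is frozen at $\mu^\a_s$, the integrand becomes a fixed bounded function of the state, so the remaining discrepancy is precisely the difference between $\dbE^{\dbP^N}[\,\cdot\,(X^i_s)]$ and $\dbE^{\dbP^i}[\,\cdot\,(X_s)]$, which is controlled by the Wasserstein bound \reff{FinSymMeasureEst2}. Since $\dbS$ is finite and the integrands are bounded by $C_0$, these estimates combine into a single modulus $\rho_L$ depending only on $T,d,L_q,C_0,\rho,L$.

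For the value estimate $\big|v^{N, L}_i(t, \vec x, \a) - v(\mu^\a; t, x_i)\big|$, recall that both are infima over $\tilde\a\in\cA^L_{state}$ of the respective cost functionals, namely $v^{N,L}_i(t,\vec x,\a) = \inf_{\tilde\a} J_i(t,\vec x,(\a,\tilde\a)_i)$ and $v(\mu^\a;t,x_i) = \inf_{\tilde\a} J(t,\mu,\a;x_i,\tilde\a)$. The cost estimate just proven holds uniformly over $\tilde\a\in\cA^L_{state}$ with the \emph{same} modulus $\rho_L(\th_N)$, because \reff{FinSymMeasureEst1}--\reff{FinSymMeasureEst2} are uniform in $\tilde\a\in\cA^L_{state}$. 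A standard argument then transfers the uniform closeness of the functionals to closeness of their infima: for any $\eta>0$ pick a near-optimizer of one infimum and use it to bound the other, giving $|v^{N,L}_i - v| \le \rho_L(\th_N)$ after letting $\eta\to 0$.

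The main obstacle, and the step requiring the most care, is the uniformity in $\tilde\a$ needed for the value estimate together with handling the Lipschitz dependence of $\tilde\a$ on $\mu$ in the running cost. The subtlety is that when I replace $\mu^N_s$ by $\mu^\a_s$ inside $F(s,X^i_s,\mu^N_s,\tilde\a(s,X^i_s,\mu^N_s))$, the \emph{control argument} $\tilde\a(s,\cdot,\mu^N_s)$ also shifts, and controlling $|\tilde\a(s,x,\mu^N_s)-\tilde\a(s,x,\mu^\a_s)|\le L\,W_1(\mu^N_s,\mu^\a_s)$ is what forces the modulus $\rho_L$ to depend on $L$; one must verify that this $L$-dependence is benign (affine in the Wasserstein gap) so that, after taking expectations and applying \reff{FinSymMeasureEst1}, everything collapses into one modulus uniformly over the $L$-Lipschitz class, not over all of $\cA^\infty_{state}$.
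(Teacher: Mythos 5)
Your proposal is correct and follows essentially the same route as the paper: a term-by-term comparison of the cost functionals, splitting each summand into a measure-replacement piece (bounded via the modulus $\rho$ and the $L$-Lipschitz continuity of $\tilde\a$ in $\mu$, then \reff{FinSymMeasureEst1}) and a law-comparison piece (bounded via \reff{FinSymMeasureEst2} and the bound $C_0$), with the value estimate obtained by passing the $\tilde\a$-uniform cost estimate through the two infima. The paper likewise glosses the final assembly into a single modulus $\rho_L$ with the phrase ``appropriately defined,'' so your level of detail matches its proof.
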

\begin{proof} Clearly the uniform estimates for $J$ implies that for
$v$, so we shall only prove the former one. Recall \reff{Ji},
\reff{J}, and the notations $\dbP^N$, $\dbP^i$ in the proof of Theorem
\ref{thm-FinSymMeasureConv}. Then \beaa &&\dis \Big|J_i(t, \vec x,
(\a,\tilde \a)_i) - J(t, \mu, \a; x_i, \tilde \a)\Big| \le I_T +
\sum_{s=t}^{T-1} I_s,\q\mbox{where}\\ &&\dis I_T:=
\Big|\dbE^{\dbP^N}\big[G(X^i_T, \mu^N_T)\big] -
\dbE^{\dbP^i}\big[G(X_T, \mu^\a_T)\big]\Big|;\\ &&\dis I_s:=
\Big|\dbE^{\dbP^N}\big[ F(s, X^i_s, \mu^N_s, \tilde\a(s, X^i_s,
\mu^N_s))\big] - \dbE^{\dbP^i}\big[ F(s, X_s, \mu^\a_s, \tilde\a(s,
X_s, \mu^\a_s))\big]\Big|,\q s<T.  \eeaa Note that, for $s<T$, by
\reff{FinSymMeasureEst2}, \beaa I_s &\le& \Big|\dbE^{\dbP^N}\big[ F(s,
X^i_s, \mu^N_s, \tilde\a(s, X^i_s, \mu^N_s))\big] - \dbE^{\dbP^N}\big[
F(s, X^i_s, \mu^\a_s, \tilde\a(s, X^i_s, \mu^\a_s))\big]\Big|\\ &&+
\Big| \dbE^{\dbP^N}\big[ F(s, X^i_s, \mu^\a_s, \tilde\a(s, X^i_s,
\mu^\a_s))\big]- \dbE^{\dbP^i}\big[ F(s, X_s, \mu^\a_s, \tilde\a(s,
X_s, \mu^\a_s))\big]\Big|\\ &\le&
\dbE^{\dbP^N}\big[\rho\big(C_LW_1(\mu^N_s, \mu^\a_s)\big)\big] +
\sum_{x\in \dbS} \big|F(s, x, \mu^\a_s, \tilde\a(s, x,
\mu^\a_s))\big|\big|\dbP^N(X^i_s=x) - \dbP^i(X_s=x)\big|\\
&\le&\dbE^{\dbP^N}\big[\rho\big(C_LW_1(\mu^N_s, \mu^\a_s)\big)\big]+
C_L\th_N.  \eeaa Similarly we have the estimate for $I_T$, and thus
\beaa \Big|J_i(t, \vec x, (\a,\tilde \a)_i) - J(t, \mu, \a; x_i,
\tilde \a)\Big| \le
\sum_{s=t}^T\dbE^{\dbP^N}\big[\rho\big(C_LW_1(\mu^N_s,
\mu^\a_s)\big)\big]+ C_L\th_N.  \eeaa This, together with
\reff{FinSymMeasureEst1}, implies \reff{FinSymCostEst} for some
appropriately defined modulus of continuity function $\rho_L$.
\end{proof}

Our main result of this section is the following convergence of the
set values. Recall, for a sequence of sets $\{E_N\}_{N\ge 1}$, $\dis
\limsup_{N\to\infty} E_N := \bigcap_{n \ge 1} \bigcup_{N\ge n} E_N$,
$\dis\liminf_{N\to\infty} E_N := \bigcup_{n \ge 1} \bigcap_{N\ge n}
E_N$.

\begin{thm}
\label{thm-FinSymConv} 
Let Assumption \ref{assum-reg} (ii), (iii) hold
and $\mu^N_{\vec x}\in \cP_N(\dbS)\to \mu\in \cP_0(\dbS)$. Then
 \bea
 \label{FinSymConv} \bigcap_{\e>0}\bigcup_{L\ge 0}\limsup_{N\to
\infty} \dbV^{N,\e,L}_{state}(t,\mu^N_{\vec x}) \subset
\dbV_{state}(t,\mu) \subset \bigcap_{\e>0}
\liminf_{N\to\infty}\dbV^{N, \e,0}_{state}(t,\mu^N_{\vec x}) 
\eea 
In particular, since $\dis \liminf_{N\to\infty}\dbV^{N,
\e,0}_{state}(t,\mu^N_{\vec x}) \subset \bigcup_{L\ge 0}\limsup_{N\to
\infty} \dbV^{N, \e,L}_{state}(t,\mu^N_{\vec x})$, actually equalities
hold.
\end{thm}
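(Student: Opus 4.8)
The plan is to deduce both inclusions in \reff{FinSymConv} from the uniform cost-convergence estimate of Theorem \ref{thm-FinSymCostConv} (which itself rests on the measure estimate of Theorem \ref{thm-FinSymMeasureConv}). The content of that estimate is that, as $\th_N = W_1(\mu^N_{\vec x},\mu)+ 1/\sqrt N \to 0$, the $N$-player cost $J^N(t,\cd,\mu^N_{\vec x},\a)$ and best-response value $v^{N,L}_i(t,\vec x,\a)$ converge to their mean field counterparts $J(t,\mu,\a;\cd,\a)$ and $v(\mu^\a;t,\cd)$ with a modulus $\rho_L$ that is \emph{uniform over} $\a\in \cA^L_{state}$. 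The final equality is then free: since $\liminf_N \dbV^{N,\e,0}_{state} \subset \bigcup_{L\ge 0}\limsup_N \dbV^{N,\e,L}_{state}$, the two displayed inclusions sandwich $\dbV_{state}(t,\mu)$ and force all the sets to coincide.

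For the second (\emph{lower}) inclusion $\dbV_{state}(t,\mu)\subset \bigcap_{\e>0}\liminf_N \dbV^{N,\e,0}_{state}(t,\mu^N_{\vec x})$, I would push a mean field equilibrium into the $N$-player game. Fix $\f\in \dbV_{state}(t,\mu)$ and $\e>0$, and use the definition with $\e'=\e/2$: there is $\a^*\in \cM^{\e'}_{state}(t,\mu)\subset \cA_{state}=\cA^0_{state}$ with $\|\f - J(t,\mu,\a^*;\cd,\a^*)\|_\infty\le \e'$. Running the homogeneous $N$-player game under this same $\a^*$, \reff{JN} gives $J_i(t,\vec x,\a^*)=J^N(t,x_i,\mu^N_{\vec x},\a^*)$, and Theorem \ref{thm-FinSymCostConv} (with $L=0$) yields $J_i(t,\vec x,\a^*)\le J(t,\mu,\a^*;x_i,\a^*)+\rho_0(\th_N)\le v(\mu^{\a^*};t,x_i)+\e'+\rho_0(\th_N)\le v^{N,0}_i(t,\vec x,\a^*)+\e'+2\rho_0(\th_N)$, the middle step being the mean field $\e'$-equilibrium property. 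For $N$ large enough that $2\rho_0(\th_N)\le \e/2$ we get $\a^*\in \cM^{N,\e,0}_{state}(t,\mu^N_{\vec x})$, and simultaneously $\|\f - J^N(t,\cd,\mu^N_{\vec x},\a^*)\|_\infty\le \e'+\rho_0(\th_N)\le \e$. Hence $\f\in \dbV^{N,\e,0}_{state}(t,\mu^N_{\vec x})$ for all large $N$, i.e. $\f\in \liminf_N \dbV^{N,\e,0}_{state}$; as $\e$ is arbitrary this gives the inclusion, with no compactness needed.

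The first (\emph{upper}) inclusion is where the real work lies, and the plan is a compactness/limit-extraction argument in the \emph{opposite} direction. Take $\f$ in the left-hand set and fix $\e>0$; then for some $L=L(\e)$ we have $\f\in \limsup_N \dbV^{N,\e,L}_{state}(t,\mu^N_{\vec x})$, so along a subsequence $N_k\to\infty$ there exist witnesses $\a^{*,N_k}\in \cM^{N_k,\e,L}_{state}(t,\mu^{N_k}_{\vec x})$ with $\|\f - J^{N_k}(t,\cd,\mu^{N_k}_{\vec x},\a^{*,N_k})\|_\infty\le \e$. Since every $\a^{*,N_k}$ is $L$-Lipschitz in $\mu$ on the \emph{compact} simplex $\cP(\dbS)$ and $\dbT\times\dbS$ is finite, the family is equicontinuous, so by Arzel\`a--Ascoli a further subsequence converges uniformly to some $\a^*\in \cA^L_{state}$. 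I would then pass to the limit: because $q,F,G$ are continuous in $a$ and the spaces are finite, $\a\mapsto \mu^\a$, and hence $\a\mapsto J(t,\mu,\a;\cd,\a)$ and $\nu_\cd\mapsto v(\nu_\cd;t,\cd)$, are continuous for uniform convergence, while Theorem \ref{thm-FinSymCostConv} (with its control-uniform modulus $\rho_L$) bridges the $N_k$-player quantities to the mean field ones evaluated at $\a^{*,N_k}$. Combining, $J^{N_k}(t,\cd,\mu^{N_k}_{\vec x},\a^{*,N_k})\to J(t,\mu,\a^*;\cd,\a^*)$ and $v^{N_k,L}_i(t,\vec x,\a^{*,N_k})\to v(\mu^{\a^*};t,x_i)$. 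The $N_k$-equilibrium inequality $J^{N_k}_i\le v^{N_k,L}_i+\e$ then passes to $J(t,\mu,\a^*;x,\a^*)\le v(\mu^{\a^*};t,x)+\e$ for every $x$, i.e. $\a^*\in \cM^{\e}_{state}(t,\mu)$, and the cost bound passes to $\|\f - J(t,\mu,\a^*;\cd,\a^*)\|_\infty\le \e$. Thus $\f\in \dbV^{\e}_{state}(t,\mu)$; as this holds for every $\e>0$, $\f\in \bigcap_{\e>0}\dbV^{\e}_{state}(t,\mu)=\dbV_{state}(t,\mu)$.

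I expect the main obstacle to be this extraction of a single limiting mean field $\e$-equilibrium from the sequence of approximate $N$-player equilibria. Two points need care. First, the Arzel\`a--Ascoli step relies on the Lipschitz constant $L=L(\e)$ being frozen along the subsequence (exactly why the definition of $\dbV^{N,\e,L}_{state}$ fixes $L$ before sending $N\to\infty$) and on the compactness of $\cP(\dbS)$; in the finite-state model these hold, though for an unbounded control set $\dbA$ one would first have to confine the near-optimal controls to a bounded subset. Second, one must use that the modulus $\rho_L$ of Theorem \ref{thm-FinSymCostConv} is independent of the particular control, so that it applies to the moving sequence $\a^{*,N_k}$; this is exactly how that theorem is stated. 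By contrast, the lower inclusion is routine once the uniform cost estimate is available.
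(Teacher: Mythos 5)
Your proof of the lower inclusion $\dbV_{state}(t,\mu)\subset \bigcap_{\e>0}\liminf_N \dbV^{N,\e,0}_{state}(t,\mu^N_{\vec x})$ is correct and essentially identical to the paper's, as is the concluding sandwich remark. The problem is your upper inclusion, where the Arzel\`a--Ascoli extraction does not go through under the theorem's actual hypotheses. Assumption \ref{assum-reg} only requires $\dbA\subset \dbR^{d_0}$ to be a measurable set: it may be unbounded and need not be closed. Equi-Lipschitz continuity in $\mu$ gives equicontinuity but not the pointwise boundedness that Arzel\`a--Ascoli also needs, and even when $\dbA$ is bounded but not closed, the uniform limit $\a^*$ may take values outside $\dbA$, hence fail to be admissible. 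Your suggested repair --- confining near-optimal controls to a bounded subset --- has no basis here: $F$ and $G$ are merely bounded and uniformly continuous in $a$, with no coercivity, so approximate equilibria can take arbitrarily large values in $\dbA$. As written, the extraction of a limiting control $\a^*$ fails in general.

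More importantly, the extraction is unnecessary, and avoiding it is precisely the design point of defining $\dbV_{state}$ through $\e$-equilibria rather than exact MFE (the introduction advertises exactly this: the proof of the convergence is ``technically a lot easier than the compactness arguments''). Since membership in $\cM^\e_{state}(t,\mu)$ only requires \emph{approximate} optimality, the moving witness itself serves as the mean field witness: taking $\a^k\in \cM^{N_k,\e/2,L_\e}_{state}(t,\mu^{N_k}_{\vec x})$ with $\|\f-J^{N_k}(t,\cd,\mu^{N_k}_{\vec x},\a^k)\|_\infty\le \e/2$, Theorem \ref{thm-FinSymCostConv} (whose modulus $\rho_{L_\e}$ is uniform over controls, as you correctly note) gives $J(t,\mu,\a^k;x,\a^k)\le v(\mu^{\a^k};t,x)+\e/2+2\rho_{L_\e}(\th_{N_k})\le v(\mu^{\a^k};t,x)+\e$ for all $k$ large, i.e.\ $\a^k\in \cM^\e_{state}(t,\mu)$ for each such fixed $k$ (after the substitution $\tilde\a^k(s,x):=\a^k(s,x,\mu^{\a^k}_s)$ of Remark \ref{rem-cAL}(i), which removes the $\mu$-dependence --- a step your limit argument would also need), and likewise $\|\f-J(t,\mu,\a^k;\cd,\a^k)\|_\infty\le\e$; hence $\f\in \dbV^\e_{state}(t,\mu)$ with no limit control ever constructed. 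Your own estimates from the lower inclusion, run in the reverse direction at a fixed large $k$, are all that is needed; this removes both the compactness obstruction and any assumption on $\dbA$ beyond what the theorem states.
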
 Note that $\vec x\in \dbS_0^N$ obviously depends on $N$, so
more rigorously we should write $\vec x^N$ in the above
statements. For notational simplicity we omit this $N$ here. We also
remark that at above we are not able to switch the order of
$\limsup_{N\to\infty}$ and $\bigcap_{\e>0}\bigcup_{L\ge 0}$ in the
left side, or the order of $\liminf_{N\to\infty}$ and $\bigcap_{\e>0}$
in the right side.

\begin{proof}
  (i) We first prove the right inclusion in \reff{FinSymConv}. Fix
  $\f\in \dbV_{state}(t,\mu)$, $\e>0$, and set $\e_1 := {\e\over
    2}$. Note that $\cA_{state} = \cA^0_{state}$.  By \reff{Vtmu},
  there exists $\a^* \in \cM^{\e_1}_{state}(t,\mu)$ such that
  $ \|\f - J(t,\mu, \a^*; \cd, \a^*)\|_\infty \le \e_1.  $ Recall
  \reff{MFEe}, we have \beaa J(t, \mu, \a^*; x, \a^*) \le
  v(\mu^{\a^*};t, x)+\e_1,\q\mbox{for all} ~x\in \dbS.  \eeaa For any
  $\a\in \cA^0_{state}=\cA_{state}$, by Theorem
  \ref{thm-FinSymCostConv} we have \beaa &\dis J_i(t, \vec x, \a^*)
  \le J(t, \mu, \a^*; x_i, \a^*)+ \rho_0( \th_N) \\ &\dis \le
  v(\mu^{\a^*};t, x)+\e_1 + \rho_0(\th_N) \le v^{N, L}_i(t, \vec x,
  \a^*)+\e_1+ 2\rho_0(\th_N).  \eeaa Choose $N$ large enough such that
  $\rho_0(\th_N)\le {\e\over 4}$, then
  $J_i(t, \vec x, \a^*) \le v^{N, L}_i(t, \vec x, \a^*)+\e$.  This
  implies that $\a^* \in \cM^N_{\e,0}(t, \mu^N_{\vec x})$. Moreover,
  \beaa \|\f - J^N(t, \cd, \mu^N_{\vec x}, \a^*)\|_\infty &\le& \e_1 +
  \sup_{i} \Big|J_i(t, \vec x, \a^*) - J(t, \mu, \a^*; x_i,
  \a^*)\Big|\\ &\le&\e_1 + \rho_0(\th_N) \le \e_1 + {\e\over 4} \le
  \e.  \eeaa Then $\f\in \dbV^{N,\e,0}_{state}(t, \mu^N_{\vec x})$ for
  all $N$ large enough. That is,
  $\f\in \liminf_{N\to\infty} \dbV^{N,\e,0}_{state}(t, \mu^N_{\vec
    x})$.  Since $\f\in \dbV_{state}(t,\mu)$ and $\e>0$ are arbitrary,
  we obtain the right inclusion in \reff{FinSymConv}.

  (ii) We next show the left inclusion in \reff{FinSymConv}. Fix
  $\dis \f\in \bigcap_{\e>0}\bigcup_{L\ge 0}\limsup_{N\to \infty}
  \dbV^{N,\e,L}_{state}(t,\mu^N_{\vec x})$ and $\e>0$. Then, for
  $\e_1:={\e\over 2}>0$, there exist $L_\e>0$ and an infinite sequence
  $\{N_k\}_{k\ge 1}$ such that
  $\f\in \dbV^{N_k, \e_1,L_\e}_{state}(t,\mu^{N_k}_{\vec x})$ for all
  $k\ge 1$. Recall \reff{VtmuN}, for each $k\ge 1$ there exists
  $\a^k\in \cM^{N_k, \e_1,L_\e}_{state}(t,\mu^{N_k}_{\vec x})$ such
  that
  $\|\f - J^N(t, \cd, \mu^{N_k}_{\vec x}, \a^k)\|_\infty \le \e_1$. By
  Definition \ref{defn-NE}, we have
  $ J_i(t, \vec x,\a^k) \le v^{N_k, L_\e}_i(t, \vec x, \a^k)+\e_1.  $
  Similar to (i), by Theorem \ref{thm-FinSymCostConv} we have \beaa
  J(t, \mu, \a^k; x_i, \a^k) \le v(\mu^{\a^k}; t, x_i)+\e_1 +
  2\rho_{L_\e}(\th_{N_k}) \le v(\mu^{\a^k}; t, x_i)+\e, \eeaa for $k$
  large enough.  That is, $\a^k\in \cM^\e_{state}(t, \mu)$. Similar to
  (i) again, for $k$ large enough we have
  $\|\f - J(t,\mu, \a^k; \cd, \a^k)\|_\infty \le \e$. Then
  $\f \in \dbV_{state}^\e(t, \mu)$. Since $\e>0$ is arbitrary, we
  obtain $\f\in \dbV_{state}(t,\mu)$, and hence derive the left
  inclusion in \reff{FinSymConv}.
\end{proof}

\begin{rem}
  \label{rem-conv}
  (i) From Theorem \ref{thm-FinSymConv} (i) we see that, for any
  $\a^* \in \cM^{\e\over 2}_{state}(t,\mu)$, we have
  $\a^* \in \cM^{N,\e,0}_{state}(t, \mu^N_{\vec x})$ when $N$ is large
  enough. Moreover, by \reff{FinSymMeasureEst1} we have the desired
  estimate for the approximate equilibrium measure
  $\dbE^{\dbP^{t, \vec x, \a^*}}\big[W_1(\mu^N_s, \mu^{\a^*}_s)\big]
  \le C_L \th_N$. This verifies the standard result in the literature
  that an approximate MFE is an approximate equilibrium of the
  $N$-player game.

  (ii) From Theorem \ref{thm-FinSymConv} (ii) we see that, for any
  $\a^k\in \cM^{N_k,{\e\over 2},L_\e}_{state}(t,\mu^{N_k}_{\vec x})$,
  we have $\a^k\in \cM^\e_{state}(t, \mu)$ when $k$ is large enough,
  and we again have the estimate for the approximate equilibrium
  measure
  $\dbE^{\dbP^{t, \vec x, \a^k}}\big[W_1(\mu^{N_k}_s,
  \mu^{\a^k}_s)\big] \le C_L \th_{N_k}$. This is in the spirit that
  any limit point of the $N$-player equilibrium measures is an MFE
  measure.
\end{rem}

\begin{rem}
  \label{rem-regularity}
  (i) We should point out that the key to obtain the convergence here
  is to consider homogeneous equilibria for the $N$-player games. If
  we use heterogeneous equilibria for the $N$-player games, it turns
  out that we will have the desired convergence when we consider
  relaxed controls for the MFG, as we will do in the next two
  sections.

  (ii) Another feature of our convergence result is the uniform Lipschitz
  continuity requirement on the admissible controls.  Indeed, the left inclusion in \reff{FinSymConv} would fail in general if we replace $\dis\bigcap_{\e>0}\bigcup_{L\ge 0}\limsup_{N\to
\infty} \dbV^{N,\e,L}_{state}(t,\mu^N_{\vec x})$ with $\dis\bigcap_{\e>0}\limsup_{N\to
\infty} \dbV^{N,\e,\infty}_{state}(t,\mu^N_{\vec x})$ or with $\dis\bigcap_{\e>0}\limsup_{N\to
\infty} \dbV^{N,\e}_{state}(t,\mu^N_{\vec x})$, where $\dbV^{N,\e,\infty}_{state}$ is defined in Remark \reff{rem-L} and $\dbV^{N,\e}_{state}$ is defined similarly, by requiring $\a^*, \tilde \a: \dbT \times \dbS \times
  \cP(\dbS)\to \dbA$ in \reff{NE} to be measurable only. See Example \ref{eg-Lipschitz} below. We refer to \cite{Lacker2, DanielLuc, Djete} for some related convergence analysis without  such regularity requirement.
  
(iii) We note that the above regularity requirement on the admissible controls is also crucial for numerical computations of set values, as well as for practical implementation of the equilibria, although these issues are not studied in the present paper.  
\end{rem}

\section{Mean field games on finite space with relaxed controls}
\label{sect-relax} \setcounter{equation}{0} In this section we study
MFG with relaxed controls, or say mixed strategies. Besides its
independent interest, our main motivation is to characterize the limit
of $N$-player games with heterogeneous equilibria. We shall still
consider the finite space in Section \ref{sect-state}, however, for
the purpose of generality in this section we consider path dependent
setting.

 \subsection{The relaxed set value with path dependent controls} 
 We
start with some notations for the path dependent setting. For $\bx =
(\bx_t)_{0\le t\le T}\in \dbX$, denote by $\bx_{t\wedge \cd} =
(\bx_0,\cds, \bx_t, \bx_t,\cds, \bx_t)$ the path stopping at $t$ and
$\dbX_t := \{\bx_{t\wedge \cd}: \bx\in \dbX\}\subset \dbX$. For $\bx,
\tilde \bx \in \dbX$, we say $\bx =_t \tilde \bx$ if $\bx_{t\wedge
\cd}=\tilde \bx_{t\wedge \cd}$. Denote $\dbX^{t,\bx} := \{\tilde
\bx\in \dbX: \tilde \bx =_t \bx\}$ and $\dbX^{t,\bx}_s:= \dbX^{t,\bx}
\cap \dbX_s$, for $s\ge t$.  Introduce the concatenation $\bx\oplus_t
\tilde \bx\in \dbX$ by \beaa (\bx\oplus_t \tilde \bx)_s := \bx_s
\1_{\{s\le t\}} + \tilde \bx_s \1_{\{s>t\}},\q\mbox{and}\q
(\bx\oplus_t x)_s := \bx_s \1_{\{s\le t\}} + x \1_{\{s>t\}}, ~x\in
\dbS.  \eeaa For each $t\in \dbT$, let $\cP(\dbX_t)$ denote the set of
probability measures on $(\O, \cF^X_t)$, equipped with \beaa W_1(\mu,
\nu):= \sum_{\bx\in \dbX_t} |\mu(\bx)-\nu(\bx)|,\q\forall \mu, \nu\in
\cP(\dbX_t), \eeaa and $\cP_0(\dbX_t)$ the subset of $\mu\in
\cP(\dbX_t)$ with full support $\dbX_t$. Again this is just for
convenience of presentation.  For a measure $\mu\in
\cP(\dbX)=\cP(\dbX_T)$, denote $\mu_{t\wedge \cd} := \mu \circ
X_{t\wedge \cd}^{-1} \in \cP(\dbX_t)$. We remark that, by abusing the
notation $\mu$, here $\mu_{t\wedge \cd}$ denote the joint law of the
stopped process $X_{t\wedge \cd}$, while in Section \ref{sect-state}
$\{\mu_\cd\}$ denote the family of marginal laws.
 
For a path dependent function $\f$ on $\dbT\times \dbX\times
\cP(\dbX)$, we say $\f$ is adapted if $\f(t, \bx, \mu) = \f(t,
\bx_{t\wedge \cd}, \mu_{t\wedge \cd})$. Throughout this section, all
the path dependent functions are required to be adapted. In
particular, the data of the game $q: \dbT \times \dbX \times \cP(\dbX)
\times \dbA\times \dbS\to (0,1)$, $F: \dbT \times \dbX\times \cP(\dbX)
\times \dbA \to \dbR$, and $G: \dbX\times \cP(\dbX)\to \dbR$ are path
dependent with $q, F$ adapted. By adapting to the path dependent
setting, we shall still assume Assumption \ref{assum-reg}.

Let $\cA_{relax}$ denote the set of path dependent adapted relaxed
controls $\g: \dbT \times \dbX \to \cP(\dbA)$.  Given $t\in \dbT$,
$\mu\in \cP(\dbX_t)$, $\g\in \cA_{relax}$, and $\bx \in \dbX_t$,
$\tilde \bx\in \dbX^{t,\bx}$, $\tilde \g\in \cA_{relax}$, we
introduce:
\begin{equation}
  \begin{aligned}
    \label{relaxJ}
    &\dbP^{t,\mu,\g} \circ X_{t\wedge\cd}^{-1} = \mu,\q
    \dbP^{t,\mu,\g}(X_{s+1}=\tilde x| X =_s
    \bx) = \int_{\dbA} q(s, \bx, \mu^\g, a; \tilde x) \g(s, \bx; da);
    \\&\hspace{8.3em} \mbox{where}\q
    \mu^\g_{s\wedge \cd}:= \dbP^{t,\mu,\g} \circ
    X_{s\wedge \cd}^{-1},\q s\ge t;
    \\&\dbP^{\mu^\g; t, \bx,
      \tilde \g}(X=_t\bx) = 1,\q \dbP^{\mu^\g; t, \bx, \tilde
      \g}(X_{s+1}= \bar x| X =_s \tilde \bx) =\int_{\dbA} q(s, \tilde
    \bx, \mu^\g, a; \bar x) \tilde \g(s, \tilde \bx; da);
    \\& J(\mu^\g; s, \tilde \bx, \tilde \g) := \dbE^{\dbP^{\mu^\g; t,
        \bx,\tilde \g}}\Big[G(X, \mu^\g) + \sum_{r=s}^{T-1}
    \int_{\dbA} F(r, X, \mu^\g, a) \tilde\g(r, X, da)\Big|X =_s \tilde
    \bx\Big];
    \\& J(t, \mu, \g; \bx, \tilde \g) := J(\mu^\g; t,
    \bx, \tilde\g),\q v(\mu^\g; s, \tilde\bx) := \inf_{\tilde \g\in
      \cA_{relax}} J(\mu^\g; s, \tilde \bx, \tilde \g).
  \end{aligned}
\end{equation}

\begin{defn}
  \label{defn-rMFEe}
  (i) For any $t\in \dbT$, $\mu\in \cP_0(\dbX_t)$, and $\e>0$, let
  $\cM^\e_{relax}(t,\mu)$ denote the set of relaxed $\e$-MFE
  $\g^*\in \cA_{relax}$ such that \bea
  \label{rMFEe}
  J(t, \mu, \g^*; \bx, \g^*) \le v(\mu^{\g^*}; t, \bx)+\e,\q\mbox{for
    all} ~\bx\in \dbX_t.  \eea

  (ii) The relaxed set value of the MFG at $(t,\mu)$ is defined as:
  \bea
  \label{rVtmu}
  &&\dis \qq\dbV_{relax}(t,\mu) := \bigcap_{\e>0}  \dbV_{relax}^\e(t,\mu),\q\mbox{where}~\|\f\|_{\dbX_t} := \sup_{\bx\in \dbX_t}|\f(\bx)|,~\mbox{and}\\
  &&\dis\!\!\!\!\!\!\!\!\!\!\!\!  \dbV_{relax}^\e(t,\mu) :=
  \Big\{\f\in \dbL^0(\dbX_t; \dbR): \exists \g^*\in
  \cM^\e_{relax}(t,\mu)~\mbox{s.t.}~ \|\f - J(t,\mu, \g^*; \cd,
  \g^*)\|_{\dbX_t} \le \e\Big\}.\nonumber 
  \eea
\end{defn}

Similarly, given $T_0$ and $\psi: \dbX_{T_0}\times \cP(\dbX_{T_0})\to
\dbR$, as in \reff{JT0} define
\begin{equation}
  \label{rJT0} J(T_0,\psi; t,\mu, \g; \bx, \tilde \g):=
  \dbE^{\dbP^{\mu^\g; t, \bx, \tilde\g}}\Big[\psi(X_{T_0\wedge \cd},
  \mu^\g_{T_0\wedge \cd}) \!+\! \sum_{s=t}^{T_0-1}\! \int_{\dbA}\! F(s,
  X, \mu^\g, a) \tilde \g(s, X, da)\Big],
\end{equation}
and let $\cM^\e_{relax}(T_0,\psi; t,\mu)$ denote the set of
$\g^*\in \cA_{relax}$ such that, $\forall \bx\in \dbX_t$,
\begin{equation}
  \label{rMFEe2}
  J(T_0,\psi; t, \mu, \g^*; \bx, \g^*) \le
  v(T,\psi;\mu^\g; s, \bx):=\inf_{\g\in \cA_{relax}}J(T_0,\psi; t,
  \mu, \g^*; \bx, \g)+\e.  
\end{equation}
Note that the tower property in \reff{tower} remains true for relaxed
controls: \bea
\label{rtower} \dis J(t, \mu, \g; \bx, \tilde\g) = J(T_0, \psi; t,
\mu, \g; \bx, \tilde\g),\q \mbox{where}\q \psi(\by, \nu) := J(T_0,
\nu, \g; \by, \tilde\g).  \eea The DPP for $\dbV_{relax}$ takes the
following form.

\begin{thm}
\label{thm-DPP2} Under Assumption \ref{assum-reg} (i), for any $t\in
\dbT$, $T_0\in \dbT_t$, and $\mu\in \cP_0(\dbX_t)$, \bea
\label{DPP2} \left.\ba{c} \dis \dbV_{relax}(t,\mu) = \bigcap_{\e>0}
\Big\{\f\in \dbL^0(\dbX_t; \dbR): \|\f - J(T_0,\psi; t,\mu, \g^*; \cd,
\g^*)\|_{\dbX_t} \le \e\\ \dis \mbox{for some}~\psi \in
\dbL^0(\dbX_{T_0}\times \cP_0(\dbX_{T_0}); \dbR)~\mbox{and}~\g^* \in
\cA_{relax} ~\mbox{such that}\\ \dis ~\psi(\cd, \mu^{\g^*}_{T_0\wedge
\cd}) \in \dbV_{relax}^\e(T_0, \mu^{\g^*}_{T_0\wedge\cd}), ~\g^*\in
\cM^\e_{relax}(T_0, \psi; t,\mu)\Big\}.  \ea\right.  \eea
\end{thm}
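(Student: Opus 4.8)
The plan is to follow the template of Theorem \ref{thm-DPP0}, upgrading its exact time-consistency to an $\e$-approximate one and transcribing every ingredient into the path-dependent relaxed setting. Write $\wt\dbV_{relax}(t,\mu)$ for the right side of \reff{DPP2}; we must show $\dbV_{relax}(t,\mu) = \wt\dbV_{relax}(t,\mu)$. The engine of the argument is an approximate analogue of Proposition \ref{prop-DPP}: for the relaxed concatenation $\hat\g^* := \g^* \oplus_{T_0}\tilde\g^*$ and $\psi(\by,\nu):= J(T_0,\nu,\tilde\g^*;\by,\tilde\g^*)$, there is a constant $C$ (depending only on $c_q$ and $T$) such that if $\g^*\in\cM^\e_{relax}(T_0,\psi;t,\mu)$ and $\tilde\g^*\in\cM^\e_{relax}(T_0,\mu^{\g^*}_{T_0\wedge\cd})$ then $\hat\g^*\in\cM^{C\e}_{relax}(t,\mu)$, and conversely if $\hat\g^*\in\cM^\e_{relax}(t,\mu)$ then $\g^*\in\cM^{C\e}_{relax}(T_0,\psi;t,\mu)$ and $\tilde\g^*\in\cM^{C\e}_{relax}(T_0,\mu^{\g^*}_{T_0\wedge\cd})$.

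For the ``if'' half I would run exactly the inequality chain in part (i) of Proposition \ref{prop-DPP}, now indexed over $\bx\in\dbX_t$ and with the relaxed cost \reff{relaxJ}: the tower property \reff{rtower} splits $J(t,\mu,\hat\g^*;\bx,\g)$ into a running part on $[t,T_0)$ plus the terminal evaluation of $J(T_0,\mu^{\g^*}_{T_0\wedge\cd},\tilde\g^*;\cdot,\cdot)$; replacing the tail by $\tilde\g^*$ costs at most $\e$ by $\e$-optimality of $\tilde\g^*$, and $\e$-optimality of $\g^*$ for the $\psi$-problem then gives the claim, the errors adding up to $C\e$. The ``only if'' half is the main obstacle. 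After concatenating $\g^*$ with an arbitrary $\g$ on $[T_0,T]$, the $\e$-MFE property of $\hat\g^*$ yields only an \emph{averaged} optimality, namely $\dbE^{\dbP^{\mu^{\g^*};t,\bx,\g^*}}\big[\,J(T_0,\mu^{\g^*}_{T_0\wedge\cd},\tilde\g^*;X_{T_0\wedge\cd},\tilde\g^*) - v(\mu^{\hat\g^*};T_0,X_{T_0\wedge\cd})\,\big]\le C\e$; since each summand is nonnegative (because $v$ is an infimum) and, thanks to Assumption \ref{assum-reg} (i), every $\bx\in\dbX_{T_0}$ carries probability at least $c_q^{\,T_0-t}>0$ under $\dbP^{\mu^{\g^*};t,\bx,\g^*}$, each individual deviation is at most $C\e/c_q^{\,T_0-t}$. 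This is precisely how the a.s.-to-everywhere step of Proposition \ref{prop-DPP} becomes quantitative, and it converts averaged into pointwise $\e$-optimality for $\tilde\g^*$ at $(T_0,\mu^{\g^*}_{T_0\wedge\cd})$; the same device applied to the head control gives $\g^*\in\cM^{C\e}_{relax}(T_0,\psi;t,\mu)$.

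With the lemma in hand, both inclusions follow as in Theorem \ref{thm-DPP0}. For $\wt\dbV_{relax}\subset\dbV_{relax}$: given $\f$ in the right side and $\e>0$, pick $\psi,\g^*$ with $\|\f - J(T_0,\psi;t,\mu,\g^*;\cdot,\g^*)\|_{\dbX_t}\le\e$, $\psi(\cdot,\mu^{\g^*}_{T_0\wedge\cd})\in\dbV^\e_{relax}(T_0,\mu^{\g^*}_{T_0\wedge\cd})$, and $\g^*\in\cM^\e_{relax}(T_0,\psi;t,\mu)$; the membership of $\psi$ produces a tail $\tilde\g^*\in\cM^\e_{relax}(T_0,\mu^{\g^*}_{T_0\wedge\cd})$ whose induced value is within $\e$ of $\psi(\cdot,\mu^{\g^*}_{T_0\wedge\cd})$, so substituting that value for $\psi$ perturbs the cost by at most $\e$ (the terminal term enters linearly in the expectation), and the ``if'' half makes $\hat\g^*$ a $C\e$-MFE at $(t,\mu)$ with $\|\f - J(t,\mu,\hat\g^*;\cdot,\hat\g^*)\|_{\dbX_t}\le C\e$ via \reff{rtower}. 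For $\dbV_{relax}\subset\wt\dbV_{relax}$: given $\f\in\dbV_{relax}(t,\mu)$ and $\e>0$, take $\g^*\in\cM^\e_{relax}(t,\mu)$ realizing $\f$ up to $\e$, set $\psi(\by,\nu):=J(T_0,\nu,\g^*;\by,\g^*)$, and apply the ``only if'' half to $\hat\g^*=\g^*$ to obtain both $\g^*\in\cM^{C\e}_{relax}(T_0,\psi;t,\mu)$ and $\psi(\cdot,\mu^{\g^*}_{T_0\wedge\cd})\in\dbV^{C\e}_{relax}(T_0,\mu^{\g^*}_{T_0\wedge\cd})$. In both directions every bound is a fixed multiple of $\e$, so intersecting over $\e>0$ absorbs the constant $C$ — this is exactly why the genuine set value $\dbV_{relax}$, unlike the raw set value, enjoys the clean DPP \reff{DPP2}.
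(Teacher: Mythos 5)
Your proposal is correct and takes essentially the same route as the paper: both inclusions mirror Theorem \ref{thm-DPP0} with the $\e$-errors tracked additively, and your decisive device --- converting the averaged tail optimality into pointwise optimality using $q\ge c_q$, so that $\dbP^{\mu^{\g^*};t,\bx,\g^*}(X=_{T_0}\tilde\bx)\ge c_q^{T_0-t}$ yields the constant $C=c_q^{t-T_0}$, which intersecting over $\e>0$ then absorbs --- is exactly the paper's claim \reff{DPP2-claim}. The only differences are cosmetic: you factor the estimates into an explicit approximate analogue of Proposition \ref{prop-DPP} where the paper inlines them, and your remark that ``the same device applied to the head control'' is needed is slightly imprecise, since the head part follows directly from a concatenation comparison ($J(T_0,\psi;t,\mu,\g^*;\bx,\g^*)=J(t,\mu,\g^*;\bx,\g^*)\le J(t,\mu,\g^*;\bx,\g\oplus_{T_0}\g^*)+\e$) without any $c_q$ argument or constant inflation, the $\e$-MFE property at $(t,\mu)$ being already pointwise in $\bx$.
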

\begin{proof}
  We shall follow the arguments in Theorem \ref{thm-DPP0}, in
  particular, we shall extend Proposition \ref{prop-DPP}. Let
  $\tilde \dbV_{relax}(t,\mu)= \bigcap_{\e>0}\tilde
  \dbV_{relax}^\e(t,\mu)$ denote the right side of \reff{DPP2}.

  (i) We first prove
  $\tilde \dbV_{relax}(t,\mu)\subset \dbV_{relax}(t,\mu)$. Fix
  $\f\in \tilde \dbV_{relax}(t,\mu)$, $\e>0$, and set
  ${\e_1}:={ \e\over 4}$. Since
  $\f\in \tilde \dbV_{relax}^{\e_1}(t, \mu)$, then \beaa \|\f-J(T_0,
  \psi; t, \mu, \g^*; \cd, \g^*)\|_{\dbX_t} \le {\e_1}\q\mbox{ for
    some desirable $\psi, \g^*$ as in \reff{DPP2}}.  \eeaa Since
  $\psi(\cd, \mu^{\g^*}_{T_0\wedge \cd}) \in \dbV_{relax}^{\e_1}(T_0,
  \mu^{\g^*}_{T_0\wedge \cd})$, there exists
  $\tilde \g^*\in \cM_{relax}^{\e_1}(T_0, \mu^{\g^*}_{T_0\wedge \cd})$
  such that \beaa \|\psi(\cd, \mu^{\g^*}_{T_0\wedge \cd}) - J(T_0,
  \mu^{\g^*}_{T_0\wedge \cd}, \tilde \g^*; \cd, \tilde \g^*)\|_{
    \dbX_{T_0}}\le {\e_1}.  \eeaa As in \reff{Oplus} denote
  $\hat \g^*:= \g^*\oplus_{T_0} \tilde \g^* := \g^* \1_{\{s<T_0\}} +
  \tilde \g^* \1_{\{s\ge T_0\}}\in \cA_{relax}$. Then, for any
  $\bx\in \dbX_t$ and $\g\in \cA_{relax}$, similarly to Proposition
  \ref{prop-DPP} (i) we have \beaa &&J(t, \mu, \hat\g^*; \bx, \g) \\
  &&= \dbE^{\dbP^{\mu^{\g^*};t, \bx, \g}}\Big[ J(T_0,
  \mu^{\g^*}_{T_0\wedge \cd}, \tilde \g^*; X_{T_0\wedge \cd}, \g) +
  \sum_{s=t}^{T_0-1} \int_{\dbA}F(s, X, \mu^{\g^*},a) \g(s, X,
  da)\Big]\\ &&\ge \dbE^{\dbP^{\mu^{\g^*}; t, \bx, \g}}\Big[ J(T_0,
  \mu^{\g^*}_{T_0\wedge \cd}, \tilde \g^*; X_{T_0\wedge \cd}, \tilde
  \g^*) + \sum_{s=t}^{T_0-1} \int_{\dbA}F(s, X, \mu^{\g^*}, a)\g(s, X,
  da)\Big]-{\e_1}\\ &&\ge \dbE^{\dbP^{\mu^{\g^*};t, \bx, \g}}\Big[
  \psi(X_{T_0\wedge \cd}, \mu^{\g^*}_{T_0\wedge \cd}) +
  \sum_{s=t}^{T_0-1} \int_{\dbA}F(s, X, \mu^{\g^*}, a)\g(s, X,
  da)\Big] - 2{\e_1}\\ &&= J(T_0, \psi; t, \mu, \g^*; \bx, \g)
  -2{\e_1} \ge J(T_0, \psi; t, \mu, \g^*; \bx, \g^*) -3{\e_1} \\ &&=
  \dbE^{\dbP^{\mu^{\g^*}; t, \bx, \g^*}}\Big[ \psi(X_{T_0\wedge \cd},
  \mu^{\g^*}_{T_0\wedge \cd}) + \sum_{s=t}^{T_0-1} \int_{\dbA}F(s, X,
  \mu^{\g^*}, a)\g^*(s, X, da)\Big] - 3{\e_1}\\ &&\ge
  \dbE^{\dbP^{\mu^{\g^*};t, \bx, \g^*}}\Big[ J(T_0,
  \mu^{\g^*}_{T_0\wedge \cd}, \tilde \g^*; X_{T_0\wedge \cd}, \tilde
  \g^*) + \sum_{s=t}^{T_0-1} \int_{\dbA}F(s, X, \mu^{\g^*}, a)\g^*(s,
  X, da)\Big] - 4{\e_1}\\ &&= J(t, \mu, \hat\g^*; \bx, \hat \g^*) -
  4\e_1 = J(t, \mu, \hat\g^*; \bx, \hat \g^*) - \e.  \eeaa That is,
  $\hat \g^* \in \cM_{relax}^\e(t,\mu)$. Moreover, note that, by
  \reff{rtower}, \beaa &&\|\f - J(t, \mu, \hat\g^*; \cd,
  \hat\g^*)\|_{\dbX_t} \le {\e_1} + \|J(T_0, \psi; t, \mu, \g^*; \cd,
  \g^*)-J(t, \mu, \hat\g^*; \cd, \hat\g^*)\|_{\dbX_t}\\ &&={\e_1} +
  \sup_{\bx\in \dbX_t} \Big|\dbE^{\dbP^{\mu^{\g^*}; t, \bx,
      \g^*}}\big[ \psi(X_{T_0\wedge \cd}, \mu^{\g^*}_{T_0\wedge \cd})
  - J(T_0, \mu^{\g^*}_{T_0\wedge \cd}, \tilde \g^*; X_{T_0\wedge \cd},
  \tilde \g^*)\big]\Big| \le 2{\e_1} <\e.  \eeaa Then
  $\f\in \dbV_{relax}^\e(t, \mu)$. Since $\e>0$ is arbitrary, we
  obtain $\f\in \dbV_{relax}(t,\mu)$.

  (ii) We now prove the opposite inclusion. Fix
  $\f\in \dbV_{relax}(t,\mu)$ and $\e>0$.  Let ${\e_2}>0$ be a small
  number which will be specified later. Since
  $\f\in \dbV_{relax}^{\e_2}(t, \mu)$, then \beaa \|\f-J(t, \mu, \g^*;
  \cd, \g^*)\|_{\dbX_t} \le {\e_2}\q\mbox{for some}~ \g^*\in
  \cM_{relax}^{\e_2}(t,\mu).  \eeaa Introduce
  $\psi(\by, \nu) := J(T_0, \nu, \g^*; \by, \g^*)$ and recall
  \reff{rtower}. Then \beaa \|\f-J(T_0,\psi; t, \mu, \g^*; \cd,
  \g^*)\|_{\dbX_t} = \|\f(\bx)-J(t, \mu, \g^*; \bx, \g^*)\|_{\dbX_t}
  \le {\e_2}.  \eeaa Moreover, since
  $\g^*\in \cM_{relax}^{\e_2}(t,\mu)$, for any $\g\in \cA_{relax}$ and
  $\bx\in \dbX_t$, we have \beaa &&J(T_0,\psi; t, \mu, \g^*; \bx,
  \g^*) = J(t, \mu, \g^*; \bx, \g^*) \\ &&\le J(t, \mu, \g^*; \bx,
  \g\oplus_{T_0} \g^*) +{\e_2} = J(T_0,\psi; t, \mu, \g^*; \bx,
  \g)+{\e_2}.  \eeaa This implies that
  $\g^*\in \cM_{relax}^{\e_2}(T_0, \psi; t,\mu)$. We claim further
  that \bea
  \label{DPP2-claim} \psi(\cd, \mu^{\g^*}_{T_0\wedge\cd}) \in
  \dbV_{relax}^{C{\e_2}}(T_0, \mu^{\g^*}_{T_0\wedge \cd}), \eea for
  some constant $C\ge 1$. Then by \reff{DPP2} we see that
  $\f\in \tilde \dbV_{relax}^{C{\e_2}}(t,\mu)\subset \tilde
  \dbV_{relax}^\e(t,\mu)$ by setting ${\e_2}\le {\e\over C}$. Since
  $\e>0$ is arbitrary, we obtain $\f\in \tilde \dbV_{relax}(t,\mu)$.

  To see \reff{DPP2-claim}, recalling \reff{relaxJ}, for any
  $\g\in \cA_{relax}$ we have \beaa &&\dis \dbE^{\dbP^{\mu^{\g^*}; t,
      \bx, \g^*}}\Big[J(T_0, \mu^{\g^*}_{T_0\wedge \cd}, \g^*;
  X_{T_0\wedge \cd}, \g^*) \Big] - \dbE^{\dbP^{\mu^{\g^*}; t, \bx,
      \g^*}}\Big[J(T_0,
  \mu^{\g^*}_{T_0\wedge \cd}, \g^*; X_{T_0\wedge \cd}, \g) \Big]\\
  &&\dis = J(t, \mu, \g^*; \bx, \g^*) - J(t,\mu,\g^*; \bx,
  \g^*\oplus_{T_0} \g) \le \e_2.  \eeaa Then, by taking infimum over
  $\g\in \cA_{relax}$, it follows from the standard control theory
  that \beaa \dbE^{\dbP^{\mu^{\g^*}; \bx, \g^*}}\Big[J(T_0,
  \mu^{\g^*}_{T_0\wedge \cd}, \g^*; X_{T_0\wedge \cd}, \g^*) \Big] \le
  \dbE^{\dbP^{\mu^{\g^*};t, \bx, \g^*}}\Big[v(\mu^{\g^*}; T_0,
  X_{T_0\wedge \cd}) \Big]+{\e_2},\q\forall \bx\in \dbX_t.  \eeaa On
  the other hand, it is obvious that
  $v(\mu^{\g^*}; T_0, \tilde \bx) \le J(T_0, \mu^{\g^*}_{T_0\wedge
    \cd}, \g^*; \tilde\bx, \g^*)$ for all $\tilde\bx\in
  \dbX_{T_0}$. Moreover, since $q\ge c_q$, clearly
  $\dbP^{\mu^{\g^*}; t, \bx, \g^*}(X=_{T_0}\tilde \bx) \ge
  c_q^{T_0-t}$, for any $\tilde\bx\in \dbX^{t,\bx}_{T_0}$. Thus, \beaa
  0&\le& J(T_0, \mu^{\g^*}_{T_0\wedge\cd}, \g^*; \tilde \bx, \g^*) -
  v(\mu^{\g^*}; T_0, \tilde\bx)\\ &\le& C\dbE^{\dbP^{\mu^{\g^*};t,
      \bx, \g^*}}\Big[\big[J(T_0, \mu^{\g^*}_{T_0\wedge \cd}, \g^*;
  X_{T_0\wedge \cd}, \g^*) - v(\mu^{\g^*}; T_0, X_{T_0\wedge
    \cd})\big]\1_{\{X=_{T_0} \tilde\bx\}} \Big]\\
  &\le&C\dbE^{\dbP^{\mu^{\g^*};t,\bx, \g^*}}\Big[J(T_0,
  \mu^{\g^*}_{T_0\wedge \cd}, \g^*; X_{T_0\wedge \cd}, \g^*) -
  v(\mu^{\g^*}; T_0, X_{T_0\wedge \cd}) \Big] \le C{\e_2}, \eeaa where
  $C:= c_q^{t-T_0}$.  This implies that
  $\g^*\in \cM_{relax}^{C{\e_2}}(T_0,
  \mu^{\g^*}_{T_0\wedge\cd})$. Then \reff{DPP2-claim} follows directly
  from
  $\psi(\cd, \mu^{\g^*}_{T_0\wedge \cd}) = J(T_0,
  \mu^{\g^*}_{T_0\wedge\cd}, \g^*; \cd, \g^*)$, and hence
  $\f\in \tilde \dbV_{relax}(t,\mu)$.
\end{proof}

\begin{rem}
  \label{rem-statepath} 
  Consider the setting that $q, F, G$ are state
  dependent, as in Section \ref{sect-state}. There is a very subtle
  issue between state dependence and path dependence of the controls.
  
  (i) For a standard non-zero sum game problems where the players
    may have different cost functions $F_i, G_i$, if one uses state
    dependent controls, in general the set value does not satisfy
    DPP. See a counterexample in \cite{FRZ}. However, with path
    dependent controls the set value of the game satisfies the DPP.

 (ii) In Section \ref{sect-state}, since all players have the same
    cost function, as we saw the set value with state dependent
    controls satisfies DPP. If we consider path dependent controls
    $\a\in \cA_{path}$, the set value will also satisfy DPP. However,
    the set values in these two settings are in general not equal, see
    Example \ref{eg-statepath} in Appendix for a counterexample.

 (iii) For relaxed controls, again restricting to
    state dependent $q, F, G$, it turns out that state dependent and
    path dependent controls lead to the same set value, see Theorem
    \ref{thm-relaxstate} in Appendix. The main reason is that the
    convex combination of relaxed controls remains a relaxed control,
    while the controls $\a$ in Section \ref{sect-state} does not share
    this property.
  \end{rem}

\subsection{An alternative formulation of the relaxed mean field game}
\label{sect-global} In this subsection we provide an alternative
formulation for the MFG with relaxed controls. This new formulation is
motivated from the heterogenous controls for the $N$-player games, and
thus is crucial for the convergence result in the next section.

Let $\cA_{path}$ denote the set of adapted path dependent controls
$\a: \dbT\times \dbX\to \dbA$, and for each $t\in \dbT$, $\cA^t_{path}
= \big\{(\a(t,\cd), \cds, \a(T-1,\cd)): \a\in \cA_{path}\big\}$.
Denote $\Xi_t := \cP(\dbX_t \times \cA^t_{path})$, and for each $\L\in
\Xi_t$, define recursively: for $s\ge t$, $\bx\in \dbX_t$, and $\tilde
\bx\in \dbX^{t,\bx}$,
\begin{equation}
  \label{Lamda} \mu^\L_{t\wedge\cd}(\bx) := \L(\bx,
    \cA^t_{path}),\q \mu^\L_{s\wedge \cd}(\tilde\bx) :=
    \int_{\cA^t_{path}} \prod_{r=t}^{s-1} q(r, \tilde\bx, \mu^\L, \a(r,
    \tilde\bx); \tilde\bx_{r+1}) \L(\bx, d\a).  
\end{equation}
Here, noting that $\a\in \cA^t_{path}$ can be equivalently expressed
as
$\{\a(s, \tilde\bx): t \le s\le T-1, \tilde \bx \in \dbX^{t,\bx}_s\}$,
we are using the following interpretation on $d\a$: for any
$\f: \cA^t_{path}\to \dbR$, \bea
\label{da} \int_{\cA^t_{path}}\f(\a) d\a := \int_\dbA\cds\int_\dbA
\f\big(\{\a(s, \tilde\bx)\}\big) \prod_{s=t}^{T-1} \prod_{\tilde
\bx\in \dbX^{t,\bx}_s} d\a(s, \tilde \bx).  \eea

Next, for $\mu\in \cP_0(\dbX_t)$, denote $\Xi_t(\mu):= \{\L\in \Xi_t:
\mu^\L_{t\wedge \cd} = \mu\}$. Moreover, recall \reff{relaxJ},
\begin{equation}
  \label{LamdaJ} J(t,\L; \bx, \a) := J(\mu^\L; t, \bx, \a),\q v(t,\L;
  \bx) := v(\mu^\L; t, \bx),\q \bx\in \dbX_t, \a\in \cA^t_{path}.  
\end{equation}
To simplify the notations, we introduce: \bea
\label{Q} \left.\ba{c} \dis Q^t_s(\{\mu_\cd\}; \tilde\bx, \a) :=
\prod_{r=t}^{s-1} q(r, \tilde\bx, \mu, \a(r, \tilde\bx);
\tilde\bx_{r+1}).  \ea\right.  \eea In particular, $Q^t_t(\{\mu_\cd\};
\bx, \a) =1$. Then we have, for any $\tilde \bx\in \dbX^{t,\bx}$,
\begin{equation}
  \label{LQ} \mu^\L_s(\tilde\bx) :=
    \int_{\cA^t_{path}} Q^t_s(\mu^\L; \tilde\bx, \a) \L(\bx, d\a),\q
    \dbP^{\mu^\L;t, \bx, \a} (X=_s \tilde \bx)= Q^t_s(\mu^\L; \tilde \bx,
    \a).  
\end{equation}

\begin{defn}
\label{defn-LamdaMFE} For any $t\in \dbT$, $\mu\in \cP_0(\dbX_t)$, and
$\e>0$, we call $\L^* \in \Xi_t(\mu)$ a global $\e$-MFE at $(t,\mu)$,
denoted as $\L^*\in \cM^\e_{global}(t,\mu)$, if \bea
\label{LamdaMFE} \int_{\cA^t_{path}}[J(t, \L^*; \bx, \a) - v(t, \L^*;
\bx)] \L^*(\bx, d\a) \le \e,\q\forall \bx\in \dbX_t.  \eea
\end{defn} Note that the above $\a$ is global in time, so we call
$\L^*$ a global equilibrium. Moreover, since there are infinitely many
$\a\in \cA^t_{path}$, it is hard to require $ J(t, \L^*; \bx, \a) -
v(t, \L^*; \bx) \le \e$ for each $\a\in \cA^t_{path}$, we thus use the
above $\dbL^1$-type of optimality condition. For the $\bx$ part,
however, since there are only finitely many $\bx$ and each of them has
positive probability, we may require the optimality for each $\bx$.

The main result of this subsection is the following equivalence
result.

\begin{thm}
\label{thm-equivalence} For any $t\in \dbT$ and $\mu\in
\cP_0(\dbX_t)$, we have \bea
\label{equivalence} \left.\ba{c} \dis \dbV_{relax}(t, \mu) =
\dbV_{global}(t, \mu) := \bigcap_{\e>0} \dbV^\e_{global}(t,
\mu),\q\mbox{where}\\ \dis \dbV^\e_{global}(t, \mu):=\Big\{\f\in
\dbL^0(\dbX_t, \dbR): \exists \L^*\in \cM^\e_{global}(t,\mu)
~\mbox{s.t.}~\|\f - v(t, \L^*; \cd)\|_{\dbX_t}\le \e\Big\}.
\ea\right.  \eea
\end{thm}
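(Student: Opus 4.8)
The plan is to set up a flow- and value-preserving correspondence between the relaxed controls $\g^*$ behind $\dbV_{relax}$ and the global measures $\L^*$ behind $\dbV_{global}$; since each $\dbV^\e$ is, up to an $O(\e)$ error, just the family of value functions $v(\mu^*;t,\cd)$ ranging over approximate equilibrium flows $\mu^*$, it then suffices to match these two families of flows. The bridge is the \emph{node-wise conditional marginal}. Given $\L^*\in\Xi_t(\mu)$, define $\g^*\in\cA_{relax}$ by
\[
\g^*(s,\tilde\bx;da):=\frac{\int_{\{\a(s,\tilde\bx)\in da\}}Q^t_s(\mu^{\L^*};\tilde\bx,\a)\,\L^*(\bx,d\a)}{\mu^{\L^*}_{s\wedge\cd}(\tilde\bx)},\qquad \tilde\bx\in\dbX^{t,\bx}_s,
\]
the conditional law of the control value at $(s,\tilde\bx)$ given that the path reaches $\tilde\bx$; this is well defined since Assumption \ref{assum-reg}(i) ($q\ge c_q>0$) together with $\mu\in\cP_0(\dbX_t)$ forces $\mu^{\L^*}_{s\wedge\cd}(\tilde\bx)>0$ at every reachable node. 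Conversely, given $\g^*\in\cA_{relax}$, take the product $\L^*:=\mu\otimes\big(\bigotimes_{(s,\tilde\bx)}\g^*(s,\tilde\bx)\big)$, which draws the control at each node independently with law $\g^*(s,\tilde\bx)$; its node-wise conditional marginal is again $\g^*$, because independence across nodes makes the $Q^t_s$-weighting irrelevant to the law at the current node.

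For this correspondence I would prove two identities by induction on $s$, using that $q,F$ are adapted and that $Q^t_{s+1}=Q^t_s\cdot q$. First, the \emph{flow identity} $\mu^{\g^*}=\mu^{\L^*}$: assuming equality up to $s$, the recursion \reff{relaxJ} for $\mu^{\g^*}_{s+1}$ and the defining formula for $\g^*(s,\tilde\bx)$ telescope exactly into the recursion \reff{Lamda}--\reff{LQ} for $\mu^{\L^*}_{s+1}$. Second, the \emph{linearity of the cost}: for the common flow $\mu:=\mu^{\g^*}=\mu^{\L^*}$,
\[
\mu(\bx)\,J(\mu;t,\bx,\g^*)=\int_{\cA^t_{path}}J(\mu;t,\bx,\a)\,\L^*(\bx,d\a),\qquad\bx\in\dbX_t.
\]
This is obtained term by term: rewriting $\dbP^{\mu;t,\bx,\g^*}(X=_s\tilde\bx)=\tfrac1{\mu(\bx)}\int Q^t_s\,\L^*(\bx,d\a)$ via the flow identity, the definition of $\g^*(s,\tilde\bx)$ cancels the denominator $\mu^{\L^*}_{s\wedge\cd}(\tilde\bx)$, so each running-cost slice $\sum_{\tilde\bx}\dbP^{\mu;t,\bx,\g^*}(X=_s\tilde\bx)\int F\,\g^*(s,\tilde\bx,da)$ collapses to $\tfrac1{\mu(\bx)}\int\big[\sum_{\tilde\bx}Q^t_s\,F(s,\tilde\bx,\mu,\a(s,\tilde\bx))\big]\L^*(\bx,d\a)$, and likewise for the terminal $G$-term. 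Since $v(\mu;t,\cd)$ depends only on the flow $\mu$, it is automatically common to both sides.

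With these identities both inclusions are short. For $\dbV_{relax}\subset\dbV_{global}$: given $\f\in\dbV_{relax}(t,\mu)$ and $\e>0$, pick $\g^*\in\cM^\e_{relax}(t,\mu)$ with $\|\f-J(t,\mu,\g^*;\cd,\g^*)\|_{\dbX_t}\le\e$ and take the product $\L^*$. Linearity turns the pointwise bound $0\le J(\mu;t,\bx,\g^*)-v(\mu;t,\bx)\le\e$ into $\int[J(\mu;t,\bx,\a)-v(\mu;t,\bx)]\L^*(\bx,d\a)=\mu(\bx)[J-v]\le\e$, so $\L^*\in\cM^\e_{global}(t,\mu)$; and $\|\f-v(t,\L^*;\cd)\|_{\dbX_t}\le\|\f-J\|+\|J-v\|\le 2\e$, whence $\f\in\dbV^{2\e}_{global}(t,\mu)$, and $\e\da0$ gives $\f\in\dbV_{global}(t,\mu)$. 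For $\dbV_{global}\subset\dbV_{relax}$: given $\f\in\dbV_{global}(t,\mu)$ and $\e>0$, pick $\L^*\in\cM^\e_{global}(t,\mu)$ with $\|\f-v(t,\L^*;\cd)\|_{\dbX_t}\le\e$ and form $\g^*$ by disintegration. Dividing the averaged optimality $\int[J-v]\L^*(\bx,d\a)\le\e$ by $\mu(\bx)\ge c_\mu:=\min_{\bx\in\dbX_t}\mu(\bx)>0$ gives $J(\mu;t,\bx,\g^*)-v(\mu;t,\bx)\le\e/c_\mu$, i.e. $\g^*\in\cM^{\e/c_\mu}_{relax}(t,\mu)$, and then $\|\f-J(t,\mu,\g^*;\cd,\g^*)\|_{\dbX_t}\le\e(1+1/c_\mu)$, so $\f\in\dbV^{\e(1+1/c_\mu)}_{relax}(t,\mu)$; as $\e\da0$ (with $\mu$, hence $c_\mu$, fixed) this gives $\f\in\dbV_{relax}(t,\mu)$.

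The main obstacle is the linearity-of-cost identity in the path-dependent setting: one must verify that conditioning on the arrival path $\tilde\bx$ uses exactly the weight $Q^t_s(\mu;\tilde\bx,\a)$ appearing in the definition of $\g^*(s,\tilde\bx)$, so that the correlations of $\L^*$ across distinct nodes are averaged out node by node and the deterministic-control costs genuinely aggregate to the relaxed-control cost. The only quantitative subtlety—the amplification of $\e$ by $1/c_\mu$ in the global$\to$relaxed direction—is harmless, since $\mu\in\cP_0(\dbX_t)$ is fixed and $c_\mu>0$, so the $\bigcap_{\e>0}$ in the definitions absorbs it.
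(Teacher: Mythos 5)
Your proposal is correct and takes essentially the same route as the paper: your node-wise conditional marginal is exactly the construction $\g^\L$ in \reff{gL}, your product measure is $\L^\g$ in \reff{Lg}, and your flow identity and cost-linearity identity are precisely Lemma \ref{lem-equivalence}, with your observation that the disintegration of the product measure recovers $\g^*$ being the fact the paper isolates as Remark \ref{rem-Lginverse}. The two inclusions, including the $1/c_\mu$ amplification in the global-to-relaxed direction absorbed by the intersection over $\e>0$, match the paper's proof of Theorem \ref{thm-equivalence} step for step.
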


We shall prove the mutual inclusion of the two sides
separately. First, given $(t, \L)$, we construct a relaxed control as
follows: for any $t\in \dbT$, $\bx\in \dbX_t$, and $s\ge t$, $\tilde
\bx\in \dbX^{t,\bx}_s$, \bea
\label{gL} \g^\L(s, \tilde \bx, da) := {1\over \mu^\L_{s\wedge
\cd}(\tilde \bx)} \int_{\cA^t_{path}} Q^t_s(\mu^\L; \tilde \bx; \a)
\d_{\a(s, \tilde \bx)}(da) \L(\bx, d\a).  \eea On the opposite
direction, given $t\in \dbT$, $\mu\in \cP_0(\dbX_t)$, $\g\in
\cA_{relax}$, recalling \reff{da} we construct \bea
\label{Lg} \L^\g(\bx, d\a) := \mu(\bx) \prod_{s=t}^{T-1} \prod_{\tilde
\bx\in \dbX^{t, \bx}_s} \g(s, \tilde \bx, d\a(s, \tilde \bx)),\q
\forall \bx\in \dbX_t, \a\in \cA^t_{path}.  \eea In particular, the
following calculation implies $\L^\g \in \Xi_t(\mu)$: \beaa \L^\g(\bx,
\cA^t_{path}) &=& \mu(\bx) \prod_{s=t}^{T-1} \prod_{\tilde \bx\in
\dbX^{t, \bx}_s} \g(s, \tilde \bx, \dbA) =\mu(\bx) \prod_{s=t}^{T-1}
\prod_{\tilde \bx\in \dbX^{t, \bx}_s} 1 = \mu(\bx).  \eeaa

\begin{lem}
\label{lem-equivalence} For any $t\in \dbT$, $\mu\in \cP_0(\dbX_t)$,
and $\L\in \Xi_t(\mu)$, $\g\in \cA_{relax}$, we have $\mu^{\g^\L} =
\mu^\L$ and $\mu^{\L^\g} = \mu^\g$. Moreover, \bea
\label{JgL} \dis J(t, \mu, \g^\L; \bx, \g^\L) = {1\over
\mu(\bx)}\int_{\cA^t_{path}} J(t, \L; \bx, \a) \L(\bx, d\a),\q\forall
\bx\in \dbX_t.  \eea
\end{lem}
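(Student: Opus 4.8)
The plan is to establish the three assertions in turn: the two measure-flow identities by a forward induction in time, and the cost identity \reff{JgL} by expanding the cost functional path by path. First I note that the divisions by $\mu(\bx)$ and $\mu^\L_{s\wedge\cd}(\tilde\bx)$ in \reff{gL} are legitimate: since $Q^t_s(\mu^\L;\tilde\bx,\a)\ge c_q^{s-t}>0$ by \reff{Q} and $\L(\bx,\cA^t_{path})=\mu(\bx)>0$ (as $\mu\in\cP_0(\dbX_t)$), formula \reff{LQ} gives $\mu^\L_{s\wedge\cd}(\tilde\bx)\ge c_q^{s-t}\mu(\bx)>0$ for every $\tilde\bx\in\dbX^{t,\bx}_s$, so $\g^\L$ is well defined.

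The engine of the argument is the telescoping identity $Q^t_{s+1}(\mu^\L;\tilde\bx\oplus_s y,\a)=q(s,\tilde\bx,\mu^\L,\a(s,\tilde\bx);y)\,Q^t_s(\mu^\L;\tilde\bx,\a)$, immediate from \reff{Q} and the adaptedness of $q$ and $\a$. Combined with \reff{gL} and \reff{LQ}, it yields, for all $\tilde\bx\in\dbX^{t,\bx}_s$ and $y\in\dbS$,
\be
\int_\dbA q(s,\tilde\bx,\mu^\L,a;y)\,\g^\L(s,\tilde\bx,da)=\frac{\mu^\L_{(s+1)\wedge\cd}(\tilde\bx\oplus_s y)}{\mu^\L_{s\wedge\cd}(\tilde\bx)}.
\ee
For $\mu^{\g^\L}=\mu^\L$ I would induct on $s$: the base case holds since $\mu^{\g^\L}_{t\wedge\cd}=\mu=\mu^\L_{t\wedge\cd}$ (the latter because $\L\in\Xi_t(\mu)$), and the step follows by inserting the displayed identity into the relaxed recursion of \reff{relaxJ}, using the induction hypothesis and the adaptedness of $q$ to replace $\mu^{\g^\L}$ by $\mu^\L$. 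The identity $\mu^{\L^\g}=\mu^\g$ is the mirror image: by the product structure of $\L^\g$ in \reff{Lg} and the convention \reff{da}, integrating $Q^t_{s+1}(\mu^{\L^\g};\tilde\bx,\a)$ against $\L^\g(\bx,d\a)$ factorizes, and every coordinate of $\a$ on which $Q^t_{s+1}$ does not depend integrates to $1$ because $\g(r,\hat\bx,\dbA)=1$; what survives is exactly the iterated relaxed recursion for $\mu^\g$, so induction closes the argument.

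For \reff{JgL} I would first use the first assertion to write $J(t,\mu,\g^\L;\bx,\g^\L)=J(\mu^{\g^\L};t,\bx,\g^\L)=J(\mu^\L;t,\bx,\g^\L)$. Telescoping the displayed transition identity from $t$ to $s$, starting from $\dbP^{\mu^\L;t,\bx,\g^\L}(X=_t\bx)=1$, gives the key formula $\dbP^{\mu^\L;t,\bx,\g^\L}(X=_s\tilde\bx)=\mu^\L_{s\wedge\cd}(\tilde\bx)/\mu(\bx)$; at $s=T$ and via \reff{LQ} this equals $\frac1{\mu(\bx)}\int_{\cA^t_{path}}Q^t_T(\mu^\L;\tilde\bx,\a)\,\L(\bx,d\a)$. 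Fed into the terminal-cost term of \reff{relaxJ}, this reproduces the $G$-contribution to the right side of \reff{JgL}. For the running cost I would, for each fixed $r$, use \reff{gL} to rewrite $\dbP^{\mu^\L;t,\bx,\g^\L}(X=_r\hat\bx)\int_\dbA F(r,\hat\bx,\mu^\L,a)\,\g^\L(r,\hat\bx,da)$ as $\frac1{\mu(\bx)}\int_{\cA^t_{path}}Q^t_r(\mu^\L;\hat\bx,\a)\,F(r,\hat\bx,\mu^\L,\a(r,\hat\bx))\,\L(\bx,d\a)$, and separately expand $\int_{\cA^t_{path}}J(\mu^\L;t,\bx,\a)\,\L(\bx,d\a)$ by grouping full paths $\tilde\bx$ according to their prefix $\hat\bx=\tilde\bx_{r\wedge\cd}$, using $\sum_{\tilde\bx:\,\tilde\bx_{r\wedge\cd}=\hat\bx}Q^t_T(\mu^\L;\tilde\bx,\a)=Q^t_r(\mu^\L;\hat\bx,\a)$ since the tail product of transition kernels sums to $1$. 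Matching the two expressions term by term then gives \reff{JgL}.

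The two measure inductions are routine. The main obstacle is the running-cost bookkeeping in the last step, where one must simultaneously disintegrate $\g^\L$ back into the pure controls $\a$ weighted by $\L$, marginalize the kernel over all continuations of a path beyond time $r$, and invoke adaptedness so that $F(r,\tilde\bx,\cd,\a(r,\tilde\bx))$ sees $\tilde\bx$ only through its $r$-prefix; aligning these three reductions is where the care lies, the $G$-term being the clean special case $r=T$.
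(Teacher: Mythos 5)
Your proposal is correct and follows essentially the same route as the paper's proof: both measure identities are established by the same forward induction with the Fubini/telescoping computation (including the use of adaptedness to invoke the induction hypothesis inside $q$), and \reff{JgL} is obtained via the marginal formula $\dbP^{\mu^\L; t, \bx, \g^\L}(X=_s \tilde\bx) = \mu^\L_{s\wedge \cd}(\tilde\bx)/\mu(\bx)$ together with disintegrating $\g^\L$ back into the pure controls weighted by $\L$. The only cosmetic difference is the final matching step, where you recover the $Q^t_s$-weights for the running costs by summing full paths over their continuations beyond time $r$, whereas the paper reads those stopped-path marginals directly off the second identity in \reff{LQ}; this is equivalent bookkeeping, not a different argument.
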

\begin{proof}
  We first prove $\mu^{\g^\L}_{s\wedge \cd} = \mu^\L_{s\wedge \cd}$ by
  induction.  The case $s=t$ follows from the definitions. Assume it
  holds for all $r\le s$. For $s+1$ and
  $\tilde \bx\in \dbX^{t,\bx}_{s+1}$, by Fubini Theorem we have \beaa
  &&\dis {\mu^{\g^\L}_{(s+1)\wedge \cd}(\tilde \bx)\over
    \mu^{\g^\L}_{s\wedge \cd}(\tilde \bx_{s\wedge \cd})} = \int_\dbA
  q(s,
  \tilde\bx, \mu^{\g^\L}, a; \tilde\bx_{s+1}) \g^\L(s, \tilde\bx, da) \\
  &&\dis = \int_\dbA q(s, \tilde\bx, \mu^{\g^\L}, a;
  \tilde\bx_{s+1}){1\over \mu^\L_{s\wedge \cd}(\tilde \bx)}
  \int_{\cA^t_{path}} Q^t_s(\mu^\L; \tilde \bx; \a) \d_{\a(s, \tilde
    \bx)}(da) \L(\bx, d\a)\\ &&\dis = {1\over \mu^\L_{s\wedge
      \cd}(\tilde \bx)} \int_{\cA^t_{path}} q(s, \tilde\bx, \mu^\L,
  \a(s, \tilde \bx); \tilde\bx_{s+1})Q^t_s(\mu^\L; \tilde \bx; \a)
  \L(\bx, d\a)\\ &&\dis = {1\over \mu^\L_{s\wedge \cd}(\tilde \bx)}
  \int_{\cA^t_{path}} Q^t_{s+1}(\mu^\L; \tilde \bx; \a) \L(\bx, d\a) =
  { \mu^\L_{(s+1)\wedge \cd}(\tilde \bx)\over \mu^\L_{s\wedge
      \cd}(\tilde \bx)}.  \eeaa Then
  $\mu^{\g^\L}_{(s+1)\wedge \cd}= \mu^\L_{(s+1)\wedge \cd}$, and we
  complete the induction argument.

  We next prove $\mu^{\L^\g}_{s\wedge \cd} = \mu^\g_{s\wedge \cd}$ by
  induction. Again the case $s=t$ is obvious.  Assume it holds for all
  $r<s$. Now for $s$, recalling \reff{da} we have \beaa \dis
  \mu^{\L^\g}_{s\wedge \cd}(\tilde \bx) &=& \int_{\cA^t_{path}}
  \big[\prod_{r=t}^{s-1} q(r, \tilde\bx, \mu^\g, \a(r, \tilde\bx);
  \tilde\bx_{r+1})\big] \big[\mu(\bx) \prod_{r=t}^{T-1} \prod_{\bar
    \bx\in \dbX^{t, \bx}_r} \g(r, \bar \bx, d\a(r, \bar \bx))\big]\\
  \dis &=& \mu(\bx) \Big[ \prod_{r=t}^{s-1} \int_{\dbA} q(r,
  \tilde\bx, \mu^\g, \a(r, \tilde\bx); \tilde\bx_{r+1}) \g(r, \tilde
  \bx, d\a(r, \bar \bx))\Big]\times\\ &&\dis \Big[ \prod_{r=t}^{s-1}
  \prod_{\bar\bx \in \dbX^{t,\bx}_r\backslash \{\tilde \bx\}} \g(r,
  \bar \bx, \dbA)\Big]\times \Big[\prod_{r=s}^{T-1} \prod_{\bar \bx\in
    \dbX^{t, \bx}_r} \g(r, \bar \bx, \dbA)\Big]\\ &=& \mu(\bx)
  \prod_{r=t}^{s-1} \int_{\dbA} q(r, \tilde\bx, \mu^\g, a;
  \tilde\bx_{r+1})\g(r, \tilde \bx, da) = \mu^\g_{s\wedge \cd}(\tilde
  \bx).  \eeaa
  
  We finally prove \reff{JgL}.  For each $s\ge t$ and
  $\tilde\bx\in \dbX^{t,\bx}_s$, by Fubini Theorem again we have \beaa
  &\dis \int_\dbA F(s, \tilde \bx, \mu^\L, a) \g^\L(s, \tilde \bx, da)
  = \int_\dbA {F(s, \tilde \bx, \mu^\L, a)\over \mu^\L_{s\wedge
      \cd}(\tilde \bx)} \int_{\cA^t_{path}} Q^t_s(\mu^\L; \tilde \bx;
  \a) \d_{\a(s, \tilde \bx)}(da) \L(\bx, d\a)\\ &\dis = {1\over
    \mu^\L_{s\wedge \cd}(\tilde \bx)} \int_{\cA^t_{path}}F(s, \tilde
  \bx, \mu^\L, \a(s, \tilde \bx)) Q^t_s(\mu^\L; \tilde \bx; \a)
  \L(\bx, d\a) \eeaa By \reff{relaxJ} we have
  $\dbP^{\mu^\L;t, \bx, \g^\L} (X=_s \tilde \bx) = {\mu^\L_{s\wedge
      \cd}(\tilde\bx) \over \mu(\bx)}$. Thus \beaa &&\dis J(t, \mu,
  \g^\L; \bx, \g^\L) \\ &&\dis = {1\over \mu(\bx)}\Big[ \sum_{\tilde
    \bx \in \dbX^{t, \bx}} G(\tilde\bx, \mu^\L) \mu^\L_{T\wedge
    \cd}(\tilde \bx) + \sum_{s=t}^{T-1} \sum_{\tilde \bx \in \dbX^{t,
      \bx}_s} \mu^\L_{s\wedge\cd}(\tilde \bx)\int_{\dbA} F(s, \tilde
  \bx, \mu^\L, a) \g^\L(s, \tilde \bx, da)\Big]\\ &&\dis = {1\over
    \mu(\bx)} \int_{\cA^t_{path}}\Big[ \sum_{\tilde \bx \in \dbX^{t,
      \bx}} G(\tilde\bx, \mu^\L)Q^t_T(\mu^\L; \tilde \bx; \a) \\
  &&\dis\qq + \sum_{s=t}^{T-1} \sum_{\tilde \bx \in \dbX^{t, \bx}_s}
  F(s, \tilde \bx, \mu^\L, \a(s, \tilde \bx)) Q^t_s(\mu^\L; \tilde
  \bx; \a) \Big] \L(\bx, d\a).  \eeaa This implies \reff{JgL}
  immediately.
\end{proof}

\begin{rem}
\label{rem-Lginverse} We can actually show that $\g^{(\L^\g)} = \g$
for all $\g\in \cA_{relax}$, see Appendix. However, it is not clear
that we would have $\L^{(\g^\L)} = \L$ for all $\L\in \Xi_t(\mu)$.
\end{rem}

\no
{\bf Proof of Theorem \ref{thm-equivalence}.}
  Since $\mu\in \cP_0(\dbX_t)$ has full support, then
  $\dis c_\mu := \inf_{\bx\in \dbX_t} \mu(\bx)>0$.

  (i) We first prove $\dbV_{global}(t, \mu) \subset \dbV_{relax}(t,
  \mu)$. Fix $\f\in \dbV_{global}(t, \mu)$ and $\e>0$. Let $\e_1>0$ be a
  small number which will be specified later. Since $\f\in
  \dbV^{\e_1}_{global}(t, \mu)$, there exists $\L^* \in
  \cM^{\e_1}_{global}(t,\mu)$ such that $\|\f - v(t, \L^*;
  \cd)\|_{\dbX_t}\le \e_1$. Set $\g^* := \g^{\L^*}$.  For any $\bx\in
  \dbX_t$, since $\mu^{\g^*} = \mu^{\L^*}$, by \reff{relaxJ},
  \reff{LamdaJ} we have $v(\mu^{\g^*};t, \bx, \g^*) = v(t, \L^*; \bx)$,
  and, by \reff{JgL}, \reff{LamdaMFE}, \beaa J(t, \mu, \g^*; \bx, \g^*)
  - v(t, \L^*; \bx) = {1\over \mu(\bx)}\int_{\cA^t_{path}}\!\!\!\! [J(t,
  \L^*; \bx, \a) - v(t, \L^*; \bx)] \L^*(\bx, d\a)\le {\e_1\over c_\mu}
  \le \e, \eeaa provided $\e_1>0$ is small enough. This implies $\g^*
  \in \cM^\e_{relax}(t, \mu)$.

  Moreover, it is clear now that, for any $\bx\in \dbX_t$ and for a
  possibly smaller $\e_1$, \beaa \big|\f(\bx) - J(t, \mu, \g^*; \bx,
  \g^*)\big| \le \e_1+ \big|v(t, \L^*; \bx) - J(t, \mu, \g^*; \bx,
  \g^*)\big| \le \e_1 + {\e_1\over c_\mu} \le \e, \eeaa Then $\f\in
  \dbV^\e_{relax}(t,\mu)$, and since $\e>0$ is arbitrary, we obtain
  $\f\in \dbV_{relax}(t,\mu)$.

  (ii) We next prove $\dbV_{relax}(t, \mu) \subset \dbV_{global}(t,
  \mu)$. Fix $\f\in \dbV_{relax}(t, \mu)$, $\e>0$, and set $\e_2 :=
  {\e\over 2}$.  Since $\f\in \dbV^{\e_2}_{relax}(t, \mu)$, there exists
  $\g^* \in \cM^{\e_2}_{relax}(t,\mu)$ such that $\|\f - J(t, \mu, \g^*;
  \cd, \g^*)\|_{\dbX_t}\le \e_2$. Set $\L^* := \L^{\g^*}$, then
  $\mu^{\L^*} = \mu^{\g^*}$. Since $\g^* \in \cM^{\e_2}_{relax}(t,\mu)$,
  we have \beaa |\f(\bx) - v(t,\L^*; \bx)| = |\f(\bx) - v(\mu^{\g^*};t,
  \bx)| \le 2\e_2 \le \e,\q\forall \bx\in \dbX_t.  \eeaa Moreover, note
  that, by \reff{JgL} again, \bea
  \label{gLMFE} \left.\ba{c} \dis \int_{\cA^t_{path}} [J(t, \L^*; \bx,
    \a) - v(t, \L^*; \bx)] \L^*(\bx, d\a) \\ \dis =\mu(\bx)[ J(t, \mu,
    \g^*; \bx, \g^*) - v(t, \L^*; \bx) ] \le \mu(\bx) \e_2 \le \e_2 \le
    \e.  \ea\right.  \eea This implies $\f\in \dbV^\e_{global}(t, \mu)$,
  and hence by the arbitrariness of $\e$, $\f\in \dbV_{global}(t, \mu)$.
\qed

 \section{The $N$-player game with heterogeneous equilibria}
 \label{sect-N2} \setcounter{equation}{0} In this section we drop the
requirement $\a^1=\cds=\a^N$ for the $N$-player game, and show that
the corresponding set value converges to $\dbV_{relax}$, which in
general is strictly larger than $\dbV_{state}$. We note that we shall
still use the pure strategies, rather than mixed strategies, for the
$N$-player game. Moreover, since we used path dependent controls in
Section \ref{sect-relax}, we shall also use path dependent controls
here.

\subsection{The $N$-player game} Let $\O^N$ and $\vec X$ be as in
Section \ref{sect-N1}, and denote
\begin{equation}
  \label{muN2} \mu^N_{t\wedge \cd} := \mu^N_{t, \vec X_{t\wedge
      \cd}},\hspace{0.5em}
  \mbox{where}\q \mu^N_{t, \vec \bx}:= {1\over N} \sum_{i=1}^N
  \d_{\bx^i} \in \cP(\dbX_t),~ \vec \bx =(\bx^1,\cds, \bx^N)\in
  \dbX^N_t.  
\end{equation}
Similarly to \reff{SN0}, for the convenience of the presentation we
introduce \bea
\label{XN0} \dbX^N_{0,t}:= \Big\{\vec \bx\in \dbX^N_t: \supp(\mu^N_{t,
\vec \bx}) = \dbX_t\Big\},\q \cP_N(\dbX_t) := \Big\{\mu^N_{t, \vec
\bx}: \vec \bx\in \dbX^N_{0,t}\Big\}.  \eea We shall consider path
dependent symmetric controls: $\cA^{t,\infty}_{path}:= \bigcup_{L\ge
0} \cA^{t,L}_{path}$, where \beaa &\dis \cA^{t,L}_{path}:=\Big\{ \a:
\{t,\cds, T-1\} \times \dbX \times \cP(\dbX)\to \dbA \Big|~ \mbox{$\a$
is adapted and}\\ &\dis \mbox{uniformly Lipschitz continuous in $\mu$
(under $W_1$) with Lipschitz constant $L$}\Big\}.  \eeaa Given $t\in
\dbT$, $\vec \bx\in \dbX^N_{0,t}$, and $\vec \a =(\a^1,\cds,\a^N)\in
(\cA^{t,\infty}_{path})^N$, introduce, for $s\ge t$, \bea
\label{PtxaPath} \left.\ba{c} \dis \dbP^{t,{\vec \bx}, {\vec \a}}(\vec
X =_t \vec \bx) = 1,~ \dbP^{t,{\vec \bx}, {\vec \a}}(\vec X_{s+1}=\vec
x'' | \vec X =_s \vec \bx') = \prod_{i=1}^N q(s, \bx'^i, \mu^N,
\a^i(s, \bx'^i, \mu^N); x''_i),\\ \dis J_i(t, \vec \bx, \vec \a) :=
\dbE^{\dbP^{t,{\vec \bx}, {\vec \a}}}\Big[G(X^i, \mu^N) +
\sum_{s=t}^{T-1} F(s, X^i, \mu^N, \a^i(s, X^i, \mu^N))\Big];\\ \dis
v^{N,L}_i(t, \vec\bx, \vec\a) := \inf_{\tilde\a\in \cA^{t,L}_{path}}
J_i(t, \vec \bx, \vec \a^{-i}, \tilde \a),\q ~ i=1,\cds,N.  \ea\right.
\eea Here $(\vec \a^{-i}, \tilde \a)$ is the vector obtained by
replacing $\a^i$ in $\vec \a$ with $\tilde \a$.

\begin{defn}
\label{defn-NEPath} For any $\e>0, L\ge 0$, we say $\vec\a \in
(\cA^{t,L}_{path})^N$ is an $(\e, L)$-equilibrium of the $N$-player
game at $(t, \vec \bx)$, denoted as $\vec\a\in \cM^{N,\e,
L}_{hetero}(t,\vec\bx)$, if: \bea
\label{NEPath} {1\over N} \sum_{i=1}^N \big[J_i(t, \vec \bx, \vec \a)
-v^{N,L}_i(t, \vec\bx, \vec\a)\big] \le \e.  \eea
\end{defn} Here, since there are $N$ players and we will send $N\to
\infty$, similar to \reff{LamdaMFE} we do not require the optimality
for each player.  In fact, by \reff{NEPath} one can easily show that
\bea
\label{Carmona} {1\over N}\Big|\big\{i=1,\cds, N: J_i(t, \vec \bx,
\vec \a) -v^{N,L}_i(t, \vec\bx, \vec\a) \ge \sqrt{\e}\big\}\Big|\le
\sqrt{\e}.  \eea This is exactly the $(\sqrt{\e},
\sqrt{\e})$-equilibrium in \cite{Carmona}.

We then define the set value of the $N$-player game with heterogeneous
equilibria:
\bea
\label{VNtmuG} \left.\ba{c} \dis\dbV^N_{hetero}(t,\vec\bx) :=
\bigcap_{\e>0} \dbV^{N, \e}_{hetero}(t,\vec\bx) := \bigcap_{\e>0}
\bigcup_{L\ge 0} \dbV^{N, \e, L}_{hetero}(t,\vec\bx),\\ \dis
\mbox{where} \q \dbV^{N, \e, L}_{hetero}(t,\vec\bx) := \Big\{\f\in
\dbL^0(\dbX_t; \dbR): \exists \vec\a\in \cM^{N, \e,
L}_{hetero}(t,\vec\bx)~\mbox{such that}\\ \dis
\max_{\bx\in\dbX_t}\min_{\{i: ~\bx^i=\bx\}} \big|\f(\bx) -
v^{N,L}_i(t, \vec\bx, \vec\a)\big| \le \e\Big\}.  \ea\right.  \eea

\begin{rem}
  (i) An alternative definition of
  $\dbV^{N, \e, L}_{hetero}(t,\vec\bx)$ is to require $\f$ satisfying
  \bea
  \label{VNtmuGalternative}
  \max_{i=1,\cds, N} \big|\f(\bx^i) - v^{N,L}_i(t, \vec\bx,
  \vec\a)\big| = \max_{\bx\in\dbX_t}\max_{\{i:~ \bx^i=\bx\}}
  \big|\f(\bx) - v^{N,L}_i(t, \vec\bx, \vec\a)\big| \le \e.  \eea
  Indeed, the convergence result Theorem \ref{thm-FinNonSymConv} below
  remains true if we use \reff{VNtmuGalternative}. However, in general
  it is possible that $\bx^i = \bx^j$ but
  $v^{N,L}_i(t, \vec\bx, \vec\a) \neq v^{N,L}_j(t, \vec\bx,
  \vec\a)$. Then, by fixing $N$ and sending $\e\to 0$, under
  \reff{VNtmuGalternative} we would have
  $\dbV^N_{hetero}(t,\vec\bx) := \bigcap_{\e>0} \dbV^{N,
    \e}_{hetero}(t,\vec\bx) = \emptyset$.

  (ii) In the homogeneous case,
  $v^{N,L}_i(t, \vec\bx, \vec\a) = v^{N,L}_j(t, \vec\bx, \vec\a)$
  whenever $\bx^i=\bx^j$, so we don't have this issue in \reff{VtmuN}.

  (iii) Note that $\mu_{t,\vec\bx}^N=\mu_{t,\vec\bx'}^N$ if and only
  if $\vec\bx$ is a permutation of $\vec\bx'$, and one can easily
  verify that
  $v^{N,L}_i(t, \vec\bx, \vec\a) = v^{N,L}_{\pi(i)}(t,
  (\bx_{\pi(1)},\cds, \bx_{\pi(N)}), (\a_{\pi(1)},\cds, \a_{\pi(N)}))$
  for any permutation $\pi$ on $\{1,\cds, N\}$, . Then, similar to the
  homogenous case, $\dbV^{N, \e, L}_{hetero}(t,\vec\bx)$ is invariant
  in $\mu_{t,\vec\bx}^N$ and we will denote is as
  $\dbV^{N, \e, L}_{hetero}(t,\mu_{t,\vec\bx}^N)$.
\end{rem}

The following convergence result of the set value is in the same
spirit of Theorem \ref{thm-FinSymConv}.
\begin{thm}
\label{thm-FinNonSymConv} Let Assumption \ref{assum-reg} hold and
$\mu^N_{t,\vec \bx}\in \cP_N(\dbX_t)\to \mu\in \cP_0(\dbX_t)$ under
$W_1$. Then \bea
 \label{FinNonSymConv} \bigcap_{\e>0}\bigcup_{L\ge 0}\limsup_{N\to
\infty} \dbV^{N, \e, L}_{hetero}(t, \mu^N_{t,\vec \bx}) \subset
\dbV_{relax}(t,\mu) \subset \bigcap_{\e>0}
\liminf_{N\to\infty}\dbV^{N, \e, 0}_{hetero}(t,\mu^N_{t,\vec \bx}).
\eea In particular, since $\dis \liminf_{N\to\infty}\dbV^{N, \e,
0}_{hetero}(t,\mu^N_{t,\vec \bx}) \subset \bigcup_{L\ge
0}\limsup_{N\to \infty} \dbV^{N, \e, L}_{hetero}(t,\mu^N_{t,\vec
\bx})$, actually equalities hold.
\end{thm}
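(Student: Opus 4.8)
The plan is to carry the whole argument through the global formulation of the relaxed MFG, exploiting $\dbV_{relax}(t,\mu)=\dbV_{global}(t,\mu)$ from Theorem \ref{thm-equivalence}, and to mirror the two-inclusion structure of Theorem \ref{thm-FinSymConv}. The bridge between a heterogeneous profile $\vec\a=(\a^1,\cds,\a^N)\in(\cA^{t,L}_{path})^N$ at $\vec\bx\in\dbX^N_{0,t}$ and an element of $\Xi_t$ is the empirical control measure. Given $\vec\bx,\vec\a$, define a deterministic flow $\bar\mu$ by forward induction ($\bar\mu_{t\wedge\cd}=\mu^N_{t,\vec\bx}$, and $\bar\mu_{(s+1)\wedge\cd}$ obtained from $\bar\mu_{\le s}$ through the mean-field transition evaluated at $\a^i(s,\cd,\bar\mu_s)$), set the frozen controls $\bar\a^i(s,\cd):=\a^i(s,\cd,\bar\mu_s)\in\cA^t_{path}$, and let $\L^{N}:=\frac1N\sum_{i=1}^N\d_{(\bx^i,\bar\a^i)}\in\Xi_t(\mu^N_{t,\vec\bx})$, so that $\mu^{\L^N}=\bar\mu$. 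First I would prove the heterogeneous analogs of Theorems \ref{thm-FinSymMeasureConv} and \ref{thm-FinSymCostConv}: under $\dbP^{t,\vec\bx,(\vec\a^{-i},\tilde\a)}$ the empirical flow concentrates, $\dbE[W_1(\mu^N_s,\bar\mu_s)]\le C_L\th_N$, whence $|J_i(t,\vec\bx,\vec\a)-J(t,\L^{N};\bx^i,\bar\a^i)|+|v^{N,L}_i(t,\vec\bx,\vec\a)-v(t,\L^{N};\bx^i)|\le\rho_L(\th_N)$. The proofs repeat the variance/law-of-large-numbers estimate of Theorem \ref{thm-FinSymMeasureConv}; the only change is that the $N$ conditionally independent increments are no longer identically distributed, but the per-player Lipschitz-in-$\mu$ bound and Assumption \ref{assum-reg} still yield the same $\th_N$-rate, and a single deviation shifts the flow only by $O(1/N)$.

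For the right inclusion, fix $\f\in\dbV_{relax}(t,\mu)=\dbV_{global}(t,\mu)$ and $\e>0$, and (using $\f\in\dbV^{\e_1}_{global}(t,\mu)$ with $\e_1\le \e/(2|\dbX_t|)$) pick $\L^*\in\cM^{\e_1}_{global}(t,\mu)$ with $\|\f-v(t,\L^*;\cd)\|_{\dbX_t}\le\e_1$. Since $\mu\in\cP_0(\dbX_t)$ and $\mu^N_{t,\vec\bx}\to\mu$, the count of players at each $\bx$ tends to $\infty$, so I would quantize: assign to the players sitting at $\bx$ pure controls $\a^i\in\cA^t_{path}=\cA^{t,0}_{path}$ whose empirical law approximates the slice $\L^*(\bx,\cd)/\mu(\bx)$, giving $\L^{N}\to\L^*$ in $W_1$. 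With $L=0$ one has $\bar\a^i=\a^i$, so by the cost convergence
\[
\tfrac1N\sum_{i=1}^N\big[J_i-v^{N,0}_i\big]\ \longrightarrow\ \sum_{\bx\in\dbX_t}\int_{\cA^t_{path}}\big[J(t,\L^*;\bx,\a)-v(t,\L^*;\bx)\big]\,\L^*(\bx,d\a)\ \le\ \e_1\,|\dbX_t|\ \le\ \tfrac\e2 ,
\]
so $\vec\a\in\cM^{N,\e,0}_{hetero}(t,\vec\bx)$ for $N$ large, while $v^{N,0}_i\to v(t,\L^*;\bx^i)$ forces $\max_{\bx}\min_{i:\bx^i=\bx}|\f(\bx)-v^{N,0}_i|\to\|\f-v(t,\L^*;\cd)\|_{\dbX_t}\le\e_1$. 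Hence $\f\in\dbV^{N,\e,0}_{hetero}(t,\mu^N_{t,\vec\bx})$ for all large $N$, i.e. $\f\in\liminf_{N\to\infty}\dbV^{N,\e,0}_{hetero}$; intersecting over $\e$ gives the right inclusion.

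For the left inclusion, fix $\f$ in the left-hand set and $\e_1>0$; there are $L_\e$ and a subsequence $N_k$ with $\vec\a^k\in\cM^{N_k,\e_1,L_\e}_{hetero}$ realizing $\f$. Form $\L^k:=\L^{N_k}\in\Xi_t(\mu^{N_k}_{t,\vec\bx})$ as above. Assuming (as is standard) $\dbA$ compact, so that $\cA^t_{path}$ and hence $\cP(\dbX_t\times\cA^t_{path})$ are compact, extract $\L^k\to\L^*\in\Xi_t(\mu)$ along a further subsequence. The maps $\L\mapsto\mu^\L$ (same forward recursion, Lipschitz in $W_1$), $\L\mapsto J(t,\L;\cd,\cd)$, and $\L\mapsto v(t,\L;\cd)$ are continuous by Assumption \ref{assum-reg}, the last two uniformly in $(\bx,\a)$; passing to the limit in the averaged optimality $\frac1{N_k}\sum_i[J_i-v^{N_k,L_\e}_i]\le\e_1$ turns it (via the cost convergence) into $\sum_{\bx}\int_{\cA^t_{path}}[J(t,\L^*;\bx,\a)-v(t,\L^*;\bx)]\L^*(\bx,d\a)\le\e_1$. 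Since $J\ge v$ each summand is nonnegative, hence each is $\le\e_1$, which is exactly $\L^*\in\cM^{\e_1}_{global}(t,\mu)$; the value match passes to the limit through $v^{N_k,L_\e}_i\to v(t,\L^*;\bx^i)$, giving $\|\f-v(t,\L^*;\cd)\|_{\dbX_t}\le\e_1$, so $\f\in\dbV^{\e_1}_{global}(t,\mu)$. Letting $\e_1\to0$ yields $\f\in\dbV_{global}(t,\mu)=\dbV_{relax}(t,\mu)$, and the final equalities follow from the elementary inclusion noted in the statement.

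I expect the main obstacle to lie in the two auxiliary facts rather than in the set-theoretic bookkeeping: first, the heterogeneous concentration estimate, where the empirical-measure increments are independent but not identically distributed and one must track how the per-player Lipschitz-in-$\mu$ controls interact through the common $\mu^N$; and second, the stability of the value map $\L\mapsto v(t,\L;\cd)$ under $W_1$-convergence of $\L$, since $v$ is an infimum over $\cA_{relax}$ and passing limits through it in both directions is what forces the uniform continuity in $(\bx,\a)$ and the compactness of the control space (matching, for $L=0$, the $\cA_{relax}$-infimum with the $\cA^t_{path}$-infimum by affineness of the dynamics and cost in the control). The one genuinely new structural point, compared with Theorem \ref{thm-FinSymConv}, is the decomposition of the single averaged equilibrium inequality into the per-$\bx$ global-MFE inequalities of Definition \ref{defn-LamdaMFE}, which is exactly what nonnegativity of $J-v$ delivers.
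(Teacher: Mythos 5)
Your skeleton is indeed the paper's: pass through the global formulation via Theorem \ref{thm-equivalence}, freeze each player's control along a deterministic flow (your $\bar\mu$, $\bar\a^i$ are exactly the paper's $\nu^N$, $\bar\a_i$ in \reff{nuN}), prove heterogeneous concentration and cost estimates (the paper's Theorems \ref{thm-NonSymMeasureConv1}, \ref{thm-NonSymCostEst1}, \ref{thm-NonSymConv2}), and use nonnegativity of $J-v$ to pass between the averaged optimality and the per-$\bx$ inequalities of Definition \ref{defn-LamdaMFE}. But your left inclusion has a genuine gap: you assume $\dbA$ compact in order to extract a weak limit $\L^*$ of $\L^{N_k}$. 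The theorem assumes only Assumption \ref{assum-reg}, under which $\dbA$ is an arbitrary measurable subset of $\dbR^{d_0}$, and avoiding exactly this compactness argument is an advertised feature of the paper (its proof is contrasted with the compactness arguments of Lacker et al.\ in the introduction). The detour is also unnecessary: with the estimates you already set up, no limit point is needed, because the empirical object \emph{itself}, reweighted so that its first marginal is exactly $\mu$ rather than $\mu^N_{t,\vec\bx}$ --- the paper's $\bar\L^N(\bx,d\a) = \frac{\mu(\bx)}{|I(\bx)|}\sum_{i\in I(\bx)}\d_{\bar\a_i}(d\a)$ --- belongs to $\cM^\e_{global}(t,\mu)$ for $k$ large, by \reff{FinNonSymMFE1}. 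Note that your $\L^N=\frac1N\sum_i\d_{(\bx^i,\bar\a^i)}$ lies in $\Xi_t(\mu^N_{t,\vec\bx})$, not in $\Xi_t(\mu)$, so before any limit it is not even an admissible candidate in Definition \ref{defn-LamdaMFE}; the reweighting is what fixes the marginal without compactness. Finally, the continuity of $\L\mapsto(\mu^\L, v(t,\L;\cd))$ under weak convergence, on which your limit passage rests, is left unproved (you flag it yourself as the main obstacle), whereas the paper's route never needs it.

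The right inclusion has the same soft spot in a different guise. Your claim that assigning pure controls with empirical law close to the slices gives $\L^N\to\L^*$ in $W_1$ glosses over non-compactness again: an unbounded $\dbA$ admits no finite partition into small cells, and converting $W_1$-closeness of $\L^N$ into closeness of $J_i$ and of the infima $v^{N,0}_i$ is once more the unestablished stability-in-$\L$. The paper circumvents both issues by discretizing the relaxed control $\g$ rather than a general $\L$: the partition \reff{Ae} keeps an exceptional cell $A_0$ of $\g$-mass at most $\e$, the product structure of $\L^{\g^\e}$ makes its support the finite set $\cA^{t,\e}_{path}$, the integer rounding \reff{LdN} is then trivial, and the finite support yields the explicit $C_\e$-constants in Theorem \ref{thm-NonSymConv2}; the paper even remarks that discretizing $\g$ is easier than discretizing a general $\L\in\Xi_t(\mu)$. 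Your direct quantization of $\L^*$ is plausibly repairable by inserting the same truncation and redoing the finite-support error analysis, but as written both inclusions lean on a compactness hypothesis and a continuity lemma that the theorem does not grant and that the paper's explicit constructions are designed precisely to avoid.
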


Unlike Theorem \ref{thm-FinSymConv}, here the $N$-player game and the
MFG take different types of controls $\vec \a$ and $\g$,
respectively. The key for the convergence is the global formulation in
Subsection \ref{sect-global} for MFG. Indeed, given $t\in \dbT$,
$\vec\bx\in \dbX^N_{0,t}$, and $\vec\a\in (\cA^{t, L}_{path})^N$, the
$N$-player game is naturally related to the following $\L^N\in
\cP(\dbX_t \times \cA^{t,L}_{path})$:
\begin{equation}
  \label{LamdaN} \L^N(\bx, d\a) := {1\over N} \sum_{i\in I(\bx)}
  \d_{\a_i}(d\a),~ \mbox{where}~ I(\bx) := \big\{i=1,\cds, N:
  \bx^i=\bx\big\},~ \bx\in \dbX_t.  
\end{equation}
By the symmetry of the problem, there exists a function $J^N$,
independent of $i$, such that \bea
\label{JiN} J_i(t, \vec \bx, \vec\a) = J^N(\L^N; t, \bx^i, \a_i),\q
i=1,\cds, N.  \eea We shall use this and Theorem \ref{thm-equivalence}
to prove Theorem \ref{thm-FinNonSymConv} in the rest of this
section. We also make the following obvious observation: \bea
\label{I} \L^N(\bx, \cA^t_{path}) = {|I(\bx)|\over N} =
\mu^N_{t,\vec\bx}(\bx),\q\forall \bx\in \dbX_t.  \eea

\begin{rem}
  \label{rem-fullweak}
  (i) In this section we are using symmetric controls and we obtain
  the convergence in Theorem \ref{thm-FinNonSymConv}. If we use full
  information controls $\a_i(t, \vec X)$, as observed in
  \cite{Lacker2} in terms of the equilibrium measure, one may expect
  the limit set value will be strictly larger than $\dbV_{relax}$. It
  will be interesting to find an appropriate notion of MFE so that the
  corresponding MFG set value will be equal to the above limit, in the
  sense of Theorem \ref{thm-FinNonSymConv}.

  (ii) While the convergence in Theorem \ref{thm-FinNonSymConv} is
  about set values, the proofs in the rest of this section confirm the
  convergence of the approximate equilibria as well, exactly in the
  same manner as in Remark \ref{rem-conv}.
\end{rem}

\subsection{From $N$-player games to mean field games} In this
subsection we prove the left inclusion in \reff{FinNonSymConv}.
Notice that the $\L^N$ in \reff{LamdaN} is defined on $\cA^{t,
L}_{path}$, rather than $\cA^t_{path} = \cA^{t, 0}_{path}$. For this
purpose, recall \reff{Q} and introduce \bea
\label{nuN} \left.\ba{c} \dis \nu^N_{t\wedge \cd}(\bx):= \mu^N_{t,
\vec\bx}(\bx),~ \nu^N_{s\wedge \cd}(\tilde\bx) := {1\over N}
\sum_{i\in I(\bx)} Q^t_s(\nu^N; \tilde \bx, \a_i(\cd,\cd, \nu^N)),~
\bx\in \dbX_t, \tilde \bx\in \dbX^{t, \bx}_s, s\ge t;\\ \dis \bar
\L^N(\bx, d\a) := {\mu(\bx)\over |I(\bx)|} \sum_{i\in I(\bx)} \d_{\bar
\a_i}(d\a),\q\mbox{where}\q \bar \a_i(s, \tilde \bx) := \a_i(s, \tilde
\bx, \nu^N).  \ea\right.  \eea Then it is obvious that $\bar \a_i
\in\cA^t_{path}$ and $\bar \L^N \in \Xi_t(\mu)$.  Moreover, when
$\mu=\mu^N_{t, \vec\bx}$, by \reff{LQ} and \reff{I} it is
straightforward to verify by induction that $\mu^{\bar \L^N} = \nu^N$.

\begin{thm}
  \label{thm-NonSymMeasureConv1} Let Assumption \ref{assum-reg} (ii)
  hold. Then, for any $L\ge 0$, there exists a constant $C_L$,
  depending only on $T, d, L_q$, and $L$ such that, for any
  $t\in \dbT$, $\vec \bx\in \dbX_{0,t}^N$, $\mu\in \cP_0(\dbX_t)$,
  $\vec\a\in (\cA_{path}^{t,L})^N, \tilde \a\in \cA^{t,L}_{path}$, and
  for the $\nu^N, \bar\L^N$ defined in \reff{nuN}, we have
  \begin{equation}
    \begin{aligned}
      \label{NonSymMeasureEst1}
      &\max_{1\le i\le N}\max_{t\le s\le
        T}\dbE^{\dbP^{t, \vec \bx, (\vec\a^{-i}, \tilde
          \a)}}\big[\cW_1(\mu^N_{s\wedge\cd}, \mu^{\bar\L^N}_{s\wedge\cd})\big]
      \le C_L\th_N, ~~\th_N:= W_1(\mu^N_{t,\vec\bx}, \mu) +
      {1\over \sqrt{N}}.  
    \end{aligned}
  \end{equation}
\end{thm}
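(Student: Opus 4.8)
The plan is to prove \reff{NonSymMeasureEst1} by inserting the deterministic flow $\nu^N$ from \reff{nuN} as an intermediate object and using the triangle inequality
\[
\cW_1(\mu^N_{s\wedge\cd},\mu^{\bar\L^N}_{s\wedge\cd}) \le \cW_1(\mu^N_{s\wedge\cd},\nu^N_{s\wedge\cd}) + \cW_1(\nu^N_{s\wedge\cd},\mu^{\bar\L^N}_{s\wedge\cd}),
\]
controlling the first (random) term by the finite-$N$ fluctuation $1/\sqrt N$ and the second (deterministic) term by the initial mismatch $W_1(\mu^N_{t,\vec\bx},\mu)$. Write $\dbP^N := \dbP^{t,\vec\bx,(\vec\a^{-i},\tilde\a)}$ throughout, and note that $\nu^N$ is built with the \emph{same} initial data as the empirical flow, $\nu^N_{t\wedge\cd}=\mu^N_{t\wedge\cd}=\mu^N_{t,\vec\bx}$, so that the first term starts from zero, while $\mu^{\bar\L^N}_{t\wedge\cd}=\mu$ accounts for the $W_1(\mu^N_{t,\vec\bx},\mu)$ in the second.

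For the first term I would mimic the proof of Theorem \ref{thm-FinSymMeasureConv}. Condition on $\cF^{\vec X}_s$: since the transitions $X^j_{s+1}$ are conditionally independent across $j$, the conditional variance of each coordinate $\mu^N_{(s+1)\wedge\cd}(\tilde\bx)=\frac1N\sum_j\indb{X^j_{(s+1)\wedge\cd}=\tilde\bx}$ is at most $1/(4N)$, so by Cauchy--Schwarz and the finiteness of $\dbX_{s+1}$ the fluctuation around the conditional mean is $\le C_d/\sqrt N$. The conditional mean equals $\frac1N\sum_j\indb{X^j_{s\wedge\cd}=\tilde\bx_{s\wedge\cd}}\,q(s,\tilde\bx,\mu^N_{s\wedge\cd},\hat\a^j(s,\tilde\bx,\mu^N_{s\wedge\cd});\tilde\bx_{s+1})$, with $\hat\a^j:=\a^j$ for $j\neq i$ and $\hat\a^i:=\tilde\a$, which I compare to the one-step update of $\nu^N$ in \reff{nuN}. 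Three discrepancies arise: (i) the measure $\mu^N_{s\wedge\cd}$ versus $\nu^N_{s\wedge\cd}$ entering both $q$ and, through the $L$-Lipschitz controls, $\hat\a^j$, handled by Assumption \ref{assum-reg}(ii) and giving a term $\le C_L\,\cW_1(\mu^N_{s\wedge\cd},\nu^N_{s\wedge\cd})$; (ii) the single perturbed coordinate $i$ using $\tilde\a$ in place of $\a^i$, contributing $\le C/N$; and (iii) matching the realized empirical occupation against the deterministic weights $Q^t_{s+1}(\nu^N;\cdot)$ defining $\nu^N$, which closes by the induction hypothesis. Taking conditional expectations and iterating over the finitely many $t\le s\le T$ yields $\dbE^{\dbP^N}[\cW_1(\mu^N_{s\wedge\cd},\nu^N_{s\wedge\cd})]\le C_L/\sqrt N$, uniformly in $i$.

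For the second term I would establish the purely deterministic stability bound $\cW_1(\nu^N_{s\wedge\cd},\mu^{\bar\L^N}_{s\wedge\cd})\le C_L\,W_1(\mu^N_{t,\vec\bx},\mu)$. The point is that $\nu^N$ and $\mu^{\bar\L^N}$ are driven by the \emph{same} controls $\bar\a_k=\a_k(\cdot,\cdot,\nu^N)$ and differ only in the initial weights, $\frac1N$ per $k\in I(\bx)$ for $\nu^N$ versus $\frac{\mu(\bx)}{|I(\bx)|}$ for $\mu^{\bar\L^N}$, and in the measure fed into $q$, self-consistently $\nu^N$ versus $\mu^{\bar\L^N}$. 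Writing $R_\bx:=\mu(\bx)/\mu^N_{t,\vec\bx}(\bx)$ and using \reff{LQ} and \reff{I}, the difference $\mu^{\bar\L^N}_{s\wedge\cd}(\tilde\bx)-\nu^N_{s\wedge\cd}(\tilde\bx)$ splits into a weight part involving $(R_\bx-1)$ and a kernel part $Q^t_s(\mu^{\bar\L^N};\cdot)-Q^t_s(\nu^N;\cdot)$. Summing over $\tilde\bx$, the small denominators cancel: the weight part sums over each cone $\dbX^{t,\bx}_s$ to $|\mu(\bx)-\mu^N_{t,\vec\bx}(\bx)|$ (since $\sum_{\tilde\bx}Q^t_s=1$), and the kernel part telescopes through the $L_q$-Lipschitz continuity of $q$ to $\le C_L\sum_{r<s}\cW_1(\nu^N_{r\wedge\cd},\mu^{\bar\L^N}_{r\wedge\cd})$. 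A discrete Gr\"onwall argument over the finite horizon then gives the claim, and combining the two terms produces $\dbE^{\dbP^N}[\cW_1(\mu^N_{s\wedge\cd},\mu^{\bar\L^N}_{s\wedge\cd})]\le C_L(1/\sqrt N + W_1(\mu^N_{t,\vec\bx},\mu))=C_L\th_N$.

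The main obstacle is the drift comparison in the first term, step (iii): reconciling the realized empirical occupation $\frac1N\sum_j\indb{X^j_{s\wedge\cd}=\tilde\bx_{s\wedge\cd}}$ with the deterministic weights that define $\nu^N$, while simultaneously correcting for the frozen measure $\nu^N$ inside $\bar\a_k$ against the fluctuating $\mu^N_{s\wedge\cd}$ inside $\hat\a^j$. This is precisely where the $L$-Lipschitz regularity of the admissible controls is indispensable. The extra subtlety relative to the homogeneous Theorem \ref{thm-FinSymMeasureConv}, absent there, is that the controls now carry a player index, so the empirical average cannot be pulled out of $q$ directly and must instead be matched against $\bar\L^N$ coordinate by coordinate; isolating the cancellation of the $R_\bx$ denominators in the second term is the other place where care is needed.
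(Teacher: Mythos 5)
Your proposal is correct and follows essentially the same route as the paper's proof: the identical triangle-inequality decomposition through the intermediate flow $\nu^N$, the same conditional-variance-plus-Cauchy--Schwarz and Lipschitz induction yielding $\dbE^{\dbP^N}\big[\cW_1(\mu^N_{s\wedge\cd},\nu^N_{s\wedge\cd})\big]\le C_L/\sqrt N$ (with the deviating player contributing $O(1/N)$ and the occupation-matching term closed by the induction hypothesis), and the same deterministic weight/kernel splitting for $W_1(\nu^N_{s\wedge\cd},\mu^{\bar\L^N}_{s\wedge\cd})$, where the $R_\bx$ cancellation via $\sum_{\tilde\bx}Q^t_s=1$ produces exactly $W_1(\mu^N_{t,\vec\bx},\mu)$ and a discrete Gr\"onwall argument closes the estimate. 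You also correctly observe that in the second step both flows are driven by the same frozen controls $\bar\a_j=\a_j(\cdot,\cdot,\nu^N)$, so only the $L_q$-Lipschitz continuity of $q$ enters there, precisely as in the paper.
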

\begin{proof}
  Fix $i$ and denote $\tilde \a_j := \a_j$ for $j\neq i$, and
  $\tilde \a_i:= \tilde \a_i$.  We first show that \bea
  \label{muNnuN} \k_s := \dbE^{\dbP^N}\big[\cW_1(\mu^N_{s\wedge\cd},
  \nu^N_{s\wedge\cd})\big]\le {C_L\over \sqrt{N}},\q\mbox{where}\q
  \dbP^N:= \dbP^{t, \vec x, (\vec\a^{-i}, \tilde \a)}.  \eea Indeed,
  for $s\ge t$, by the conditional independence of
  $\{X^j_{s+1}\}_{1\le j\le N}$ under $\dbP^N$, conditional on
  $\cF_s$, it follows from the same arguments as in \reff{muNaest1}
  that \beaa \k_{s+1} &=& \dbE^{\dbP^{N}}\Big[
  \dbE^{\dbP^{N}}_{\cF_s}\big[\cW_1(\mu^N_{(s+1)\wedge \cd},
  \nu^N_{(s+1)\wedge \cd})\big]\Big] \\ &\le& {C\over \sqrt{N}} +
  C\sum_{\bx\in \dbX_{s+1}}\dbE^{\dbP^N}\Big[ \Big|{1\over
    N}\sum_{j=1}^N \dbP^{N}(X^j=_{s+1}\bx |\cF_s) -\nu^N_{(s+1)\wedge
    \cd}(\bx)\Big| \Big].  \eeaa Note that, \beaa &&\dis\Big|{1\over
    N}\sum_{j=1}^N \dbP^{N}(X^j=_{s+1}\bx |\cF_s) -{1\over
    N}\sum_{j=1}^N\1_{\{X^j=_s \bx\}} q(s, \bx, \nu^N, \a_j(s, \bx,
  \nu^N); \bx_{s+1}) \Big|\\ &&\dis =\Big| {1\over
    N}\sum_{j=1}^N\1_{\{X^j=_s \bx\}} \big[q(s, \bx, \mu^N,
  \tilde\a_j(s, \bx, \mu^N); \bx_{s+1}) - q(s, \bx, \nu^N, \a_j(s,
  \bx, \nu^N); \bx_{s+1})\big] \Big|\\ &&\dis \le
  C_LW_1(\mu^N_{s\wedge \cd}, \nu^N_{s\wedge\cd}) + {1\over N} = C_L
  \k_s +{1\over N}, \eeaa where in the last inequality, the first term
  is due to the sum over all $j\neq i$. Then \beaa &&\dis \k_{s+1} \le
  C_L \k_s + {C\over \sqrt{N}} + \dbE^{\dbP^N}\Big[ \sum_{\bx\in
    \dbX_{s+1}} \Big|{1\over N}\sum_{j=1}^N\1_{\{X^j=_s \bx\}} q(s,
  \bx, \nu^N, \a_j(s, \bx, \nu^N); \bx_{s+1}) \\ &&\dis\qq - {1\over
    N} \sum_{j\in I(\bx_{t\wedge\cd})} Q^t_s(\nu^N; \bx, \bar
  \a_j)q(s, \bx, \nu^N, \a_j(s, \bx, \nu^N); \bx_{s+1})\Big|\Big]\\
  &&\dis= C_L \k_s + {C\over \sqrt{N}} + \dbE^{\dbP^N}\Big[
  \sum_{\bx\in \dbX_{s}} \Big|{1\over N}\sum_{j=1}^N\1_{\{X^j=_s
    \bx\}} - {1\over N} \sum_{j\in I(\bx_{t\wedge\cd})} Q^t_s(\nu^N;
  \bx, \bar \a_j)\Big|\Big]\\ &&= C_L \k_s + {C\over \sqrt{N}} +
  \dbE^{\dbP^N}\Big[ \sum_{\bx\in \dbX_{s}} \big|\mu^N_{s\wedge
    \cd}(\bx)- \nu^N_{s\wedge \cd}(\bx)\big|\Big]\le C_L \k_s+{C\over
    \sqrt{N}}.  \eeaa It is obvious that $\k_t =0$.  Then by induction
  we obtain \reff{muNnuN}.

  Next, denote
  $\bar\k_s := W_1(\nu^N_{s\wedge \cd}, \mu^{\bar\L^N}_{s\wedge
    \cd})$. For $s\ge t$, by \reff{nuN}, \reff{LQ}, and \reff{Q}, we
  have \beaa &&\dis \bar \k_{s+1} = \sum_{\bx\in \dbX_t} \sum_{\tilde
    \bx\in \dbX^{t,\bx}_{s+1}}\big| \nu^N_{(s+1)\wedge \cd}(\tilde
  \bx) - \mu^{\bar\L^N}_{(s+1)\wedge \cd}(\tilde \bx)\big|\\ &&\dis =
  \sum_{\bx\in \dbX_t} \sum_{\tilde \bx\in \dbX^{t,\bx}_{s+1}}\big|
  {1\over N} \sum_{j\in I(\bx)} Q^t_{s+1}(\nu^N; \tilde \bx, \bar\a_j)
  -{\mu(\bx)\over |I(\bx)|} \sum_{j\in I(\bx)}
  Q^t_{s+1}(\mu^{\bar\L^N}; \tilde \bx, \bar\a_j)\big|\\ &&\dis =
  \sum_{\bx\in \dbX_t} \sum_{\tilde \bx\in \dbX^{t,\bx}_{s+1}}\Big[
  {1\over N} \sum_{j\in I(\bx)} \big|Q^t_{s+1}(\nu^N; \tilde \bx,
  \bar\a_j) - Q^t_{s+1}(\mu^{\bar\L^N}; \tilde \bx, \bar\a_j)\big|\\
  &&\dis \qq +\Big|{1\over N} - {\mu(\bx)\over |I(\bx)|}\Big|
  \sum_{j\in I(\bx)} Q^t_{s+1}(\mu^{\bar\L^N}; \tilde \bx,
  \bar\a_j)\Big]\\ &&\dis \le C \sum_{\bx\in \dbX_t} \sum_{\tilde
    \bx\in \dbX^{t,\bx}_{s+1}}\Big[ {1\over N} \sum_{j\in I(\bx)}
  \sum_{r=t}^s W_1(\nu^N_{r\wedge \cd}, \mu^{\bar\L^N}_{r\wedge \cd})
  +\Big|{1\over N} - {\mu(\bx)\over |I(\bx)|}\Big| |I(\bx)|\Big]\\
  &&\dis \le C \sum_{r=t}^s \bar\k_r +C\sum_{\bx\in
    \dbX_t}\big|\mu^N_{t, \vec\bx}(\bx) - \mu(\bx)\big| \le C
  \sum_{r=t}^s \bar\k_r.  \eeaa Obviously
  $\bar k_t = W_1(\mu^N_{t,\vec\bx}, \mu)$. Then by induction we have
  $\dis\sup_{t\le s\le T}\bar\k_s \le CW_1(\mu^N_{t,\vec\bx},
  \mu)$. This, together with \reff{muNnuN}, implies
  \reff{NonSymMeasureEst1} immediately.
\end{proof}

\begin{thm}
\label{thm-NonSymCostEst1} For the setting in Theorem
\ref{thm-NonSymMeasureConv1} and assuming further Assumption
\ref{assum-reg} (iii), there exists a modulus of continuity function
$\rho_L$, depending on $T, d, L_q$, $C_0$, $\rho$, $L$, s.t.
\begin{equation}
  \label{FinNonSymCostEst1} \Big|J_i(t, \vec x, (\vec\a^{-i},\tilde \a))
  - J(t,\bar\L^N; \bx^i, \tilde \a(\cd,\nu^N) )\Big| + \big|v^{N,L}_i(t,
  \vec\bx, \vec\a) - v(\mu^{\bar\L^N}; t,\bx^i)\big|\le \rho_L(\th_N).  
\end{equation}
Moreover, assume $\vec \a\in \cM^{N,\e_1, L}_{hetero}(t,\vec\bx)$ for
some $\e_1>0$, then
\begin{equation}
  \label{FinNonSymMFE1} \int_{\cA^t_{path}}[J(t, \bar\L^N; \bx, \a) -
  v(t, \bar\L^N; \bx)] \bar\L^N(\bx, d\a) \le \e_1+
  2\rho_L(\th_N),\q\forall \bx\in \dbX_t.  
\end{equation}
In particular, if $\e_1+ 2\rho_L(\th_N) \le \e$, then
$\bar \L^N \in \cM^\e_{global}(t,\mu)$.
\end{thm}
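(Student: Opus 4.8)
The plan is to follow the two-step pattern of Theorems~\ref{thm-FinSymMeasureConv} and~\ref{thm-FinSymCostConv}, now in the path-dependent heterogeneous setting, and then to convert the $N$-player equilibrium condition \reff{NEPath} into the global MFE condition \reff{LamdaMFE} for $\bar\L^N$. Fix $i$ and write $\dbP^N := \dbP^{t,\vec\bx,(\vec\a^{-i},\tilde\a)}$ and $\dbP^i := \dbP^{\mu^{\bar\L^N};t,\bx^i,\tilde\a(\cd,\nu^N)}$. The first thing I would do is upgrade the empirical-measure bound \reff{NonSymMeasureEst1} to an estimate on the law of the single path $X^i$: by the same induction on $s$ as in Step~3 of Theorem~\ref{thm-FinSymMeasureConv}, using that $q$ and the controls are Lipschitz in $\mu$, I expect $\cW_1\big(\dbP^N\circ (X^i)^{-1},\dbP^i\circ X^{-1}\big)\le C_L\th_N$. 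The three discrepancies driving the induction are the transition measure ($\mu^N$ versus $\mu^{\bar\L^N}$, controlled by \reff{NonSymMeasureEst1}), the measure frozen in the feedback control ($\mu^N$ versus $\nu^N$, controlled by \reff{muNnuN}), and the path-law gap itself, which closes by induction since $T$ is finite.

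With this individual-law estimate in hand, the cost bound \reff{FinNonSymCostEst1} is obtained as in Theorem~\ref{thm-FinSymCostConv}: I would write $|J_i(t,\vec\bx,(\vec\a^{-i},\tilde\a))-J(t,\bar\L^N;\bx^i,\tilde\a(\cd,\nu^N))|$ as one terminal term plus $T$ running terms, and estimate each by first swapping $\mu^N$ for $\mu^{\bar\L^N}$ inside $F,G$ and inside the control argument (paying a modulus $\rho$ composed with \reff{NonSymMeasureEst1} and \reff{muNnuN}, in $\dbP^N$-expectation), then swapping the law $\dbP^N\circ(X^i)^{-1}$ for $\dbP^i\circ X^{-1}$ (paying the individual-law estimate). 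Collecting terms yields the $J$-part with an appropriate modulus $\rho_L$. For the $v$-part I would take $\inf_{\tilde\a\in\cA^{t,L}_{path}}$ on both sides of the (uniform-in-$\tilde\a$) $J$-part; the key observation is that freezing a control at a fixed flow, $\tilde\a\mapsto\tilde\a(\cd,\nu^N)$, maps $\cA^{t,L}_{path}$ onto $\cA^t_{path}$ (constant-in-$\mu$ controls realize every pure control), and that for a frozen measure flow the pure-control value coincides with the relaxed-control value $v(\mu^{\bar\L^N};t,\bx^i)$, since the cost is linear in the relaxed control and the minimizing action can be chosen nodewise. Hence $\inf_{\tilde\a\in\cA^{t,L}_{path}}J(t,\bar\L^N;\bx^i,\tilde\a(\cd,\nu^N))=v(t,\bar\L^N;\bx^i)$ and the $v$-part follows (enlarging $\rho_L$ to absorb the two moduli into one).

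For the equilibrium estimate \reff{FinNonSymMFE1}, I would use the atomic form of $\bar\L^N$ in \reff{nuN} to write, for fixed $\bx\in\dbX_t$, $\int_{\cA^t_{path}}[J(t,\bar\L^N;\bx,\a)-v(t,\bar\L^N;\bx)]\bar\L^N(\bx,d\a)={\mu(\bx)\over|I(\bx)|}\sum_{i\in I(\bx)}[J(t,\bar\L^N;\bx,\bar\a_i)-v(t,\bar\L^N;\bx)]$. Applying \reff{FinNonSymCostEst1} with $\tilde\a=\a_i$ (so that $\bar\a_i=\a_i(\cd,\nu^N)$ and $\bx^i=\bx$) gives $J(t,\bar\L^N;\bx,\bar\a_i)-v(t,\bar\L^N;\bx)\le g_i+2\rho_L(\th_N)$, where $g_i:=J_i(t,\vec\bx,\vec\a)-v^{N,L}_i(t,\vec\bx,\vec\a)$ is nonnegative (take $\tilde\a=\a_i$ in the infimum defining $v^{N,L}_i$) and bounded by a constant $C$ depending only on $C_0,T$.

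The step I expect to be the main obstacle is reconciling the weight $\mu(\bx)/|I(\bx)|$ in $\bar\L^N$, tied to the limiting measure $\mu$, with the $N$-player condition \reff{NEPath}, which only controls ${1\over N}\sum_{i=1}^N g_i$ through the empirical measure $\mu^N_{t,\vec\bx}$ (recall $|I(\bx)|=N\mu^N_{t,\vec\bx}(\bx)$). A naive bound produces the ratio $\mu(\bx)/\mu^N_{t,\vec\bx}(\bx)$, which need not be $\le 1$. The fix I would use is the componentwise inequality $\mu(\bx)\le\mu^N_{t,\vec\bx}(\bx)+W_1(\mu^N_{t,\vec\bx},\mu)$: splitting the weighted sum accordingly, the $\mu^N_{t,\vec\bx}(\bx)$-part equals ${1\over N}\sum_{i\in I(\bx)}g_i\le{1\over N}\sum_{i=1}^N g_i\le\e_1$ by nonnegativity of $g_i$ and \reff{NEPath}, while the $W_1(\mu^N_{t,\vec\bx},\mu)$-part is at most $C\,W_1(\mu^N_{t,\vec\bx},\mu)\le C\th_N$ by boundedness of the $g_i$. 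Absorbing this residual together with the factor $\mu(\bx)\le1$ into a redefined modulus $\rho_L$ yields \reff{FinNonSymMFE1}. The ``In particular'' claim is then immediate from Definition~\ref{defn-LamdaMFE}: if $\e_1+2\rho_L(\th_N)\le\e$, then $\bar\L^N\in\cM^\e_{global}(t,\mu)$.
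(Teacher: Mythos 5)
Your proposal is correct and takes essentially the same route as the paper: \reff{FinNonSymCostEst1} is obtained by rerunning the arguments of Theorems \ref{thm-FinSymMeasureConv} (the Step 3 individual-law induction, which the paper leaves implicit) and \ref{thm-FinSymCostConv}, with the measure gaps controlled by \reff{NonSymMeasureEst1} and \reff{muNnuN}; and \reff{FinNonSymMFE1} follows by evaluating the integral at the atoms $\bar\a_i$ and splitting $J(t,\bar\L^N;\bx,\bar\a_i)-v(t,\bar\L^N;\bx)$ into $|J-J_i|+[J_i-v^{N,L}_i]+|v^{N,L}_i-v|$, invoking \reff{NEPath} for the middle term. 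Worth noting: the paper's displayed computation writes the integral against $\bar\L^N(\bx,d\a)$ as ${1\over N}\sum_{i\in I(\bx)}$, silently identifying the weight $\mu(\bx)/|I(\bx)|$ with $1/N$; the discrepancy you flag is real, and your patch --- componentwise $\mu(\bx)\le \mu^N_{t,\vec\bx}(\bx)+W_1(\mu^N_{t,\vec\bx},\mu)$ together with nonnegativity and boundedness of the gaps, absorbing the residual $C\th_N$ into an enlarged $\rho_L$ --- is exactly what is needed to make that line rigorous, as is your explicit justification that $\inf_{\tilde\a\in\cA^{t,L}_{path}} J(t,\bar\L^N;\bx^i,\tilde\a(\cd,\nu^N))=v(\mu^{\bar\L^N};t,\bx^i)$ (freezing at $\nu^N$ maps onto $\cA^t_{path}$, and pure and relaxed values coincide for the frozen standard control problem), a point the paper subsumes under ``the uniform estimate for $J$ implies that for $v$.''
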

\proof  First, given Theorem \ref{thm-NonSymMeasureConv1},
  \reff{FinNonSymCostEst1} follows from the arguments in Theorem
  \ref{thm-FinSymCostConv}. Then, for
  $\vec \a\in \cM^{N,\e_1, L}_{hetero}(t,\vec\bx)$ and
  $\bx\in \dbX_t$, by \reff{NEPath} we have \beaa &&\dis
  \int_{\cA^t_{path}}[J(t, \bar\L^N; \bx, \a) - v(t, \bar\L^N; \bx)]
  \bar\L^N(\bx, d\a) = {1\over N} \sum_{i\in I(\bx)} \big[J(t,
  \bar\L^N; \bx, \bar\a_i) - v(t, \bar\L^N; \bx)\big]\\ &&\dis \le
  {1\over N} \sum_{i\in I(\bx)} \Big[\big|J(t, \bar\L^N; \bx^i,
  \bar\a_i) - J_i(t, \vec \bx, \vec\a)\big| + \big[J_i(t, \vec \bx,
  \vec\a) - v^{N,L}_i(t, \vec\bx, \vec\a)\big] \\ &&\dis\qq\qq +
  \big|v^{N,L}_i(t, \vec\bx, \vec\a) - v( \mu^{\bar\L^N}; t,
  \bx^i)\big|\Big]\\ &&\dis \le \rho_L(\th_N) + \e_1 + \rho_L(\th_N) =
  \e_1 + 2 \rho_L(\th_N).
  \eeaa 
  
  \vspace{-9.5mm}
  \qed

\no{\bf Proof of Theorem \ref{thm-FinNonSymConv}: the left
  inclusion}.
  We first fix an arbitrary function
  $\f\in \bigcap_{\e>0}\bigcup_{L\ge 0}\limsup_{N\to \infty} \dbV^{N,
    \e, L}_{hetero}(t, \mu_{t,\vec \bx}^N)$, $\e>0$, and set
  $\e_1:= {\e\over 2}$. Then there exists $L_{\e}\ge 0$ and and a
  sequence $N_k\to \infty$ (possibly depending on $\e$) such that
  $\f\in \dbV^{N_k, \e_1, L_{\e_1}}_{hetero}(t,\mu_{t,\vec
    \bx}^{N_k})$, for all $k\ge 1$. Now choose $k$ large enough so
  that $2\rho_{L_\e}(\th_{N_k})\le \e_1$. By \reff{VNtmuG} there
  exists $\vec\a\in \cM^{N_k, \e_1, L_\e}_{hetero}(t,\vec\bx)$ such
  that
  $\max_{\bx\in\dbX_t}\min_{i\in I(\bx)} |\f(\bx) -
  v_i^{N,L}(t,\vec\bx,\vec\a)| \leq \e_1$. By Theorem
  \ref{thm-NonSymCostEst1} we see that
  $\bar \L^{N_k} \in \cM^\e_{global}(t,\mu)$ and, by
  \reff{FinNonSymCostEst1}, \beaa \|\f - v( \mu^{\bar\L^N}; t,
  \cdot)\|_{\dbX_t} \!\!&\le&\!\!  \max_{\bx\in\dbX_t}\min_{i\in
    I(\bx)}\Big[\big|\f(\bx) - v^{N,L}_i(t, \vec\bx, \vec\a)\big|+
  \big|v^{N,L}_i(t, \vec\bx, \vec\a)- v(\mu^{\bar\L^N};t, \bx)\big|
  \Big]\\ \!\!&\le&\!\! \e_1 + \rho_{L_\e}(\th_N) \le \e.  
  \eeaa 
  Then $\f\in \dbV^\e_{global}(t,\mu)$.
Since $\e>0$ is arbitrary, by Theorem \ref{thm-equivalence} we
  get $\f\in \dbV_{relax}(t,\mu)$.
\qed

\subsection{From mean field games to $N$-player games}

We now turn to the right inclusion in \reff{FinNonSymConv}. Fix
$t\in \dbT$, $\vec \bx\in \dbX^N_{0,t}$, $\mu\in \cP_0(\dbX_t)$, and
$\g\in \cA_{relax}$. Our goal is to construct a desired
$\vec \a\in (\cA^{t,0}_{path})^N$. However, since $\vec \a$, or
equivalently the corresponding $\L^N$, is discrete, we need to
discretize $\g$ first. We note that it is slightly easier to
discretize $\g$ than a general $\L\in \Xi_t(\mu)$.

First, given $\e>0$, there exists a partition
$\dbA = \cup_{k=0}^{n_\e} A_k$ with $n_\e$ depending on $\e$ (and
$\g$) such that, for some arbitrarily fixed $a_k\in A_k$,
$k=0,\cds, n_\e$,
\begin{equation}
  \label{Ae}\g(s, \bx, A_0) \le\e, \forall s\in
    \dbT_t, \bx\in \dbX_s,
    \q\mbox{and}\q |a-a_k|\le \e, \forall a\in A_k,~
    k=1,\cds, n_\e.  
\end{equation}
Denote by $\cA^{t, \e}_{path}$ the subset of $\a\in \cA^{t,0}_{path}$
taking values in $\dbA_\e := \{a_k: k=0,\cds, n_\e\}$.  Define \bea
\label{ge} \g^\e(s, \bx, da) := \sum_{k=0}^{n_\e} \g(s, \bx, A_k)
\d_{a_k}(da).  \eea Recall \reff{Lg}, we see that
$\supp(\L^{\g^\e}(\bx, d\a)) = \cA^{t,\e}_{path} \subset
\cA^{t,0}_{path}$ for all $\bx\in \dbX_t$.

Next, recall \reff{I} that $N\mu^N_{t,\vec \bx}(\bx) = |I(\bx)|$ is a
positive integer for all $\bx\in \dbX_t$. Let $\L^\e_{t,\vec \bx}\in
\cP(\dbX_t\times\cA^{t,\e}_{path})$ be a modification of $\L^{\g^\e}$
such that, \bea
\label{LdN} \left.\ba{c} \dis \L^\e_{t,\vec \bx}(\bx,
\cA^{t,\e}_{path}) = \mu^N_{t, \vec\bx} (\bx)~ \mbox{and
$N\L^\e_{t,\vec \bx}(\bx, \a)$ is an integer};\\ \dis |\L^\e_{t,\vec
\bx}(\bx, \a) - \L^{\g^\e}(\bx, \a)|\le {1\over N} + |\mu^N_{t,
\vec\bx} (\bx) - \mu(\bx)|; \ea\right. ~ \forall (\bx, \a)\in
\dbX_t\times\cA^{t,\e}_{path}.  \eea Note that, since $
\cA^{t,\e}_{path}$ is finite, such a construction is easy.

We now construct $\vec\a\in (\cA^{t,\e}_{path})^N$, which relies on
$\g^\e$ and hence on $\e$. Note that \beaa \sum_{\a\in
\cA^{t,\e}_{path}} [N\L^\e_{t,\vec \bx}(\bx, \a)] = N \L^\e_{t,\vec
\bx}(\bx, \cA^{t,\e}_{path}) = N\mu^N_{t, \vec\bx} (\bx) = |I(\bx)|,
\eeaa and each $N\L^\e_{t,\vec \bx}(\bx, \a)$ is an integer.  Let
$I(\bx) = \cup_{\a\in \cA^{t,\e}_{path}} I(\bx, \a)$ be a partition of
$I(\bx)$ such that $|I(\bx, \a)| = N\L^\e_{t,\vec \bx}(\bx, \a)$. We
then set \bea
 \label{veca} \a_i := \a,\q i\in I(\bx, \a),\q (\bx,\a)\in
\dbX_t\times\cA^{t,\e}_{path}.  \eea Let $\L^N$ be the one defined by
\reff{LamdaN} corresponding to this $\vec\a$. It is clear that $\L^N =
\L^\e_{t, \vec\bx}$.
 \begin{thm}
\label{thm-NonSymConv2} (i) Let Assumption \ref{assum-reg} (ii)
hold. Then there exists a constant $C$, depending only on $T, d, L_q$,
such that, for any $t\in \dbT$, $\vec \bx\in \dbX_{0,t}^N$, $\mu\in
\cP_0(\dbX_t)$, $\g\in \cA_{relax}$, $\e>0$, and for the $\vec\a\in
(\cA_{path}^{t,\e})^N$ constructed above, we have, for the $\th_N$ in
\reff{NonSymMeasureEst1} and for any $\tilde \a\in \cA^{t,0}_{path}$,
\bea
\label{NonSymMeasureEst2} \max_{1\le i\le N}\max_{t\le s\le
T}\dbE^{\dbP^{t, \vec \bx, (\vec\a^{-i}, \tilde
\a)}}\big[W_1(\mu^N_{s\wedge\cd}, \mu^\g_{s\wedge\cd})\big] \le
C\e+C_\e\th_N, \eea where $C_\e$ may depend on $\e$ as well.

(ii) Assume further Assumption \ref{assum-reg} (iii), then there
exists a modulus of continuity function $\rho_0$, depending only on
$T, d, L_q$, $C_0$, and $\rho$, such that,
\begin{equation}
  \label{FinNonSymCostEst2} \Big|J_i(t, \vec x, (\vec\a^{-i},\tilde \a))
  - J(\mu^\g; t, \bx^i, \tilde \a )\Big| + \big|v^{N,0}_i(t, \vec\bx,
  \vec\a) - v(\mu^\g; t, \bx^i)\big|\le \rho_0\big(C\e+C_\e\th_N\big).  
\end{equation}
Moreover, assume $\g\in \cM^{\e}_{relax}(t,\mu)$, then \bea
\label{FinNonSymMFE2} {1\over N} \sum_{i=1}^N \big[J_i(t, \vec \bx,
\vec \a) -v^{N,0}_i(t, \vec\bx, \vec\a)\big] \le \e+ 2
\rho_0\big(C\e+C_\e\th_N\big),\q\forall \bx\in \dbX_t.  \eea In
particular, this means that $\vec\a \in \cM^{N, \tilde \e,
0}_{hetero}(t, \vec \bx)$ with $\tilde \e := \e+ 2
\rho_0\big(C\e+C_\e\th_N\big)$.
\end{thm}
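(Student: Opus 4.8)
The plan is to prove \reff{NonSymMeasureEst2} by reducing it to Theorem \ref{thm-NonSymMeasureConv1} and then controlling a purely deterministic discretization error. Since $\vec\a\in(\cA^{t,\e}_{path})^N\subset(\cA^{t,0}_{path})^N$ and $\tilde\a\in\cA^{t,0}_{path}$, Theorem \ref{thm-NonSymMeasureConv1} applies with $L=0$ and gives $\dbE^{\dbP^{t,\vec\bx,(\vec\a^{-i},\tilde\a)}}[W_1(\mu^N_{s\wedge\cd},\mu^{\bar\L^N}_{s\wedge\cd})]\le C\th_N$, where $\bar\L^N$ is the measure in \reff{nuN} built from $\vec\a$ (with $\bar\a_i=\a_i$, the controls being measure-independent). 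It then suffices to bound $W_1(\mu^{\bar\L^N}_{s\wedge\cd},\mu^\g_{s\wedge\cd})$ deterministically, which I would do along the chain $\bar\L^N\approx\L^N=\L^\e_{t,\vec\bx}\approx\L^{\g^\e}$ followed by $\mu^{\g^\e}\approx\mu^\g$. For $L=0$ one has $\bar\L^N(\bx,\cd)=\frac{\mu(\bx)}{\mu^N_{t,\vec\bx}(\bx)}\L^N(\bx,\cd)$, so, using \reff{I}, $\sum_{\bx,\a}|\bar\L^N-\L^N|=W_1(\mu^N_{t,\vec\bx},\mu)\le\th_N$; meanwhile \reff{LdN} bounds $|\L^\e_{t,\vec\bx}(\bx,\a)-\L^{\g^\e}(\bx,\a)|$ per atom, and since $\cA^{t,\e}_{path}$ is finite with cardinality depending only on $\e$, summing over its atoms yields $\sum_{\bx,\a}|\bar\L^N-\L^{\g^\e}|\le C_\e\th_N$. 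By stability of the flow map $\L\mapsto\mu^\L$ defined in \reff{LQ} — an induction in $s$ using the Lipschitz continuity of $q$ in $\mu$, identical to the $\bar\k_s$ estimate in Theorem \ref{thm-NonSymMeasureConv1} — this gives $W_1(\mu^{\bar\L^N}_{s\wedge\cd},\mu^{\g^\e}_{s\wedge\cd})\le C_\e\th_N$. Finally $W_1(\mu^{\g^\e}_{s\wedge\cd},\mu^\g_{s\wedge\cd})\le C\e$ follows from \reff{Ae}--\reff{ge} by another Gronwall argument, the per-step kernel discrepancy between $\g$ and $\g^\e$ being $O(\e)$ because $|a-a_k|\le\e$ on each $A_k$, $k\ge1$ (Lipschitz $q$ in $a$), while $\g(s,\bx,A_0)\le\e$ with $q$ bounded. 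The triangle inequality then gives \reff{NonSymMeasureEst2}.

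Granting \reff{NonSymMeasureEst2}, the cost estimate \reff{FinNonSymCostEst2} is routine and repeats the arguments of Theorem \ref{thm-FinSymMeasureConv} (Step 3) and Theorem \ref{thm-FinSymCostConv}: one first shows by induction that the law of $X^i$ under $\dbP^{t,\vec\bx,(\vec\a^{-i},\tilde\a)}$ is within $C\e+C_\e\th_N$ of the law of $X$ under $\dbP^{\mu^\g;t,\bx^i,\tilde\a}$, and then upgrades this, via the uniform continuity of $F,G$ in $(\mu,a)$ from Assumption \ref{assum-reg}(iii), to the cost bound for $J$ with some modulus $\rho_0$. The bound for $v^{N,0}_i$ follows by taking the infimum of the $J$-bound over $\tilde\a\in\cA^{t,0}_{path}$, together with the standard fact that, for the frozen-flow control problem with flow $\mu^\g$, optimizing over pure path-dependent controls gives the same value $v(\mu^\g;t,\bx^i)$ as optimizing over relaxed controls.

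For \reff{FinNonSymMFE2} the plan is to apply \reff{FinNonSymCostEst2} twice, with $\tilde\a=\a_i$ so that $(\vec\a^{-i},\a_i)=\vec\a$, reducing the claim to bounding $S:=\frac1N\sum_{i=1}^N[J(\mu^\g;t,\bx^i,\a_i)-v(\mu^\g;t,\bx^i)]$ at the cost of $2\rho_0(C\e+C_\e\th_N)$. Grouping by the common value $\bx$ and using $|I(\bx,\a)|/N=\L^N(\bx,\a)=\L^\e_{t,\vec\bx}(\bx,\a)$, I write $S=\sum_\bx\int_{\cA^t_{path}}[J(\mu^\g;t,\bx,\a)-v(\mu^\g;t,\bx)]\L^\e_{t,\vec\bx}(\bx,d\a)$. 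Since the integrand is bounded by $2(T+1)C_0$, \reff{LdN} lets me replace $\L^\e_{t,\vec\bx}$ by $\L^{\g^\e}$ up to $C_\e\th_N$, and $W_1(\mu^{\g^\e},\mu^\g)\le C\e$ lets me replace $\mu^\g$ by $\mu^{\g^\e}$ up to $\rho_0(C\e)$. I then invoke the identity \reff{JgL} (computed as in \reff{gLMFE}) with $\L=\L^{\g^\e}$ and $\g^{(\L^{\g^\e})}=\g^\e$ (Remark \ref{rem-Lginverse}) to collapse $\int[J(\mu^{\g^\e};t,\bx,\a)-v(\mu^{\g^\e};t,\bx)]\L^{\g^\e}(\bx,d\a)$ into $\mu(\bx)[J(\mu^{\g^\e};t,\bx,\g^\e)-v(\mu^{\g^\e};t,\bx)]$. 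Undoing the discretization once more ($\g^\e\to\g$ and $\mu^{\g^\e}\to\mu^\g$, each an $O(\rho_0(C\e))$ error) and using that $\g\in\cM^\e_{relax}(t,\mu)$ satisfies \reff{rMFEe}, i.e. $J(\mu^\g;t,\bx,\g)-v(\mu^\g;t,\bx)\le\e$, gives $S\le\e+C\rho_0(C\e)+C_\e\th_N$. Since the statement asserts only the existence of a modulus $\rho_0$, all the $\rho_0(C\e)$-type discretization errors together with the two cost-estimate errors can be absorbed, after enlarging $\rho_0$, into $2\rho_0(C\e+C_\e\th_N)$; this yields \reff{FinNonSymMFE2}, and $\vec\a\in\cM^{N,\tilde\e,0}_{hetero}(t,\vec\bx)$ with $\tilde\e=\e+2\rho_0(C\e+C_\e\th_N)$ is then immediate from Definition \ref{defn-NEPath}.

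The main obstacle is the mismatch between the discrete pure controls of the finitely many players and the continuous relaxed control $\g$ of the mean field game: the exact bridge \reff{JgL} between the $N$-player averaged cost and a single mean field cost is available only with the discretized flow $\mu^{\g^\e}$, whereas \reff{NonSymMeasureEst2} and \reff{FinNonSymCostEst2} are phrased with the true flow $\mu^\g$. Reconciling the two is what forces the $O(\e)$ discretization errors of \reff{Ae}, and it explains both the irreducible additive $C\e$ and the blow-up of $C_\e$ as $\e\downarrow0$: the per-atom errors of \reff{LdN} are summed over $\cA^{t,\e}_{path}$, whose cardinality grows with $1/\e$. Keeping these two parameters correctly separated — letting $N\to\infty$ first at fixed $\e$ and only afterwards $\e\downarrow0$ — is the delicate bookkeeping point, and is precisely why Theorem \ref{thm-FinNonSymConv} intersects over $\e$ outside the $\limsup_N$ rather than inside.
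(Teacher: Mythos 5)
Your proposal is correct and follows essentially the same route as the paper: the same three-leg decomposition (empirical flow of the discretized controls $\approx \mu^{\g^\e}$ up to $C_\e\th_N$ via \reff{LdN} and the finiteness of $\cA^{t,\e}_{path}$, then $\mu^{\g^\e}\approx\mu^\g$ up to $C\e$ via the Gronwall argument from \reff{Ae}--\reff{ge}, combined with Theorem \ref{thm-NonSymMeasureConv1} at $L=0$), followed by the cost estimates as in Theorem \ref{thm-FinSymCostConv} and the transfer of the MFE property through \reff{JgL}/\reff{gLMFE}. Your only deviations are presentational: you compare flows via the total variation of the $\L$'s plus a flow-stability induction instead of the paper's direct induction on $\bar\k_s$ (the same computation), and you correctly spell out --- including the identity $\g^{(\L^{\g^\e})}=\g^\e$, which is exactly what is needed since a naive total-variation comparison of $\L^{\g^\e}$ with $\L^\g$ would fail, and the enlargement of $\rho_0$ to absorb the $\rho_0(C\e)$-type errors --- the bookkeeping for \reff{FinNonSymMFE2} that the paper compresses into ``similar arguments as those for \reff{FinNonSymMFE1}''.
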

\begin{proof}
  (i) We first show by induction that \bea
  \label{NonSymMeasureEst2-1} \k_s:= W_1\big(\mu^\g_{s\wedge \cd},
  \mu^{\g^{\e}}_{s\wedge \cd}\big) \le C\e,\q s=t,\cds, T.  \eea
  Indeed, it is obvious that $\k_t = 0$. For $s\ge t$, by
  \reff{relaxJ}, \reff{Ae}, and \reff{ge}, we have \beaa &&\dis
  \k_{s+1} = \sum_{\bx\in \dbX_{s+1}} \big|\mu^\g_{(s+1)\wedge
    \cd}(\bx) - \mu^{\g^\e}_{(s+1)\wedge \cd}(\bx) \big|\\ &&\dis =
  \sum_{\bx\in \dbX_s, x\in \dbS}\Big|\mu^\g_{s\wedge \cd}(\bx)
  \int_{\dbA}\!\! q(s, \bx, \mu^\g, a; x) \g(s, \bx, da) -
  \mu^{\g^\e}_{s\wedge \cd}(\bx)
  \int_{\dbA}\!\! q(s, \bx, \mu^{\g^\e}, a; x) \g^\e(s, \bx, da) \Big|\\
  &&\dis \le \sum_{\bx\in \dbX_s, x\in \dbS}\Big[ \big|\mu^\g_{s\wedge
    \cd}(\bx) - \mu^{\g^\e}_{s\wedge \cd}(\bx)\big|+ \sum_{k=1}^{n_\e}
  \int_{A_k}\!\!\big|q(s, \bx, \mu^\g, a; x) - q(s, \bx, \mu^{\g^\e},
  a_k; x)\big| \g(s, \bx, da)\\ &&\dis \qq + \int_{A_0}q(s, \bx,
  \mu^\g, a; x) \g(s, \bx, da) + \int_{A_0}q(s, \bx, \mu^{\g^\e}, a;
  x) \g^\e(s, \bx, da) \\ &&\dis\le C\k_s + C\e.  \eeaa Then by
  induction we have \reff{NonSymMeasureEst2-1}.

  We next show by induction that, recalling \reff{nuN}, \bea
  \label{NonSymMeasureEst2-2} \bar\k_s:= W_1\big(\nu^N_{s\wedge\cd},
  \mu^{\g^\e}_{s\wedge\cd}\big) \le C_\e\th_N,\q s=t, \cds, T.  \eea
  Indeed, $\bar \k_t = W_1(\mu^N_{t,\vec \bx}, \mu)$. For $s\ge t$,
  noting that $\a_i \in \cA^{t,\e}_{path}\subset \cA^{t,0}_{path}$ and
  recalling from Lemma \ref{lem-equivalence} that
  $\mu^{\L^{\g^\e}} = \mu^{\g^\e}$, then by \reff{nuN} and \reff{LQ}
  that \beaa &&\dis \bar\k_{s+1} = W_1\big(\nu^N_{{s+1}\wedge\cd},
  \mu^{\L^{\g^\e}}_{(s+1)\wedge\cd}\big) \\ &&\dis = \sum_{\bx\in
    \dbX_t} \sum_{\tilde \bx\in \dbX^{t,\bx}_{s+1}} \Big|{1\over N}
  \sum_{\a\in \cA^{t, \e}_{path}}\sum_{i\in I(\bx,\a)}
  Q^t_{s+1}(\nu^N; \tilde \bx, \a) - \int_{\cA^t_{path}}
  Q^t_{s+1}(\mu^{\g^\e}; \tilde\bx, \a) \L^{\g^\e}(\bx, d\a) \Big|\\
  &&\dis = \sum_{\bx\in \dbX_t} \sum_{\tilde \bx\in
    \dbX^{t,\bx}_{s+1}} \Big| \sum_{\a\in \cA^{t,
      \e}_{path}}\big[\L^\e_{t,\vec\bx}(\bx, \a) Q^t_{s+1}(\nu^N;
  \tilde \bx, \a) - \L^{\g^\e}(\bx, \a) Q^t_{s+1}(\mu^{\g^\e}; \tilde
  \bx, \a) \big] \Big|\\ &&\dis \le \sum_{\bx\in \dbX_t} \sum_{\tilde
    \bx\in \dbX^{t,\bx}_{s+1}} \sum_{\a\in \cA^{t,
      \e}_{path}}\Big[\big|\L^\e_{t,\vec\bx}(\bx, \a) -
  \L^{\g^\e}(\bx, \a) \big|Q^t_{s+1}(\nu^N; \tilde \bx, \a) \\
  &&\dis\qq + \L^{\g^\e}(\bx, \a)\big| Q^t_{s+1}(\nu^N; \tilde \bx,
  \a) -Q^t_{s+1}(\mu^{\g^\e}; \tilde \bx, \a) \big| \Big].  \eeaa
  Then, by \reff{LdN} and noting that $C_\e := |\cA^{t, \e}_{path}|$
  is independent of $N$, we have \beaa \bar\k_{s+1} &\le& \sum_{\bx\in
    \dbX_t} \sum_{\tilde \bx\in \dbX^{t,\bx}_{s+1}} \sum_{\a\in
    \cA^{t, \e}_{path}}\Big[\th_N Q^t_{s+1}(\nu^N; \tilde \bx, \a) +
  C\L^{\g^\e}(\bx, \a) \sum_{r=t}^s
  W_1\big(\nu^N_{r\wedge\cd}, \mu^{\g^\e}_{r\wedge\cd}\big) \Big]\\
  &\le& C_\e \th_N + C\sum_{r=t}^s \bar \k_r.  \eeaa This implies
  \reff{NonSymMeasureEst2-2} immediately.
  
  Finally, combining \reff{NonSymMeasureEst2-1},
  \reff{NonSymMeasureEst2-2}, and \reff{NonSymMeasureEst1}, we obtain
  \reff{NonSymMeasureEst2}.
  
  (ii) First, similar to \reff{FinNonSymCostEst1}, by
  \reff{NonSymMeasureEst2} we have \reff{FinNonSymCostEst2} following
  from the arguments in Theorem \ref{thm-FinSymCostConv}. Next, for
  $\g\in \cM^{\e}_{relax}(t,\mu)$, by \reff{gLMFE} we have
  $\L^\g \in \cM^{\e}_{global}(t,\mu)$. Then \reff{FinNonSymMFE2}
  follows from similar arguments as those for \reff{FinNonSymMFE1}.
\end{proof}
\no{\bf Proof of Theorem \ref{thm-FinNonSymConv}: the right
  inclusion}.
  Fix $\f\in \dbV_{relax}(t,\mu)$ and $\e>0$. Let $\e_1>0$ be a small
  number which will be specified later. There exists
  $\g\in \cM^{\e_1}_{relax}(t,\mu)$ such that
  $\|\f - J(t,\mu, \g; \cd, \g)\|_{\dbX_t} \le \e_1$. Let $\g^{\e_1}$
  and $\vec \a$ be constructed as above. By \reff{FinNonSymMFE2} we
  have \beaa {1\over N} \sum_{i=1}^N \big[J_i(t, \vec \bx, \vec \a)
  -v^{N,0}_i(t, \vec\bx, \vec\a)\big] \le \e_1+ 2
  \rho_0\big(C\e_1+C_{\e_1}\th_N\big),\q\forall \bx\in \dbX_t.  \eeaa
  Choose $\e_1$ small enough such that
  $\e_1 + 2\rho_0(C\e_1 + \e_1) < \e$. Then, for all $N$ large enough
  such that $\th_N \le {\e_1\over C_{\e_1}}$, we have
  ${1\over N} \sum_{i=1}^N \big[J_i(t, \vec \bx, \vec \a)
  -v^{N,0}_i(t, \vec\bx, \vec\a)\big] \le \e$. That is,
  $\vec\a\in \dbV^{N, \e, 0}_{hetero}(t,\mu^N_{t,\vec \bx})$ for all
  $N$ large enough. Then, following the same arguments as those in the
  proof for the left inclusion, we can easily get
  $\f \in \dbV^{N, \e, 0}_{hetero}(t,\mu^N_{t,\vec \bx})$ for all $N$
  large enough, and thus
  $\f \in \liminf_{N\to\infty}\dbV^{N, \e, 0}_{hetero}(t,\mu^N_{t,\vec
    \bx})$. Since $\e>0$ is arbitrary, we get the desired inclusion.
\qed

\section{A continuous time model with controlled diffusions}
\label{sect-Diffusion} 
\setcounter{equation}{0} 
In this section we study a  continuous time model where the state process is a controlled diffusion with closed loop drift controls. In this case the
laws of the controlled state process are all equivalent. The
volatility control case involves mutually singular measures
(corresponding to degenerate $q$ in the discrete setting) and is much
more challenging. We shall leave that for future research.  To ensure
the convergence, we consider state dependent homogeneous controls
for the $N$-player games, as we did in Section 3.

\subsection{The mean field game and the dynamic programming principle}
\label{sect-contDPP} 
Let $T>0$ be a fixed terminal time, $(\O, \cF,
\dbF=\{\cF_t\}_{0\le t\le T}, \dbP)$ a filtered probability space
where $\cF_0$ is atomless; $B$ a $d$-dimensional Brownian motion; and
the set $\dbA\subset \dbR^{d_0}$ a Borel measurable set. The state
process $X$ will also take values in $\dbR^d$. Its law lies in the
space $\cP_2:=\cP_2(\dbR^d)$ equipped with the $2$-Wasserstein
distance $W_2$. We remark that in the finite state space case $W_1$
and $W_2$ are equivalent, while in continuous models they are not. In
fact, at below we shall require $W_1$-regularity, which is stronger
than the $W_2$-regularity, and obtain $W_1$-convergence, which is
weaker than the $W_2$-convergence. This is not surprising in the mean
field literature, see, e.g. \cite{MZ}. The main advantage of the
$W_1$-distance is the following well known representation, see
e.g. \cite{CD1}: for any $\mu, \tilde\mu\in \cP_1(\dbR^d)$,
\begin{equation}
  \label{contW1} W_1(\mu, \tilde \mu) =
  \sup\Big\{\int_{\dbR^d}\!\!\f(x)[\mu(dx) - \tilde \mu(dx)]: \f\in
  C_{Lip}(\dbR^d)~\mbox{s.t.}~ |\f(x)-\f(\tilde x)|\le |x-\tilde
  x|\Big\}.   
\end{equation}
Here $C_{Lip}(\dbR^d)$ denote the set of uniformly Lipschitz
continuous functions $\f: \dbR^d\to \dbR$. Moreover, for each
$(t, \mu) \in [0,T]\times \cP_2$, let $\dbL^2(t,\mu)$ denote the set
of $\cF_t$-measurable random variables $\xi$ whose law (under $\dbP$)
$\cL_\xi = \mu$.

We consider coefficients $(b, f): [0, T]\times \dbR^d \times \cP_2
\times \dbA\to (\dbR^d, \dbR)$ and $g: \dbR^d \times \cP_2\to \dbR$.
Throughout this section, the following assumptions will always be in
force.

\begin{assum}
  \label{assum-diffusion}
   (i) $b, f, g$ are Borel measurable in $t$ and bounded by $C_0$ (for
  simplicity);

  (ii) $b, f, g$ are uniformly Lipschitz continuous in $(x, \mu, a)$
  with a Lipschitz constant $L_0$, where the Lipschitz continuity in
  $\mu$ is under $W_1$.
\end{assum}

Let $\cA_{cont}$ denote the set of admissible controls $\a: [0,
T]\times \dbR^d \to \dbA$ which is measurable in $t$ and Lipschitz
continuous in $x$, with the Lipschitz constant $L_\a$ possibly
depending on $\a$. Given $(t,\mu)\in [0, T]\times \cP_2$, $\xi\in
\dbL^2(t,\mu)$, and $\a\in \cA_{cont}$, consider the McKean-Vlasov
SDE: \bea
 \label{Xmu} X^{t,\xi,\a}_s = \xi + \int_t^s b(r, X^{t,\xi,\a}_r,
\mu^\a_r, \a(r, X^{t,\xi,\a}_r)) dr + B_s - B_t,\q \mu^\a_s :=
\cL_{X^{t,\xi,\a}_s}.  \eea By the required Lipschitz continuity, the
above SDE is wellposed, and it is obvious that $\mu^\a_t = \mu$ and
$\mu^\a_s$ does not depend on the choice of $\xi\in
\dbL^2(t,\mu)$. Then, when only the law is involved, by abusing the
notations we may also denote $X^{t,\xi,\a}$ as $X^{t,\mu, \a}$.
 
 Next, for any $x\in \dbR^d$, and $\tilde \a\in \cA_{cont}$, we
introduce \bea
 \label{Xmua} \left.\ba{c} \dis J(t,\mu,\a; x, \tilde \a) := J(\mu^\a;
t, x, \tilde \a),\q v(\mu^\a; s, x) := \inf_{\tilde \a\in \cA_{cont}}
J(\mu^\a; s, x, \tilde \a), s\ge t,\q \mbox{where}\\ \dis X^{\mu^\a;
s, x, \tilde \a}_r = x + \int_s^r b(l, X^{\mu^\a; s, x, \tilde \a}_l,
\mu^\a_l, \tilde\a(l, X^{\mu^\a; s, x, \tilde \a}_l)) dl + B_r - B_s,
~r\ge s;\\ \dis J(\mu^\a; s, x, \tilde \a) := \dbE\Big[g(X^{\mu^\a; s,
x, \tilde \a}_T, \mu^\a_T) + \int_s^T f(r, X^{\mu^\a; s, x, \tilde
\a}_r, \mu^\a_r, \tilde\a(r, X^{\mu^\a; s, x, \tilde \a}_r)) dr\Big].
\ea\right.  \eea Here we abuse the notations by using the same
notations as in the discrete setting. Clearly $u(s, x) := J(\mu^\a; s,
x, \tilde \a)$ and $v(s, x):= v(\mu^\a; s,x)$ satisfy the following
linear PDE and standard HJB equation on $[t, T]\times \dbR^d$,
respectively, with parameter $\mu^\a$:
\begin{equation}
  \begin{aligned}
    \label{HJB}\pa_s u (s,x) + {1\over 2}
    \tr\big(\pa_{xx} u(s,x)\big) + b(s, x, \mu^\a_s, \tilde \a(s, x))
    \cd \pa_x u(s,x) + f(s, x, \mu^\a_s, \tilde\a(s,x))&=0;
    \\ \pa_t v(s, x) + {1\over 2} \tr\big(\pa_{xx} v(s, x)\big) +
    \inf_{a\in \dbA} \big[
    b(s, x, \mu^\a_s, a) \cd \pa_x v(s, x)+ f(s, x, \mu^\a_s, a)
    \big]&=0;
    \\ u(T,x) = v(T, x) = g(x, \mu^\a_T).\hspace{10em}&
  \end{aligned}
\end{equation}

\begin{defn}
\label{defn-contMeL} Fix $(t,\mu)\in [0, T]\times \cP_2$. For any
$\e>0$, we say $\a^*\in \cA_{cont}$ is an $\e$-MFE at $(t,\mu)$,
denoted as $\a^*\in \cM^\e_{cont}(t,\mu)$, if 
\bea
\label{contMeL} 
\int_{\dbR^d} \big[J(t, \mu,\a^*; x, \a^*) -
v(\mu^{\a^*}; t, x)\big] \mu(dx) \le \e. 
 \eea
\end{defn}

\begin{rem}
\label{rem-contMeL}
Similar to \reff{NEPath} and \reff{Carmona}, here we do not require
  $\a^*$ to be optimal for every player $x$. In fact, alternatively, we may replace \reff{contMeL} with
\bea
\label{contMeL-equivalent}
    \mu\Big\{x \ :\ |
    J(t, \mu,\a^*; x, \a^*) - v(\mu^{\a^*}; t, x)| > \e \Big\} < \e.
    \eea
The intuition is that, since there are infinitely many players, we shall tolerate that a small portion of
  players may not be happy for the $\a^*$, as in \cite{Carmona}, and
  their possible deviation from $\a^*$ won't change the equilibrium
  measure $\mu^{\a^*}$ significantly.  We note that, although \reff{contMeL-equivalent} and \reff{contMeL} are not equivalent for fixed $\e$, they define the same set value in \reff{contV}  below, and the proofs are slightly easier by using \reff{contMeL}. 
   
 However,  if we require the $\e$-optimality for $\mu$-a.e. $x$, namely the probability in the left side of \reff{contMeL-equivalent} becomes $0$, then the set value will be different and may not satisfy the DPP. Such difference would disappear in the discrete model though.
\end{rem}

To define the set value, we need the following simple but crucial
regularity result, whose proof is postponed to Appendix.
\begin{lem}
\label{lem-vreg} Let Assumption \ref{assum-diffusion} hold. There
exists a constant $C>0$, depending only on $T, d, C_0, L_0$, such
that, for any $t, \mu, \a, \tilde \a$ and $s\ge t$,
\begin{equation}
  \label{vLip} \big|J(\mu^\a; \tilde \a, s,
    x)-J(\mu^\a; \tilde \a, s, \tilde x)\big| + \big|v(\mu^\a; s, x) -
    v(\mu^\a; s, \tilde x)\big|\le
    C |x-\tilde x|,\q\forall x, \tilde x.
  \end{equation}
\end{lem}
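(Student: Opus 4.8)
The plan is to reduce the entire statement to a single marginal estimate for a diffusion with bounded, merely measurable drift and identity diffusion coefficient, the whole point being that the resulting constant must be uniform over the (possibly wildly oscillating) control $\tilde\a$. First I would dispose of $v$ in favor of $J$: since $v(\mu^\a; s, \cdot) = \inf_{\tilde\a\in \cA_{cont}} J(\mu^\a; s, \cdot, \tilde\a)$ and the pointwise infimum of a family of $C$-Lipschitz functions is again $C$-Lipschitz (the infima being finite because $g,f$ are bounded), it suffices to prove \reff{vLip} for $u(s,x) := J(\mu^\a; s, x, \tilde\a)$ with a fixed but arbitrary $\tilde\a$ and a constant $C$ independent of $\tilde\a$. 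Freezing the flow $\mu^\a$, set $\hat b(r,y) := b(r,y,\mu^\a_r,\tilde\a(r,y))$ and $\hat f(r,y) := f(r,y,\mu^\a_r,\tilde\a(r,y))$; by Assumption \ref{assum-diffusion} both are Borel and bounded by $C_0$, while $g(\cdot,\mu^\a_T)$ stays $L_0$-Lipschitz. Then $Y^{s,x}$ solves $dY^{s,x}_r = \hat b(r,Y^{s,x}_r)\,dr + dB_r$, $Y^{s,x}_s = x$, and $u(s,x) = \dbE[g(Y^{s,x}_T,\mu^\a_T)] + \int_s^T \dbE[\hat f(r,Y^{s,x}_r)]\,dr$.

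The heart of the matter is to prove, uniformly in $\tilde\a$, two marginal estimates: (a) for any bounded Borel $\phi$ and $r>s$, $|\dbE[\phi(Y^{s,x}_r)] - \dbE[\phi(Y^{s,x'}_r)]| \le C\|\phi\|_\infty |x-x'|/\sqrt{r-s}$; and (b) $|\dbE[g(Y^{s,x}_T,\cdot)] - \dbE[g(Y^{s,x'}_T,\cdot)]| \le C L_0 |x-x'|$. Granting these, the terminal term of $u$ is controlled by (b), and the running term by (a) through $\int_s^T C_0\,C|x-x'|/\sqrt{r-s}\,dr = 2CC_0\sqrt{T-s}\,|x-x'|$, which is exactly \reff{vLip}. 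To establish (a) and (b) I would couple the two state processes started at $x$ and $x'$ by reflecting the driving Brownian motion along the direction $(x-x')/|x-x'|$ up to the coupling time $\tau$, and synchronizing afterwards so that $Y^{s,x}_r = Y^{s,x'}_r$ on $\{\tau\le r\}$. Because $\hat b$ is bounded by $C_0$, the scalar difference process carries diffusion coefficient $2$ and drift bounded by $2C_0$; a Girsanov comparison with the driftless reflected process gives $\dbP(\tau>r)\le C|x-x'|/\sqrt{r-s}$, hence (a) via $|\dbE[\phi(Y^{s,x}_r)-\phi(Y^{s,x'}_r)]| \le 2\|\phi\|_\infty\,\dbP(\tau>r)$. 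For (b) the additional Lipschitz continuity of $g$ removes the $1/\sqrt{T-s}$ blow-up: bounding $L_0\,\dbE[|Y^{s,x}_T - Y^{s,x'}_T|\,\1_{\{\tau>T\}}]$ directly against the survival density of the reflected difference process recovers the clean linear rate $CL_0|x-x'|$.

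The main obstacle is precisely this uniformity in $\tilde\a$. Any attempt to compare the two trajectories pathwise — synchronous coupling with Gronwall, differentiation of the stochastic flow, or a Bismut–Elworthy–Li formula — produces a constant depending on the Lipschitz constant $L_{\tilde\a}$ of the control, which is forbidden. The resolution is that the required smallness comes not from pathwise closeness (the two rough drifts can genuinely push the paths apart) but from the diffusive smoothing of the transition density: only the boundedness of $\hat b$, never its regularity, enters the reflection/Girsanov estimate, and the integrable singularity $1/\sqrt{r-s}$ is exactly what keeps the running-cost contribution finite. I would therefore carry out the Girsanov comparison carefully so that the rate stays linear in $|x-x'|$ and proportional to $1/\sqrt{r-s}$, rather than degrading to a square-root rate through a crude Cauchy–Schwarz bound on the Radon–Nikodym density.
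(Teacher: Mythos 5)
Your strategy is sound and genuinely different from the paper's. The paper argues analytically: it observes that $u(s,x):=J(\mu^\a;\tilde\a,s,x)$ is a classical solution of the linear PDE \reff{HJB}, writes a Duhamel/Bismut--Elworthy--Li-type representation of $\pa_x u(s,x)$ against the \emph{driftless} Brownian path $X^{s,x}_r=x+B_r-B_s$ (so the drift term $b\cd\pa_x u$ and the running cost $f$ appear as source terms integrated against the Gaussian kernel weight $(B_r-B_s)/(r-s)$), and then closes a weighted sup-norm Gronwall estimate on $K_s=e^{\l s}\sup_x|\pa_x u(s,x)|$, choosing $\l=4C_0^2/\pi$ so that the singular kernel $1/\sqrt{r-s}$ contributes a contraction factor ${1\over 2}$. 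Only $\|b\|_\infty,\|f\|_\infty\le C_0$ and the $L_0$-Lipschitz bound on $g$ enter, which is how uniformity in $\tilde\a$ is achieved there. Your probabilistic route -- reduce $v$ to $J$ by the same ``infimum of $C$-Lipschitz functions'' remark the paper makes, freeze $\mu^\a$, and control the marginals of the frozen diffusion by reflection coupling -- buys something the paper's proof does not: you never need $u$ to be a classical solution, so the argument would survive merely measurable controls (granted well-posedness of the SDE), whereas the paper implicitly uses the Lipschitz continuity of $\tilde\a$ in $x$ to justify the PDE representation, even though its constant is independent of $L_{\tilde\a}$. The decompositions also differ: the paper bounds $\pa_x u$ directly through a closed-loop estimate in which $\pa_x u$ appears on both sides; you bound increments of $u$ via a total-variation smoothing estimate (a) for the running cost plus a separate coupling estimate (b) exploiting the Lipschitz terminal condition, with no self-referential term.

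One step needs repair, and you half-flag it yourself: a ``Girsanov comparison'' with the driftless reflected process cannot deliver the linear rate, even done carefully. Any H\"older bound $\dbE[M\1_A]\le\|M\|_p\,\dbP_0(A)^{1/q}$ with $q>1$ degrades $|x-x'|/\sqrt{r-s}$ to a power $1/q<1$, and $q=1$ would require $M\in\dbL^\infty$, which is unavailable. The standard fix bypasses Girsanov entirely: under the mirror coupling the radial process $|Z_r|=|Y^{s,x}_r-Y^{s,x'}_r|$ satisfies, before the coupling time $\t$, a one-dimensional SDE with diffusion coefficient $2$ and drift bounded by $2C_0$, so by the comparison theorem $|Z|\le R$ pathwise, where $R$ has constant outward drift $2C_0$ and $R_s=|x-x'|$. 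The explicit (inverse-Gaussian) hitting law of $R$ then gives $\dbP(\t>r)\le C(1+C_0\sqrt T)\,|x-x'|/\sqrt{r-s}$, which is your estimate (a) with the linear rate intact; and for (b), $\dbE[|Z_T|\1_{\{\t>T\}}]=\dbE[|Z_{T\wedge\t}|]\le |x-x'|+2C_0\int_s^T\dbP(\t>r)\,dr\le C|x-x'|$, since the survival probability integrates to $O(|x-x'|\sqrt{T-s})$. With this substitution your proof is complete and yields a constant depending only on $T,d,C_0,L_0$, as required.
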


We then define the set value of the mean field game: \bea
\label{contV} \left.\ba{c} \dis \dbV_{cont}(t,\mu) := \bigcap_{\e>0}
\dbV^\e_{cont}(t, \mu),\q\mbox{where}\\ \dis \dbV^\e_{cont}(t,\mu) :=
\Big\{ \f\in C_{Lip}(\dbR^d): ~\mbox{there exists $\a^*\in
\cM^\e_{cont}(t,\mu)$ such that}\ms\\ \dis \int_{\dbR^d}
\big|\f(x)-J(t, \mu,\a^*; x, \a^*)\big| \mu(dx)\le \e\Big\}.
\ea\right.  \eea In particular, since $J(t, \mu,\a^*; x, \a^*) \ge
v(\mu^{\a^*}; t, x)$, then by \reff{vLip} and \reff{contMeL} we see
that both $J(t, \mu,\a^*; \cd, \a^*) $ and $v(\mu^{\a^*}; t, \cd)$
belong to $\dbV_{cont}(t,\mu)$. Moreover, again due to \reff{contMeL},
we may replace the inequality in the last line of \reff{contV} with $
\int_{\dbR^d} \big|\f(x)-v(\mu^{\a^*}; t, x)\big| \mu(dx)\le \e$.

Similarly, given $T_0$ and $\psi\in C_{Lip}(\dbR^d)$, we may define
the functions $J(T_0, \psi; t, \mu, \a; x,\tilde \a)$, $J(T_0, \psi;
\mu^\a; s, x, \tilde \a)$, $v(T_0, \psi; \mu^\a; s, x)$, as well as
the sets $\cM^\e_{cont}(T_0, \psi; t, \mu)$, $\dbV^\e_{cont}(T_0,\psi;
t, \mu)$, $\dbV_{cont}(T_0, \psi; t, \mu)$ in the obvious sense. In
particular, we have the following tower property:
\begin{equation}
  \begin{aligned}
    \label{tower2}
    J(t, \mu, \a; x, \tilde \a) &=
    J(T_0, \psi;
    t, \mu, \a; x, \tilde \a),\q\mbox{where}\q \psi(x):= J(T_0,
    \mu^\a_{T_0}, \a; x, \tilde \a);\\ v(\mu^{\a}; t, x) &= v(T_0, \tilde
    \psi; \mu^{\a}; t, x),\hspace{2.7em}\mbox{where}\q \tilde\psi(x) :=
    v(\mu^\a; T_0, x).  
  \end{aligned}
\end{equation}

We now establish the DPP for $\dbV_{cont}(t,\mu)$.

\begin{thm}
\label{thm-contDPP} Let Assumption \ref{assum-diffusion} hold. For any
$0\le t\le T_0\le T$ and $\mu\in \cP_2$, it holds \bea
\label{contDPP} \left.\ba{c} \dis \dbV_{cont}(t, \mu) =\tilde
\dbV_{cont}(t, \mu) := \bigcap_{\e>0} \tilde \dbV^\e_{cont}(t,
\mu),\q\mbox{where}\\ \dis \tilde \dbV^\e_{cont}(t, \mu) :=
\Big\{\f\in C_{Lip}(\dbR^d): \int_{\dbR^d} |\f(x)-J(T_0,\psi; t, \mu,
\a^*; x,\a^*)|\mu(dx)\le \e,\\ \dis \mbox{for some $(\psi,\a^*)$
satisfying:} ~\psi \in \dbV^\e_{cont}(T_0, \mu^{\a^*}_{T_0}), \a^*\in
\cM^\e_{cont}(T_0, \psi; t,\mu)\Big\}.  \ea\right.  \eea
\end{thm}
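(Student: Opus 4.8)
The plan is to establish the two inclusions $\tilde\dbV_{cont}(t,\mu)\subset\dbV_{cont}(t,\mu)$ and $\dbV_{cont}(t,\mu)\subset\tilde\dbV_{cont}(t,\mu)$ separately, following the template of Theorem \ref{thm-DPP2} but recasting every optimality statement in the $\mu$-averaged form dictated by \reff{contMeL} and \reff{contV}. Throughout I write $\bar\mu:=\mu^{\a^*}_{T_0}$, use the concatenation $\hat\a^*:=\a^*\oplus_{T_0}\tilde\a^*$ (run $\a^*$ on $[t,T_0)$ and $\tilde\a^*$ on $[T_0,T]$, which again lies in $\cA_{cont}$), and record the elementary flow identities $\mu^{\hat\a^*}=\mu^{\a^*}$ on $[t,T_0]$ and $\mu^{\hat\a^*}_{T_0}=\bar\mu$, so that the time-$T_0$ value $w:=v(\mu^{\hat\a^*};T_0,\cd)$ depends only on the continued flow. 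The tower property \reff{tower2} and the flow-frozen control DPP attached to the linear PDE and HJB equation in \reff{HJB} will be used repeatedly to split expectations at $T_0$, and membership in $C_{Lip}(\dbR^d)$ of all the $J$ and $v$ functions is guaranteed by Lemma \ref{lem-vreg}. I abbreviate $\dbE^{x}_{\a}$ for expectation under $\dbP^{\mu^{\a^*};t,x,\a}$.

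I would do $\dbV_{cont}\subset\tilde\dbV_{cont}$ first, as it is cleaner. Fix $\f\in\dbV_{cont}(t,\mu)$, $\e>0$, a small $\e_2$, and pick $\a^*\in\cM^{\e_2}_{cont}(t,\mu)$ with $\int|\f-J(t,\mu,\a^*;x,\a^*)|\mu(dx)\le\e_2$. Set $\psi(x):=J(\mu^{\a^*};T_0,x,\a^*)$, the exact continuation cost of $\a^*$; by \reff{tower2} the $L^1(\mu)$-closeness of $\f$ to $J(T_0,\psi;t,\mu,\a^*;\cd,\a^*)$ is immediate, and $\a^*\in\cM^{\e_2}_{cont}(T_0,\psi;t,\mu)$ holds because any deviation on $[t,T_0)$ against terminal cost $\psi$ equals a full-horizon deviation $\a\oplus_{T_0}\a^*$, so the truncated defect is dominated by the full one. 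The key point is $\psi\in\dbV^{\e_2}_{cont}(T_0,\bar\mu)$: writing $g:=\psi-v(\mu^{\a^*};T_0,\cd)\ge0$, the flow-frozen DPP gives $J(\mu^{\a^*};t,x,\a^*)-v(\mu^{\a^*};t,x)\ge\dbE^{x}_{\a^*}[g(X_{T_0})]$, and since $\a^*$ transports $\mu$ to $\bar\mu$ exactly, integrating against $\mu$ yields $\int g\,\bar\mu\le\e_2$. Here the nonnegativity of $g$ and the exact transport make this go through with no loss.

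For the reverse inclusion, fix $\f\in\tilde\dbV_{cont}(t,\mu)$, $\e>0$, and a small $\e_1$; choose $\psi,\a^*$ as in \reff{contDPP} and, using $\psi\in\dbV^{\e_1}_{cont}(T_0,\bar\mu)$, a matching $\tilde\a^*\in\cM^{\e_1}_{cont}(T_0,\bar\mu)$ with $\int|\psi-\psi_0|\bar\mu\le\e_1$, where $\psi_0:=J(T_0,\bar\mu,\tilde\a^*;\cd,\tilde\a^*)$. I would then show $\hat\a^*\in\cM^\e_{cont}(t,\mu)$ by splitting the nonnegative defect $\int[J(t,\mu,\hat\a^*;x,\hat\a^*)-v(\mu^{\hat\a^*};t,x)]\mu(dx)$ at $T_0$ into $(I)+(II)+(III)$, where $(I)=\int(\psi_0-\psi)\bar\mu$ (again via exact transport, so $|(I)|\le\e_1$), $(II)$ is the truncated MFE defect of $\a^*$ against $\psi$ (so $(II)\le\e_1$), and $(III)=\int[v(T_0,\psi;\mu^{\a^*};t,x)-v(\mu^{\hat\a^*};t,x)]\mu(dx)$ compares two control values differing only in their terminal data $\psi$ versus $w$. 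The $L^1(\mu)$-closeness of $\f$ to $J(t,\mu,\hat\a^*;\cd,\hat\a^*)$ follows from $(I)$ together with the choice of $\psi,\a^*$.

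The main obstacle is term $(III)$: I know $|\psi-w|$ is small only in $L^1(\bar\mu)$ (since $w\le\psi_0$, $\int(\psi_0-w)\bar\mu\le\e_1$ and $\int|\psi-\psi_0|\bar\mu\le\e_1$), yet the infimum defining $v(\mu^{\hat\a^*};t,x)$ ranges over deviations $\a$ whose time-$T_0$ law drifts away from $\bar\mu$, so $|(III)|\le\int\sup_{\a}\dbE^{x}_{\a}[|\psi-w|(X_{T_0})]\mu(dx)$ cannot be controlled pointwise. The resolution, which is the continuous-space analogue of the $q\ge c_q$ step in Theorem \ref{thm-DPP2} but subtler since the transition density is not bounded below, is a change of measure: as the drift is bounded by $C_0$ (Assumption \ref{assum-diffusion}(i)), Girsanov together with Novikov's condition gives, for every admissible $\a$, a density $Z^\a=d\dbP^{\mu^{\a^*};t,x,\a}/d\dbP^{\mu^{\a^*};t,x,\a^*}$ with $\|Z^\a\|_{L^p}\le e^{C(p,C_0,T)}$ \emph{uniformly} in $x$ and $\a$. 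Hölder then gives $\dbE^x_\a[|\psi-w|(X_{T_0})]\le e^{C}\big(\dbE^x_{\a^*}[|\psi-w|^{p'}(X_{T_0})]\big)^{1/p'}$ with a constant independent of $\a$, taming the supremum; integrating against $\mu$, applying Jensen, and using the boundedness $|\psi-w|\le 2C_0$ with $\int|\psi-w|^{p'}\bar\mu\le(2C_0)^{p'-1}\cdot2\e_1$ bounds $|(III)|$ by $C\,\e_1^{1/p'}$. Thus the total defect is at most $2\e_1+C\e_1^{1/p'}$, which is $\le\e$ once $\e_1$ is small enough; arbitrariness of $\e$ closes the inclusion. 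I expect the only delicate point beyond bookkeeping to be precisely this uniform-in-$\a$ transfer of the $L^1(\bar\mu)$ estimate, where the $L^1$-to-$L^{p'}$ loss is absorbed exactly because $\psi$ and $w$ are bounded.
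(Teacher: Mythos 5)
Your first inclusion ($\dbV_{cont}\subset\tilde\dbV_{cont}$) is correct and follows the paper's part (i) almost exactly: the choice $\psi:=J(\mu^{\a^*};T_0,\cd,\a^*)$, the tower identity, and the argument for $\psi\in\dbV^{\e_2}_{cont}(T_0,\bar\mu)$ via $g:=\psi-v(\mu^{\a^*};T_0,\cd)\ge 0$ and the exact transport of $\mu$ to $\bar\mu$ under $\a^*$ is precisely the paper's Step~1. Your handling of the truncated membership is in fact slightly cleaner than the paper's: you use that every truncated deviation $\a$ embeds as the full deviation $\a\oplus_{T_0}\a^*$, so $v(T_0,\psi;\mu^{\a^*};t,x)\ge v(\mu^{\a^*};t,x)$ and the truncated defect is dominated by the full one with no $\e$-loss, whereas the paper takes a detour through a uniformly $\e_1$-optimal auxiliary control $\tilde\a^*$ and loses a factor.

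In the second inclusion your decomposition $(I)+(II)+(III)$ is exactly the paper's chain (its estimate \reff{Jdif} is your $(I)$, the membership $\a^*\in\cM^{\e_1}_{cont}(T_0,\psi;t,\mu)$ is your $(II)$, and the comparison $v(T_0,\psi;\mu^{\a^*};t,\cd)-v(T_0,\tilde\psi;\mu^{\a^*};t,\cd)$ is your $(III)$), and you have correctly isolated the real difficulty: the paper dispatches $(III)$ in a single display, replacing $\sup_{\tilde\a}\dbE\big[J(T_0,\psi;t,\mu,\a^*;\xi,\tilde\a)-J(T_0,\tilde\psi;t,\mu,\a^*;\xi,\tilde\a)\big]$ by its evaluation along $\a^*$, which implicitly invokes the equivalence of the laws of $X_{T_0}$ across admissible controls announced at the start of Section 6. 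Your Girsanov/H\"older transfer (uniform $L^p$ bounds on the density, then Jensen) is the right tool to make this rigorous, and for the bounded, nonnegative component $\psi_0-w$ (the paper's $\hat\psi-\tilde\psi$, which satisfies $0\le\psi_0-w\le C_0(1+T)$ and $\int(\psi_0-w)\,d\bar\mu\le\e_1$) it works verbatim.

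However, there is a genuine gap: the pointwise bound $|\psi-w|\le 2C_0$ on which your interpolation $\int|\psi-w|^{p'}d\bar\mu\le (2C_0)^{p'-1}\cdot 2\e_1$ rests is not available. By \reff{contV}, $\psi\in\dbV^{\e_1}_{cont}(T_0,\bar\mu)$ only says that $\psi\in C_{Lip}(\dbR^d)$ with an \emph{uncontrolled} Lipschitz constant and $\int|\psi-\psi_0|\,d\bar\mu\le\e_1$; thus $\psi-\psi_0$ is an unbounded function that is merely small in $L^1(\bar\mu)$. This cannot be transferred to deviated laws uniformly: taking $h=L(|x|-R)^+$, one has $\int h\,d\bar\mu\sim L e^{-R^2/(2(T_0-t))}$ up to polynomial factors, while a bounded drift pointed toward the spike gives $\dbE^x_\a[h(X_{T_0})]\gtrsim e^{cR}\int h\,d\bar\mu$, so no modulus bound $\sup_\a\dbE^x_\a[h(X_{T_0})]\le\omega\big(\int h\,d\bar\mu\big)$ independent of $h$ exists. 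Patching by interpolation using the Lipschitz constant of $\psi$ produces constants depending on $\psi=\psi_{\e_1}$, which breaks the quantifier order ($\e_1$ must be fixed before $\psi$ is revealed); truncating $\psi$ at the uniform bound of $J$ and $v$ forces you to re-derive $\a^*\in\cM^{C\e_1}_{cont}(T_0,\psi_c;t,\mu)$ from the hypothesis for $\psi$, which runs into the same transfer problem for $(\psi-\psi_c)^+$. So your argument closes only on the bounded part; the unbounded tail of $\psi$ requires additional input (e.g., comparable Gaussian tail bounds for all achievable laws together with an avoidance/steering argument, or an a priori restriction to bounded $\psi$). To be fair, the paper's own display at this point is no more detailed than your sketch, so you have in fact pinpointed the one step of the proof where extra care is needed.
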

\begin{proof}
  (i) We first prove
  $\dbV_{cont}(t,\mu) \subset \tilde \dbV_{cont}(t,\mu)$. Fix
  $\f\in \dbV_{cont}(t,\mu)$, $\e>0$, and set $\e_1:= {\e\over
    2}$. Since $\f\in \dbV^{\e_1}_{cont}(t, \mu)$, there exists
  $\a^*\in \cM^{\e_1}_{cont}(t, \mu)$ satisfying \reff{contV} for
  $\e_1$.  Denote \beaa \psi(x):= J(T_0, \mu^{\a^*}_{T_0}, \a^*; x,
  \a^*),\q \tilde \psi(x) := v(\mu^{\a^*}; T_0, x).  \eeaa By
  \reff{tower2} we have
  $J(T_0, \psi; t, \mu, \a^*; x, \a^*) = J(t, \mu, \a^*; x, \a^*)$ and
  thus \beaa \int_{\dbR^d} \big|\f(x)-J(T_0, \psi; t, \mu,\a^*; x,
  \a^*)\big| \mu(dx)\le \e_1\le \e.  \eeaa We shall show that
  $\psi \in \dbV^\e_{cont}(T_0, \mu^{\a^*}_{T_0})$ and
  $\a^*\in \cM^\e_{cont}(T_0, \psi; t,\mu)$. Then
  $\f\in \tilde \dbV^\e_{cont}(t, \mu)$, and therefore, since $\e>0$
  is arbitrary, we have $\f\in \tilde \dbV(t,\mu)$.

  {\it Step 1.} In this step we show that
  \begin{equation}
    \label{contDPPest1} \int_{\dbR^d}\big[J(T_0, \mu^{\a^*}_{T_0}, \a^*;
    x, \a^*)- v(\mu^{\a^*}; T_0, x)\big] \mu^{\a^*}_{T_0}(dx) =
    \int_{\dbR^d}[\psi( x)- \tilde \psi(x)] \mu^{\a^*}_{T_0}(dx)\le \e_1.  
  \end{equation}
  Then $\a^*\in \cM^\e_{cont}(T_0, \mu^{\a^*}_{T_0})$, which, together
  with the regularity of $\psi$ from Lemma \ref{lem-vreg}, implies
  immediately that $\psi \in \dbV^\e_{cont}(T_0, \mu^{\a^*}_{T_0})$.

  To see this, we recall \reff{Xmu} with $\xi \in \dbL^2(t,
  \mu)$. Since $\a^*\in \cM^{\e_1}_{cont}(t, \mu)$, by \reff{tower2}
  we have \beaa &\dis \e_1 \ge \dbE\Big[J(t, \mu, \a^*; \xi, \a^*) -
  v(\mu^{\a^*}; t, \xi)\Big]= \dbE\Big[J(T_0, \psi; t, \mu, \a^*; \xi,
  \a^*) - v(T_0, \tilde \psi; \mu^{\a^*}; t, \xi)\Big]\\ &\dis \ge
  \dbE\Big[J(T_0, \psi; t, \mu, \a^*; \xi, \a^*) - J(T_0, \tilde \psi;
  t, \mu, \a^*; \xi, \a^*)\Big]=\dbE\Big[ \psi(X^{t, \xi, \a^*}_{T_0})
  - \tilde \psi(X^{t, \xi, \a^*}_{T_0})\Big].  \eeaa Note that
  $\cL_{X^{t, \xi, \a^*}_{T_0}} = \mu^{\a^*}_{T_0}$, then this is
  exactly \reff{contDPPest1}.

  {\it Step 2.} It remains to show that
  $ \a^*\in \cM^\e_{cont}(T_0, \psi; t,\mu)$. By the definition of $v$
  and its regularity from Lemma \ref{lem-vreg}, there exists
  $\tilde \a^*\in \cA_{cont}$ such that \beaa J(T_0, \psi; t, \mu,
  \a^*; x, \tilde \a^*) \le v(T_0,\psi; \mu^{\a^*}; t, x) +
  \e_1,\q\forall x\in \dbR^d.  \eeaa Then, denoting
  $\hat\a^* := \tilde \a^* \oplus_{T_0} \a^*\in \cA_{cont}$, by
  \reff{tower2} again we have \beaa &&\dis \dbE\Big[J(T_0, \psi; t,
  \mu, \a^*; \xi, \a^*) - v(T_0,\psi; \mu^{\a^*}; t, \xi) \Big]\\
  &&\dis \le \dbE\Big[J(T_0, \psi; t, \mu, \a^*; \xi, \a^*) - J(T_0,
  \psi; t, \mu, \a^*; \xi, \tilde\a^*)\Big] + \e_1 \\ &&\dis =
  \dbE\Big[J(t, \mu,
  \a^*; \xi, \a^*) - J(t, \mu, \a^*; \xi, \hat \a^*)\Big] + \e_1\\
  &&\dis \le \dbE\Big[J(t, \mu, \a^*; \xi, \a^*) - v(\mu^{\a^*}; t,
  \xi)\Big] +\e_1\le \e_1 + \e_1 = \e, \eeaa This means
  $ \a^*\in \cM^\e_{cont}(T_0, \psi; t,\mu)$.

  (ii) We next prove
  $\tilde \dbV_{cont}(t,\mu) \subset \dbV_{cont}(t,\mu)$. Fix
  $\f\in \tilde\dbV_{cont}(t,\mu)$, $\e>0$, and set
  $\e_1:= {\e\over 4}$. Since
  $\f\in \tilde\dbV^{\e_1}_{cont}(t, \mu)$, there exist $(\psi, \a^*)$
  satisfying the desired properties in \reff{contDPP} for $\e_1$. In
  particular, since
  $\psi\in \dbV^{\e_1}_{cont}(T_0, \mu^{\a^*}_{T_0})$, there exists
  desired $\tilde \a^*\in \cM^{\e_1}_{cont}(T_0, \mu^{\a^*}_{T_0})$
  required in \reff{contV} for $\e_1$. Denote
  $\hat \a^* := \a^* \oplus_{T_0} \tilde \a^*\in \cA_{cont}$ and \beaa
  \hat\psi(x) := J(T_0, \mu^{\a^*}_{T_0}, \tilde \a^*; x, \tilde
  \a^*),\q \tilde \psi(x):= v(\mu^{\hat\a^*}; T_0, x).  \eeaa By
  \reff{contDPP}, \bea
  \label{Jdif} &&\dis\qq\qq\qq \dbE\Big[\big|J(T_0, \psi; t, \mu,
  \a^*; \xi, \a^*) - J(T_0,\hat\psi; t, \mu, \a^*; \xi,
  \a^*)\big|\Big] \\&&\dis\!\!\!\!\!\!\!\!\!\!=
  \dbE\Big[\big|\psi(X_{T_0}^{\mu^{\a^*};t,\xi,\a^*}) -
  \hat\psi(X_{T_0}^{\mu^{\a^*};t,\xi,\a^*})\big|\Big]=\int_{\dbR^d}\big|\psi(x)
  - J(T_0, \mu^{\a^*}_{T_0}, \tilde \a^*; x, \tilde
  \a^*)\big|\mu_{T_0}^{\a^*}(dx)\leq \e_1 \nonumber \eea Then, since
  $\f\in \tilde\dbV^{\e_1}_{cont}(t, \mu)$ with corresponding
  $(\psi, \a^*)$, by \reff{tower2} and \reff{Jdif} we have \beaa
  \dbE\Big[\big|\f(\xi) - J(t, \mu, \hat\a^*; \xi, \hat\a^*)\big|\Big]
  \leq \dbE\Big[\big|\f(\xi)-J(T_0, \psi; t, \mu, \a^*; \xi, \a^*)
  \big|\Big] + \e_1 \le 2\e_1\le \e, \eeaa where
  $\xi\in \dbL^2(t,\mu)$. We claim further that
  $\hat \a^*\in \cM^\e_{cont}(t, \mu)$. Then
  $\f\in \dbV^\e_{cont}(t, \mu)$, and thus $\f\in \dbV_{cont}(t,\mu)$,
  since $\e>0$ is arbitrary.

  To see the claim, since
  $\a^* \in \cM^{\e_1}_{cont}(T_0, \psi; t,\mu)$,
  $\tilde \a^*\in \cM^{\e_1}_{cont}(T_0, \mu^{\a^*}_{T_0})$, by
  \reff{tower2} we have \beaa &&\dis \dbE\Big[J(t, \mu, \hat\a^*; \xi,
  \hat\a^*) - v(\mu^{\hat\a^*}; t, \xi) \Big]\\ &&\dis =
  \dbE\Big[J(T_0, \hat\psi; t, \mu, \a^*; \xi, \a^*) - v(T_0, \tilde
  \psi; \mu^{\a^*}; t, \xi) \Big]\\ &&\dis \leq \dbE\Big[J(T_0, \psi;
  t, \mu, \a^*; \xi, \a^*) - v(T_0, \tilde \psi; \mu^{\a^*}; t, \xi)
  \Big] + \e_1\\ &&\dis \le \dbE\Big[ v(T_0, \psi; \mu^{\a^*}; t, \xi)
  - v(T_0, \tilde \psi; \mu^{\a^*}; t, \xi) \Big] + 2\e_1\\ &&\dis \le
  \sup_{\tilde\a\in \cA_{cont}}\dbE\Big[J(T_0, \psi; t, \mu, \a^*;
  \xi, \tilde \a) -
  J(T_0, \tilde\psi; t, \mu, \a^*; \xi, \tilde \a) \Big] + 2\e_1\\
  &&\dis = \dbE\big[\psi(X^{t,\xi, \a^*}_{T_0}) - \tilde\psi(X^{t,\xi,
    \a^*}_{T_0})\big] + 2\e_1 \le \dbE\big[\hat\psi(X^{t,\xi,
    \a^*}_{T_0}) - \tilde\psi(X^{t,\xi, \a^*}_{T_0})\big] + 3\e_1 \leq
  \e_1 + 3\e_1 = \e.  \eeaa This means
  $\hat \a^*\in \cM^\e_{cont}(t, \mu)$, and hence completes the proof.
\end{proof}

\begin{rem}
\label{rem-contV}
(i) Our set value $\dbV_{cont}(t, \mu)$ is defined for each $(t, \mu)$ with elements in $C_{Lip}(\dbR^d)$, instead of $\dbV(t, x, \mu)\subset \dbR$ for each $(t,x,\mu)$. This is consistent with \reff{V0}  in the discrete model, and is due to the fact that an $\e$-MFE $\a^*$ in Definition \ref{defn-contMeL} depends on $(t, \mu)$, but is common for all initial states $x$. Indeed, if we define $\dbV_{cont}(t,x,\mu)$ in an obvious manner, it will not satisfy the DPP.  

(ii) The above observation is also consistent with the fact that the following master equation is local in $(t, \mu)$, but non-local in $x$ due to the term $ \pa_x V(t, \tilde x, \mu)$:
\bea
\label{master}
\left.\ba{c}
\dis \pa_t V(t,x,\mu) + {1\over 2}\tr(\pa_{xx} V) + H(x,\mu, \pa_x V) \\
\dis + \int_{\dbR^d} \big[{1\over 2}\tr( \pa_{\tilde x\mu} V(t,x,\mu, \tilde x)) + \pa_p H(\tilde x, \mu, \pa_x V(t, \tilde x, \mu)) \pa_\mu V(t, x, \mu, \tilde x)\big] \mu(d\tilde x)=0. 
\ea\right.
\eea
Under appropriate conditions, in particular under certain monotonicity conditions, the above master equation has a unique solution and we have $\dbV_{cont}(t, \mu) = \{\cV(t, \mu)\}$ is a singleton, where  $\cV(t,\mu)(x) := V(t,x,\mu)$ is a function of $x$. In this way, we may also view \reff{master} as a first order ODE on the space $C^2(\dbR^d)$ (the regularity in $x$ is  a lot easier to obtain):
\bea
\label{master2}
\left.\ba{c}
\dis \pa_t \cV(t,\mu) + \cH(\mu,  \cV(t, \mu)) + \cM(\mu,  \cV(t, \mu), \pa_\mu\cV(t, \mu))=0, \\
\dis  \mbox{where}\q \cH(\mu, v(\cd))(x) :=  {1\over 2}\tr(\pa_{xx} v(x)) + H(x,\mu, \pa_x v(x)),\\
\dis \cM(\mu, v(\cd), \tilde v(\cd, \cd))(x) := \int_{\dbR^d} \big[{1\over 2}\tr( \pa_{\tilde x} \tilde v(x,\tilde x)) + \pa_p H(\tilde x, \mu, \pa_x v(\tilde x)) \tilde v(x, \tilde x)\big] \mu(d\tilde x). 
\ea\right.
\eea
It could be interesting to explore master equations from this perspective as well.
\end{rem}

\subsection{Convergence of the $N$-player game} By enlarging the
filtered probability space $(\O, \cF, \dbF, \dbP)$, if necessary, we
let $B^1, \cds, B^N$ be independent $d$-dimensional Brownian motions
on it. Set $\dis\cA^\infty_{cont} := \cup_{L\ge 0} \cA^L_{cont}$,
where, for each $L\ge 0$, $\cA^L_{cont}$ denotes the set of admissible
controls $\a: [0, T]\times \dbR^d \times \cP_2 \to \dbA$ such that
\beaa |\a(t, x, \mu) - \a(t, \tilde x, \tilde \mu)| \le L_\a|x-\tilde
x| + LW_1(\mu, \tilde \mu).  \eeaa Here the Lipschitz constant $L_\a$
may depend on $\a$, hence the Lipschitz continuity in $x$ is not
uniform in $\a$. We emphasize that the Lipschitz continuity in $\mu$
is under $W_1$, rather than $W_2$, so that we can use the
representation \reff{contW1}.  Note that $\cA_{cont} = \cA^0_{cont}$,
and by Remark \ref{rem-cAL} (i), all the results in the previous
subsection remain true if we replace $\cA_{cont}$ with
$\cA^\infty_{cont}$.

Given $t\in [0, T]$, $\vec x =(x_1,\cds, x_N) \in \dbR^{dN}$ and $\vec
\a = (\a_1,\cds, \a_N)\in (\cA^L_{cont})^N$, consider \bea
\label{contXi} \left.\ba{c} \dis X^{t, \vec x, \vec \a; i}_s = x_i +
\int_t^s \!\! b\big(r, X_r^{t, \vec x, \vec \a; i}, \mu^{t, \vec x,
\vec\a}_r, \a_i(r, X_r^{t, \vec x, \vec \a; i}, \mu^{t, \vec x,
\vec\a}_r)\big) dr + B^i_s-B^i_t, i=1,\cds, N;\\ \dis\mbox{where}\q
\mu^{t,\vec x, \vec \a}_s := {1\over N}\sum_{i=1}^N\d_{X^{t, \vec x,
\vec \a; i}_s};\\ \dis J_i(t, \vec x, \vec \a) := \dbE\Big[g(X^{t,
\vec x, \vec \a; i}_T, \mu^{t, \vec x, \vec\a}_T) + \int_t^T\!\!
f\big(s, X_s^{t, \vec x, \vec \a; i}, \mu^{t, \vec x, \vec\a}_s,
\a_i(s, X_s^{t, \vec x, \vec \a; i}, \mu^{t, \vec x, \vec\a}_s)\big)
ds\Big],\\ \dis v^{N,L}_i(t, \vec x, \vec \a) := \inf_{\tilde \a\in
\cA^L_{cont}} J_i(t, \vec x, (\vec \a^{-i}, \tilde \a)).  \ea\right.
\eea

In light of Lemma \ref{lem-vreg}, the following regularity result is
interesting in its own right. However, since it will not be used for
our main result, we postpone its proof to Appendix.
\begin{prop}
\label{prop-contNreg} Let Assumption \ref{assum-diffusion} hold. For
any $L\ge 0$, there exists a constant $C_L>0$, depending only on $T,
d, C_0, L_0$, and $L$, such that, for any $(t, \vec x)\in [0, T]\times
\dbR^{dN}$, $\bar x, \tilde x\in \dbR^d$, and $\vec \a \in
(\cA^L_{cont})^N$, we have \bea
\label{vLireg} \big|v^{N,L}_i\big(t, (\vec x^{-i}, \bar x), \vec
\a\big)- v^{N,L}_i\big(t, (\vec x^{-i}, \tilde x), \vec
\a\big)\big|\le C_L|\bar x-\tilde x|,\q i=1,\cds, N.  \eea
\end{prop}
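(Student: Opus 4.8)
The plan is to prove the two-sided estimate by establishing the one-sided inequality $v^{N,L}_i(t,(\vec x^{-i},\bar x),\vec\a) \le v^{N,L}_i(t,(\vec x^{-i},\tilde x),\vec\a) + C_L|\bar x-\tilde x|$ and then swapping the roles of $\bar x$ and $\tilde x$. Fix a small $\e>0$ and a near-optimal response $\tilde\a^\e \in \cA^L_{cont}$ for the profile $(\vec x^{-i},\tilde x)$, so that $J_i(t,(\vec x^{-i},\tilde x),(\vec\a^{-i},\tilde\a^\e)) \le v^{N,L}_i(t,(\vec x^{-i},\tilde x),\vec\a)+\e$. I would then run the two systems \reff{contXi} — started from $(\vec x^{-i},\bar x)$ and from $(\vec x^{-i},\tilde x)$ — on the same Brownian motions $B^1,\cds,B^N$, letting player $i$ use in \emph{both} systems the \emph{open-loop} realization of $\tilde\a^\e$ (legitimate since for drift-controlled additive-noise SDEs open- and closed-loop controls give the same value, exactly as in Lemma \ref{lem-vreg}), while each $j\neq i$ keeps its fixed feedback $\a_j$. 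Writing $\mu^1,\mu^2$ for the two empirical flows and $D^j_s := |X^{1;j}_s - X^{2;j}_s|$, the Lipschitz continuity of $f,g$ in $(x,\mu)$ reduces \reff{vLireg} to controlling $\dbE[D^i_s]$ and $\dbE[W_1(\mu^1_s,\mu^2_s)]$.

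The \emph{optimizing} player is the easy part: since player $i$ applies the same open-loop control in both systems, its driver cancels in the difference and synchronous coupling yields $\dot D^i_s \le L_0\big(D^i_s + W_1(\mu^1_s,\mu^2_s)\big)$ with $D^i_t = |\bar x-\tilde x|$, so Gr\"onwall bounds $D^i_s$ by $|\bar x-\tilde x|$ plus a measure-gap term, with a constant depending only on $T,L_0$ — this is precisely the mechanism of Lemma \ref{lem-vreg}, and it is insensitive to the (control-dependent) Lipschitz constant of $\tilde\a^\e$. Thus the entire difficulty concentrates in the single bound $\dbE[W_1(\mu^1_s,\mu^2_s)] \le C_L|\bar x-\tilde x|$.

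This last bound I would establish by an inductive Gr\"onwall scheme modeled on Theorem \ref{thm-NonSymMeasureConv1}, propagating the measure gap through the one-step (conditional) dynamics, the source being the $O(|\bar x-\tilde x|/N)$ contribution of the single perturbed player $i$ to each empirical measure. \textbf{The main obstacle} — the feature that makes the heterogeneous diffusion setting delicate, unlike the finite-state model — is that the fixed feedbacks $\a_j$, $j\neq i$, are Lipschitz in the state only with \emph{non-uniform} constants $L_{\a_j}$: the naive synchronous-coupling inequality $\dot D^j_s \le L_0(1+L_{\a_j})D^j_s + L_0(1+L)W_1(\mu^1_s,\mu^2_s)$ amplifies the source by $e^{L_0(\max_j L_{\a_j})T}$ and destroys the uniformity of $C_L$. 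The device I would use to avoid this is to discard index-by-index trajectory coupling and instead compare the two \emph{empirical-measure flows} through their transition kernels evaluated at common spatial points, so that the recursion is driven by the measure increments $W_1(\mu^1_s,\mu^2_s)$ alone and never by the others' state increments $D^j_s$ — the same self-consistent, point-wise comparison that in Theorem \ref{thm-NonSymMeasureConv1} keeps the constant dependent on the $\mu$-Lipschitz constant $L$ (and on $L_0,C_0,T,d$) only. The boundedness of $b,f,g$ in Assumption \ref{assum-diffusion}(i) together with the nondegeneracy of the additive noise and the independence of $B^1,\cds,B^N$ are what make this point-wise propagation legitimate and keep the $O(1/N)$ fluctuation terms under control.

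Finally, substituting $\dbE[W_1(\mu^1_s,\mu^2_s)] \le C_L|\bar x-\tilde x|$ back into the estimate for $\dbE[D^i_s]$ and into the cost difference gives $J_i(t,(\vec x^{-i},\bar x),(\vec\a^{-i},\tilde\a^\e)) \le J_i(t,(\vec x^{-i},\tilde x),(\vec\a^{-i},\tilde\a^\e)) + C_L|\bar x-\tilde x|$. Since the left side dominates $v^{N,L}_i(t,(\vec x^{-i},\bar x),\vec\a)$ and the right side is within $\e$ of $v^{N,L}_i(t,(\vec x^{-i},\tilde x),\vec\a)$, letting $\e\downarrow 0$ and interchanging $\bar x,\tilde x$ yields \reff{vLireg}.
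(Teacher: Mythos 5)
Your skeleton (one-sided bound via an $\e$-optimal response, then swapping $\bar x$ and $\tilde x$) matches the paper's, and you correctly locate the difficulty in the non-uniform state-Lipschitz constants $L_{\a_j}$ of the frozen feedbacks; but both load-bearing steps of your proposal have genuine gaps. First, the transfer of the response by its open-loop realization is not legitimate here: $v^{N,L}_i$ is defined in \reff{contXi} as an infimum over the restricted feedback class $\cA^L_{cont}$ (controls of the form $\tilde\a(s,X^i_s,\mu^N_s)$), and the open-loop process obtained by realizing $\tilde\a^\e$ along the second system's paths, when fed into the first system, is adapted to the full Brownian filtration and is not an element of this class --- so the inequality $v^{N,L}_i(t,(\vec x^{-i},\bar x),\vec\a)\le J_1^{\mathrm{open}}$ that your chain of estimates needs does not follow. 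The open/closed-loop equivalence you invoke via Lemma \ref{lem-vreg} pertains to the frozen-measure-flow problem of the MFG limit; in the $N$-player best response the empirical measure reacts to player $i$'s control, and no equivalence between open-loop controls and the restricted symmetric feedback class is available (or used) in the paper. The paper's fix is elementary and stays inside the admissible class: replace $\tilde\a$ by the shifted feedback $\bar\a(s,x,\mu):=\tilde\a(s,x-\bar x+\tilde x,\mu)$, which again belongs to $\cA^L_{cont}$ and, along the coupled paths, takes \emph{identical} values for the two initial points, achieving exactly the cancellation you want without any open-loop detour and without ever seeing the $x$-Lipschitz constant of $\tilde\a$.

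Second, your key bound $\dbE[W_1(\mu^1_s,\mu^2_s)]\le C_L|\bar x-\tilde x|$ cannot be obtained within the synchronous coupling you set up: for $j\neq i$ the trajectories $X^{1;j}$ and $X^{2;j}$ genuinely differ (their drifts see $\mu^1$ versus $\mu^2$), so $W_1(\mu^1_s,\mu^2_s)$ is controlled only through the increments $D^j_s$, which re-enter Gr\"onwall multiplied by $L_{\a_j}$ --- precisely the amplification you set out to avoid. The device you describe, comparing the empirical-measure flows through transition kernels at common spatial points as in Theorem \ref{thm-NonSymMeasureConv1}, does not transfer to the diffusion model: that argument is specific to finite state space and discrete time, where both measures are evaluated on one common finite path space through a single Lipschitz kernel, whereas under synchronous coupling the two empirical measures are supported on different random point clouds and no such kernel comparison exists. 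What actually decouples the estimate from the $L_{\a_j}$ in the paper is a change of formulation, not a sharper estimate: working in the weak formulation, Girsanov's theorem turns all particles into control-independent driftless Brownian paths (identical in the two systems for $j\neq i$, shifted by the constant $\D x$ for $j=i$, so that $W_1(\bar\mu^N_s,\tilde\mu^N_s)\le|\D x|/N$ deterministically), and the entire perturbation is moved into the exponential densities $M^j$. Their $L^2$-differences are then estimated by Gr\"onwall to be of order $|\D x|$ for $j=i$ and $|\D x|/N$ for $j\neq i$, with constants depending only on $T,d,C_0,L_0,L$, because the drift differences are measured through the $\mu$-Lipschitz constant $L$ alone. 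Your remark about nondegenerate additive noise points in this direction, but without the Girsanov construction (and the feedback shift above) the proposal as written does not close.
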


Given $\a\in \cA^L_{cont}$, by viewing it as the homogeneous control
$(\a,\cds, \a)$, we may use the simplified notations $X^{t, \vec x,
\a; i}$, $\mu^{t, \vec x, \a}$, $J_i(t, \vec x, \a)$, and $
v^{N,L}_i(t, \vec x, \a)$ in the obvious sense.

\begin{defn}
  \label{defn-contNcM} (i) For $(t, \vec x) \in [0, T] \times \dbR^{dN}$\!, $\e> 0$,  $L\ge 0$, we
  call $\a^*\in \cA^L_{cont}$ a homogeneous $(\e, L)$-equilibrium of
  the $N$-player game at $(t, \vec x)$, denoted as
  $ \a^* \! \in\!  \cM^{N,\e,L}_{cont}(t, \vec x)$, if \bea
\label{contNcM} {1\over N}\sum_{i=1}^N \big[J_i(t,\vec x, \a^*) -
v^{N,L}_i(t, \vec x, \a^*)\big] \le \e.  \eea

(ii) The set value for the $N$-player game is defined as: \bea
\label{contVN} &&\dis\qq\qq \dbV^N_{cont}(t,\vec x) := \bigcap_{\e>0}
\dbV^{N,\e}_{cont}(t, \vec x):= \bigcap_{\e>0}\bigcup_{L\ge 0}
\dbV^{N,\e,L}_{cont}(t, \vec x),\q\mbox{where}\\ &&\dis \!\!\!\!\!\!
\dbV^{N,\e,L}_{cont}(t,\vec x) := \Big\{ \f\in C_{Lip}(\dbR^d):
\exists \a^*\in \cM^{N,\e,L}_{cont}(t,\vec x)~\mbox{s.t.}~{1\over
N}\sum_{i=1}^N |\f(x_i)-J_i(t,\vec x, \a^*) | \le \e\Big\}.\nonumber
\eea
\end{defn}

We remark that, although $\dbV^{N,\e,L}_{cont}(t,\vec x)$ involves
only the values $\{\f(x_i)\}_{1\le i\le N}$, for the convenience of
the convergence analysis we consider its elements as $\f\in
C_{Lip}(\dbR^d)$.

\begin{rem}
  \label{rem-contNJN}
  (i) Recall \reff{muN}. By the required symmetry, obviously there
  exist functions
  $J^N, v^{N,L}: [0, T] \times \cP_2 \times \cA^L_{cont} \times
  \dbR^d\to \dbR$ such that
  \begin{equation}
    \label{contJiN} J_i(t, \vec x, \a) = J^N(t, \mu^N_{\vec x}, \a;
    x_i),\ \ v^{N,L}_i(t, \vec x, \a) = v^{N,L}(t, \mu^N_{\vec x}, \a;
    x_i),\ \ i=1,\cds, N. 
  \end{equation}
  Moreover, $\dbV^N_{cont}(t,\vec x)$ is invariant in $\mu^N_{\vec x}$
  and thus can be denoted as $\dbV^N_{cont}(t, \mu^N_{\vec x})$.

  (ii) The required inequalities in Definition \ref{defn-contNcM} are
  equivalent to: \beaa \int_{\dbR^d} [J^N- v^{N,L}](t, \mu^N_{\vec x},
  \a^*; x) \mu^N_{\vec x}(dx)\le \e,\q \dis\int_{\dbR^d} \big[\f(x) -
  J^N(t, \mu^N_{\vec x}, \a^*; x) \big]\mu^N_{\vec x}(dx)\le \e.
  \eeaa
\end{rem}

We now turn to the convergence, starting with the convergence of the
equilibrium measures.  Recall the vector $(\a, \tilde \a)_i$
introduced in \reff{NE}.

\begin{thm}
\label{thm-contmuConv} Let Assumption \ref{assum-diffusion} hold. For
any $L\ge 0$, there exists a constant $C_L>0$, depending only on $T,
d, C_0, L_0$, and $L$, such that, for any $t\in [0, T]$, $\vec x\in
\dbR^{dN}$, $\mu\in \cP_2$, $\a, \tilde \a\in \cA^L_{cont}$, and
$i=1,\cds, N$, \bea
   \label{contmuConv} &\dis \sup_{t\le s\le T} \dbE\Big[W_1(\mu^{t,
\vec x, (\a,\tilde\a)_i}_s,\mu_s^\a)\Big] \le C_L\th_N,\\ &\dis
\mbox{where}\q \th_N:= W_1(\mu^N_{\vec x}, \mu)+ N^{-{1\over {d\vee
3}}} \|\vec x\|_2 + N^{-1},\q \|\vec x\|_2^2:= {1\over N}\sum_{i=1}^N
|x_i|^2.\nonumber \eea
\end{thm}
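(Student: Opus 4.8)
The plan is to run a synchronous-coupling (propagation-of-chaos) argument, the continuous-time analogue of Theorem \ref{thm-FinSymMeasureConv}. Fix $t,\vec x,\mu,\a,\tilde\a,i$ and abbreviate $X^j := X^{t,\vec x,(\a,\tilde\a)_i;j}$ and $\mu^N_s := \mu^{t,\vec x,(\a,\tilde\a)_i}_s$ for the genuine $N$-player system. I would introduce a \emph{decoupled} reference system $\bar X^j$, driven by the \emph{same} Brownian motions $B^j$ and the \emph{same} initial data $x_j$, but with the frozen limit flow $\mu^\a$ in the coefficients:
$$\bar X^j_s = x_j + \int_t^s b\big(r,\bar X^j_r,\mu^\a_r,\a_{(j)}(r,\bar X^j_r,\mu^\a_r)\big)\,dr + B^j_s - B^j_t,$$
where $\a_{(j)} = \a$ for $j\neq i$ and $\a_{(i)} = \tilde\a$. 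The $\bar X^j$ are then independent, $\cL_{\bar X^j_s}$ is the frozen-flow law started from $x_j$, while $\mu^\a_s = \cL_{X^{t,\mu,\a}_s}$ is the \emph{same} frozen flow started from $\mu$. Writing $\bar\mu^N_s := \frac1N\sum_j\d_{\bar X^j_s}$, the triangle inequality gives $\dbE[W_1(\mu^N_s,\mu^\a_s)] \le \Delta_s + E_s$ with $\Delta_s := \frac1N\sum_j \dbE|X^j_s-\bar X^j_s|$ and $E_s := \dbE[W_1(\bar\mu^N_s,\mu^\a_s)]$, so it suffices to bound $\Delta_s$ and $E_s$ separately.

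For $\Delta_s$ I would use the synchronous coupling: subtracting the two SDEs cancels the Brownian terms, and I split each drift difference
$$b(r,X^j_r,\mu^N_r,\a_{(j)}(r,X^j_r,\mu^N_r)) - b(r,\bar X^j_r,\mu^\a_r,\a_{(j)}(r,\bar X^j_r,\mu^\a_r))$$
into a measure part (evaluated at the common point $X^j_r$, two flows) and a spatial part (frozen flow $\mu^\a$, two positions). Assumption \ref{assum-diffusion} together with the $W_1$-Lipschitz bound with constant $L$ controls the measure part by $L_0(1+L)\,W_1(\mu^N_r,\mu^\a_r)$, while the spatial part is controlled by $L_0(1+L_\a)\,|X^j_r-\bar X^j_r|$. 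Averaging over $j$, using $\dbE[W_1(\mu^N_r,\mu^\a_r)]\le \Delta_r+E_r$, and applying Gronwall yields $\Delta_s \le C_L\int_t^s E_r\,dr$. Note that the Gronwall constant involves the $x$-Lipschitz constants of $\a,\tilde\a$; since these are fixed once the controls are given, this is harmless in the convergence application, where $N\to\infty$ with the controls held fixed.

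The heart of the proof is the estimate for $E_s$, for which I would decompose $\bar\mu^N_s$ and $\mu^\a_s$ through the averaged law $\bar m^N_s := \frac1N\sum_j \cL_{\bar X^j_s}$. The first piece, $\dbE[W_1(\bar\mu^N_s,\bar m^N_s)]$, is the fluctuation of an empirical measure of \emph{independent} (non-identical) samples around the mean of their laws; this is the continuous analogue of the variance computation in Step~1 of Theorem \ref{thm-FinSymMeasureConv}, and a Fournier--Guillin-type bound gives $\dbE[W_1(\bar\mu^N_s,\bar m^N_s)]\le C\,N^{-1/(d\vee3)}\big(\tfrac1N\sum_j\dbE|\bar X^j_s|^2\big)^{1/2}$, the second moment being $\lesssim \|\vec x\|_2^2+1$ by boundedness of $b$; this produces the $N^{-1/(d\vee3)}\|\vec x\|_2$ term. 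The second piece, $W_1(\bar m^N_s,\mu^\a_s)$, compares the frozen-flow image of $\mu^N_{\vec x}$ with that of $\mu$; by $W_1$-stability of the frozen (Lipschitz) flow with respect to the initial law it is bounded by $C_L\,W_1(\mu^N_{\vec x},\mu)$, producing the $W_1(\mu^N_{\vec x},\mu)$ term. Finally the single deviating player $j=i$ (using $\tilde\a$ rather than $\a$) alters both empirical measures by only $\tfrac1N\d$, and since $b$ is bounded the resulting $W_1$-discrepancy is $O(1/N)$, producing the last term in $\th_N$. Combining these and running Gronwall once more on $s\mapsto \Delta_s+E_s$ gives $\sup_{t\le s\le T}\dbE[W_1(\mu^N_s,\mu^\a_s)]\le C_L\th_N$.

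The main obstacle is the fluctuation estimate for $E_s$: unlike the finite-state case, where $W_1$ is equivalent to total variation and the variance of each indicator is computed explicitly, in $\dbR^d$ one needs a genuine $W_1$ concentration/law-of-large-numbers estimate for independent, non-identically-distributed samples, with the correct moment dependence and the dimension-dependent exponent $N^{-1/(d\vee3)}$. I expect this to require either a direct dyadic-cube covering argument, controlling mass fluctuations cube by cube with tails dominated by the uniform second moment, or an appeal to a quantitative empirical-measure convergence theorem adapted to independent but non-identical summands. Everything else (the two Gronwall steps and the flow-stability comparison) is routine once the boundedness and Lipschitz continuity in Assumption \ref{assum-diffusion} are invoked.
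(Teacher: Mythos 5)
There is a genuine gap, and it is precisely the point you flag and then dismiss as harmless: your synchronous-coupling Gronwall argument produces a constant of the form $e^{C(1+L_\a+L_{\tilde\a})T}$, depending on the spatial Lipschitz constants $L_\a, L_{\tilde\a}$ of the particular controls, whereas the theorem asserts a constant $C_L$ depending only on $T,d,C_0,L_0$ and the common $\mu$-Lipschitz constant $L$. Recall that in $\cA^L_{cont}$ the constant $L_\a$ is allowed to depend on $\a$ and is not bounded over the class. This uniformity is not cosmetic and the "controls held fixed" escape does not apply downstream: in Theorem \ref{thm-contCostConv} the estimate for $v^{N,L}_i$ is obtained by taking the infimum over \emph{all} $\tilde\a\in\cA^L_{cont}$, so the bound for $J$ must be uniform over $\tilde\a$ with arbitrarily large $L_{\tilde\a}$; and in part (ii) of Theorem \ref{thm-contVConv} the equilibria $\a^k$ vary with $N_k$, sharing only the bound $L_\e$ on the $\mu$-Lipschitz constant, so a constant blowing up with $L_{\a^k}$ would destroy the convergence. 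The same defect infects your "routine" flow-stability step: coupling initial conditions pathwise gives $W_1(\bar m^N_s,\mu^\a_s)\le e^{L_0(1+L_\a)T}W_1(\mu^N_{\vec x},\mu)$, again non-uniform.

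The paper's proof is engineered exactly to avoid this. Its reference system uses the control $\a$ for \emph{all} players (the deviation is not built into the decoupled system), the empirical fluctuation is handled by the Fournier--Guillin-type bound of \cite[Lemma 8.4]{MZ} as you anticipate, but the initial-law stability is proved via the Kolmogorov backward PDE: by Lemma \ref{lem-vreg}, the semigroup associated with the frozen-flow drift maps $1$-Lipschitz test functions to Lipschitz functions with a constant depending only on $T,d,C_0,L_0$ (the nondegenerate unit diffusion plus boundedness of $b$ regularizes, so the drift's $x$-regularity never enters), and \reff{contW1} converts this into $W_1(\cL_{\tilde X_s},\mu^\a_s)\le CW_1(\mu^N_{\vec x},\mu)$. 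The passage from the decoupled system to the true interacting system with one deviator is then done by a Girsanov change of measure rather than by coupling trajectories: writing $\dbE[W_1(\mu^{t,\vec x,(\a,\tilde\a)_i}_s,\mu^\a_s)]=\dbE[M_sW_1(\tilde\mu^N_s,\mu^\a_s)]$, representing $W_1(\tilde\mu^N_s,\mu^\a_s)$ by the martingale representation theorem, and bounding the integrands $Z^j$ by $C/N$ via a PDE on $[0,T]\times\dbR^{dN}$ whose terminal condition is ${1\over N}$-Lipschitz in each $x_j$ --- again with constants independent of $L_\a$. The deviator contributes $|\b^i|\le C$ weighted by $1/N$, the others $|\b^j|\le C_LW_1(\tilde\mu^N,\mu^\a)$, yielding \reff{contmuConv}. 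So your architecture (decouple, Fournier--Guillin, flow stability, deviator of order $1/N$) matches the paper's at the level of the decomposition, but the two Lipschitz-flow/Gronwall steps must be replaced by the PDE-gradient and Girsanov arguments for the stated theorem to hold; as written, your proof establishes only the weaker statement with $C$ depending on $L_\a,L_{\tilde\a}$, which does not suffice for the applications in Section \ref{sect-Diffusion}.
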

\proof  Recall \reff{contXi} and introduce, for $j=1,\cds, N$, \bea
  \label{contXi2} \left.\ba{c} \dis \tilde X^j_s = x_j+ \int_t^s b(r,
    \tilde X_r^j, \mu^\a_r, \a(r, \tilde X_r^j, \mu^\a_r)) dr +
    B^j_s-B^j_t,\ \ \tilde \mu^N_s := {1\over N} \sum_{j=1}^N
    \d_{\tilde X^j_s};\\ \dis \tilde X_s = \tilde \xi+ \int_t^s b(r,
    \tilde X_r, \mu^\a_r, \a(r, \tilde X_r, \mu^\a_r)) dr + B_s-B_t, \
    \mbox{where}\ \tilde\xi\in \dbL^2(\cF_0; \mu^N_{\vec x}).
    \ea\right.  \eea Note that $\tilde X^1,\cds, \tilde X^N$ are
  independent. We proceed the rest of the proof in two steps.

  {\it Step 1.} In this step we estimate
  $\dbE\big[W_1(\tilde \mu^N_s,\mu_s^\a)\big]$.  First, by \cite[Lemma
  8.4]{MZ} we have \beaa \dbE\big[W_1(\tilde \mu^N_s, \cL_{\tilde
    X_s})\big] \le CN^{-{1\over {d\vee 3}}} \|\vec x\|_2.  \eeaa Next,
  fix an $\f$ in \reff{contW1} and let $u=u_\f$ denote the solution to
  the following PDE on $[t, s]$: \bea
  \label{HJB2} \pa_r u + {1\over 2} \tr\big(\pa_{xx} u\big) + b(r, x,
  \mu^\a_s, \a(r, x, \mu^\a_r)) \cd \pa_x u =0,\q u(s, x) = \f(x).
  \eea Applying Lemma \ref{lem-vreg} with
  $\tilde \a(r, x) := \a(r, x, \mu^\a_r)$ and $f=0$, we see that $u$
  is uniformly Lipschitz continuous in $x$, with a Lipschitz constant
  $C$ independent of $\f$ and $L$. Thus, \beaa \dbE\big[ \f(\tilde
  X_s) - \f(X^\a_s)\big] = \dbE\big[ u(t, \tilde \xi) - u(t, \xi)\big]
  \le C\dbE[ |\tilde \xi-\xi|].  \eeaa Since $\cF_0$ is atomless, we
  may choose $\xi, \tilde \xi$ such that
  $\dbE[ |\tilde \xi-\xi|] = W_1(\mu^N_{\vec x}, \mu)$, then
  \reff{contW1} implies
  $ W_1(\cL_{\tilde X_s}, \mu^\a_s) \le CW_1(\mu^N_{\vec x}, \mu).  $
  Put together, we have \bea
  \label{contmuConv2} \dbE\big[W_1(\tilde \mu^N_s, \mu^\a_s)\big] \le
  CW_1(\mu^N_{\vec x}, \mu)+ CN^{-{1\over {d\vee 3}}} \|\vec x\|_2\le
  C\th_N,\q t\le s\le T.  \eea

  {\it Step 2.} We next estimate
  $\dbE\big[W_1(\mu^{t, \vec x, (\a, \tilde \a)_i}_s, \mu^\a_s)\big]$.
  Denote $\a_i:=\tilde \a$, $\a_j:= \a$ for $j\neq i$, and \beaa
  &\b^j_s:= b(s, \tilde X_s^j, \tilde \mu^N_s, \a_j(s, \tilde X_s^j,
  \tilde \mu^N_s)) - b(s, \tilde X_s^j, \mu^\a_s, \a(s, \tilde X_s^j,
  \mu^\a_s)),\q 1\le j\le N\\ & M_s:= \prod_{j=1}^N M^j_s,\q M^j_s:=
  \exp\Big(\int_t^s \b^j_r dB^j_r - {1\over 2} \int_t^s |\b^j_r|^2
  dr\Big).  \eeaa Then, by the Girsanov theorem we have \bea
  \label{Girsanov} \dis \dbE\big[W_1(\mu^{t, \vec x, (\a, \tilde
    \a)_i}_s, \mu^\a_s)\big] &=&\dbE\big[ M_s W_1(\tilde \mu^N_s,
  \mu^\a_s)\big] = \dbE\big[ [M_s-1] W_1(\tilde \mu^N_s, \mu^\a_s)\big]
  + \dbE\big[W_1(\tilde \mu^N_s, \mu^\a_s)\big]\nonumber\\ \dis &=&
  \sum_{j=1}^N \dbE\Big[ \int_t^s M_r \b^j_r dB^j_r~ W_1(\tilde \mu^N_s,
  \mu^\a_s)\Big]+ \dbE\big[W_1(\tilde \mu^N_s, \mu^\a_s)\big].  \eea By
  the martingale representation theorem, we have \bea
  \label{mrt} W_1(\tilde \mu^N_s, \mu^\a_s) = \dbE\big[ W_1(\tilde
  \mu^N_s, \mu^\a_s)\big] + \sum_{j=1}^N \int_t^s Z^j_r dB^j_r.  \eea
  Note that $\tilde X^j$ are independent. Consider the following
  linear PDE on $[t, s]\times \dbR^{dN}$:
  \begin{equation}
    \begin{aligned}
      \label{HJB3}
      &\pa_r u(r, \vec x') + {1\over
        2}\sum_{j=1}^N \tr\big(\pa_{x_jx_j} u(r, \vec x')\big) + \sum_{j=1}^N
      b(r, x_j', \mu^\a_s, \a(r, x_j', \mu^\a_r)) \cd \pa_{x_j} u(r, \vec
      x') =0,
      \\&\hspace{10em}
      \dis u(s, \vec x') = W_1(\mu^N_{\vec x'}, \mu^\a_s). 
    \end{aligned}
  \end{equation}
  By standard BSDE theory, see e.g. \cite[Chapter 5]{Zhang}, we have
  $Z^j_r = \pa_{x_j} u(r, \vec X^{t, \vec x}_r)$, where
  $X^{t, \vec x, j}_r := x_j+B^j_r-B^j_t$. Note that the terminal
  condition $u(s, \vec x')$ is Lipschitz continuous in $x_j'$ with
  Lipschitz constant ${1\over N}$. Then, similarly to \reff{HJB2}, by
  Lemma \ref{lem-vreg} we see that
  $|Z^j|\le |\pa_{x_j}u|\le {C\over N}$ for some constant $C$
  independent of $\a$ and $L$. Thus, by \reff{Girsanov} and
  \reff{mrt}, \beaa \dbE\Big[W_1(\mu^{t, \vec x, (\a, \tilde \a)_i}_s,
  \mu^\a_s) - W_1(\tilde \mu^N_s, \mu^\a_s)\Big] =\sum_{j=1}^N
  \dbE\Big[ \int_t^s\!\! M_r \b^j_r \cd Z^j_r dr\Big] \le {C\over
    N}\sum_{j=1}^N \dbE\Big[ \int_t^s M_r |\b^j_r| dr\Big].  \eeaa
  Note that $|\b^i|\le C$ and, for $j\neq i$,
  $|\b^j_r|\le C_L W_1(\tilde \mu^N_r, \mu^\a_r)$. Then, by
  \reff{contmuConv2}, \beaa &&\dis \dbE\Big[W_1(\mu^{t, \vec x, (\a,
    \tilde \a)_i}_s, \mu^\a_s)\Big] \le \dbE\big[W_1(\tilde \mu^N_s,
  \mu^\a_s)\big] + {C\over N} \dbE\Big[ \int_t^s \!\! M_r |\b^i_r| dr
  +\sum_{j\neq i} \int_t^s \!\! M_r |\b^j_r| dr\Big]\\ &&\dis \le
  \dbE\big[W_1(\tilde \mu^N_s, \mu^\a_s)\big] + {C\over N} + {C_L\over
    N} \sum_{j\neq i} \dbE\Big[ \int_t^s M_r W_1(\tilde \mu^N_r,
  \mu^\a_r) dr\Big] = {C\over N} +C_L \th_N \le C_L\th_N.
  \eeaa

\vspace{-12mm}
\qed

\begin{thm}
  \label{thm-contCostConv} For the setting in Theorem
  \ref{thm-contmuConv}, we have
  \begin{equation}
    \label{contCostEst}
    \Big|J_i(t, \vec x, (\a,\tilde \a)_i) - J(t, \mu,
    \a; x_i, \tilde \a)\Big| + \Big|v^{N,L}_i(t, \vec x, \a) - v(\mu^\a;
    t, x_i)\Big|\le C_L\th_N^{1\over 4}.
  \end{equation}
\end{thm}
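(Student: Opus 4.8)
The plan is to reduce the cost estimate entirely to the measure-convergence estimate \reff{contmuConv} of Theorem \ref{thm-contmuConv}, in the same spirit that the discrete cost estimate in Theorem \ref{thm-FinSymCostConv} was reduced to its measure estimate. Write $X^i := X^{t,\vec x,(\a,\tilde\a)_i;i}$ for player $i$'s state in the $N$-player system \reff{contXi}, $\mu^N$ for the associated empirical flow, and let $\tilde X := X^{\mu^\a;t,x_i,\tilde\a}$ be the mean field state \reff{Xmua} driven by the frozen flow $\mu^\a$ and the \emph{same} Brownian motion $B^i$. I would first split the cost difference into a terminal piece from $g$ and a running piece from $f$, and within each separate (a) the error from replacing $\mu^N$ by $\mu^\a$ inside the coefficients with the state held fixed, from (b) the error from replacing the trajectory $X^i$ by $\tilde X$. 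Part (a) is harmless: since $g,f$ are $L_0$-Lipschitz in $\mu$ and $\tilde\a\in\cA^L_{cont}$ is $L$-Lipschitz in $\mu$, every such term is bounded by a constant times $\dbE[W_1(\mu^N_s,\mu^\a_s)]\le C_L\th_N$ by \reff{contmuConv}.

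The genuine difficulty is part (b), where a naive pathwise coupling fails: estimating $X^i_s-\tilde X_s$ by Gronwall brings in the spatial Lipschitz constant $L_\a$ of $\tilde\a$, which is uncontrolled and must not appear in $C_L$. This is precisely the new phenomenon of the diffusion model, absent in the finite-state case where controls live on a finite set and carry no spatial Lipschitz constant. The resolution is to exploit the $L_\a$-free regularity of Lemma \ref{lem-vreg}. I would represent the mean field cost as $J(\mu^\a;t,x_i,\tilde\a)=u(t,x_i)$, where $u(s,x):=J(\mu^\a;s,x,\tilde\a)$ solves the linear PDE in \reff{HJB} with frozen flow $\mu^\a$, and apply It\^o's formula to $u(s,X^i_s)$ along player $i$'s trajectory. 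Since $u$ satisfies that PDE, the drift and running-cost contributions evaluated at the frozen flow cancel, and subtracting the resulting identity for $u(t,x_i)$ from $J_i(t,\vec x,(\a,\tilde\a)_i)$ leaves only the terminal and running measure-discrepancy terms in $g$ and $f$ (at the fixed state $X^i_s$), together with a drift term $\int_t^T (b^N_s-b^{\mu^\a}_s)\cd\pa_x u\,ds$, where $b^N_s$ and $b^{\mu^\a}_s$ denote $b$ evaluated at $(s,X^i_s)$ with the empirical flow and with the frozen flow, respectively. All of these compare coefficients at the \emph{same} state with $\mu^N_s$ in place of $\mu^\a_s$, hence are again controlled by $\dbE[W_1(\mu^N_s,\mu^\a_s)]$, now crucially using that $\|\pa_x u\|_\infty$ is bounded by a constant depending only on $T,d,C_0,L_0$ by Lemma \ref{lem-vreg}, and \emph{not} on $L_\a$. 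The companion bound for $|v^{N,L}_i(t,\vec x,\a)-v(\mu^\a;t,x_i)|$ follows identically, applying the argument to the HJB equation in \reff{HJB} (equivalently, via the $W_1$-duality \reff{contW1} and the linear PDE \reff{HJB2}, exactly as in Theorem \ref{thm-contmuConv}), since the value function enjoys the same $L_\a$-free Lipschitz regularity in $x$.

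The main obstacle, and the likely reason the stated rate is $\th_N^{1\over 4}$ rather than the $\th_N$ one reads off formally from the above, is the justification of It\^o's formula for the merely Lipschitz (not $C^{1,2}$) function $u$: one must either mollify $u$, paying an error balanced against a truncation/H\"older step that produces the fractional power, or invoke the generalized It\^o--Krylov formula using interior $W^{1,2}_p$ bounds for \reff{HJB}. I expect this regularity bookkeeping, together with the passage between the $W_1$-law comparison and the cost integrals on the unbounded state space $\dbR^d$, to be the only delicate point; under Assumption \ref{assum-diffusion} all remaining estimates are routine given Theorem \ref{thm-contmuConv}.
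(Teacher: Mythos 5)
Your proposal is correct in substance but takes a genuinely different route from the paper's. The paper shares your step (a): it introduces $\tilde J_i$, the cost along player $i$'s \emph{actual} trajectory but with $\mu^N$ replaced by $\mu^\a$ in $g$, $f$ and in the argument of $\tilde \a$, and bounds $|J_i-\tilde J_i|\le C_L\th_N$ by \reff{contmuConv}. For your step (b), however, the paper never touches the PDE \reff{HJB}: it works in weak formulation, writing both $\tilde J_i$ and $J(t,\mu,\a;x_i,\tilde\a)$ as expectations of one bounded functional of the driftless path $X^i_s=x_i+B^i_s-B^i_t$ under two Girsanov densities $\tilde M_T, M_T$ whose drifts differ only through the measure argument (the state argument being the common driftless path, so $L_\a$ never enters), and then estimates $\dbE[|\tilde M_T-M_T|]$. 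This is also where the exponent ${1\over 4}$ actually comes from --- not from mollification, as you conjecture: the It\^o isometry gives an $L^2$ bound on $\D M := \tilde M - M$, and since $b$ is bounded one writes $|\D\b|^2\le C|\D\b|^{1\over 2}$ and applies Cauchy--Schwarz, so the rate $\th_N$ from \reff{contmuConv} gets square-rooted twice. Your alternative --- represent $J(\mu^\a;t,x_i,\tilde\a)=u(t,x_i)$ via \reff{HJB}, apply It\^o in Krylov's form along the actual $X^i$ (legitimate here: unit diffusion, bounded coefficients measurable in $t$, so $u\in W^{1,2}_{p,loc}$ with $\|\pa_x u\|_\infty\le C$ independent of $L_\a$ by Lemma \ref{lem-vreg}), and bound the residual drift term $(b^N-b^{\mu^\a})\cd\pa_x u$, where $b^N, b^{\mu^\a}$ denote $b$ at the same state with empirical resp.\ frozen flow, by $C_LW_1(\mu^N_s,\mu^\a_s)$ --- is sound, avoids $L_\a$ by the same ``same-state-argument'' mechanism, and in fact yields the stronger rate $C_L\th_N$, which implies \reff{contCostEst} since the costs are bounded. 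One small repair: for the $v$-part, rather than a direct It\^o argument on the HJB equation (which gives only one inequality and would need a verification step with $\e$-optimal controls), simply note, as the paper does, that your $J$-estimate is uniform over $\tilde\a\in\cA^L_{cont}$ (constants depend on $L$ only, never on $\tilde\a$ or $L_\a$) and take infima, identifying the infimum over $\cA^L_{cont}$ along the frozen flow with $v(\mu^\a;t,x_i)$ as in Remark \ref{rem-cAL} (i).
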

\begin{proof}
  Fix $i$. First, by taking supremum over $\tilde \a\in \cA^L_{cont}$,
  the uniform estimate for $J$ implies that for $v$ immediately. So it
  suffices to prove the former estimate.

  For this purpose, recall \reff{contXi} and denote \beaa \dis \tilde
  J_i(t, \vec x, (\a,\tilde \a)_i) := \dbE^{\dbP}\Big[g(X^{t, \vec x,
    (\a,\tilde \a)_i; i}_T, \mu^\a_T) + \int_t^T\!\! f(s, X_s^{t, \vec
    x, (\a,\tilde \a)_i; i}, \mu^\a_s, \tilde\a(s, X_s^{t, \vec x,
    (\a,\tilde \a)_i; i}, \mu^\a_s)) ds\Big].  \eeaa Then one can
  easily see that, by applying Theorem \ref{thm-contmuConv},
  \begin{equation}
    \label{contCostEst1}
    \big| J_i(t, \vec x, (\a,\tilde \a)_i)-\tilde
    J_i(t, \vec x, (\a,\tilde \a)_i)\big|\le C_L \sup_{t\le s\le T}
    \dbE\big[W_1(\mu^{t, \vec x, (\a,\tilde\a)_i}_s,\mu_s^\a)\big] \le
    C_L\th_N.
  \end{equation}

  Next, denote \beaa &X^i_s := x_i + B^i_s - B^i_t,\q \tilde
  \mu^{N,i}_s:= {1\over N} \Big[\sum_{j\neq i} \d_{X^{t, \vec x,
      (\a,\tilde \a)_i; j}_s} + \d_{X^i_s}\Big];\\ &\b_s := b(s,
  X^i_s, \mu^\a_s, \tilde \a(s, X^i_s, \mu^\a_s)),\q M_s :=
  \exp\Big(\int_t^s
  \b_r dB^i_r - {1\over 2} \int_t^s |\b_r|^2dr \Big);\\
  &\tilde \b_s := b(s, X^i_s, \tilde\mu^{N,i}_s, \tilde \a(s, X^i_s,
  \tilde \mu^{N,i}_s)),\q \tilde M_s := \exp\Big(\int_t^s \tilde \b_r
  dB^i_r - {1\over 2} \int_t^s |\tilde\b_r|^2 dr\Big).  \eeaa By
  \reff{Xmua} and \reff{contXi}, it follows from the Girsanov theorem
  again that
  \begin{equation}
    \begin{aligned}
      \label{DM} &
      \Big|\tilde J_i(t, \vec x, (\a,\tilde \a)_i) - J(t,
      \mu, \a; x_i, \tilde \a)\Big|
      \\ &\dis= \Big|\dbE\Big[\big[\tilde M_T
      - M_T\big]\big[g(X^i_T, \mu^\a_T) + \int_t^T f(s, X_s^i, \mu^\a_s,
      \tilde\a(s, X_s^i, \mu^\a_s)ds\big]\Big]\Big| \le C\dbE\big[|\tilde
      M_T - M_T|\big].
    \end{aligned}
  \end{equation}
  Denote $\D M_s:= \tilde M_s - M_s$, $\D \b_s := \tilde \b_s -
  \b_s$. Then, since $b$ is bounded, \beaa &&\dis \dbE[|\D M_s|^2] =
  \dbE\Big[\big(\int_t^s [\tilde M_r \tilde \b_r - M_r \b_r]
  dB^i_r\big)^2\Big] = \dbE\Big[\int_t^s [\tilde M_r \tilde \b_r - M_r
  \b_r]^2 dr\Big]\\ &&\dis \le C\int_t^s \dbE[|\D M_r|^2] dr + C
  \dbE\Big[\int_t^s |\tilde M_r|^2|\D \b_r|^2dr\Big] \\ &&\dis \le
  C\int_t^s \dbE[|\D M_r|^2] dr + C \dbE\Big[\int_t^s \tilde
  M_r^{3\over 2} \tilde M_r^{1\over 2}|\D \b_r|^{1\over 2} dr\Big] \\
  &&\dis \le C\int_t^s \dbE[|\D M_r|^2] dr + C \Big(\dbE\Big[\int_t^s
  \tilde M_r|\D \b_r| dr\Big]\Big)^{1\over 2}\\ &&\dis \le C\int_t^s
  \dbE[|\D M_r|^2] dr + C_L \Big(\dbE\Big[\int_t^s \tilde M_r
  W_1(\tilde \mu^{N,i}_r, \mu^\a_r) dr\Big]\Big)^{1\over 2}\\ &&\dis =
  C\int_t^s \dbE[|\D M_r|^2] dr + C_L \Big(\dbE\Big[\int_t^s
  W_1(\mu^{t, \vec x, (\a,\tilde \a)_i}_r, \mu^\a_r)
  dr\Big]\Big)^{1\over 2}\\ &&\dis \le C\int_t^s \dbE[|\D M_r|^2] dr +
  C_L\th_N^{1\over 2}, \eeaa where the last inequality thanks to
  Theorem \ref{thm-contmuConv}. Then, by the Grownwall inequality we
  obtain $ \dbE[|\D M_s|^2] \le C_L\th_N^{1\over 2}, $ and thus
  \reff{DM} implies \beaa \Big|\tilde J_i(t, \vec x, (\a,\tilde \a)_i)
  - J(t, \mu, \a; x_i, \tilde \a)\Big|\le C_L\th_N^{1\over 4}.  \eeaa
  This, together with \reff{contCostEst1}, implies the estimate for
  $J$ in \reff{contCostEst} immediately.
\end{proof}

\begin{thm}
 \label{thm-contVConv} Let Assumption \ref{assum-diffusion}
hold. Assume further that $\dis\lim_{N\to\infty}W_1(\mu^N_{\vec x},
\mu)=0$, and there exists a constant $C>0$ such that\footnote{ Note
again that $\vec x$ depends on $N$. Also, the conditions here are
slightly weaker than $\dis\lim_{N\to\infty}W_2(\mu^N_{\vec x},
\mu)=0$.} $\|\vec x\|_2 \le C$ for all $N$. Then \bea
  \label{contVConv} \bigcap_{\e > 0}\bigcup_{L\ge 0}
\limsup_{N\to\infty} \dbV^{N,\e,L}_{cont}(t,\mu^N_{\vec x}) \subset
\dbV_{cont}(t,\mu) \subset \bigcap_{\e > 0}\liminf_{N\to\infty}
\dbV^{N,\e,0}_{cont}(t,\mu^N_{\vec x}) \eea In particular, since
$\dis \liminf_{N\to\infty} \dbV^{N,\e,0}_{cont}(t,\mu^N_{\vec x})
\subset \bigcup_{L\ge 0} \limsup_{N\to\infty}
\dbV^{N,\e,L}_{cont}(t,\mu^N_{\vec x})$, actually equalities hold.
\end{thm}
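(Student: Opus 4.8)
The plan is to follow the two-inclusion scheme of Theorem \ref{thm-FinSymConv}, now combining the cost-convergence estimate of Theorem \ref{thm-contCostConv} with the uniform spatial regularity of Lemma \ref{lem-vreg} in order to pass between the player-average appearing in the $N$-player conditions \reff{contNcM}, \reff{contVN} and the $\mu$-integral appearing in the mean field conditions \reff{contMeL}, \reff{contV}. Throughout I would first record that, under the standing assumptions $W_1(\mu^N_{\vec x},\mu)\to 0$ and $\|\vec x\|_2\le C$, the quantity $\th_N = W_1(\mu^N_{\vec x},\mu)+ N^{-1/(d\vee 3)}\|\vec x\|_2 + N^{-1}$ tends to $0$, so that both $\th_N$ and $\th_N^{1/4}$ are negligible for large $N$. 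The one recurring device is this: for any $h$ that is Lipschitz in $x$ with a constant independent of the control, the representation \reff{contW1} gives $|\int h\,d\mu^N_{\vec x} - \int h\,d\mu|\le \mathrm{Lip}(h)\,W_1(\mu^N_{\vec x},\mu)\le \mathrm{Lip}(h)\,\th_N$; I would apply this to $h(x)=J(t,\mu,\a;x,\a)-v(\mu^{\a};t,x)$ and to $h(x)=|\f(x)-J(t,\mu,\a;x,\a)|$, whose Lipschitz constants are controlled uniformly in $\a$ by Lemma \ref{lem-vreg} together with the fixed Lipschitz constant of $\f$.

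For the right inclusion $\dbV_{cont}(t,\mu)\subset \bigcap_\e\liminf_N \dbV^{N,\e,0}_{cont}$, fix $\f\in \dbV_{cont}(t,\mu)$ and $\e>0$, set $\e_1:=\e/2$, and take $\a^*\in \cM^{\e_1}_{cont}(t,\mu)$ realizing $\int |\f - J(t,\mu,\a^*;\cd,\a^*)|\,d\mu\le \e_1$. Viewing $\a^*\in \cA_{cont}=\cA^0_{cont}$ as a homogeneous control for the $N$-player game, Theorem \ref{thm-contCostConv} (with $L=0$) bounds $|J_i(t,\vec x,\a^*)-J(t,\mu,\a^*;x_i,\a^*)|$ and $|v^{N,0}_i(t,\vec x,\a^*)-v(\mu^{\a^*};t,x_i)|$ by $C_0\th_N^{1/4}$. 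Averaging over $i$ and invoking the Lipschitz-to-$W_1$ device converts the empirical averages into the $\mu$-integrals, giving $\frac1N\sum_i[J_i-v^{N,0}_i]\le \e_1 + C\th_N + 2C_0\th_N^{1/4}$ and $\frac1N\sum_i|\f(x_i)-J_i|\le \e_1 + C\th_N + C_0\th_N^{1/4}$. For $N$ large both right-hand sides are $\le \e$, so $\a^*\in \cM^{N,\e,0}_{cont}(t,\vec x)$ and $\f\in \dbV^{N,\e,0}_{cont}(t,\mu^N_{\vec x})$ for all large $N$, i.e. $\f\in \liminf_N \dbV^{N,\e,0}_{cont}$.

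For the left inclusion, fix $\f$ in the left-hand set and $\e>0$, set $\e_1:=\e/2$, and extract $L_\e$ and a subsequence $N_k\to\infty$ with $\f\in \dbV^{N_k,\e_1,L_\e}_{cont}(t,\mu^{N_k}_{\vec x})$, together with $\a^k\in \cM^{N_k,\e_1,L_\e}_{cont}(t,\vec x)$ realizing $\frac1{N_k}\sum_i|\f(x_i)-J_i(t,\vec x,\a^k)|\le \e_1$. Now Theorem \ref{thm-contCostConv} applies with $L=L_\e$ fixed and constant $C_{L_\e}$ independent of $k$, while Lemma \ref{lem-vreg} gives the Lipschitz constants of $J(t,\mu,\a^k;\cd,\a^k)$ and $v(\mu^{\a^k};t,\cd)$ independently of the varying control $\a^k$. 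Running the same average-to-integral conversion yields $\int[J(t,\mu,\a^k;x,\a^k)-v(\mu^{\a^k};t,x)]\mu(dx)\le \e_1 + 2C_{L_\e}\th_{N_k}^{1/4}+C\th_{N_k}$ and $\int|\f-J(t,\mu,\a^k;\cd,\a^k)|\,d\mu\le \e_1+C_{L_\e}\th_{N_k}^{1/4}+C\th_{N_k}$. For $k$ large both are $\le \e$, hence $\a^k\in \cM^\e_{cont}(t,\mu)$ and $\f\in \dbV^\e_{cont}(t,\mu)$; letting $\e\downarrow 0$ gives $\f\in \dbV_{cont}(t,\mu)$. Finally the asserted equalities follow by chaining the two inclusions with the trivial observation $\liminf_N \dbV^{N,\e,0}_{cont}\subset \limsup_N \dbV^{N,\e,0}_{cont}\subset \bigcup_{L\ge0}\limsup_N \dbV^{N,\e,L}_{cont}$.

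The main obstacle — and the reason Lemma \ref{lem-vreg} is isolated beforehand — is the left inclusion: there the approximate equilibria $\a^k$ change with $k$, so the conversion from the empirical average $\frac1{N_k}\sum_i$ to the integral $\int(\cd)\,d\mu$ must be uniform in $\a^k$. This is exactly what the control-independent Lipschitz bound of Lemma \ref{lem-vreg} supplies; were the spatial regularity allowed to blow up with $\a^k$, the $W_1$-estimate $\mathrm{Lip}(h_k)\,\th_{N_k}$ could fail to vanish. A secondary technical point is ensuring $\th_N\to 0$, where the assumption $\|\vec x\|_2\le C$ is used to neutralize the propagation-of-chaos term $N^{-1/(d\vee 3)}\|\vec x\|_2$ coming from Theorem \ref{thm-contmuConv}.
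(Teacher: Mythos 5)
Your proposal is correct and follows essentially the same route as the paper's own proof: both inclusions are run through the cost estimate of Theorem \ref{thm-contCostConv} with the fixed-$L_\e$ constant $C_{L_\e}$ on the extracted subsequence, Lemma \ref{lem-vreg} together with the $W_1$-duality \reff{contW1} performs the conversion between the empirical player-average and the $\mu$-integral uniformly in the control, and $\|\vec x\|_2\le C$ is used exactly as you say to force $\th_N\to 0$. The one step you elide is that in the left inclusion $\a^k\in\cA^{L_\e}_{cont}$ depends on the measure variable, so before asserting $\a^k\in\cM^\e_{cont}(t,\mu)$ one should pass to the state-dependent control $\tilde\a^k(s,x):=\a^k(s,x,\mu^{\a^k}_s)\in\cA_{cont}$, which leaves the flow $\mu^{\a^k}$ and all relevant cost functionals unchanged (Remark \ref{rem-cAL}(i)) --- this is precisely the harmless substitution the paper makes, so it is a bookkeeping point rather than a gap.
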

\begin{proof}
  (i) We first prove the right inclusion in \reff{contVConv}. Fix
  $\f\in \dbV_{cont}(t,\mu)$, $\e>0$, and set $\e_1 := {\e\over 2}$.
  By \reff{contV} and \reff{contMeL}, there exists
  $\a^* \in \cM^{\e_1}_{cont}(t,\mu)$ such that \beaa \int_{\dbR^d}
  \!\! \big[J(t, \mu,\a^*; x, \a^*) - v(\mu^{\a^*}; t, x)\big] \mu(dx)
  \le \e_1, ~ \int_{\dbR^d}\!\! \big|\f(x)-J(t, \mu,\a^*; x,
  \a^*)\big| \mu(dx)\le \e_1.  \eeaa Recall Lemma \ref{lem-vreg} and
  note that $\f\in C_{Lip}(\dbR^d)$, then by \reff{contW1} we have
  \beaa &\dis \int_{\dbR^d} \big[J(t, \mu,\a^*; x, \a^*) -
  v(\mu^{\a^*}; t, x)\big] \mu^N_{\vec x}(dx) \le \e_1 + C
  W_1(\mu^N_{\vec x}, \mu), \\ &\dis \int_{\dbR^d} \big|\f(x)-J(t,
  \mu,\a^*; x, \a^*)\big|\mu^N_{\vec x}(dx)\le \e_1 + C_\f
  W_1(\mu^N_{\vec x}, \mu), \eeaa where $C_\f$ may depend on the
  Lipschitz constant of $\f$. Moreover, by \reff{contCostEst} we have
  \beaa &&\dis {1\over N}\sum_{i=1}^N \big[J_i(t,\vec x, \a^*) -
  v^{N,L}_i(t, \vec x, \a^*)\big] \le {1\over N}\sum_{i=1}^N \big[J(t,
  \mu,\a^*; x_i, \a^*) - v(\mu^{\a^*}; t, x_i)\big] + C_L\th_N^{1\over
    4}\\ &&\dis\q = \int_{\dbR^d} \big[J(t, \mu,\a^*; x, \a^*) -
  v(\mu^{\a^*}; t, x)\big] \mu^N_{\vec x}(dx) + C_L\th_N^{1\over 4}
  \le \e_1 + C_L\th_N^{1\over 4};\\ &&\dis {1\over N}\sum_{i=1}^N
  |\f(x_i)-J_i(t,\vec x, \a^*) | \le {1\over N}\sum_{i=1}^N
  |\f(x_i)-J(t, \mu,\a^*; x_i, \a^*)| + C_L\th_N^{1\over 4}\\ &&\dis
  \q = \int_{\dbR^d} \big|\f(x)-J(t, \mu,\a^*; x,
  \a^*)\big|\mu^N_{\vec x}(dx) + C_L\th_N^{1\over 4} \le \e_1 +
  C_{L,\f} \th_N^{1\over 4}.  \eeaa We emphasize again that
  $\|\vec x\|_2 \le C$ is independent of $N$. Then, by choosing $N$
  large enough such that $C_L\th_N^{1\over 4} \le \e_1$,
  $C_{L,\f} \th_N^{1\over 4}\le \e_1$, we obtain \beaa {1\over
    N}\sum_{i=1}^N \big[J_i(t,\vec x, \a^*) - v^{N,L}_i(t, \vec x,
  \a^*)\big] \le \e;\q {1\over N}\sum_{i=1}^N |\f(x_i)-J_i(t,\vec x,
  \a^*) | \le \e.  \eeaa This implies that
  $\a^* \in \cM^{N,\e,0}_{cont}(t,\vec x)$ and
  $\f\in \dbV^{N,\e,0}_{cont}(t,\mu^N_{\vec x})$, for all $N$ large
  enough.  That is,
  $\f\in \liminf_{N\to\infty}\dbV^{N,\e,0}_{cont}(t,\vec x)$ for any
  $\e>0$.

  (ii) We next show the left inclusion in \reff{contVConv}. Fix
  $\dis \f\in \bigcap_{\e>0}\bigcup_{L\ge 0}\limsup_{N\to \infty}
  \dbV^{N,\e,L}_{cont}(t,\mu^N_{\vec x})$, $\e>0$, and set
  $\e_1:= {\e\over 2}$. There exist $L_\e\ge 0$ and an infinite
  sequence $\{N_k\}_{k\ge 1}$ such that
  $\f\in \dbV^{N_k,\e_1,L_\e}_{cont}(t,\mu^N_{\vec x})$ for all
  $k\ge 1$. Recall \reff{contNcM} and \reff{contVN}, there exists
  $\a^k\in\cA^{L_\e}_{cont}$ such that \beaa {1\over
    N_k}\sum_{i=1}^{N_k} \big[J_i(t,\vec x, \a^k) - v^{N_k,L_\e}_i(t,
  \vec x, \a^k)\big] \le \e_1;\q {1\over N_k}\sum_{i=1}^{N_k}
  |\f(x_i)-J_i(t,\vec x, \a^k) | \le \e_1.  \eeaa Note that $L_\e$ is
  fixed, in particular it is independent of $k$. In light of Remark
  \ref{rem-cAL} (i) and denote
  $\tilde \a^k(s, x) := \a^k(s, x, \mu^{\a^k})$, then
  $\mu^{\tilde \a^k}=\mu^{\a^k}$. Similarly to (i), by
  \reff{contCostEst} we have \beaa &\dis \int_{\dbR^d} \big[J(t, \mu,
  \tilde \a^k; x, \tilde \a^k) - v(\mu^{\a^k}; t, x)\big]
  \mu^{N_k}_{\vec x}(dx) \le \e_1 + C_{L_\e} \th_{N_k}^{1\over 4}, \\
  &\dis \int_{\dbR^d} \big|\f(x)-J(t, \mu, \tilde\a^k; x,
  \tilde\a^k)\big|\mu^N_{\vec x}(dx)\le \e_1 +
  C_{L_\e}\th_{N_k}^{1\over 4}.  \eeaa Then, by Lemma \ref{lem-vreg}
  and \reff{contW1} we have \beaa &\dis \int_{\dbR^d} \big[J(t, \mu,
  \tilde \a^k; x, \tilde \a^k) - v(\mu^{\a^k}; t, x)\big] \mu(dx) \le
  \e_1 + C_{L_\e} \th_{N_k}^{1\over 4} + CW_1( \mu^{N_k}_{\vec x},
  \mu), \\ &\dis \int_{\dbR^d} \big|\f(x)-J(t, \mu, \tilde \a^k; x,
  \tilde \a^k)\big|\mu(dx)\le \e_1 + C_{L_\e}\th_{N_k}^{1\over 4} +
  C_\f W_1( \mu^{N_k}_{\vec x}, \mu).  \eeaa Now choose $k$ large
  enough (possibly depending on $\e$ and $\f$) such that \beaa
  C_{L_\e} \th_{N_k}^{1\over 4} + CW_1( \mu^{N_k}_{\vec x}, \mu) \le
  \e_1,\q C_{L_\e}\th_{N_k}^{1\over 4} + C_\f W_1( \mu^{N_k}_{\vec x},
  \mu)\le \e_1.  \eeaa Then we have \beaa \int_{\dbR^d} \!\! \big[J(t,
  \mu, \tilde \a^k; x, \tilde \a^k) - v(\mu^{\a^k}; t, x)\big] \mu(dx)
  \le \e, ~ \int_{\dbR^d} \!\! \big|\f(x)-J(t, \mu,\tilde \a^k; x,
  \tilde \a^k)\big|\mu(dx)\le \e.  \eeaa This implies that
  $\tilde\a^k \in \cM^{\e}_{cont}(t,\mu)$ and
  $\f\in \dbV^{\e}_{cont}(t,\mu)$. Since $\e>0$ is arbitrary, we
  obtain $\f\in \dbV_{cont}(t,\mu)$.
\end{proof}

\section{Appendix}
\label{sect-Appendix} 
\setcounter{equation}{0}

  \subsection{Some examples}

In this subsection we first construct  an example in discrete setting such that $\dbV_0 \subset \dbV_{state} \subset \dbV_{path}\subset\dbV_{relax}$ with all the inclusions strict, where $\dbV_{path}$ are defined in an obvious way. In particular, $\dbV_0$ is empty.

\begin{eg}
  \label{eg-statepath} Set $T=2$, $\dbS = \{\ul x, \ol x\}$,
  $\dbA = ({1\over 3}, {2\over 3})$, and 
  \beaa 
  &\dis q(0, x, \mu, a; \ul x) =q(0, x, \mu, a; \ol x)\equiv {1\over 2},\q q(1,
  x, \mu, a; \ul x) = a,\q q(1, x, \mu, a; \ol x) = 1-a;\\ 
  &\dis F(0,x,\mu, a) = 0,\q F(1,x,\mu, a) = F_1(a):= a[1-a],\q G(x, \mu) = \mu(\ul
  x).  
  \eeaa 
  Then for any $\mu \in \cP_0(\dbS)$, we have $\dbV = \{(y, y): y\in \hat \dbV\}$ for $\dbV = \dbV_0, \dbV_{state}, \dbV_{path},\dbV_{relax}$, and
  \bea
  \label{eghatV}
  \left.\ba{c}
\dis \hat \dbV_0(0,\mu) = \emptyset,\qq \hat\dbV_{state}(0, \mu) = \Big\{{5\over 9}, {13\over 18},{8\over 9}\Big\}, \ms \\
\dis  \hat\dbV_{path}(0, \mu):= \bigg\{\ul \l ~\!\mu(\ul x) + \ol \l ~\!\mu(\ol x) + {2\over 9}:\q  \ul \l, \ol \l \in \Big\{ {1\over 3}, {1\over 2}, {2\over 3}\Big\}\bigg\}, \ms \\
\dis\hat\dbV_{relax}(0, \mu):= \bigg\{\ul \l ~\!\mu(\ul x) + \ol \l~\! \mu(\ol x) + {2\over 9}:\q  \ul \l, \ol \l \in \Big[{1\over 3},  {2\over 3}\Big]\bigg\}
  \ea\right.
    \eea
\end{eg}

\proof Since $|\dbS|=2$, for any $\mu\in \cP_0(\dbS)$ clearly it suffices
  to specify $\mu(\ul x)$.

(i) We first compute $\dbV_0(0,\mu)$. For any $\a, \tilde \a\in \cA_{state}$, it is straightforward to
  compute: 
  \bea
  \label{mua}
  \left.\ba{lll}
 \dis \mu^\a_1(\ul x) = \sum_{x_0\in \dbS} \mu(x_0)
  q(0, x_0, \mu, \a(0, x_0); \ul x) = \sum_{x_0\in \dbS} \mu(x_0) {1\over 2} = {1\over 2};\\
  \dis \mu^\a_2(\ul x) = \sum_{x_1\in \dbS} \mu^\a_1(x_1) q(1, x_1,
  \mu^\a_1, \a(1, x_1); \ul x) = {1\over 2} \sum_{x_1\in \dbS}\a(1, x_1);\\ 
  \dis \dbP^{\mu^\a; 0, x_0, \tilde \a}(X_1 = \ul x) = q(0, x_0, \mu, \tilde \a(0, x_0); \ul x) = {1\over 2}.
  \ea\right.
  \eea
  Then
  \bea
  \label{J0}
 \dis J(0, \mu, \a; x_0, \tilde \a) &=& \dbE^{\dbP^{\mu^\a; 0, x_0, \tilde \a}}\Big[G(X_2, \mu^\a_2) + \sum_{t=0,1} F(t, X_t, \mu^\a_t, \tilde\a(t, X_t))\Big]\nonumber\\
  &=& \mu^\a_2(\ul x) + \dbE^{\dbP^{\mu^\a; 0, x_0, \tilde
      \a}}\Big[ F_1(\tilde\a(1, X_1))\Big]\nonumber\\
  &=& {1\over 2} \sum_{x_1\in \dbS}\a(1, x_1) + {1\over
    2} \sum_{x_1\in \dbS} F_1( \tilde\a(1, x_1)).
    \eea
  Given $\a$, we see that $\inf_{\tilde \a}  J(0, \mu, \a; x_0, \tilde \a)={1\over 2} \sum_{x_1\in \dbS}\a(1, x_1) + {2\over 9}$, and the minimum is achieved when $\tilde \a(1, x_1) = {1\over 3}, {2\over 3}$, $\forall x_1\in \dbS$, which are  not included in $\dbA$. Thus $\cM_{state}(0,\mu) = \emptyset$, and hence $\dbV_0(0,\mu) = \emptyset$.
  
  (ii) We next compute $\dbV_{state}(0, \mu)$. Fix $\e>0$ small. By \reff{MFEe} and \reff{J0} it is clear that 
  \bea
  \label{ae0}
 \a^\e\in  \cM_{state}^\e(0,\mu) \q\mbox{ if and only if}\q {1\over 2}\sum_{x_1\in \dbS} F_1(\a^\e(1, x_1))\le {2\over 9} +\e.
  \eea
 and in this case,  for any $x_0\in \dbS$, by \reff{J0} again we have
 \beaa
  J(0, \mu, \a^\e; x_0, \a^\e) = J_0(\a^\e):= {1\over 2}\sum_{x_1\in \dbS}\tilde F_1(\a^\e(1, x_1)),~\mbox{where}~ \tilde F_1(a) := a+F_1(a)=a [2- a].
  \eeaa
  In particular, this implies that $ \dbV^\e_{state}(0, \mu) = \Big\{(y, y): y\in  \hat\dbV^\e_{state}(0, \mu)\Big\}$ where
  \beaa
   \hat\dbV^\e_{state}(0, \mu):= \Big\{J_0(\a^\e): \a^\e\in  \cM_{state}^\e(0,\mu)\Big\}.
  \eeaa

 Recall again that $\dis\inf_{a\in \dbA} F_1(a) = {2\over 9}$. By \reff{ae0},  $\a^\e\in  \cM_{state}^\e(0,\mu)$ if and only if there exists a function $\chi_\e: \dbS\to \dbR$ such that $F_1(\a^\e(1, x_1))= {2\over 9} +\chi_\e(x_1)$ for all $x_1\in \dbS$, and
 \bea
 \label{ae1}
  \chi_\e(\ul x), \chi_\e(\ol x)>0,\q \chi_\e(\ul x)+\chi_\e(\ol x)\le 2\e.
  \eea
  This implies that
  \beaa
  \a^\e(1, x_1) = {1\over 3} + \hat\chi_\e(x_1) ~\mbox{or}~ {2\over 3} - \hat\chi_\e(x_1),\q\mbox{where}\q \hat \chi_\e(x_1) := {6\chi_\e(x_1)\over 1+\sqrt{1  - 36\chi_\e(x_1)}}.
  \eeaa
  Note that $\tilde F_1$ is strictly increasing for $a\in \dbA$. Then, by \reff{Vtmu} we have, for $\e>0$ small,
  \beaa
   &\dis \hat\dbV^\e_{state}(0, \mu) = \bigcup_{\chi_\e} \bigcup_{i=1}^4 (y_i-\e, y_i+\e),\\
   &\dis y_1 := {1\over 2}\Big[\tilde F_1\big({1\over 3} + \chi_\e(\ul x)\big) + \tilde F_1\big({1\over 3} + \chi_\e(\ol x)\big)\Big],~ y_2 := {1\over 2}\Big[\tilde F_1\big({1\over 3} + \chi_\e(\ul x)\big) + \tilde F_1\big({2\over 3} - \chi_\e(\ol x)\big)\Big],\\
   &\dis y_3 := {1\over 2}\Big[\tilde F_1\big({2\over 3} - \chi_\e(\ul x)\big) + \tilde F_1\big({1\over 3} + \chi_\e(\ol x)\big)\Big],~ y_4 := {1\over 2}\Big[\tilde F_1\big({2\over 3} - \chi_\e(\ul x)\big) + \tilde F_1\big({2\over 3} - \chi_\e(\ol x)\big)\Big],
   \eeaa
  where the first union is over all $\chi_\e$  satisfying \reff{ae1}. Note that $0< \chi_\e(\ul x), \chi_\e(\ol x) < 2\e$. Then by \reff{Vtmu} it is obvious that $\dbV_{state}(0, \mu) = \Big\{(y, y): y\in  \hat\dbV_{state}(0, \mu)\Big\}$ and
  \beaa
 \hat\dbV_{state}(0, \mu) = \Big\{\tilde F_1\big({1\over 3}\big),~ {1\over 2}\big[\tilde F_1\big({1\over 3}\big)+\tilde F_1\big({2\over 3}\big)\big], ~\tilde F_1\big({2\over 3}\big)\Big\} =\Big\{{5\over 9}, {13\over 18}, {8\over 9}\Big\}.
   \eeaa

  (iii) We now compute $\dbV_{path}(0, \mu)$. For any
  $\a, \tilde \a\in \cA_{path}$, we still have
  $\mu^\a_1(\ul x) = {1\over 2}$ and
  $\dbP^{\mu^\a; 0, x_0, \tilde \a}(X_1 = \ul x) ={1\over
    2}$, for all $x_0\in \dbS$. Moreover, 
    \bea
    \label{Je2}
    &&\dis \mu^\a_2(\ul x) = \sum_{x_0, x_1\in
    \dbS} \mu(x_0) q(0, x_0, \mu, \a(0, x_0); x_1) q(1, x_1, \mu^\a_1,
  \a(1, x_0, x_1); \ul x) \nonumber\\
   &&\dis \qq \q= {1\over 2} \sum_{x_0,
    x_1\in \dbS}\mu(x_0)\a(1, x_0, x_1); \nonumber\\ 
    &&\dis J(0, \mu, \a; x_0,
  \tilde \a) = \dbE^{\dbP^{\mu^\a; 0, x_0, \tilde \a}}\Big[G(X_2,
  \mu^\a_2) + F(1, X_1, \mu^\a_1, \tilde\a(1, X_0, X_1))\Big]\nonumber\\ 
  &&\dis
  \q = \mu^\a_2(\ul x) + \dbE^{\dbP^{\mu^\a; 0, x_0, \tilde \a}}\Big[F_1(
  \tilde\a(1, X_0, X_1))\Big]\nonumber\\ 
  &&\dis \q =
 \sum_{\tilde x_0\in \dbS}  \mu(\tilde x_0) \times {1\over 2}\sum_{x_1\in \dbS}\a(1, \tilde
  x_0, x_1) + {1\over 2}\sum_{x_1\in \dbS}F_1( \tilde\a(1, x_0, x_1) ).  
  \eea 
  Similarly to \reff{ae0}, 
  \beaa
  \a^\e\in \cM^\e_{path}(0,\mu) \q\mbox{if and only if}\q
  {1\over 2}\sum_{x_1\in \dbS} F_1(\a^\e(1, x_0, x_1) )\le {2\over 9}+\e,~\forall x_0\in \dbS.
  \eeaa
  Furthermore, by abusing the notation $\chi_\e$, the above is equivalent to that there exists $\chi_\e: \dbS\times \dbS\to \dbA$ such that, by denoting $\hat \chi_\e(x_0, x_1) := {6\chi_\e(x_0, x_1)\over 1+\sqrt{1  - 36\chi_\e(x_0,x_1)}}$,
 \beaa
 &\dis \chi_\e(x_0, x_1)>0, ~\forall x_0, x_1\in \dbS,\q\mbox{and}\q \chi_\e(x_0, \ul x)+\chi_\e(x_0,\ol x)\le 2\e, ~ \forall x_0\in \dbS;\\
  &\dis \a^\e(1, x_0, x_1) = {1\over 3} + \hat\chi_\e(x_0, x_1) ~~\mbox{or}~~ {2\over 3} - \hat\chi_\e(x_0, x_1). 
  \eeaa
  Following the same arguments as in (ii), we can easily see that $\dbV_{path}(0, \mu)$ consists of pairs
$\big(J(0, \mu, \a^*; \ul x, \a^*),~ J(0, \mu, \a^*; \ol x, \a^*)\big)$ for all $\a^*: \dbS^2 \to \{{1\over 3}, {2\over 3}\}$. Note that $F_1({1\over 3}) = F_1({2\over 3}) = {2\over 9}$, and ${1\over 2}\sum_{x_1\in \dbS}\a^*(1, \tilde
  x_0, x_1)$ takes $3$ possible values: ${1\over 3}, {1\over 2}, {2\over 3}$. Then by \reff{Je2} we have
\bea
\label{Je3}
J(0, \mu, \a^*; x_0,
  \a^*) = \ul \l~\! \mu(\ul x) + \ol \l ~\!\mu(\ol x) + {2\over 9},\q\mbox{where}\q \ul \l, \ol \l \in \big\{ {1\over 3}, {1\over 2}, {2\over 3}\big\}.
  \eea
 Again this is independent of $x_0$. Then  $ \dbV_{path}(0, \mu) = \Big\{(y, y): y\in  \hat\dbV_{path}(0, \mu)\Big\}$ and
\beaa
 \hat\dbV_{path}(0, \mu):= \Big\{\ul \l ~\!\mu(\ul x) + \ol \l~\! \mu(\ol x) + {2\over 9}:\q  \ul \l, \ol \l \in \big\{ {1\over 3}, {1\over 2}, {2\over 3}\big\}\Big\}.
   \eeaa
  In particular, we see that  $\hat\dbV_{state}(0, \mu)$ consists of the elements of $\hat\dbV_{path}(0, \mu)$ with $\ul \l = \ol \l$, and $\hat\dbV_{path}(0, \mu) = \hat\dbV_{state}(0, \mu)$ when $\mu(\ul x) = \mu(\ol x)$.
  
  (iv) Finally we compute $\dbV_{relax}(0,\mu)$. Fix $\g, \tilde \g\in \cA_{relax}$, it is straightforward to
  compute: 
  \beaa 
  &&\dis \mu^\g_1(\ul x) = \sum_{x_0\in \dbS} \mu(x_0)  \int_\dbA q(0, x_0,
      \mu, a; \ul x)\g(0,x_0; da)=\sum_{x_0\in \dbS} \mu(x_0)\times {1\over 2}={1\over 2};\\
 &&\dis \dbP^{\mu^\g; 0, x_0, \tilde \g}(X_1 = \ul x) =  \int_\dbA q(0, x_0,
      \mu, a; \ul x)\tilde \g(0,x_0; da)={1\over 2};\\
 &&\dis     \mu_2^\g(\ul x) = \sum_{x_0, x_1\in \dbS} \mu(x_0) \int_{\dbA^2} q(0, x_0, \mu, a_0; x_1)  q(1, x_1,
      \mu^\g_1, a_1; \ul x)\g(0, x_0; da_0)\g(1,x_0, x_1; da_1) \\
  &&\dis \qq\q    = {1\over 2} \sum_{x_0, x_1\in \dbS} \mu(x_0) \int_{\dbA}   a \g(1,x_0, x_1; da);
      \\
      &&\dis J(0, \mu, \g; x_0, \tilde \g)
      = \dbE^{\dbP^{\mu^\g; 0, x_0, \tilde \g}}\Big[
      G(X_2,\mu^\g_2) +
      \sum_{t=0,1} \int_\dbA F(t, X_t, \mu^\g_t, a) \tilde\g(t,X; da)\Big]\\
      &&\dis \qq\q = \mu^\g_2(\ul x) +
      \dbE^{\dbP^{\mu^\g; 0, x_0, \tilde \g}}
      \Big[ \int_\dbA F_1(a)\tilde\g(1, X; da)\Big]
      \\
      &&\dis \qq\q    = {1\over 2} \sum_{\tilde x_0, x_1\in \dbS} \mu(\tilde x_0) \int_{\dbA}   a \g(1,\tilde x_0, x_1; da) + {1\over 2} \sum_{x_1\in \dbS} \int_{\dbA}   F_1(a) \tilde\g(1,x_0, x_1; da). 
  \eeaa
 Similarly to \reff{ae0}, 
  \bea
  \label{ae2}
  \g^\e\in \cM^\e_{relax}(0,\mu) \q\mbox{if and only if}\q
  {1\over 2} \sum_{x_1\in \dbS} \int_{\dbA}   F_1(a) \g^\e(1,x_0, x_1; da)\le {2\over 9}+\e,~\forall x_0\in \dbS,
  \eea
  and in this case, for any $x_0\in \dbS$,
  \bea
  \label{Je4}
  J(0, \mu, \g^\e; x_0, \g^\e)
     = {1\over 2} \sum_{\tilde x_0, x_1\in \dbS} \mu(\tilde x_0) \int_{\dbA}   a \g^\e(1,\tilde x_0, x_1; da) + {1\over 2} \sum_{x_1\in \dbS} \int_{\dbA}   F_1(a) \g^\e(1,x_0, x_1; da). 
      \eea
      
  Let $\hat\cM_{relax}$ denote the set of $\g^*: \dbS^2 \to \cP(\{{1\over 3}, {2\over 3}\})$ and set
  \bea
  \label{Je5}
  \hat J(\g^*):=  {1\over 2}\sum_{x_0, x_1\in \dbS} \mu(x_0) \Big[{1\over 3} \g^*(x_0, x_1; {1\over 3}) + {2\over 3} \g^*(x_0, x_1; {2\over 3})\Big] + {2\over 9}. 
  \eea
We claim that, for any $\g^\e\in \cM^\e_{relax}(0,\mu)$, there exists $\hat \g^\e\in \hat\cM_{relax}$ such that 
\bea
\label{Je6}
\Big|J(0, \mu, \g^\e; x_0, \g^\e) - \hat J(\hat\g^\e)\Big| \le C\sqrt{\e}.
\eea
On the other hand, for any $\g^*\in \hat\cM_{relax}$, denote 
\bea
\label{Ae123}
A^\e_1:= ({1\over 3}, {1\over 3}+\sqrt{\e}],\q A^\e_2:= [{2\over 3}-\sqrt{\e}, {2\over 3}),\q A^\e_3:= \dbA \backslash (A^\e_1\cup A^\e_2),
\eea
and set $\g^\e\in \cA_{relax}$ such that 
\beaa
\g^\e(1, x_0, x_1; da) :={1\over 2\sqrt{\e}}\Big[ \g^*(x_0, x_1; {1\over 3}) \1_{A^\e_1}(a) + \g^*(x_0, x_1; {2\over 3})\1_{A^\e_2}(a)\Big]da.
\eeaa
Note that $F_1(a) \le ({1\over 3}+\sqrt{\e})({2\over 3}-\sqrt{\e})= {2\over 9} + {\sqrt{\e}\over 3}-\e$, $\g^\e(1, x_0, x_1;da)$-a.s. Then it is clear that $\g^\e\in \cM^{{\sqrt{\e}\over 3}-\e}_{relax}$. Moreover, one can easily verify that
\beaa
&&\dis \Big|J(0, \mu, \g^\e; x_0, \g^\e) - \hat J(\hat\g^\e)\Big| \\
&&\dis \le \sum_{i=1}^2 {1\over 2} \sum_{\tilde x_0, x_1\in \dbS} \mu(\tilde x_0) \g^*(x_0, x_1; {i\over 3}) \Big|{1\over \sqrt{\e}}\int_{A^\e_i} ada - {i\over 3}\Big| +{\sqrt{\e}\over 3}-\e \le C\sqrt{\e}.
\eeaa
This, together with \reff{Je6} and \reff{rVtmu}, implies that    $\dbV_{relax}(0, \mu) = \Big\{(y, y): y\in  \hat\dbV_{relax}(0, \mu)\Big\}$ and, by denoting $\ul \l:= {1\over 2}\sum_{x_1\in \dbS} \Big[{1\over 3} \g^*(\ul x, x_1; {1\over 3}) + {2\over 3} \g^*(\ul x, x_1; {2\over 3})\Big]\in  [{1\over 3}, {2\over 3}]$ and similarly for $\ol \l$,
\beaa
   \hat\dbV_{relax}(0, \mu):= \Big\{\hat J(\g^*): \g^*\in  \hat\cM_{relax}\Big\}=\Big\{\ul \l~\! \mu(\ul x) + \ol \l ~\!\mu(\ol x) + {2\over 9}:\q  \ul \l, \ol \l \in [{1\over 3}, {2\over 3}]\Big\}.
   \eeaa

 It remains to prove \reff{Je6}. Let $\g^\e$ satisfies \reff{ae2}. 
Then, for any $x_0\in \dbS$, we have 
\beaa
\e \!\!\!&\ge&\!\!\! {1\over 2} \sum_{x_1\in \dbS} \int_{\dbA}   F_1(a) \g^\e(1,x_0, x_1; da)-{2\over 9} = {1\over 2} \sum_{x_1\in \dbS} \int_{\dbA}   (a-{1\over 3})({2\over 3}-a) \g^\e(1,x_0, x_1; da)\\
\!\!\!&\ge&\!\!\!  {1\over 2} \sum_{x_1\in \dbS} \int_{A^\e_3}   (a-{1\over 3})({2\over 3}-a) \g^\e(1,x_0, x_1; da) \ge \sqrt{\e}({1\over 3}-\sqrt{\e}){1\over 2} \sum_{x_1\in \dbS} \int_{A^\e_3}  \g^\e(1,x_0, x_1; da).
  \eeaa
Thus
  \beaa
 \int_{A^\e_3}  \g^\e(1,x_0, x_1; da) \le C\sqrt{\e},\q\forall x_0, x_1\in \dbS.
   \eeaa 
   Recall \reff{Ae123} and set $\hat \g^\e\in \hat \cM_{relax}$ by: 
   \beaa
  \hat \g^\e(x_0, x_1; {1\over 3}) := {\g^\e(1,x_0, x_1; A^\e_1)\over \sum_{i=1}^2\g^\e(1,x_0, x_1; A^\e_i)},\q \hat \g^\e(x_0, x_1; {2\over 3}) := {\g^\e(1,x_0, x_1; A^\e_2)\over \sum_{i=1}^2\g^\e(1,x_0, x_1; A^\e_i)}.
  \eeaa  
Then $F_1(a) = {2\over 9}$, $\hat \g^\e(x_0, x_1;da)$-a.s., and thus
\beaa
&&\dis \Big|J(0, \mu, \g^\e; x_0, \g^\e) - \hat J(\hat\g^\e)\Big| \\
&&\dis \le \sum_{i=1}^2 {1\over 2} \sum_{\tilde x_0, x_1\in \dbS} \mu(\tilde x_0)\Big| \int_{\dbA^\e_i}   a \g^\e(1,\tilde x_0, x_1; da)-{i\over 3} \hat\g^\e(\tilde x_0, x_1; A^\e_i)\Big| \\
&&+{1\over 2} \sum_{\tilde x_0, x_1\in \dbS} \mu(\tilde x_0) \int_{\dbA^\e_3}   a \g^\e(1,\tilde x_0, x_1; da)+\Big| {1\over 2} \sum_{x_1\in \dbS} \int_{\dbA}   F_1(a) \g^\e(1,x_0, x_1; da) - {2\over 9}\Big|\\
&&\dis \le C\sum_{i=1}^2\big|\g^\e(1,\tilde x_0, x_1; A^\e_i) - \hat\g^\e(\tilde x_0, x_1; A^\e_i)\big|  +C\sqrt{\e} \\
&&\dis \le C{1-  \sum_{i=1}^2 \g^\e(1,\tilde x_0, x_1; A^\e_i)\over \sum_{i=1}^2 \g^\e(1,\tilde x_0, x_1; A^\e_i)} +C\sqrt{\e} \le {C\sqrt{\e}\over 1-C\sqrt{\e}} + C\sqrt{\e} \le C\sqrt{\e}.
\eeaa   
This proves \reff{Je6}.
\qed

Our next example shows that the left inclusion in \reff{FinSymConv} fails if we remove the $L$-Lipschitz continuity requirement, as mentioned in Remark \ref{rem-regularity} (ii). This justifies our uniform regularity requirement on the admissible controls in order to have the desired convergence as in Theorem \ref{thm-FinSymConv}. Recall $\dbV^{N,\e}$ and $\dbV^{N,\e, \infty}$ in Remark \ref{rem-regularity} (ii).

  \begin{eg}
    \label{eg-Lipschitz}
    Let $T, \dbS,  q$ be as in Example \ref{eg-statepath}, and 
    \beaa
   \dbA = [{1\over 3}, {2\over 3}],\q  F \equiv 0,\q G(\ul x, \mu) = \frac{20}{9}-5 \mu(\ul x),\q G(\ol x, \mu) =  \frac{20}{9}-3\mu(\ol x).
    \eeaa
    Then, for any $\mu\in \cP_0(\dbS)$ and $\mu^N_{\vec x}\in \cP_N(\dbS)$ with $\mu^N_{\vec x}\to \mu$,  $(0, 0)$ is in  $\dis \bigcap_{\e>0}\limsup_{N\to\infty}
      \dbV_{state}^{N,\e,\infty}(0,\mu^N_{\vec x})$ and $\dis\bigcap_{\e>0}\limsup_{N\to\infty}
      \dbV_{state}^{N,\e}(0,\mu^N_{\vec x})$, but not in $\dbV_{state}(0,\mu)$.
      \end{eg}

  \begin{proof} (i) We first compute $\dbV_{state}(0,\mu)$.  For $\a, \tilde \a\in \cA_{state}$ (which do not depend on $\mu$), similarly to \reff{mua} we have  
  \bea
  \label{mua2}
  \left.\ba{lll}
 \dis \mu^\a_1(\ul x)  = {1\over 2},\q  \mu^\a_2(\ul x) = {1\over 2} \sum_{x_1\in \dbS}\a(1, x_1),\q \dbP^{\mu^\a; 0, x_0, \tilde \a}(X_1 = \ul x)  = {1\over 2},\\
  \dis \dbP^{\mu^\a; 0, x_0, \tilde \a}(X_2 = \ul x)  = \sum_{x_1\in \dbS} \dbP^{\mu^\a; 0, x_0, \tilde \a}(X_1 = x_1) q(1, x_1,
  \mu^\a_1, \tilde\a(1, x_1); \ul x) = {1\over 2}\sum_{x_1\in \dbS} \tilde\a(1, x_1).
    \ea\right.
  \eea
  Then
  \beaa
&&   \dis J(0, \mu, \a; x_0, \tilde \a) = \dbE^{\dbP^{\mu^\a; 0, x_0, \tilde \a}}[G(X_2, \mu^\a_2)]\\
&&\dis = \frac{20}{9} -5 \dbP^{\mu^\a; 0, x_0, \tilde \a}(X_2=\ul x) \mu^\a_2(\ul x) -3 \dbP^{\mu^\a; 0, x_0, \tilde \a}(X_2=\ol x) \mu^\a_2(\ol x)  \\
  &&\dis =\frac{20}{9}-{5\over 2}\sum_{x_1\in \dbS} \tilde\a(1, x_1) \times {1\over 2} \sum_{x_1\in \dbS}\a(1, x_1) - 3 \Big[1- {1\over 2}\sum_{x_1\in \dbS} \tilde\a(1, x_1)\Big]\Big[1- {1\over 2} \sum_{x_1\in \dbS}\a(1, x_1)\Big]\\
  &&\dis =  {1\over 2} \Big[3-4\sum_{x_1\in \dbS} \a(1, x_1)\Big] \sum_{x_1\in \dbS} \tilde\a(1, x_1) + {3\over 2} \sum_{x_1\in \dbS}\a(1, x_1) - \frac{7}{9}.
    \eeaa
    Note that, when $\sum_{x_1\in \dbS} \a(1, x_1) > {3\over 4}$, $\inf_{\tilde \a\in \cA_{state}} J(0, \mu, \a; x_0, \tilde \a)$ is achieved at $\tilde \a \equiv {2\over 3}$. Since $\sum_{x_1\in \dbS} {2\over 3} = {4\over 3} > {3\over 4}$, then $\a\equiv {2\over 3}$ is an equilibrium with
    \begin{equation*}
      J(0,\mu,{2\over 3};x_0, {2\over 3}) =  
      {1\over 2} \Big[3-4\sum_{x_1\in \dbS} {2\over 3}\Big] \sum_{x_1\in \dbS} {2\over 3} + {3\over 2} \sum_{x_1\in \dbS}{2\over 3} - \frac{7}{9}
      = - \frac{1}{3},\q \forall x_0\in \dbS.
    \end{equation*}
  Similarly, when $\sum_{x_1\in \dbS} \a(1, x_1) < {3\over 4}$, $\inf_{\tilde \a\in \cA_{state}} J(0, \mu, \a; x_0, \tilde \a)$ is achieved at $\tilde \a \equiv {1\over 3}$. Since $\sum_{x_1\in \dbS} {1\over 3} = {2\over 3} < {3\over 4}$, then $\a\equiv {1\over 3}$ is also an equilibrium with
      \begin{equation*}
      J(0,\mu, {1\over 3};x_0, {1\over 3}) =  {1\over 2} \Big[3-4\sum_{x_1\in \dbS} {1\over 3}\Big] \sum_{x_1\in \dbS} {1\over 3} + {3\over 2} \sum_{x_1\in \dbS}{1\over 3} - \frac{7}{9} =  \frac{1}{3},\q \forall x_0\in \dbS.
    \end{equation*}
    Moreover, when $\sum_{x_1\in \dbS} \a(1, x_1) = {3\over 4}$, then all $\tilde \a$, including $\tilde \a=\a$, are minimizers of $J$, and thus such $\a$ is an equilibrium. In this case 
\beaa
J(0, \mu, \a; x_0, \a)= {3\over 2} \sum_{x_1\in \dbS}\a(1, x_1) - {7\over 9} = {3\over 2} \times {3\over 4}- {7\over 9} = \frac{25}{72},\q \forall x_0\in \dbS.
\eeaa
Put all cases together, we have $\dbV_{state}(0,\mu) = \big\{(-{1\over 3}, -{1\over 3}),({1\over 3}, {1\over 3}),({25\over 72}, {25\over 72})\big\}$.

\ms
(ii) We next show that $(0, 0)\in\bigcap_{\e>0}\limsup_{N\to\infty} \dbV_{state}^{N,\e}(0,\mu^N_{\vec x})$. 
Set
\beaa
\a(t,x, \mu) := \a(\mu) := {1\over 3} \1_{\{\mu(\ul x)\le {1\over 2}\}} + {2\over 3} \1_{\{\mu(\ul x)> {1\over 2}\}},~ E^N_1:= \{\mu^N_1(\ul x) \le {1\over 2}\},~ E^N_2:= \{\mu^N_1(\ul x) > {1\over 2}\},
\eeaa
where $\a$ does not depend on $(t,x)$. Then,  for any $\tilde \a: \dbT \times \dbS \times \cP(\dbS) \to \dbA$, recalling the setting in Subsection \ref{sect-stateN} and denoting $\dbP^i := \dbP^{0,\vec x, (\a, \tilde \a)_i}$, we have
  \begin{equation*}
      \begin{aligned}
        &J_i(0,\vec x,(\a, \tilde\a)_i)
            = \dbE^{\dbP^i}\big[G(X_2^i,\mu_2^N)\big] = \frac{20}{9} -\dbE^{\dbP^i}  \big[5 \mu^N_2(\ul x) \1_{\{X^i_2 = \ul x\}} + 3  \mu^N_2(\ol x) \1_{\{X^i_2 = \ol x\}}\big] \\
            &=\frac{20}{9}-{1\over N} \dbE^{\dbP^i}  \big[5  \1_{\{X^i_2 = \ul x\}} + 3  \1_{\{X^i_2 = \ol x\}}\big] - {1\over N} \sum_{j\neq i} \dbE^{\dbP^i}  \big[5  \1_{\{X^j_2=X^i_2 = \ul x\}} + 3  \1_{\{X^j_2=X^i_2 = \ol x\}}\big]\\
             &= \frac{20}{9}-{1\over N}\sum_{j\neq i}\dbE^{\dbP^i} \Big[5 \a(\mu^N_1)\tilde \a(1, X^i_1, \mu^N_1) + 3 [1-\a(\mu^N_1)][1-\tilde \a(1, X^i_1, \mu^N_1)\big]\Big]+ O\big({1\over N}\big)\\
            &= \frac{20}{9} - \dbE^{\dbP^i} \Big[ \big[2-{1\over 3} \tilde \a(1, X^i_1, \mu^N_1)\big]\1_{E^N_1} + \big[1+{7\over 3} \tilde \a(1, X^i_1, \mu^N_1)\big]\1_{E^N_2}\Big]+ O\big({1\over N}\big).
            \end{aligned}
    \end{equation*}
    Notice that, under each $\dbP^i$, $X^1_1, \cds, X^N_1$ are i.i.d. with $\dbP^i(X^j_1=\ul x) = \dbP^i(X^j_1=\ol x) = {1\over 2}$. Thus we may use a common $\bar\dbP$, under which $\vec X_1$ has the above distribution, such that
    \begin{equation}
      \begin{aligned}
        \label{eq-repcost}
        J_i(0,\vec x,(\a, \tilde\a)_i) =\frac{20}{9}- \dbE^{\bar\dbP} \Big[ \big[2-{1\over 3} \tilde \a(1, X^i_1, \mu^N_1)\big]\1_{E^N_1} + \big[1+{7\over 3} \tilde \a(1, X^i_1, \mu^N_1)\big]\1_{E^N_2}\Big]+ O\big({1\over N}\big).
      \end{aligned}
    \end{equation}
 If we ignore the term $ O\big({1\over N}\big)$, clearly $\tilde \a =\a$ is the minimizer of the above $J_i$. Then for fixed $\e>0$ and for $N$ large enough, $\a$ is an $\e$-minimizer for all $i$, and thus $\a$ is an $\e$-equilibrium. Note that  $N \mu^N_1(\ul x)= \sum_{i=1}^N \1_{\{X^i_1=\ul x\}}$ has distribution Binomial($N, {1\over 2}$) under $\bar \dbP$. Then $\bar\dbP(E^N_1) = {1\over 2}$ when $N$ is odd, and 
  \beaa
  {1\over 2} \le \bar\dbP(E^N_1) \le {1\over 2} + \bar\dbP(N \mu^N_1(\ul x) = {N\over 2}) = {1\over 2} + {1\over 2^N}\left(\ba{lll} N\\ {N\over 2}\ea\right) = {1\over 2} + O\big({1\over \sqrt{N}}\big),
  \eeaa
  when $N$ is even. Thus
  \beaa
  J_i(0,\vec x,\a)= \frac{20}{9}-{17\over 9}\bar\dbP(E^N_1) -{23\over 9} \bar\dbP(E^N_2)+ O\big({1\over N}\big) =\frac{20}{9}-{1\over 2}[{17\over 9}+{23\over 9} ]+ O\big({1\over \sqrt{N}}\big) = O\big({1\over \sqrt{N}}\big).
  \eeaa
  Since $\mu^N_{\vec x}\to \mu\in \cP_0(\dbS)$, we have $\mu^N_{\vec x}\in \cP_0(\dbS)$ for $N$ large enough. Then, in light of \reff{JN}, 
  \beaa
  J_N(0, x_0, \mu^N_{\vec x}, \a) =  O\big({1\over \sqrt{N}}\big),\q\forall x_0\in \dbS.
  \eeaa
  This implies that $(0,0)\in\bigcap_{\e>0}\limsup_{N\to\infty} \dbV_{state}^{N,\e}(0,\mu^N_{\vec x})$.
         
(iii) We finally show that $(0,0)\in\bigcap_{\e>0}\limsup_{N\to\infty} \dbV_{state}^{N,\e, \infty}(0,\mu^N_{\vec x})$. Set
\beaa
&\dis \a^N(t,x, \mu) := {1\over 3} \1_{\{\mu(\ul x)\le p_N \}} + {2\over 3} \1_{\{\mu(\ul x)\ge  q_N\}} + \big[{1\over 3} + {N\over 3}\big(\mu(\ul x) -  p_N\big)\big]\1_{\{p_N < \mu(\ul x) < q_N\}},\\
&\dis \mbox{where}\q  p_N :={1\over 2}- {1\over 2N},\q q_N :={1\over 2}+ {1\over 2N}\\ 
&\dis \tilde E^N_1:= \big\{\mu^N_1(\ul x) \le  p_N\big\},\q \tilde E^N_2:= \big\{\mu^N_1(\ul x) \ge q_N\big\},\q \tilde E^N_3:= \big\{p_N<\mu^N_1(\ul x) < q_N\big\}.
\eeaa
Then clearly $\a^N\in \cA^\infty_{state}$. For any $\tilde \a\in  \cA^\infty_{state}$, similarly to \reff{eq-repcost} we have
\beaa
      \begin{aligned}
        &J_i(0,\vec x,(\a^N, \tilde\a)_i)
           = \frac{20}{9} - \dbE^{\bar\dbP} \Big[ \big[2-{1\over 3} \tilde \a(1, X^i_1, \mu^N_1)\big]\1_{\tilde E^N_1} + \big[1+{7\over 3} \tilde \a(1, X^i_1, \mu^N_1)\big]\1_{\tilde E^N_2}\\
            &\qq - \big[5 \a(\mu^N_1)\tilde \a(1, X^i_1, \mu^N_1) + 3 [1-\a(\mu^N_1)][1-\tilde \a(1, X^i_1, \mu^N_1)]\big]\1_{\tilde E^N_3}\Big]+ O\big({1\over N}\big).
        \end{aligned}
    \eeaa
 Again, fix $\e>0$ and consider $N$ large enough. On $\tilde E^N_1 \cup \tilde E^N_2$, it is optimal to choose $\tilde \a = \a^N$, up to the error $O\big({1\over N}\big)$. Then
\beaa
J_i(0,\vec x,(\a^N, \tilde\a)_i) - J_i(0,\vec x,\a^N) \le C \bar\dbP (\tilde E^N_3)+ O\big({1\over N}\big),
\eeaa
 When $N$ is odd, $\tilde E^N_3 = \emptyset$ and thus $\bar\dbP (\tilde E^N_3)=0$. When $N$ is even,
 \beaa
 \bar\dbP (\tilde E^N_3) = \bar \dbP(\mu^N_1(\ul x) = {1\over 2}) = {1\over 2^N}\left(\ba{lll} N\\ {N\over 2}\ea\right) = O\big({1\over \sqrt{N}}\big).
 \eeaa
  So in both cases, we have
    \beaa
J_i(0,\vec x,(\a^N, \tilde\a)_i) - J_i(0,\vec x,\a^N) \le  O\big({1\over \sqrt{N}}\big),
\eeaa 
That is, $\a^N\in \cM^{N,\e,\infty}_{state}(0, \mu^N_{\vec x})$ for $N$ large enough. Thus $J_N(0, \cd, \mu^N_{\vec x}, \a^N)   \in \dbV_{state}^{N,\e, \infty}(0,\mu^N_{\vec x})$. Then by similar arguments as in (ii) we see that $(0,0)\in\bigcap_{\e>0}\limsup_{N\to\infty} \dbV_{state}^{N,\e, \infty}(0,\mu^N_{\vec x})$.
 \end{proof}

  \begin{rem}
\label{rem-Lipschitz}
Consider the setting in Example \ref{eg-Lipschitz} (ii). Denote $\dbP^\a = \dbP^{0,\vec x, \a}$, we have
\beaa
\dis \dbE^{\dbP^\a}[\mu^N_2(\ul x)] \!\!\!&=&\!\!\! {1\over N}\sum_{i=1}^N \dbP^\a(X^i_2=\ul x) = {1\over N}\sum_{i=1}^N \dbE^{\bar\dbP}\big[\a(\mu^N_1)\big] \\
\!\!\!&=&\!\!\! {1\over 3} \bar \dbP\big(\mu^N_1(\ul x) \le {1\over 2} \big) + {2\over 3} \bar \dbP\big(\mu^N_1(\ul x) > {1\over 2} \big)={1\over 2};\\
\dis \dbE^{\dbP^\a}[|\mu^N_2(\ul x)|^2] \!\!\!&=&\!\!\!   {1\over N^2}\sum_{i,j=1}^N \dbP^\a(X^i_2=X^j_2=\ul x)  = {1\over N^2}\sum_{i=1}^N \dbE^{\bar\dbP}\big[\a(\mu^N_1)\big] + {1\over N^2}\sum_{i\neq j} \dbE^{\bar\dbP}\big[|\a(\mu^N_1)|^2\big]\\
\!\!\!&=&\!\!\! {1\over 9} \bar \dbP\big(\mu^N_1(\ul x) \le {1\over 2} \big) + {4\over 9} \bar \dbP\big(\mu^N_1(\ul x) > {1\over 2} \big) +O\big({1\over N}\big)={5\over 18} + O\big({1\over N}\big);\\
\dis Var^{\dbP^\a}(\mu^N_2(\ul x))\!\!\!&=&\!\!\! {5\over 18} + O\big({1\over N}\big)- ({1\over 2})^2 = {1\over 36} +  O\big({1\over N}\big).
\eeaa
Then we see that  the random measure $\mu^N_2$ under $\dbP^\a$, which is an $O({1\over \sqrt{N}})$-equilibrium measure of the $N$-player problem,  does not converge to a deterministic measure. This explains why \cite{Lacker2} introduced the weak mean field equilibrium when considering the convergence issue for all measurable controls. However, we shall emphasize again that, as pointed out in Remark \ref{rem-regularity} (iii), measurable controls/equilibria are not desirable for numerical or practical purpose.  
\end{rem}

\subsection{The subtle path dependence issue in Remark
  \ref{rem-statepath}} 
  In this subsection we elaborate Remark
\ref{rem-statepath} (ii) and (iii). Throughout the subsection,
$q, F, G$ are state dependent as in Section \ref{sect-state}. As we always saw in Example \ref{eg-statepath}, in general $\dbV_{state}\neq \dbV_{path}$, confirming Remark
\ref{rem-statepath} (ii). 
We now turn to Remark \ref{rem-statepath} (iii) for relaxed controls. For simplicity we verify it only for raw set values. The equality for set values follow similar ideas but with more involved approximations, as we saw in Example \ref{eg-statepath} (iv). Let $\cA_{relax}$ be the path
dependent ones in Section \ref{sect-relax}, and $\cA^{state}_{relax}$
denote the subset taking the form $\g(t, x, da)$. We emphasize again
that here we are considering state dependent $q, F, G$. Fix $t=0$ and
$\mu\in \cP_0(\dbS)$.

\begin{lem}
  \label{lem-gstate} For any $\g \in \cA_{relax}$, define
  \begin{equation}
    \label{statetildeg} \tilde \g(s, x, da) :={1\over \mu^\g_s(x)}
    \sum_{\bx\in \dbX_s: \bx_s = x} \mu^\g_{s\wedge \cd}(\bx) \g(s, \bx,
    da),~\mbox{where}~ \mu^\g_s(x):= \sum_{\bx\in \dbX_s: \bx_s = x}
    \mu^\g_{s\wedge \cd}(\bx). 
  \end{equation}
  Then $\tilde \g\in \cA^{state}_{relax}$ and
  $\mu^{\tilde \g}_s = \mu^\g_s$.
\end{lem}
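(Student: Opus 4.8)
The plan is to verify the two assertions in \reff{statetildeg} separately: that $\tilde \g$ is a bona fide state-dependent relaxed control, and that its marginal flow agrees with that of $\g$. Both become routine once \reff{statetildeg} is read as ``conditioning $\g$ on the present state''. First I would check membership in $\cA^{state}_{relax}$. Since $q\ge c_q>0$ and $\mu\in\cP_0(\dbS)$ has full support, the marginal $\mu^\g_s$ stays in $\cP_0(\dbS)$ for every $s$, so the normalizer $\mu^\g_s(x)>0$ and \reff{statetildeg} is well posed. For fixed $(s,x)$, the weights $\mu^\g_{s\wedge\cd}(\bx)/\mu^\g_s(x)$ over $\{\bx\in\dbX_s:\bx_s=x\}$ are nonnegative and sum to one by the definition of $\mu^\g_s(x)$; hence $\tilde\g(s,x,\cd)$ is a convex combination of the probability measures $\g(s,\bx,\cd)\in\cP(\dbA)$, and in particular $\tilde\g(s,x,\dbA)=1$, so $\tilde\g(s,x,\cd)\in\cP(\dbA)$. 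As $\tilde\g$ depends on the path only through the current state $\bx_s=x$, it is adapted and lies in $\cA^{state}_{relax}$.

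For the marginal identity I would show that $\{\mu^\g_s\}$ itself solves the forward recursion that uniquely determines $\{\mu^{\tilde\g}_s\}$, and conclude by uniqueness; both sequences start at $\mu$. For the one-step update, using the identity $\mu^\g_s(x)\,\tilde\g(s,x,da)=\sum_{\bx:\bx_s=x}\mu^\g_{s\wedge\cd}(\bx)\,\g(s,\bx,da)$ read off from \reff{statetildeg},
\[
\sum_{x\in\dbS}\mu^\g_s(x)\int_\dbA q(s,x,\mu^\g_s,a;\tilde x)\,\tilde\g(s,x,da)
=\sum_{x\in\dbS}\sum_{\bx:\bx_s=x}\mu^\g_{s\wedge\cd}(\bx)\int_\dbA q(s,x,\mu^\g_s,a;\tilde x)\,\g(s,\bx,da).
\]
Because $q$ is state dependent, $q(s,\bx,\mu^\g,a;\tilde x)=q(s,\bx_s,\mu^\g_s,a;\tilde x)$, so the right-hand side equals $\sum_{\bx\in\dbX_s}\mu^\g_{s\wedge\cd}(\bx)\int_\dbA q(s,\bx,\mu^\g,a;\tilde x)\,\g(s,\bx,da)=\mu^\g_{s+1}(\tilde x)$ by the path-dependent update in \reff{relaxJ}. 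Thus $\{\mu^\g_s\}$ satisfies $\nu_0=\mu$ and $\nu_{s+1}(\tilde x)=\sum_x\nu_s(x)\int_\dbA q(s,x,\nu_s,a;\tilde x)\,\tilde\g(s,x,da)$, which is exactly the Markov recursion obeyed by $\{\mu^{\tilde\g}_s\}$ for the state-dependent control $\tilde\g$. Since this explicit forward recursion determines the flow uniquely from $\nu_0=\mu$, we obtain $\mu^{\tilde\g}_s=\mu^\g_s$ for all $s$.

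The only place any real content enters — and the step I would flag as the crux — is the collapse of the path-dependent update onto the marginals, which hinges on $q$ being state dependent so that $q(s,\bx,\mu^\g,a;\tilde x)$ depends on $\bx$ only through $\bx_s$. This is precisely why the averaging weights in \reff{statetildeg} are taken to be the $\g$-conditional law of the past given the present state: they are the unique choice that preserves the one-step $q$-integral. I do not anticipate genuine difficulty beyond this observation; positivity of $q$ guarantees the construction never divides by zero, and had $q$ depended on the full path no such marginalization would hold, which is consistent with Remark \ref{rem-statepath}.
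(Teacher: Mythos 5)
Your proof is correct and follows essentially the same route as the paper's: the paper verifies $\tilde\g(s,x,\dbA)=1$ and then proves $\mu^{\tilde\g}_s=\mu^\g_s$ by induction, where the induction step is exactly your one-step collapse of the path-dependent update onto the marginals via the weights $\mu^\g_{s\wedge\cd}(\bx)/\mu^\g_s(x)$ and the state dependence of $q$. Your phrasing via uniqueness of the forward recursion is just the mirror image of that induction, and your explicit well-posedness check ($\mu^\g_s(x)>0$ from $q\ge c_q$ and $\mu\in\cP_0$) is a harmless addition the paper leaves implicit.
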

\begin{proof}
  First it is obvious that \beaa \tilde \g(s, x, \dbA) = {1\over
    \mu^\g_s(x)} \sum_{\bx\in \dbX_s: \bx_s = x} \mu^\g_{s\wedge
    \cd}(\bx) \g(s, \bx, \dbA) = {1\over \mu^\g_s(x)} \sum_{\bx\in
    \dbX_s: \bx_s = x} \mu^\g_{s\wedge \cd}(\bx) =1, \eeaa so
  $\tilde \g\in \cA^{state}_{relax}$. Next, by definition
  $\mu^{\tilde \g}_0 =\mu = \mu^\g_0$. Assume
  $\mu^{\tilde \g}_s = \mu^\g_s$, then for $s+1$, \beaa \mu^{\tilde
    \g}_{s+1}(x) &=& \sum_{\tilde x\in \dbS} \mu^{\tilde \g}_s(\tilde
  x) \int_\dbA q(s, \tilde x, \mu^{\tilde \g}_s, a; x) \tilde \g(s,
  \tilde x, da) \\ &=& \sum_{\tilde x\in \dbS} \mu^{\g}_s(\tilde x)
  \int_\dbA q(s, \tilde x, \mu^{\g}_s, a; x) {1\over \mu^\g_s(\tilde
    x)} \sum_{\bx\in \dbX_s: \bx_s =\tilde x} \mu^\g_{s\wedge
    \cd}(\bx) \g(s, \bx, da)\\ &=& \sum_{\bx\in \dbX_s}
  \mu^\g_{s\wedge \cd}(\bx) \int_\dbA q(s, \bx_s, \mu^{\g}_s, a; x)
  \g(s, \bx, da) = \mu^\g_{s+1}(x).  \eeaa This completes the
  induction argument.
\end{proof}

\begin{lem}
  \label{lem-stateMFE} If $\g^*\in \cA_{relax}$ is a relaxed MFE at
  $(0, \mu)$, then the corresponding
  $\tilde \g^*\in \cA^{state}_{relax}$ is a state dependent relaxed
  MFE at $(0, \mu)$. Moreover, in this case we have \bea
\label{stateJ} J(0, \mu, \g^*; x, \g^*) = J(0, \mu, \tilde \g^*; x,
\tilde \g^*).  \eea
\end{lem}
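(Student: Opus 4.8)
The plan is to fix the equilibrium measure flow $\mu^{\g^*}$ and treat the representative player's problem as a \emph{standard} (relaxed) Markov control problem with state-dependent data, for which the value function depends on the current state alone. Write $V(s,y):=v(\mu^{\g^*};s,y)$ as a local shorthand (by the state dependence of $q,F,G$ this indeed depends on $\bx$ only through $\bx_s$), and introduce the one-step Hamiltonian
\[
h_s(y,a) := F(s, y, \mu^{\g^*}_s, a) + \sum_{\tilde x \in \dbS} q(s, y, \mu^{\g^*}_s, a; \tilde x)\, V(s+1, \tilde x),
\]
so that dynamic programming reads $V(s,y) = \inf_{a\in\dbA} h_s(y,a) = \inf_{\nu\in\cP(\dbA)} \int_\dbA h_s(y,a)\,\nu(da)$, the last equality because $h_s$ is linear in the relaxed control $\nu$. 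The decisive structural fact is that $h_s$ depends on the path $\bx$ only through its current state $\bx_s$. By Lemma \ref{lem-gstate} we also have $\mu^{\tilde \g^*}=\mu^{\g^*}$, so passing from $\g^*$ to $\tilde\g^*$ leaves the coefficients and hence $V$ and $h_s$ unchanged; it therefore suffices to compare the two controls against the \emph{same} data.

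The first substantive step is to propagate the optimality of $\g^*$ from time $0$ to all later times and all reachable paths, i.e. to show $J(\mu^{\g^*}; s, \bx, \g^*) = V(s, \bx_s)$ for every $s$ and every $\bx\in\dbX_s$. Starting from the MFE identity $J(\mu^{\g^*};0,x,\g^*)=V(0,x)$ for each $x\in\dbS$, I would combine the exact tower property $J(\mu^{\g^*};0,x,\g^*)=\dbE^{\dbP^{\mu^{\g^*};0,x,\g^*}}[\sum_{r<s}\int_\dbA F(r,X_r,\mu^{\g^*}_r,a)\g^*(r,X,da) + J(\mu^{\g^*};s,X,\g^*)]$, the pointwise bound $J(\mu^{\g^*};s,\cd,\g^*)\ge V(s,\cd)$, and the optimality principle $V(0,x)\le \dbE^{\dbP^{\mu^{\g^*};0,x,\g^*}}[\sum_{r<s}\int_\dbA F\,\g^*(r,X,da)+V(s,X_s)]$; these three force $\dbE^{\dbP^{\mu^{\g^*};0,x,\g^*}}[J(\mu^{\g^*};s,X,\g^*)-V(s,X_s)]=0$. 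Since $q>0$, every path in $\dbX^{0,x}_s$ carries positive mass, so the nonnegative integrand vanishes identically, and letting $x$ range over $\dbS$ covers all of $\dbX_s$. Feeding this back into the one-step recursion for $J(\mu^{\g^*};s,\bx,\g^*)$ yields $\int_\dbA h_s(\bx_s,a)\,\g^*(s,\bx,da)=V(s,\bx_s)$, i.e. $\g^*(s,\bx,\cd)$ is an optimal relaxed control for $h_s$ at the state $\bx_s$. I expect this propagation step, with its reliance on the full-support assumption $q>0$, to be the main obstacle.

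The conclusion is then a short averaging argument. Since $\tilde\g^*(s,x,\cd)$ is, by \reff{statetildeg}, a convex combination (weights $\mu^{\g^*}_{s\wedge\cd}(\bx)/\mu^{\g^*}_s(x)$ summing to one) of the measures $\g^*(s,\bx,\cd)$ over paths $\bx$ with $\bx_s=x$, and each such measure is optimal for the \emph{same} linear functional $\nu\mapsto\int_\dbA h_s(x,a)\,\nu(da)$, linearity gives $\int_\dbA h_s(x,a)\,\tilde\g^*(s,x,da)=V(s,x)$. A backward induction on $s$, using that $\tilde\g^*$ is state dependent and $\mu^{\tilde\g^*}=\mu^{\g^*}$, then proves $J(\mu^{\g^*};s,x,\tilde\g^*)=V(s,x)$ for all $(s,x)$: the base case $s=T$ is $G(x,\mu^{\g^*}_T)=V(T,x)$, and the inductive step replaces the tail cost by $V(s+1,\cd)$ and invokes the displayed identity. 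In particular $J(0,\mu,\tilde\g^*;x,\tilde\g^*)=V(0,x)=v(\mu^{\tilde\g^*};0,x)$, so $\tilde\g^*$ is a state dependent relaxed MFE; and since the MFE property of $\g^*$ gives $J(0,\mu,\g^*;x,\g^*)=V(0,x)$ as well, the two costs coincide, which is exactly \reff{stateJ}.
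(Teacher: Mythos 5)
Your proposal is correct, but it takes a genuinely different route from the paper. The paper's proof is static and aggregate: it first verifies, via the occupation-measure identity underlying Lemma \ref{lem-gstate}, that the population-averaged cost is invariant under the path-to-state averaging, i.e. $\int_\dbS J(0,\mu,\g;x,\g)\mu(dx)=\int_\dbS J(0,\mu,\tilde\g;x,\tilde\g)\mu(dx)$; it then notes $v(\mu^{\g^*};0,\cd)=v(\mu^{\tilde\g^*};0,\cd)$ by standard control theory (value over state dependent and path dependent relaxed controls coincide, and $\mu^{\tilde\g^*}=\mu^{\g^*}$); finally, combining these with the MFE property of $\g^*$, the pointwise bound $J(0,\mu,\tilde\g^*;x,\tilde\g^*)\ge v(\mu^{\tilde\g^*};0,x)$, and the full support of $\mu$, it upgrades equality of $\mu$-averages to pointwise equality. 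You instead argue dynamically: you propagate the exact optimality of $\g^*$ from time $0$ to every later time and every path (your use of $q\ge c_q>0$ to kill the nonnegative integrand path-by-path is exactly the device the paper itself uses in the proofs of Proposition \ref{prop-DPP} and Theorem \ref{thm-DPP2}, so this step is sound), identify each $\g^*(s,\bx,\cd)$ as a minimizer of the linear one-step functional $\nu\mapsto\int_\dbA h_s(\bx_s,a)\nu(da)$, and then observe that $\tilde\g^*(s,x,\cd)$, being a convex combination of minimizers of the \emph{same} linear functional, is itself a minimizer; backward induction closes the argument. The paper's route is shorter and never leaves time $0$, while yours proves a strictly stronger intermediate statement — time-consistent optimality of $\g^*$ along every path and optimality of $\tilde\g^*$ at every $(s,x)$, a verification-type result — and it makes fully explicit where the convexity of relaxed controls enters, which is precisely the structural point highlighted in Remark \ref{rem-statepath} (iii). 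Both arguments rest on Lemma \ref{lem-gstate} and on full support ($\mu\in\cP_0(\dbS)$ and $q>0$), and both correctly conclude \reff{stateJ} from $J(0,\mu,\g^*;x,\g^*)=v(\mu^{\g^*};0,x)=v(\mu^{\tilde\g^*};0,x)=J(0,\mu,\tilde\g^*;x,\tilde\g^*)$.
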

\begin{proof}
  First, by Lemma \ref{lem-gstate} it is straightforward to verify
  that \beaa \int_\dbS J(0, \mu, \g; x, \g) \mu(dx) = \int_\dbS J(0,
  \mu, \tilde \g; x, \tilde \g) \mu(dx).  \eeaa On the other hand,
  since $\g^*\in \cA_{relax}$, by the standard control theory we have
  \begin{equation}
    \label{statev} \inf_{\g\in \cA_{relax}} J(0, \mu, \g^*; x, \g) =
    v(\mu^{\g^*}; 0, x) = v(\mu^{\tilde \g^*}; 0, x) = \inf_{\g'\in
      \cA^{state}_{relax}} J(0, \mu, \tilde\g^*; x, \g').  
  \end{equation}
  Then \beaa \int_\dbS J(0, \mu, \tilde \g^*; x, \tilde \g^*) \mu(dx)=
  \int_\dbS J(0, \mu, \g^*; x, \g^*) \mu(dx) = \int_\dbS v(\mu^{\tilde
    \g^*}; 0, x)\mu(dx).  \eeaa Since
  $J(0, \mu, \tilde \g^*; x, \tilde \g^*) \ge v(\mu^{\tilde \g^*}; 0,
  x)$ and $\supp(\mu) = \dbS$, then
  $J(0, \mu, \tilde \g^*; x, \tilde \g^*) =v(\mu^{\tilde \g^*}; 0, x)$
  for all $x\in \dbS$. This implies that
  $\tilde \g^*\in \cA^{state}_{relax}$ is a state dependent relaxed
  MFE at $(0, \mu)$, and consequently \reff{statev} leads to
  \reff{stateJ}.
\end{proof}

\begin{thm}
\label{thm-relaxstate} The MFGs with state dependent relaxed controls
and path dependent relaxed controls have the same relaxed raw set
value.
\end{thm}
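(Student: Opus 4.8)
The plan is to prove the asserted equality by establishing two inclusions between the raw set values, using Lemmas \ref{lem-gstate} and \ref{lem-stateMFE} for one direction and the elementary indistinguishability of state dependent and path dependent controls in a fixed-flow control problem for the other. Throughout set $t=0$. Write $\cM_{relax}(0,\mu)\subset\cA_{relax}$ and $\cM^{state}_{relax}(0,\mu)\subset\cA^{state}_{relax}$ for the sets of exact ($\e=0$) path dependent, respectively state dependent, relaxed MFEs, and denote the two relaxed raw set values by
\[
\dbV^{p}(0,\mu):=\big\{J(0,\mu,\g^*;\cd,\g^*):\g^*\in\cM_{relax}(0,\mu)\big\},\qquad
\dbV^{s}(0,\mu):=\big\{J(0,\mu,\g^*;\cd,\g^*):\g^*\in\cM^{state}_{relax}(0,\mu)\big\}.
\]

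First I would prove $\dbV^{p}(0,\mu)\subset\dbV^{s}(0,\mu)$. Given any $\g^*\in\cM_{relax}(0,\mu)$, form the projected control $\tilde\g^*$ through \reff{statetildeg}. Lemma \ref{lem-stateMFE} then delivers in one stroke both that $\tilde\g^*\in\cM^{state}_{relax}(0,\mu)$ and that $J(0,\mu,\g^*;x,\g^*)=J(0,\mu,\tilde\g^*;x,\tilde\g^*)$ for all $x\in\dbS$; hence the function $J(0,\mu,\g^*;\cd,\g^*)$ coincides with $J(0,\mu,\tilde\g^*;\cd,\tilde\g^*)$ and lies in $\dbV^{s}(0,\mu)$. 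This is the direction carrying all the content, since it rests on the measure-preserving fiberwise averaging of Lemma \ref{lem-gstate} together with the cost identity of Lemma \ref{lem-stateMFE}.

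Next I would prove the reverse inclusion $\dbV^{s}(0,\mu)\subset\dbV^{p}(0,\mu)$, which is essentially free. Since $\cA^{state}_{relax}\subset\cA_{relax}$, any $\g^*\in\cM^{state}_{relax}(0,\mu)$ is already an admissible path dependent relaxed control, so I only need to verify that it satisfies the path dependent equilibrium condition \reff{rMFEe} with $\e=0$. But the chain \reff{statev} shows the state dependent and path dependent infima coincide, both equalling the value function $v(\mu^{\g^*};0,x)$ of the standard relaxed control problem for the fixed flow $\mu^{\g^*}$; that is,
\[
J(0,\mu,\g^*;x,\g^*)=\inf_{\g'\in\cA^{state}_{relax}}J(0,\mu,\g^*;x,\g')=v(\mu^{\g^*};0,x)=\inf_{\g\in\cA_{relax}}J(0,\mu,\g^*;x,\g),
\]
the first equality being the state dependent MFE property. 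The outer two terms are exactly the path dependent MFE condition, so $\g^*\in\cM_{relax}(0,\mu)$ and $J(0,\mu,\g^*;\cd,\g^*)\in\dbV^{p}(0,\mu)$.

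Combining the two inclusions yields the claimed equality. The only genuine obstacle is the forward direction, and it has already been packaged into Lemma \ref{lem-stateMFE}; the conceptual heart is that the fiberwise average \reff{statetildeg} collapses path dependence onto the current state while simultaneously preserving the equilibrium flow $\mu^\g$ and the equilibrium cost, whereas optimality transfers automatically because a standard control problem never distinguishes Markovian from path dependent strategies. I would remark that this convex-combination stability of relaxed controls is precisely what fails for the pure controls of Section \ref{sect-state}, which explains why the state dependent and path dependent raw set values diverge there (Example \ref{eg-statepath}) yet coincide in the relaxed setting.
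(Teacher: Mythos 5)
Your proposal is correct and follows essentially the same route as the paper: the inclusion of the path dependent raw set value into the state dependent one is exactly the content of Lemma \ref{lem-stateMFE}, and the reverse inclusion is obtained, as in the paper, by viewing a state dependent relaxed MFE as a path dependent control and using the chain \reff{statev} (state dependent and path dependent infima coincide for the fixed flow $\mu^{\g^*}$) to verify the path dependent equilibrium condition \reff{rMFEe}. Your chain of equalities is precisely the paper's point that $\cM^{state}_{relax}\subset\cM^{path}_{relax}$ is the only nontrivial step, so no gap remains.
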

\begin{proof}
  By Lemma \ref{lem-stateMFE}, clearly the path dependent raw set
  value is included in the state dependent raw set value. On the other
  hand, for any state dependent relaxed control
  $\hat \g^*\in \cA^{state}_{relax}$, we may still view
  $\g^* := \hat \g^*$ as a path dependent relaxed
  control\footnote{While it is trivial that
    $\cA^{state}_{relax} \subset \cA^{path}_{relax}:=\cA_{relax}$, as
    stated here, in general it is not trivial that
    $\cM^{state}_{relax} \subset \cM^{path}_{relax}$, because for the
    latter one has to compare with other path dependent relax
    controls, which is a stronger requirement than that for
    $\cM^{state}_{relax}$. The rest of the proof is exactly to prove
    $\cM^{state}_{relax} \subset \cM^{path}_{relax}$.}, and it is
  straightforward to verify that the
  $\tilde \g^*\in \cA^{state}_{relax}$ corresponding to $\g^*$ is
  equal to $\hat \g^*$. Then, following the arguments in Lemma
  \ref{lem-stateMFE}, in particular \reff{statev}, one can easily show
  that $J(0, \mu, \g^*; x, \g^*) = v(\mu^{\g^*}; 0, x)$ and thus
  $\g^*$ is also an MFE among $\cA_{relax}$. Therefore,
  $J(0, \mu, \g^*; \cd, \g^*)$ belong to the path dependent raw set
  value as well.
\end{proof}

\subsection{Some technical proofs}

\no{\bf Proof of Theorem \ref{thm-DPP1}.}
  Let
  $\tilde \dbV_{state}(t,\mu)= \bigcap_{\e>0}\tilde
  \dbV^\e_{state}(t,\mu)$ denote the right side of \reff{DPP1} in the
  obvious sense. We shall follow the arguments in Theorem
  \ref{thm-DPP0}.

  (i) We first prove
  $\tilde \dbV_{state}(t,\mu)\subset \dbV_{state}(t,\mu)$. Fix
  $\f\in \tilde \dbV_{state}(t,\mu)$, $\e>0$, and set
  $\e_1:= {\e\over 4}$. Since
  $\f\in \tilde \dbV^{\e_1}_{state}(t, \mu)$, there exist desirable
  $\psi$ and $\a^*\in \cM^{\e_1}_{state}(T_0, \psi; t,\mu)$ as in
  \reff{DPP1}, and the property
  $\psi(\cd, \mu^{\a^*}_{T_0}) \in \dbV^{\e_1}_{state}(T_0,
  \mu^{\a^*}_{T_0})$ implies further that there exists
  $\tilde \a^*\in \cM^{\e_1}_{state}(T_0, \mu^{\a^*}_{T_0})$ such that
  \beaa \|\f-J(T_0, \psi; t, \mu, \a^*; \cd, \a^*)\|_\infty \le
  \e_1,\q \|\psi(\cd, \mu^{\a^*}_{T_0}) - J(T_0, \mu^{\a^*}_{T_0},
  \tilde \a^*; \cd, \tilde \a^*)\|_\infty\le \e_1.  \eeaa Denote
  $\hat \a^*:= \a^*\oplus_{T_0} \tilde \a^* \in \cA_{state}$. Then,
  for any $\a\in \cA_{state}$ and $x\in \dbS$, similar to the
  arguments in Proposition \ref{prop-DPP} (i), we have \beaa &&J(t,
  \mu, \hat\a^*; x, \a) = \dbE^{\dbP^{\mu^{\a^*}; t, x, \a}}\Big[
  J(T_0, \mu^{\a^*}_{T_0}, \tilde \a^*; X_{T_0}, \a) +
  \sum_{s=t}^{T_0-1} F(s, X_s, \mu^{\a^*}_s, \a(s, X_s))\Big]\\ &&\ge
  \dbE^{\dbP^{\mu^{\a^*}; t, x, \a}}\Big[ J(T_0, \mu^{\a^*}_{T_0},
  \tilde \a^*; X_{T_0}, \tilde \a^*) +
  \sum_{s=t}^{T_0-1} F(s, X_s, \mu^{\a^*}_s, \a(s, X_s))\Big]-\e_1\\
  &&\ge \dbE^{\dbP^{\mu^{\a^*}; t, x, \a}}\Big[ \psi(X_{T_0},
  \mu^{\a^*}_{T_0}) + \sum_{s=t}^{T_0-1} F(s, X_s, \mu^{\a^*}_s, \a(s,
  X_s))\Big] - 2\e_1\\ &&= J(T_0, \psi; t, \mu, \a^*; x, \a) -2\e_1
  \ge J(T_0, \psi; t, \mu, \a^*; x, \a^*) -3\e_1 \\ &&=
  \dbE^{\dbP^{\mu^{\a^*}; t, x, \a^*}}\Big[ \psi(X_{T_0},
  \mu^{\a^*}_{T_0}) + \sum_{s=t}^{T_0-1} F(s, X_s, \mu^{\a^*}_s,
  \a^*(s, X_s))\Big] - 3\e_1\\ &&\ge \dbE^{\dbP^{\mu^{\a^*}; t, x,
      \a^*}}\Big[ J(T_0, \mu^{\a^*}_{T_0}, \tilde \a^*; X_{T_0},
  \tilde \a^*) + \sum_{s=t}^{T_0-1} F(s, X_s, \mu^{\a^*}_s, \a^*(s,
  X_s))\Big] - 4\e_1\\ &&= J(t, \mu, \hat\a^*; x, \hat \a^*) - \e.
  \eeaa That is, $\hat \a^* \in \cM^\e_{state}(t,\mu)$. Moreover, note
  that \beaa &&\|\f - J(t, \mu, \hat\a^*; \cd, \hat\a^*)\|_\infty \le
  \e_1 + \|J(T_0, \psi; t, \mu, \a^*; \cd, \a^*)-J(t, \mu, \hat\a^*;
  \cd, \hat\a^*)\|_\infty\\ &&=\e_1 + \sup_{x\in \dbS}
  \Big|\dbE^{\dbP^{\mu^{\a^*}; t, x, \a^*}}\Big[ \psi(X_{T_0},
  \mu^{\a^*}_{T_0}) - J(T_0, \mu^{\a^*}_{T_0}, \tilde \a^*; X_{T_0},
  \tilde \a^*)\Big]\Big| \le 2\e_1 \le \e.  \eeaa Then
  $\f\in \dbV^\e_{state}(t,\mu)$. Since $\e>0$ is arbitrary, we obtain
  $\f\in \dbV_{state}(t,\mu)$.

  (ii) We now prove the opposite inclusion. Fix $\f\in
  \dbV_{state}(t,\mu)$ and $\e>0$.  Let $\e_1>0$ be a small number which
  will be specified later. Since $\f\in \dbV^{\e_1}_{state}(t, \mu)$,
  there exists $\a^*\in \cM^{\e_1}_{state}(t,\mu)$ such that $\|\f-J(t,
  \mu, \a^*; \cd, \a^*)\|_\infty \le \e_1$. Introduce $\psi(x,
  \nu):=J(T_0, \nu, \a^*; x, \a^*)$.  By \reff{tower} we have \beaa
  \|\f-J(T_0,\psi; t, \mu, \a^*; \cd, \a^*)\|_\infty = \|\f-J(t, \mu,
  \a^*; \cd, \a^*)\|_\infty \le \e_1.  \eeaa Moreover, since $\a^*\in
  \cM^{\e_1}_{state}(t,\mu)$, for any $\a\in \cA_{state}$ and $x\in
  \dbS$, we have \beaa &&J(T_0,\psi; t, \mu, \a^*; x, \a^*) = J(t, \mu,
  \a^*; x, \a^*) \\ &&\le J(t, \mu, \a^*; x, \a\oplus_{T_0} \a^*) +\e_1
  = J(T,\psi; t, \mu, \a^*; x, \a)+\e_1.  \eeaa This implies that
  $\a^*\in \cM^{\e_1}_{state}(T_0, \psi; t,\mu)$. We claim further that
  \bea
  \label{DPP1-claim} \psi(\cd, \mu^{\a^*}_{T_0}) \in \dbV_{C\e_1}(T_0,
  \mu^{\a^*}_{T_0}), \eea for some constant $C\ge 1$. Then by
  \reff{DPP1} we see that $\f\in \tilde
  \dbV^{C\e_1}_{state}(t,\mu)\subset \tilde \dbV^\e_{state}(t,\mu)$ by
  setting $\e_1\le {\e\over C}$. Since $\e>0$ is arbitrary, we obtain
  $\f\in \tilde \dbV_{state}(t,\mu)$.

  To show \reff{DPP1-claim}, we follow the arguments in Proposition
  \ref{prop-DPP} (ii). Recall $v$ in \reff{J} and the standard DPP
  \reff{vDPP} for $v$, for any $x\in \dbS$ we have \beaa
  \dbE^{\dbP^{\mu^{\a^*}; t, x, \a^*}}\Big[J(T_0, \mu^{\a^*}_{T_0},
  \a^*; X_{T_0}, \a^*) \Big] &\le&\!\! \inf_{\a\in\cA_{state}}\!\!
  \dbE^{\dbP^{\mu^{\a^*}; t, x, \a^*}}\Big[J(T_0, \mu^{\a^*}_{T_0},
  \a^*; X_{T_0}, \a) \Big]+\e_1\\ &=&\dbE^{\dbP^{\mu^{\a^*}; t, x,
      \a^*}}\Big[v(\mu^{\a^*}; T_0, X_{T_0}) \Big]+\e_1, \eeaa It is obvious
  that $v(\mu^{\a^*}; T_0, \cd) \le J(T_0, \mu^{\a^*}_{T_0}, \a^*; \cd,
  \a^*)$. Moreover, since $q\ge c_q$, clearly $\dbP^{\mu^{\a^*}; t, x,
    \a^*}(X_{T_0}=\tilde x) \ge c_0^{T_0-t}$, for any $\tilde x\in
  \dbS$. Thus, for $C:= c_0^{t-T_0}$, \beaa 0&\le& J(T_0,
  \mu^{\a^*}_{T_0}, \a^*; \tilde x, \a^*) - v(\mu^{\a^*}; T_0, \tilde
  x)\\ &\le& C\dbE^{\dbP^{\mu^{\a^*}; t, x, \a^*}}\Big[\big[J(T_0,
  \mu^{\a^*}_{T_0}, \a^*; X_{T_0}, \a^*) - v(\mu^{\a^*}; T_0,
  X_{T_0})\big]\1_{\{X_{T_0}=\tilde x\}} \Big]\\ &\le&C
  \dbE^{\dbP^{\mu^{\a^*}; t, x, \a^*}}\Big[\big[J(T_0, \mu^{\a^*}_{T_0},
  \a^*; X_{T_0}, \a^*) - v(\mu^{\a^*}; T_0,X_{T_0})\big] \Big] \le
  C\e_1.  \eeaa This implies that $\a^*\in \cM^{C\e_1}_{state}(T_0,
  \mu^{\a^*}_{T_0})$. Since $\psi(\cd, \mu^{\a^*}_{T_0}) = J(T_0,
  \mu^{\a^*}_{T_0}, \a^*; \cd, \a^*)$, we obtain \reff{DPP1-claim}
  immediately, and hence $\f\in \tilde \dbV_{state}(t,\mu)$.
\qed

\ms
\no{\bf Proof of the claim in Remark \ref{rem-Lginverse}}.
  By \reff{gL} and \reff{Lg} we have \beaa &&\dis \g^{(\L^\g)}(s,
  \tilde \bx, da) := {1\over \mu^\g_{s\wedge \cd}(\tilde \bx)}
  \int_{\cA^t_{path}} Q^t_s(\mu^\g; \tilde \bx; \a) \d_{\a(s, \tilde
    \bx)}(da) \L^\g(\bx, d\a)\\ &&\dis = {1\over \mu^\g_{s\wedge
      \cd}(\tilde \bx)} \int_\dbA\cds\int_\dbA \big[ \prod_{r=t}^{s-1}
  q(r, \tilde\bx, \mu^\g, \a(r, \tilde\bx); \bx_{r+1}) \big]\times
  \d_{\a(s, \tilde \bx)}(da)\times \\ &&\dis\qq \big[\mu(\bx)
  \prod_{r=t}^{T-1} \prod_{\bar \bx\in \dbX^{t, \bx}_s} \g(r, \bar
  \bx, d\a(r, \bar \bx))\big]\\ &&\dis = {\mu(\bx)\over
    \mu^\g_{s\wedge \cd}(\tilde \bx)} \int_\dbA\cds\int_\dbA \big[
  \prod_{r=t}^{s-1} q(r, \tilde\bx, \mu^\g, \a(r, \tilde\bx);
  \bx_{r+1}) \g(r, \tilde \bx, d\a(r, \tilde \bx))\big]\times \\
  &&\dis\qq \big[ \d_{\a(s, \tilde \bx)}(da) \g(s, \tilde \bx, d\a(s,
  \tilde\bx)) \prod_{\bar \bx\in \dbX^{t, \bx}_s\backslash \{\tilde
    \bx\}} \g(s, \bar \bx, d\a(s, \bar\bx))\big] \times\\ &&\dis\qq
  \big[ \prod_{r=t}^{s-1} \prod_{\bar \bx\in \dbX^{t, \bx}_s\backslash
    \{\tilde \bx\}} \g(r, \bar \bx, d\a(r, \bar \bx))\big] \big[
  \prod_{r=s}^{T-1} \prod_{\bar \bx\in \dbX^{t, \bx}_s} \g(r, \bar
  \bx, d\a(r, \bar \bx))\big]\\ &&\dis = {\mu(\bx)\over
    \mu^\g_{s\wedge \cd}(\tilde \bx)} \big[ \prod_{r=t}^{s-1}
  \int_\dbA q(r, \tilde\bx, \mu^\g, \bar a; \bx_{r+1}) \g(r, \tilde
  \bx, d\bar a)\big]\times \big[ \g(s, \tilde \bx, da)\big]\\ &&\dis =
  {\mu(\bx)\over \mu^\g_{s\wedge \cd}(\tilde \bx)} Q^t_s(\mu^\g;
  \tilde \bx, \g) \g(s, \tilde \bx, d a) = \g(s, \tilde \bx, d a).
  \eeaa That is, $\g^{(\L^\g)} = \g$.
\qed

\ms
\no{\bf Proof of Lemma \ref{lem-vreg}}.
  Clearly the uniform estimate for $J(\mu^\a;\cd)$ implies that for
  $v(\mu^\a;\cd)$, so we shall only prove the former one. Fix
  $(t, \mu)\in [0, T]\times \cP_2$ and $\a, \tilde \a\in \cA_{cont}$,
  and denote $u(s, x):= J(\mu^\a; \tilde \a, s, x)$.  By standard PDE
  theory $u$ is a classical solution to the linear PDE in \reff{HJB}
  and we have the following formula: denoting
  $X^{s,x}_r := x + B_r - B_s$, \beaa &\dis\pa_x u(s, x) =
  \dbE^\dbP\Big[[g(X^{s, x}_T, \mu^\a_T)- g(x, \mu^\a_T)]
  {B_T-B_s\over T-s} \\ &\dis+ \int_s^T \big[ b(r, X^{s,x}_t,
  \mu^\a_r, \tilde \a(r, X^{s,x}_r)) \cd \pa_x u(r, X^{s,x}_r) + f(r,
  X^{s,x}_t, \mu^\a_r, \tilde \a(r, X^{s,x}_r))\big] {B_r-B_s\over
    r-s} dr\Big].  \eeaa Then, by the Lipschitz continuity of $g$ and
  the boundedness of $b$ and $f$, \beaa |\pa_x u(s, x)| &\le&
  \dbE\Big[L_0 {|B_T-B_s|^2\over T-s} + C_0\int_s^T \big[ |\pa_x u(r,
  X^{s,x}_r)| + 1\big] {|B_r-B_s|\over r-s} dr\Big]\\ &\le& C +
  C_0\dbE\Big[ \int_s^T |\pa_x u(r, X^{s,x}_r)|{|B_r-B_s|\over
    r-s}dr\Big].  \eeaa Denote
  $K_s:= e^{\l s} \sup_x |\pa_x u(s, x)|$,
  $\bar K:= \sup_{t\le s\le T} K_s$, for some constant $\l>0$. Then
  \beaa K_s &\le& C e^{\l s} + C_0 \int_s^T {K_r e^{-\l (r-s)} \over
    \sqrt{r-s}}dr\le C e^{\l s} + C_0\bar K\int_s^T {e^{-\l
      (r-s)}\over \sqrt{r-s}} dr \\ &\le& C e^{\l s} + C_0\bar
  K\int_s^\infty {e^{-\l (r-s)}\over \sqrt{r-s}} dr =C e^{\l s} +
  C_0\bar K\int_0^\infty {e^{-\l r}\over \sqrt{r}} dr =C e^{\l s} +
  {C_0\over \sqrt{\pi \l}} \bar K.  \eeaa Thus
  $\bar K\le {C_0\over \sqrt{\pi \l}} \bar K + C e^{\l T}$.  Set
  $\l:= {4C_0^2\over \pi}$ so that
  ${C_0\over \sqrt{\pi \l}} = {1\over 2}$, we obtain
  $\bar K\le C_1:= 2Ce^{\l T}$, which implies the desired estimate
  immediately.
\qed

\ms
\no{\bf Proof of Proposition \ref{prop-contNreg}}.
  Fix $(t, \vec x, \vec \a, \bar x, \tilde x)$ and $i$. For any
  $\tilde \a\in \cA^L_{cont}$, introduce
  $\bar \a(s, x, \mu) := \tilde \a(s, x-\bar x + \tilde x, \mu)$, and
  denote \beaa &\dis \bar X^{i}_s := \bar x + B^i_s - B^i_t,\q X^j_s
  := x_j + B^j_s - B^j_t,\q j\neq i,\q ;\\ &\dis \bar \mu^{N}_s :=
  {1\over N} \big[\d_{\bar X^{i}_s} + \sum_{j\neq i} \d_{X^j_s}\big],~
  \bar M^{j}_s := \exp\Big(\int_t^s \bar b^{j}_r dB^j_r - {1\over
    2}\int_t^s |\bar b^{j}_r|^2 dr\Big), j\ge 1, ~ \mbox{where}\\
  &\dis \bar b^{i}_s:= b(s, \bar X^{i}_s, \bar \mu^{N}_s, \bar \a(s,
  \bar X^{i}_s, \bar \mu^{N}_s)),\q \bar b^{j}_s:= b(s, X^j_s, \bar
  \mu^{N}_s, \a_j(s, X^j_s, \bar \mu^{N}_s)), j\neq i.  \eeaa By the
  Girsanov Theorem we have \beaa J_i(t, (\vec x^{-i},\bar x), (\vec
  \a^{-i}, \bar \a)) = \dbE\Big[\big[\prod_{j=1}^N \bar M^{j}_T\big]
  \big[g(\bar X^{i}_T, \bar\mu^{N}_T) + \int_t^T f(s, \bar X^{i}_s,
  \bar \mu^{N}_s, \bar \a(s, \bar X^{i}_s, \bar\mu^{N}_s)) \big]
  ds\Big].  \eeaa Similarly define $\tilde X^i$, $\tilde \mu^N$,
  $\tilde M^j$, $\tilde b^i$, $\tilde b^j$ corresponding to
  $(\tilde x, \tilde \a)$ in the obvious sense. Then we have a similar
  expression as above and
  $\bar \a(s, \bar X^{i}_s,\mu) = \tilde \a(s, \tilde X^{i}_s,
  \mu)$. Therefore, \bea
  \label{vLiJi} \left.\ba{c} \dis v^{N,L}_i\big(t, (\vec x^{-i}, \bar
    x), \vec \a\big) - J_i(t, (\vec x^{-i}, \tilde x), (\vec \a^{-i},
    \tilde \a)) \\ \dis \le J_i(t, (\vec x^{-i}, \bar x), (\vec
    \a^{-i}, \bar \a))-J_i(t, (\vec x^{-i}, \tilde x), (\vec \a^{-i},
    \tilde \a)) \le C\sum_{j=1}^N K^j_T + K_0, \ea\right.  \eea where
  \beaa K^j_s &:=& \dbE\Big[\big[\prod_{k< j} \bar M^{k}_s\big]
  \big[\prod_{k> j} \tilde M^{k}_s\big]\big|\bar M^{j}_s -\tilde
  M^{j}_s|\Big],\q j\ge 1;\\ K_0 &:=& \dbE\Big[\prod_{j=1}^N \bar
  M^{j}_T \big[ |g(\bar X^{i}_T, \bar \mu^{N}_T) - g(\tilde X^{i}_T,
  \tilde\mu^{N}_T)|\\ &&+\int_t^T | f(s, \bar X^{i}_s, \bar\mu^{N}_s,
  \bar\a(s, \bar X^{i}_s, \bar\mu^{N}_s)) - f(s, \tilde X^{i}_s,
  \tilde\mu^{N}_s, \tilde \a(s, \tilde X^{i}_s,
  \tilde\mu^{N}_s))|ds\big]\Big].  \eeaa Denote
  $\D x := \bar x-\tilde x$. Note that \bea
  \label{DXest} \left.\ba{c} \dis\bar X^{i}_s -\tilde X^{i}_s = \D
    x,\q W_1(\bar\mu^{N}_s, \tilde \mu^{N}_s) \le {|\D x|\over N},\\
    \dis \big|\bar\a(s, \bar X^{i}_s, \bar\mu^{N}_s) - \tilde \a(s,
    \tilde X^{i}_s, \tilde\mu^{N}_s)\big| = \big|\tilde \a(s, \tilde
    X^{i}_s, \bar \mu^{N}_s) - \tilde \a(s, \tilde X^{i}_s, \tilde
    \mu^{N}_s)\big| \le {L\over N} |\D x|.  \ea\right.  \eea By the
  required Lipschitz continuity, we have \bea
  \label{K0} K_0 \le C\dbE^\dbP\Big[\prod_{j=1}^N \bar M^{j}_T \big[
  [1+{1\over N}]|\D x| +\int_t^T [1+{L\over N}]|\D x| ds\big]\Big]\le
  C|\D x|.  \eea

  Next, introduce \beaa \G^j_s := \dbE\Big[\big[\prod_{k< j} \bar
  M^{k}_s\big] \big[\prod_{k> j} \tilde M^{k}_s\big] \big|\bar
  M^{j}_s|^2\Big],\q \D \G^j_s := \dbE\Big[\big[\prod_{k< j} \bar
  M^{k}_s\big] \big[\prod_{k> j} \tilde M^{k}_s\big] \big|\bar
  M^{j}_s- \tilde M^j_s|^2\Big].  \eeaa Note that $B^1,\cds, B^N$ are
  independent. By applying the It\^o formula, we have \beaa \G^j_s =
  1+ \int_t^s \dbE\Big[\big[ \prod_{k< j} \bar M^{k}_r\big]
  \big[\prod_{k> j} \tilde M^{k}_r\big] \big|\bar M^{j}_r \bar
  b^{j}_r|^2 \Big]dr \le 1+C\int_t^s \G^j_r dr, \eeaa Then
  $\G^j_s\le C$. Thus, by applying the It\^o formula again we have
  \beaa \D \G^j_s &=&\int_t^s \dbE\Big[\big[\prod_{k< j} \bar
  M^{k}_r\big] \big[\prod_{k> j} \tilde M^{k}_r\big] \big[\bar M^{j}_r
  \bar b^{j}_r -\tilde M^{j}_r\tilde b^{j}_r]^2\Big] dr\\ &\le&
  C\int_t^s \dbE\Big[\big[\prod_{k< j} \bar M^{k}_r\big]
  \big[\prod_{k> j} \tilde M^{k}_r\big] \big[|\bar M^{j}_r-\tilde
  M^{j}_r| +\bar M^{j}_r | \bar b^{j}_r-\tilde b^{j}_r|\big]^2\Big]
  dr\\ &\le&C\int_t^s \D \G^j_r dr + C\int_t^s \dbE\Big[\big[\prod_{k<
    j} \bar M^{k}_r\big] \big[\prod_{k> j} \tilde M^{k}_r\big]
  \big[\bar M^{j}_r | \bar b^{j}_r-\tilde b^{j}_r|]^2\Big] dr.  \eeaa
  Note that, by \reff{DXest}, \beaa &\dis | \bar b^{i}_r-\tilde
  b^{i}_r| = \Big|b(s, \bar X^{i}_s, \bar \mu^{N}_s, \tilde\a(s,
  \tilde X^{i}_s, \bar \mu^{N}_s)) - b(s, \tilde X^{i}_s, \tilde
  \mu^{N}_s, \tilde \a(s, \tilde X^{i}_s, \tilde \mu^{N}_s))\Big| \le
  C_L|\D x|\\ &\dis | \bar b^{j}_r-\tilde b^{j}_r|\le {C_L\over N} |\D
  x|,\q j\neq i.  \eeaa Then, since $\G^j_s\le C$, \beaa \D \G^i_s \le
  C\int_t^s \D \G^i_r dr + C_L|\D x|^2,\q \D \G^j_s \le C\int_t^s \D
  \G^j_r dr + {C_L\over N^2} |\D x|^2, ~j\neq i.  
  \eeaa 
  and thus 
  \bea
   \label{Kj}
  \left.\ba{c}
  \dis  \D \G^i_s \le C_L|\D x|^2,\q K^i_s \le {|\D x|\over 2} + { \D \G^i_s\over 2|\D
    x|} \le C_L |\D x|;\ms\\
  \dis \D \G^j_s \le {C_L\over N^2} |\D x|^2, \q K^j_s \le {|\D x|\over 2N} + {N \D
    \G^j_s\over 2|\D x|}\le {C_L\over N}|\D x|,\q j\neq i. 
    \ea\right.
     \eea 
     Then,
  by \reff{vLiJi}, \reff{K0} and \reff{Kj} we have \beaa &\dis
  v^{N,L}_i\big(t, (\vec x^{-i}, \bar x), \vec \a\big) - J_i(t, (\vec
  x^{-i}, \tilde x), (\vec \a^{-i}, \tilde \a)) \le K_0 + C K^i_s +
  C\sum_{j\neq i} K^j_s\\ &\dis \le C|\D x| + C_L|\D x| + C_L
  \sum_{j\neq i} {|\D x|\over N} \le C_L |\D x|.  \eeaa Since
  $\tilde \a\in \cA^L$ is arbitrary, we obtain
  $v^{N,L}_i\big(t, (\vec x^{-i}, \bar x), \vec \a\big)
  -v^{N,L}_i\big(t, (\vec x^{-i}, \tilde x), \vec \a\big)\le C_L |\D
  x|$. Similarly we have
  $v^{N,L}_i\big(t, (\vec x^{-i}, \tilde x), \vec \a\big) - v\big(t,
  (\vec x^{-i}, \bar x), \vec \a\big)\le C_L |\D x|$, and hence
  \reff{vLireg}.
\qed

\end{document}